\setlist[1]{leftmargin=*}
\setlist[enumerate,1]{label=(\alph*)}
\setlist[enumerate,2]{label=(\roman*), ref=(\alph{enumi}.\roman*)}
\newlist{enumerate-alt}{enumerate}{1}
\setlist[enumerate-alt,1]{label=(\roman*)}
\newlist{longlist}{enumerate}{2}
\setlist[longlist,1]{label=\small{(\arabic*)}, itemsep=0.2em}
\setlist[longlist,2]{label=\small{(\arabic{longlisti}.\arabic*)}}
\newlist{shortlist}{enumerate}{1}
\setlist[shortlist]{label=\small{(\Alph*)}}
\newlist{parlist}{enumerate}{1}
\setlist[parlist]{leftmargin=0cm, itemindent=2\parindent, label=(\alph*), itemsep=0.2em}
\newlist{casesp}{enumerate}{5} %% new list environment based on enumerate with a max depth of 3
\setlist[casesp]{align=left, %% alignment of labels
	listparindent=\parindent, %% same indentation as in normal text
	parsep=\parskip, %% same parskip as in normal text
	font=\normalfont\bfseries, %% font used for labels
	leftmargin=0pt, %% total amount by which text is indented
	labelwidth=0pt, %% width of labels (=how much they stick out on the left because align=left)
	itemindent=.4em,labelsep=.4em, %% space between label and text
	topsep=.6em, %% vertical space above and below list
	%partopsep=.4em, %% extra vertical space above and below if separate paragraph
	itemsep=.6em, %% vertical space after each item
}
\setlist[casesp,1]{label=Case~\arabic*:,ref=\arabic*}
\setlist[casesp,2]{label=Subcase~\thecasespi.\arabic*:,ref=\thecasespi.\arabic*}
\setlist[casesp,3]{label=Subcase~\thecasespii.\arabic*:,ref=\thecasespii.\arabic*}
\setlist[casesp,4]{label=Subcase~\thecasespiii.\arabic*:,ref=\thecasespiii.\arabic*}
\setlist[casesp,5]{label=Subcase~\thecasespiv.\arabic*:,ref=\thecasespiv.\arabic*}
\newcommand\litem[1]{\item{\bfseries #1.\enspace}}
\newlist{casesp*}{enumerate}{2}
\setlist[casesp*]{align=left, listparindent=\parindent, parsep=\parskip, font=\normalfont\bfseries, leftmargin=0pt, 	labelwidth=0pt, itemindent=.4em,labelsep=.4em, topsep=.6em, itemsep=.6em }
\setlist[casesp*,1]{label=Case,ref=\arabic*}
\setlist[casesp*,2]{label=Subcase,ref=\arabic*}
\newlist{types}{enumerate}{1}
\setlist[types]{align=left, listparindent=\parindent, parsep=\parskip, font=\normalfont\bfseries, leftmargin=0pt, 	labelwidth=0pt, itemindent=.4em,labelsep=.4em, topsep=.6em, itemsep=.6em }
\setlist[types,1]{label=Type,ref=\arabic*}
\tikzset{
	add/.style args={#1 and #2}{
		to path={%
			($(\tikztostart)!-#1!(\tikztotarget)$)--($(\tikztotarget)!-#2!(\tikztostart)$)%
			\tikztonodes},add/.default={.2 and .2}}
}
\renewcommand\subsection{\@startsection{subsection}{3}
	\z@{.5\linespacing\@plus.7\linespacing}{.5\linespacing}
	{\bfseries\itshape}} 
\renewcommand\paragraph{\@startsection{paragraph}{4}%
	\z@{.5\linespacing\@plus.7\linespacing}{-.5\linespacing}%
	{\normalfont\bfseries}}
\makeatletter \renewenvironment{proof}[1][\proofname]{
	\par\pushQED{\qed}\normalfont
	\topsep6\p@\@plus6\p@\relax
	\trivlist\item[\hskip\labelsep\bfseries#1\@addpunct{.}]
	\ignorespaces}{
	\popQED\endtrivlist\@endpefalse} \makeatother
\theoremstyle{plain}
\newtheorem{theorem}{Theorem}[section]
\newtheorem*{theorem*}{Theorem}
\theoremstyle{definition}
\newtheorem{corollary}[theorem]{Corollary}
\newtheorem{definition}[theorem]{Definition}
\newtheorem{lemma}[theorem]{Lemma}
\newtheorem{proposition}[theorem]{Proposition}
\newtheorem{example}[theorem]{Example}
\newtheorem{remark}[theorem]{Remark}
\newtheorem{claim}{Claim}
\newtheorem*{claim*}{Claim}
\newtheorem*{remark*}{Remark} 
\let\sec\S % \sec - paragraph character; \S=\mathbb{S} - sphere 
\newcommand{\A}{\mathbb{A}}
\newcommand{\C}{\mathbb{C}}
\newcommand{\F}{\mathbb{F}}
\newcommand{\G}{\mathbb{G}}
\renewcommand{\P}{\mathbb{P}}
\newcommand{\Q}{\mathbb{Q}}
\newcommand{\Z}{\mathbb{Z}}
\newcommand{\cA}{\mathcal{A}}
\newcommand{\cC}{\mathcal{C}}
\newcommand{\cD}{\mathcal{D}}
\newcommand{\cE}{\mathcal{E}}
\newcommand{\cL}{\mathcal{L}}
\newcommand{\cO}{\mathcal{O}}
\newcommand{\cP}{\mathcal{P}}
\newcommand{\cQ}{\mathcal{Q}}
\newcommand{\cS}{\mathcal{S}}
\newcommand{\cX}{\mathcal{X}}
\newcommand{\cY}{\mathcal{Y}}
\newcommand{\Pcusp}{\cP_{\textnormal{cusp}}}
\newcommand{\rA}{\mathrm{A}}
\newcommand{\rD}{\mathrm{D}}
\newcommand{\rE}{\mathrm{E}}
\newcommand{\rC}{\mathrm{C}}
\newcommand{\rN}{\mathrm{N}}
\renewcommand{\aa}{\mathscr{a}}
\renewcommand{\ll}{\mathscr{l}}
\newcommand{\cc}{\mathscr{c}}
\newcommand{\qq}{\mathscr{q}}
\newcommand{\pp}{\mathscr{p}}
\newcommand{\hh}{\mathscr{h}}
\newcommand{\kk}{\mathscr{k}}
\newcommand{\ttt}{\mathscr{t}}
\renewcommand{\epsilon}{\varepsilon}
\renewcommand{\phi}{\varphi}
\renewcommand{\theta}{\vartheta}
\DeclareFontFamily{U}{mathx}{}
\DeclareFontShape{U}{mathx}{m}{n}{<-> mathx10}{}
\DeclareSymbolFont{mathx}{U}{mathx}{m}{n}
\DeclareMathAccent{\widehat}{0}{mathx}{"70}
\DeclareMathAccent{\widecheck}{0}{mathx}{"71}
\renewcommand{\tilde}{\widetilde}
\renewcommand{\check}{\widecheck}
\renewcommand{\hat}{\widehat}
\renewcommand{\bar}{\overline}
\renewcommand{\leq}{\leqslant}
\renewcommand{\geq}{\geqslant}
\renewcommand{\to}{\longrightarrow}
\newcommand{\map}{\dashrightarrow}
\newcommand{\into}{\hookrightarrow}
\newcommand{\cha}{\operatorname{char}} %characteristic of the field
\newcommand{\Aut}{\operatorname{Aut}}
\newcommand{\Sing}{\operatorname{Sing}}
\newcommand{\NS}{\operatorname{NS}}
\newcommand{\Exc}{\operatorname{Exc}}
\newcommand{\Supp}{\operatorname{Supp}}
\newcommand{\redd}{_{\mathrm{red}}} %reduced structure
\newcommand{\reg}{^{\mathrm{reg}}} %smooth locus
\newcommand{\trp}{^{\scriptscriptstyle{\top}}} %transpose
\newcommand{\PGL}{\mathrm{PGL}}
\newcommand{\im}{\operatorname{Im}}
\newcommand{\ld}{\operatorname{ld}} %log discrepancy
\newcommand{\cf}{\operatorname{cf}}
\newcommand{\lts}[2]{\mathcal{T}_{#1}(-\log #2)} %logarithmic tangent sheaf
\newcommand{\hor}{_{\mathrm{hor}}} %horizontal part
\renewcommand{\vert}{_{\mathrm{vert}}} %vertical part
\newcommand{\cp}[1]{^{(#1)}} %T(i) - i-th component
\newcommand{\am}{_{\textnormal{am}}}
\newcommand{\height}{\operatorname{ht}}
\newcommand{\id}{\mathrm{id}}
\newcommand{\Sec}{\Xi}%negative section on F_n, to be changed, but I don't like KM's C_n.
\newcommand{\contS}{\varsigma}%contraction of \Sec
\newcommand{\bs}[1]{\boldsymbol{#1}} %section
\newcommand{\ub}[1]{\uline{\bs{#1}}} %2-section
\newcommand{\dec}[1]{^{|#1|}}
\newcommand{\de}{\coloneqq} %:=
\begin{document}
	\title[On the structure of open del Pezzo surfaces]{On the structure of open del Pezzo surfaces}
	
	\author{Karol Palka}
	\address{Institute of Mathematics, Polish Academy of Sciences, \'{S}niadeckich 8, 00-656 Warsaw, Poland}
	\email{palka@impan.pl}
	\author{Tomasz Pe{\l}ka}
	\address{University of Warsaw, Faculty of Mathematics, Informatics and Mechanics, Banacha 2, 02-097 Warsaw, Poland}
	\email{tpelka@mimuw.edu.pl}
	\thanks{This project was funded by the National Science Centre, and, grant number 2021/41/B/ST1/02062. For the purpose of Open Access, the authors have applied a CC-BY public copyright license to any Author Accepted Manuscript version arising from this submission.}
	\subjclass[2020]{14J10; 14D06, 14J45, 14R05}
\begin{abstract}
	Let $(\bar{X}, \bar{D})$ be an open log del Pezzo surface of rank one, that is, $\bar{X}$ is a normal projective surface of Picard rank one, the boundary $\bar{D}$ is a reduced nonzero divisor on $\bar{X}$, and $-(K_{\bar{X}}+\bar{D})$ is ample. We show that, up to well described exceptions in characteristics $2,3$ and $5$, the smooth part of $\bar{X}\setminus \bar{D}$ admits an $\A^{1}$- or an $\A^{1}_{*}$-fibration, which extends to a $\P^1$-fibration of the minimal log resolution of $(\bar{X},\bar{D})$. In characteristic $0$, this improves a well-known structure theorem of Miyanishi--Tsunoda \cite{Miy_Tsu-opendP}.	Within the proof, we classify rational anti-canonical curves contained in smooth loci of canonical del Pezzo surfaces of rank one.	
\end{abstract}
%%%%%%%%%%%%%%%%%%%%%%%%%%%%%%%%%%%%%%%%%%%%%%%%%%%%%%%%%%%%%%%%%%%%
%%%%%%% abstract for the arxiv %%%%%%%%%%%%%%%%%%%%%%%%%%%%%%%%%%%%%
%%%%%%%%%%%%%%%%%%%%%%%%%%%%%%%%%%%%%%%%%%%%%%%%%%%%%%%%%%%%%%%%%%%%
%	Let $(X,D)$ be an open log del Pezzo surface of rank one, that is, $X$ is a normal projective surface of Picard rank one, the boundary $D$ is a reduced nonzero divisor on $X$, and the log anti-canonical divisor $-(K_X+D)$ is ample. We show that, up to well described exceptions in characteristics 2, 3 and 5, the smooth part of $X\setminus D$ admits an $\mathbb{A}^1$- or an $\mathbb{A}^{1}_{*}$-fibration, which extends to a $\mathbb{P}^1$-fibration of the minimal log resolution of $(X,D)$. In characteristic 0, this improves a well-known structure theorem of Miyanishi--Tsunoda.	Within the proof, we classify rational anti-canonical curves contained in smooth loci of canonical del Pezzo surfaces of rank one.	
%%%%%%%%%%%%%%%%%%%%%%%%%%%%%%%%%%%%%%%%%%%%%%%%%%%%%%%%%%%%%%%%%%%%
%%%%%%%%%%%%%%%%%%%%%%%%%%%%%%%%%%%%%%%%%%%%%%%%%%%%%%%%%%%%%%%%%%%%

\maketitle
\setcounter{tocdepth}{1}
\tableofcontents
\section{Introduction}

A  pair $(\bar{X},\bar{D})$ consisting of a normal projective surface $\bar{X}$ and a reduced Weil divisor $\bar{D}$ is called a \emph{log del Pezzo surface} if the anti-log canonical divisor $-(K_{\bar{X}}+\bar{D})$ is ample. It is \emph{open} if $\bar{D}\neq 0$. Its \emph{rank} is the Picard rank of $\bar{X}$. In this article, we assume that the base field $\kk$ is algebraically closed, of arbitrary characteristic; and, unless explicitly stated otherwise, consider only log surfaces with reduced boundaries. 

Being one of the possible outcomes of the minimal model program in dimension $2$, log del Pezzo surfaces of rank one are of great importance in birational geometry, and have been studied by many authors from various points of view, see  \cite{Miy_Tsu-opendP,GurZha_1,Keel-McKernan_rational_curves}. In case $\bar{D}\neq 0$, the structure of the open part $\bar{X}\setminus \bar{D}$ is described by theorems of Fujita, Miyanishi, Sugie, Russell and Tsunoda \cite{Fujita-Zariski,Sugie,Miy_Su,Russell-ruled, Miy_Tsu-nonconnected}. These theorems have numerous applications in affine geometry, cf.\ \cite{KR_contractible,Kojima_Sing-1,PK_QHP,Daigle_Russell} or \cite[\sec II.5]{Miyan-OpenSurf} for a discussion. They imply that if $\cha\kk=0$ then the open surface $\bar{X}\setminus \bar{D}$ admits an $\A^{1}$-fibration or a very specific (\emph{Platonic}) $\A^{1}_{*}$-fibration, see \cite{Miy_Tsu-opendP}.
\smallskip

The main goal of this article is to generalize the above theorems to algebraically closed fields of arbitrary characteristic, and at the same time to gain control on the base points of the corresponding $\A^1$- or $\A^1_{*}$- fibrations on the boundary of the minimal log resolution of $(\bar{X},\bar{D})$, call it $(X,D)$. More precisely, while \cite{Miy_Tsu-opendP} proves the existence of $\A^{1}$- or $\A^{1}_{*}$-fibrations of $X\setminus D$ -- which might have base points on $D$, see Remark \ref{rem:hidden_A1_fibrations} -- we construct $\P^1$-fibrations of $X$ whose fiber meets $D$ at most twice. We introduce the following definition. 
 
 \begin{definition}[Height]\label{def:height}
 	Let $\bar{D}$ be a reduced Weil divisor on a normal surface $\bar{X}$, and let $(X,D)\to (\bar{X},\bar{D})$ be the minimal log resolution, where $D$ is the reduced total transform of $\bar{D}$. The \emph{height} of the log surface $(\bar{X},\bar{D})$, denoted by $\height(\bar{X},\bar{D})$, is the infimum of the set of integers $\hh$ such that $X$ admits a $\P^1$-fibration whose fiber $F$ satisfies $F\cdot D=\hh$. 
 	The \emph{height} of a normal surface $\bar{X}$ is $\height(\bar{X})\de \height(\bar{X},0)$. 
 \end{definition}
 
Enriques classification of smooth projective surfaces implies that $\height(\bar{X},\bar{D})=\infty$ if and only if $\bar{X}\cong \P^2$ or $\kappa(\bar{X})\geq 0$. Any $\P^{1}$-fibration realizing the infimum in Definition \ref{def:height} is said to \emph{witness} the height. 

We can now state our first main result, see Proposition \ref{prop:MT_smooth} and Corollary \ref{cor:MT} for more detailed versions.

\begin{theorem}[Structure of open del Pezzo surfaces]\label{thm:MT-version}
	Let $(\bar{X},\bar{D})$ be a log del Pezzo surface of rank one such that $\bar{X}\not\cong \P^{2}$ and $\bar{D}$ is reduced and nonzero. Then $\height(\bar{X},\bar{D})\leq 2$, unless one of the following holds.
	\begin{enumerate}
		\item\label{item:thm-relative} $\cha\kk\in \{2,3,5\}$ and the minimal log resolution of $(\bar{X},\bar{D})$ is also the minimal log resolution of a log surface $(\bar{Y},\bar{T})$, where $\bar{Y}$ is a canonical surface of rank one and $\bar{T}\subseteq \bar{Y}\reg$ is a cuspidal curve with $p_{a}(\bar{T})=1$, see Proposition \ref{prop:canonical}. In particular, $\height(\bar{X})=\height(\bar{Y})+2\in \{4,6\}$, see Proposition \ref{prop:height}.
		\item\label{item:thm-exception} $\cha\kk=2$ and $(\bar{X},\bar{D})$ is isomorphic to one of the two log del Pezzo surfaces constructed in Example \ref{ex:exception}, see Figures \ref{fig:exception-1}, \ref{fig:exception-2}. In particular,  $\height(\bar{X},\bar{D})=3$.
	\end{enumerate}
\end{theorem}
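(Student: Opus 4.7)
The plan is to pass to the minimal log resolution $\pi\colon (X,D)\to(\bar{X},\bar{D})$ and construct on $X$ a $\P^{1}$-fibration whose general fiber $F$ satisfies $F\cdot D\leq 2$. Since $-(K_{\bar{X}}+\bar{D})$ is ample and $\bar{D}\ne 0$, the open part $X\setminus D=\bar{X}\setminus\bar{D}$ has logarithmic Kodaira dimension $-\infty$, so by the positive-characteristic extension of the Miyanishi--Sugie--Russell structure theory it carries some $\A^{1}$- or $\A^{1}_{*}$-fibration; the point is that this fibration may have base points on $D$ (Remark \ref{rem:hidden_A1_fibrations}), and the height invariant measures exactly this defect. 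A witnessing $\P^{1}$-fibration on $X$ with $F\cdot D\leq 2$ yields an $\A^{1}$- or $\A^{1}_{*}$-fibration of $X\setminus D$ without hidden base points, which is the sharpening over Miyanishi--Tsunoda we are after.

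The first step is to exhibit natural $\P^{1}$-pencils on $X$ coming from distinguished rational curves of small self-intersection: components of $D$, components of $\Exc(\pi)$, or small modifications thereof. For a chosen curve $E$ with $E^{2}\geq 0$, obtained if needed by contracting or blowing up inside the boundary tree, I would run a relative MMP on $(X,D)$ over the pencil generated by $E$ and read off $F\cdot D$ from the outcome. The rank-one hypothesis, together with negative-definiteness of $\Exc(\pi)$ and Hodge index, forces the dual graph of $D\cup\Exc(\pi)$ into a short list of admissible shapes, and in each typical shape one directly produces a pencil of horizontal degree at most $2$.

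When no such candidate pencil works, the aim is to show that $(X,D)$ must also arise as the minimal log resolution of a pair $(\bar{Y},\bar{T})$ with $\bar{Y}$ canonical of rank one and $\bar{T}\subseteq \bar{Y}\reg$ an irreducible cuspidal curve of arithmetic genus one, that is, an anti-canonical member lying entirely in the smooth locus. This reduction brings the classification of rational anti-canonical curves on canonical del Pezzo surfaces of rank one (Proposition \ref{prop:canonical}) into play: it restricts $\cha\kk$ to $\{2,3,5\}$, pins down $\bar{Y}$ and its singularities, and yields $\height(\bar{X})=\height(\bar{Y})+2\in\{4,6\}$ through Proposition \ref{prop:height}. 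Any residual configurations not obtained this way would have to be isolated by hand, and in characteristic $2$ precisely the two sporadic examples of Example \ref{ex:exception} survive.

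The principal obstacle is positive characteristic. The classical Miyanishi--Tsunoda argument relies on Bertini-type freeness statements and on tameness of Du Val singularities, both of which can fail in characteristics $2$, $3$ and $5$, and this failure is exactly the source of the exceptions. I would handle it combinatorially: the rank-one and anti-ampleness hypotheses bound the dual graph of $D\cup\Exc(\pi)$ severely, and for each admissible graph one can check, using the intersection form and the log discrepancies at the singular points of $\bar{X}$, whether a pencil of horizontal degree at most $2$ exists. Tracking which dual graphs can support \emph{wild} Du Val configurations in small characteristic is the delicate step, and is precisely what isolates cases (a) and (b) from the generic picture.
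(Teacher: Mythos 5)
Your overall strategy --- resolve, look for a $\P^1$-fibration of height at most $2$ built from distinguished rational curves, and reduce the failures to canonical del Pezzo surfaces carrying a cuspidal anticanonical curve in their smooth locus --- does match the paper's. But the proposal has a genuine gap at its center: the claim that rank one, negative definiteness of $\Exc\pi$ and Hodge index force the dual graph of $D$ into a \emph{short list} of shapes that can then be checked combinatorially. No such finite list exists. Already in the unexceptional outcomes the boundary ranges over infinite families (arbitrary admissible chains $T$ and their adjoints $T^{*}$ in columnar fibers, arbitrary admissible forks $\langle 1;T_1,T_2,T_3\rangle$ with $\sum_i 1/d(T_i)>1$ in the Platonic case), so the argument cannot be a finite case check over graphs. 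What the paper does instead is: (i) run the MMP on the log resolution to a minimal \emph{dlt} model and deduce from ampleness of $-(K_{\bar X}+\bar D)$ (Lemma \ref{lem:Miy_computation}) that the boundary component $D_0$ containing the proper transform of $\bar D$ is either a chain with at most two non-exceptional components or a fork with $\delta_{D_0}>1$ and branching component of self-intersection at least $-1$; (ii) in the fork case, play a two-ray game --- extract one twig to a rank-two log terminal surface and use the cone theorem to produce a \emph{dual} curve $A$ (Lemma \ref{lem:dual}), which is either a $0$-curve giving the fibration directly or a $(-1)$-curve whose position relative to $D$ is pinned down by log-discrepancy estimates (Lemma \ref{lem:cf}); (iii) assemble the witnessing pencil from linear systems such as $|C+2A+T|$. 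None of these mechanisms appears in your outline, and without them you have no way to produce the pencils you need or to bound the residual configurations. You also do not address the final transfer step: showing that the morphism from $(X,D)$ to the minimal log resolution of the exceptional model is an isomorphism (the paper does this by a Picard-rank count combined with snc-minimality of $D$).

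Second, your diagnosis of where the characteristic enters (``Bertini-type freeness'' and ``tameness of Du Val singularities'') is off target. The exceptions are not caused by wildness of the Du Val singularities; they come from tangency and ramification degenerations in plane models: in characteristic $2$ a degree-two cover $\P^1\to\P^1$ can be totally ramified, so all tangent lines to a conic become concurrent and a $2$-section can meet three or more degenerate fibers in ramification points --- this is exactly the step where the characteristic-zero argument of \cite[\S II.5]{Miyan-OpenSurf} breaks, cf.\ Remark \ref{rem:correcing-Miy}; in characteristics $3$ and $5$ the group law on a cuspidal cubic is $\G_a$, so $3$- and $5$-torsion conditions degenerate. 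Identifying these as the failure points and showing that they lead precisely to the two surfaces of Example \ref{ex:exception} and to the canonical types listed in Proposition \ref{prop:MT_smooth}\ref{item:MT-eliptic-relative} is the actual content of the proof, and it is not recoverable from the intersection form and log discrepancies alone.
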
 

The assumption $\bar{D}\neq 0$ is crucial. Indeed, for a del Pezzo surface $\bar{X}$ of rank one, the height $\height(\bar{X})$ can be as high as $4$ (for arbitrary $\cha\kk$), or even $6$ if $\cha\kk=2$ or $3$, see \cite[Theorem E(f)]{PaPe_ht_2}. 

As a consequence of Proposition \ref{prop:MT_smooth}, we get the following Corollaries \ref{cor:Russell} and \ref{cor:uniruled}, generalizing the results of \cite{Russell-ruled} and \cite[\sec 8.2]{Miy_Tsu-opendP}. We recall that a surface is \emph{$\A^{1}$-ruled} if it contains an open subset which is isomorphic to a cylinder $\A^1\times C$ for some curve $C$; and is \emph{$\A^1$-uniruled} if it admits a dominant (not necessarily separable) morphism from an $\A^{1}$-ruled surface. 

\begin{corollary}[Log surfaces with $\kappa=-\infty$ and connected boundary, see \cite{Russell-ruled}]\label{cor:Russell}
	Let $X$ be a smooth projective surface and let $D$ be a connected, reduced divisor on $X$ such that $\kappa(K_{X}+D)=-\infty$. Then the following hold.
	\begin{enumerate}
		\item\label{item:Rus-ht} We have either $\height(X,D)\leq 2$, or $X=\P^2$ and $\deg D\leq 2$.
		\item\label{item:Rus-Cylinder} The open part $X\setminus D$ is $\A^{1}$-ruled.
	\end{enumerate}
\end{corollary}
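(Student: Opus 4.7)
The plan is to apply Theorem \ref{thm:MT-version} (or its more detailed version Proposition \ref{prop:MT_smooth}) via a logarithmic minimal model program argument. After replacing $(X,D)$ by its minimal log resolution, I may assume $(X,D)$ is smooth SNC; connectedness of $D$ is preserved since the resolution only inserts exceptional curves that meet the strict transform of $D$. Since $\kappa(K_X+D)=-\infty$, running the $(K_X+D)$-MMP produces a birational morphism $\pi\:X\to \bar X$ such that either $\bar X\to\P^1$ is a Mori fiber space, or $(\bar X,\bar D\de \pi_*D)$ has Picard rank one.

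In the Mori fiber space case, the composition $X\to\bar X\to \P^1$ is a $\P^1$-fibration of $X$; a general fiber $F$ is mapped isomorphically to a general fiber $\bar F$ of $\bar X\to\P^1$ (both avoid the finitely many points over which the exceptional locus sits), so $F\cdot D=\bar F\cdot\bar D$. The inequality $(K_{\bar X}+\bar D)\cdot\bar F<0$ coming from the MMP, together with the adjunction-type bound $K_{\bar X}\cdot\bar F=-2$, forces $\bar F\cdot\bar D\le 1$, hence $\height(X,D)\le 1$.

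In the rank-one case, if $\bar X\cong\P^2$ and the MMP is trivial, then $X=\P^2$ and $\kappa(K_{\P^2}+D)=-\infty$ directly forces $\deg D\le 2$. If $\bar X\cong\P^2$ with $\bar D\ne 0$, then $\deg\bar D\le 2$ and a pencil of lines through a point of $\bar D$ gives a fibration with fibers meeting $\bar D$ at most twice, which lifts through $\pi$ to $X$. Otherwise $\bar X\not\cong\P^2$ and $\bar D\ne 0$, so Theorem \ref{thm:MT-version} applies and yields $\height(\bar X,\bar D)\le 2$, provided one can rule out the exceptional configurations of Proposition \ref{prop:canonical} and Example \ref{ex:exception} in characteristics $2,3,5$. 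This exclusion must be carried out by examining the explicit boundary configurations listed there and showing they are incompatible with a connected SNC boundary arising from an MMP starting from a smooth pair with $\kappa=-\infty$. The $\P^1$-fibration witnessing $\height(\bar X,\bar D)\le 2$ on the minimal log resolution of $(\bar X,\bar D)$ then descends through the birational maps to one on $X$ with $F\cdot D\le 2$, establishing part (a).

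For part (b), take the $\P^1$-fibration from (a) with general fiber $F$ satisfying $F\cdot D\le 2$. When $F\cdot D\le 1$, the fibration restricts to an $\A^{1}$-fibration on $X\setminus D$ and so $X\setminus D$ is $\A^{1}$-ruled. When $F\cdot D=2$ and the horizontal part of $D$ contains a section, an elementary transformation centered on that section replaces the fibration by one with smaller $F\cdot D$, reducing to the previous case. The remaining subcase, in which the horizontal part of $D$ is a single irreducible $2$-section, must be analyzed separately using connectedness of $D$ and the structure of singular fibers of the fibration. This last step, together with the systematic exclusion of the positive characteristic exceptions of Theorem \ref{thm:MT-version}, is what I expect to be the main technical obstacle of the proof.
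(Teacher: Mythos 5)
There is a genuine gap: your case analysis after the MMP omits the possibility $\bar D=0$ with $\bar X\not\cong\P^2$. Nothing in the hypotheses of the corollary prevents the connected divisor $D$ from being negative definite and contracting to a single log terminal singular point under the MMP run (e.g.\ $D$ an admissible chain); this is exactly why Corollaries \ref{cor:MT} and \ref{cor:uniruled} carry the extra assumption that a minimal model with \emph{nonzero} boundary exists, while Corollary \ref{cor:Russell} does not. In that case Theorem \ref{thm:MT-version} and Proposition \ref{prop:MT_smooth} are simply inapplicable, and the paper has to do real work: $\bar X$ is then a log terminal del Pezzo surface of rank one with a single singularity, and one proves $h^{0}(-K_X-D)\geq K_X\cdot(K_X+D)>0$ by Riemann--Roch and the admissibility of $D$ (inequality \eqref{eq:K+D}), so that $K_X+D+G=0$ for an effective $G$ and any fiber satisfies $F\cdot D\leq F\cdot(D+G)=2$. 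The subsequent analysis for part \ref{item:Rus-Cylinder} in this case (contracting $(-1)$-curves until $D+G$ is circular, identifying a columnar fiber, and producing a non--negative-definite twig so that Lemma \ref{lem:twig} applies) is the technical heart of the proof and is entirely absent from your proposal.

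Two smaller points. First, the exclusion of the exceptional cases of Proposition \ref{prop:MT_smooth} in characteristics $2,3,5$, which you flag as a main obstacle, is in fact immediate: in cases \ref{prop:MT_smooth}\ref{item:MT-3-fibers}--\ref{item:MT-eliptic-relative} the boundary of the minimal log resolution is visibly disconnected (see Figures \ref{fig:MT-ht=2-1}--\ref{fig:MT_exceptions}), so connectedness of $D$ rules them out at once, leaving only cases \ref{item:MT-P2}, \ref{item:MT-ht=1}, \ref{item:MT-columnar}, \ref{item:MT-q-columnar}. Second, in those surviving cases part \ref{item:Rus-Cylinder} does not require the elementary-transformation argument you sketch: the boundary contains a rational twig that is not negative definite, so $\A^1$-ruledness follows directly from Lemma \ref{lem:twig} (this is Remark \ref{rem:A1-fibration}). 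Your deferred subcase of an irreducible $2$-section does not in fact arise when $\bar D\neq 0$ and $D$ is connected.
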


Part \ref{item:Rus-Cylinder} is the original result of \cite{Russell-ruled}, obtained by generalizing to arbitrary characteristic previous results of Fujita and Miyanishi--Sugie \cite{Fujita-Zariski,Miy_Su}, which later gave the foundation for the Miyanishi--Tsunoda theorem \cite{Miy_Tsu-opendP}. 
In Section \ref{sec:MT} we deduce it from Proposition \ref{prop:MT_smooth}, essentially repeating some arguments of \cite{Russell-ruled}. 

	In case $\cha\kk=0$, the construction of \cite[\sec 8.1]{Miy_Tsu-opendP} shows that every Platonic $\A^{1}_{*}$-fiber space becomes $\A^{1}$-ruled after a finite base change, hence is $\A^{1}$-uniruled. We will see in Section \ref{sec:MT} that similar construction shows $\A^{1}$-uniruledness for the open parts of exceptional log del Pezzo surfaces in Theorem \ref{thm:MT-version}, too. This leads to the following Corollary \ref{cor:uniruled}. By a \emph{minimal model} we mean an outcome of the birational part of the minimal model program, see Section \ref{sec:singularities} for details.
	\begin{corollary}[{$\A^{1}$-uniruledness, cf.\ \cite[\sec 8.2]{Miy_Tsu-opendP}}]\label{cor:uniruled}
		Let $X$ be a smooth projective surface with a reduced divisor $D$ such that $\kappa(K_{X}+D)=-\infty$ and $(X,D)$ admits a minimal model with nonzero boundary (this holds for instance if the intersection matrix of $D$ is not negative definite). Then $X\setminus D$ is $\A^{1}$-uniruled. 
	\end{corollary}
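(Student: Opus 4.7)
The plan is to reduce the statement to Theorem~\ref{thm:MT-version} via the log MMP applied to $(X,D)$, and then extract $\A^1$-uniruledness of the open surface from the resulting $\P^1$-fibration. Since any birational morphism $X\to \bar X$ between surfaces induces a dominant morphism $X\setminus D\to \bar X\setminus \bar D$, and $\A^1$-uniruledness is preserved under dominant morphisms (in both directions, since the pullback of an $\A^1$-ruled surface along a birational morphism remains $\A^1$-uniruled), it suffices to prove that $\bar X\setminus \bar D$ is $\A^1$-uniruled for a minimal model $(\bar X,\bar D)$ with $\bar D\neq 0$, whose existence is granted by hypothesis. The parenthetical sufficient condition is the standard fact that if the intersection matrix of $D$ is not negative definite, then no run of the log MMP can contract $D$ entirely.

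By the log MMP, $(\bar X,\bar D)$ is either of Mori fiber type over a curve -- directly yielding an $\A^1$- or $\A^1_*$-fibration on $\bar X\setminus \bar D$ -- or a log del Pezzo surface of rank one covered by Theorem~\ref{thm:MT-version}; the subcase $\bar X\cong \P^2$ with $\bar D\neq 0$ being classical. In the generic branch $\height(\bar X,\bar D)\leq 2$, let $(X',D')\to (\bar X,\bar D)$ be the minimal log resolution and let $f\: X'\to \P^1$ be a witnessing $\P^1$-fibration. If the general fiber $F$ satisfies $F\cdot D'\leq 1$, then $f$ restricts to an $\A^1$-fibration on $X'\setminus D'$, so the open part is $\A^1$-ruled and hence $\A^1$-uniruled. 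If $F\cdot D'=2$, then $f$ restricts to an $\A^1_*$-fibration; performing a finite base change $C\to \P^1$ branched suitably over the two horizontal $\bar D$-branches, following the construction of \cite[\sec 8.1]{Miy_Tsu-opendP}, produces an $\A^1$-ruled surface dominating $X'\setminus D'$, which is $\A^1$-uniruledness.

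It remains to treat the exceptional cases of Theorem~\ref{thm:MT-version}. In \ref{item:thm-relative} (characteristics $2,3,5$), the canonical rank-one surface $(\bar Y,\bar T)$ carries a rational cuspidal anti-canonical curve $\bar T\subseteq \bar Y\reg$ with $p_a(\bar T)=1$; a pencil through $\bar T$ (or a suitable deformation thereof) induces an $\A^1_*$-fibration on $\bar Y\setminus \bar T$, which a cyclic cover absorbing the relevant $p$-torsion renders $\A^1$-ruled. For \ref{item:thm-exception} (the two sporadic surfaces in characteristic $2$ from Example~\ref{ex:exception}), the explicit models yield a visible $\A^1_*$-fibration treated analogously by a $\Z/2$-cover.

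The principal obstacle is the positive-characteristic exceptions: the classical Miyanishi--Tsunoda finite base-change argument for Platonic $\A^1_*$-fiber spaces relies on tame ramification, which may be subtle in characteristics $2,3,5$. The delicate step is verifying -- using the explicit geometry of Proposition~\ref{prop:canonical} and Example~\ref{ex:exception} -- that the covers chosen give rise to a source surface which is genuinely $\A^1$-ruled rather than again merely $\A^1_*$-ruled, so that composition with the dominant map back to $\bar X\setminus \bar D$ truly witnesses $\A^1$-uniruledness.
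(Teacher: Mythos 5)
Your reduction to a minimal model $(\bar X,\bar D)$ and the treatment of the cases $\height\leq 1$ are fine and match the paper, but the substance of this corollary lies entirely in the exceptional cases, and there your argument has a genuine gap. First, the claim that in case \ref{item:thm-relative} ``a pencil through $\bar T$ induces an $\A^1_*$-fibration on $\bar Y\setminus\bar T$'' is false: any pencil containing $\bar T\in|-K_{\bar Y}|$ is a (quasi-)elliptic pencil having $\bar T$ as a \emph{member}, so after resolving base points $\bar T$ becomes a fiber, and the general fiber restricted to $\bar Y\setminus\bar T$ is a genus-one curve minus finitely many points, not $\A^1_*$. Second, the phrase ``a cyclic cover absorbing the relevant $p$-torsion'' names no cover and proves nothing; you yourself flag the verification that the pullback is genuinely $\A^1$-ruled as ``the delicate step,'' which is precisely the step that constitutes the paper's proof. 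The mechanism actually used is not a tame cyclic cover but the purely inseparable base change $x\mapsto x^p$ of the base $\P^1$ of a carefully constructed $\P^1$-fibration $f$ (of height up to $6$, see Claim \ref{cl:good-fibration}), chosen so that every horizontal component of the boundary is either a $1$-section or a $p$-section on which $f$ is totally ramified, and so that at most one or two fibers are ``bad.'' Total ramification forces the $p$-sections to become $1$-sections on the normalized pullback (Claim \ref{cl:local-properties}), the ``good'' fibers become nondegenerate (Claim \ref{cl:F}), and the resulting boundary acquires a non-negative-definite twig, whence $\A^1$-ruledness by Lemma \ref{lem:twig}. None of this is recoverable from your sketch, and establishing the existence of such a fibration requires the case-by-case geometry of Proposition \ref{prop:canonical} and Examples \ref{ex:cha=2_ht=2}, \ref{ex:exception}.

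A secondary inaccuracy: in the non-Platonic height-$2$ cases \ref{item:MT-columnar}--\ref{item:MT-3-fibers} no base change is needed at all, since $D$ visibly contains a rational twig that is not negative definite and Lemma \ref{lem:twig} gives $\A^1$-ruledness directly (Remark \ref{rem:A1-fibration}); conversely, your blanket assertion that a finite base change of an arbitrary height-$2$ witnessing fibration ``following \cite[\sec 8.1]{Miy_Tsu-opendP}'' yields an $\A^1$-ruled surface is not justified for a general $\A^1_*$-fibration --- that construction depends on the Platonic structure of the degenerate fibers.
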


To prove Theorem \ref{thm:MT-version} we need to show that log surfaces $(\bar{Y},\bar{T})$ as in \ref{thm:MT-version}\ref{item:thm-relative} occur only if $\cha\kk\in \{2,3,5\}$.  To do this, we classify all log surfaces $(\bar{Y},\bar{T})$  such that $\rho(\bar{Y})=1$ and $\bar{T}$ is a rational curve with $p_a(\bar{T})=1$ contained in the smooth locus of $\bar{Y}$. In this case $\bar{Y}$ is canonical and $\bar{T}\in |-K_{\bar{Y}}|$, see Lemma  \ref{lem:T-in-smooth-locus}\ref{item:elliptic}. The classification is summarized in Proposition \ref{prop:canonical} below, where by the \emph{singularity type} of a normal surface we mean the weighted graph of the exceptional divisor of its minimal resolution. 

We note that canonical surfaces of rank one and their anti-canonical systems are well known, various descriptions are given e.g.\ in  \cite{Zhang_canonical,Ye,Kawakami_Nagaoka_canonical_dP-in-char>0}. We give a self-contained argument based on a simpler approach, essentially independent of $\cha\kk$. Namely, using a $\P^{1}$-fibration of low height we construct a birational map $\bar{Y}\map \P^{2}$ such that the total transform of $\bar{T}$ is an arrangement of a rational cubic, possibly one conic, and few lines. This allows to avoid technical analysis of elliptic fibrations done in loc.\ cit.

\begin{proposition}[Rational anti-canonical curves on canonical del Pezzo surfaces]\label{prop:canonical}
	Let $\cS$ be a singularity type of a canonical del Pezzo surface of rank one other than $\P^2$ or the quadric cone (listed in the second column of Table \ref{table:canonical}). Then the following hold.
	\begin{parlist}
		\item\label{item:canonical-types} There exists a del Pezzo surface $\bar{Y}$ of rank one and type $\cS$ whose smooth part $\bar{Y}\reg$ contains a complete nodal (respectively, cuspidal) curve $\bar{T}$ of arithmetic genus one if and only if the corresponding row of Table \ref{table:canonical} contains $\rN$ (respectively, $\rC$ or $\rC_d$) in the last column. 
		\item\label{item:canonical-uniqueness} The  isomorphism class of the log surface $(\bar{Y},\bar{T})$ as in \ref{item:canonical-types} is uniquely determined by the singularity types of $\bar{Y}$ and $\bar{T}$, except for types denoted in Table \ref{table:canonical} by $\rC_{d}$. In these cases, the set $\Pcusp(\cS)$ of isomorphism classes of log surfaces $(\bar{Y},\bar{T})$ with $\bar{Y}$ of type $\cS$ and cuspidal $\bar{T}$ has moduli dimension $d$. 
		\item\label{item:canonical-aut} If $\bar{T}$ is nodal then $(\bar{Y},\bar{T})$ has an automorphism  interchanging the analytic branches of $\bar{T}$ at the node.
		\item \label{item:canonical-Noether} We have $\bar{T}\in |-K_{\bar{Y}}|$, and the Picard rank of the minimal resolution of $\bar{Y}$ equals $10-\bar{T}^2$. 
	\end{parlist} 
\end{proposition}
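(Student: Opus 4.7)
The plan is to first handle part (d), which is quick, then reduce the classification in (a)–(c) to a combinatorial analysis of low-degree plane curve arrangements.

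\textbf{Setup and proof of (d).} I let $\pi\colon Y\to \bar{Y}$ be the minimal resolution with exceptional divisor $E$, which is crepant since $\bar{Y}$ is canonical. The strict transform $T$ of $\bar{T}$ is a rational curve with one cusp or node, disjoint from $E$ (as $\bar{T}\subseteq \bar{Y}\reg$). Because $\rho(\bar{Y})=1$ and $-K_{\bar{Y}}$ is ample, one has $\bar{T}\sim_{\Q}\lambda(-K_{\bar{Y}})$ for some $\lambda>0$; adjunction in the smooth locus gives $(K_{\bar{Y}}+\bar{T})\cdot \bar{T}=2p_a(\bar{T})-2=0$, so $\lambda=1$. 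Since $Y$ is a smooth rational surface, $\mathrm{Pic}(\bar{Y})$ is torsion-free, so this upgrades to $\bar{T}\in |-K_{\bar{Y}}|$. Noether's formula on the rational surface $Y$ gives $K_Y^{2}+\rho(Y)=10$; crepancy of $\pi$ yields $K_Y^{2}=K_{\bar{Y}}^{2}=\bar{T}^{2}$, hence $\rho(Y)=10-\bar{T}^{2}$.

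\textbf{Reduction to arrangements in $\P^{2}$.} For (a)--(c), I fix a singularity type $\cS$ not equal to $\P^2$ or the quadric cone, and pick a $\P^{1}$-fibration $\phi\colon Y\to\P^{1}$ witnessing $\height(\bar{Y})$, which is small (bounded by the data in Proposition \ref{prop:height}). The general fiber $F$ satisfies $F\cdot \bar{T}=F\cdot(-K_Y)-F\cdot E$, so $T$ is either a section or a $2$-section of $\phi$, and its intersection pattern with the components of degenerate fibers is tightly constrained. I then run a sequence of contractions: in each degenerate fiber I contract all but one component, continuing until the surface becomes $\P^{2}$. The induced birational morphism $\psi\colon Y\to\P^{2}$ pushes $T+E$ forward to an arrangement $\psi_{*}T+\psi_{*}E$ in $\P^{2}$ consisting of $\psi_{*}T$ (a nodal or cuspidal cubic, a conic, or a line, depending on the degree drop caused by the contractions) together with at most one conic and several lines coming from $E$ and from $(-1)$-curves in the horizontal fibers. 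For each admissible $\cS$, I enumerate all such arrangements up to $\PGL_3$, then reconstruct $(\bar{Y},\bar{T})$ by reversing the contractions and contracting the $(-2)$-configurations. This simultaneously proves existence, records the ADE-type of the resulting singularities (filling in Table \ref{table:canonical}), and identifies the moduli of each family; the cases where the arrangement has a genuine modulus (e.g., the position of the cusp relative to the rest of the configuration) contribute the $\rC_{d}$ entries.

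\textbf{The involution (c) and the main obstacle.} When $\bar{T}$ is nodal, the image $\psi_{*}T$ is a nodal plane cubic (or degeneration thereof), and the enumerated arrangements turn out to be symmetric under the standard involution of the nodal cubic that swaps its two branches at the node; this involution preserves the configuration $\psi_{*}E$ too (verified case by case), so it lifts through $\psi$ to an automorphism of $(\bar{Y},\bar{T})$ as required. The main obstacle is the bookkeeping in the enumeration: for each singularity type $\cS$ I must verify that every candidate arrangement in $\P^{2}$, after the inverse of $\psi$, produces exactly the prescribed $(-2)$-chain configuration (hence the correct ADE-type on $\bar{Y}$) and that no arrangement is missed. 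This is a finite but delicate dual-graph computation; the payoff, compared to the elliptic-fibration approaches in \cite{Zhang_canonical,Ye,Kawakami_Nagaoka_canonical_dP-in-char>0}, is that it is essentially characteristic-free, so the same enumeration simultaneously handles all $\cha\kk$ and isolates the exceptional rows in Table \ref{table:canonical} where $\cha\kk$ must belong to $\{2,3,5\}$.
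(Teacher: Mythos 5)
Your overall strategy for (a)--(c) --- use a low-height $\P^1$-fibration to build a birational morphism to $\P^2$ carrying $T+E$ to an arrangement of a rational cubic, at most one conic and a few lines, then classify the arrangements and reverse the contractions --- is exactly the route the paper takes, and your proof of (d) matches the paper's. However, two genuine gaps remain. First, you locate the difficulty in ``a finite but delicate dual-graph computation'', but the dual graph is not where the content lies. What actually decides existence, the nodal/cuspidal dichotomy, and the characteristic restrictions is arithmetic on the cubic: the tangency conditions imposed by the lines and the conic translate, via the group law on $\qq\reg$ (Lemma \ref{lem:group_law}), into torsion equations whose solvability in $\G_m$ versus $\G_a$ is what forces, e.g., $\qq$ to be cuspidal exactly when $\cha\kk=3$ for type $3\rA_2$, produces the $5$-torsion point that singles out $\cha\kk=5$ for $2\rA_4$, and rules out singular members entirely for $2\rD_4$ and $2\rA_1+2\rA_3$. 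Before a group law is even available one must dispose of the strange cubic with no inflection point in characteristic $3$ (Lemma \ref{lem:cubic}\ref{item:funny_cubic}, eliminated by a Hurwitz-formula argument). None of this is visible from the dual graph, and without it ``enumerate all arrangements up to $\PGL_3$'' is a plan rather than a proof; your claim that the enumeration is ``essentially characteristic-free'' is true only because the characteristic dependence has been packaged into these torsion computations.

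Second, for the $\rC_d$ entries in (b) the paper's notion of moduli dimension requires exhibiting an almost universal family --- in particular verifying that its formal germ at each point is a semiuniversal deformation, which the paper does via versality plus Artin algebraization (Remark \ref{rem:4A2-alternative}, Lemmas \ref{lem:7A1}\ref{item:7A1_cuspidal} and \ref{lem:8A1}\ref{item:8A1_cuspidal}). Observing that ``the arrangement has a genuine modulus'' does not establish this, so part (b) is not proved as stated. Two smaller slips: since $T\in|-K_Y|$ is disjoint from $E$, one has $F\cdot T=-F\cdot K_Y=2$, so $T$ is always a $2$-section (your formula $F\cdot\bar{T}=F\cdot(-K_Y)-F\cdot E$ is not correct); and contracting degenerate fibers lands you on a Hirzebruch surface, so an additional contraction of a suitable section is needed before you reach $\P^2$.
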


For the definition of moduli dimension, see \cite[Definitions 1.9, 1.10]{PaPe_ht_2} or Section \ref{sec:moduli}. By this definition, Proposition \ref{prop:canonical}\ref{item:canonical-uniqueness} implies that the minimal log resolutions of all log surfaces in $\Pcusp(\cS)$ are parametrized by a family with base of dimension $d$. Such a family blows down to a family parametrizing all surfaces in $\Pcusp(\cS)$, cf.\ \cite[Lemma 2.15]{PaPe_ht_2}. Additional properties of these families are summarized in  Table \ref{table:symmetries}.

\begin{remark}
It is important to note that even if $\#\Pcusp(\cS)=1$, it can happen that the singularity type $\cS$ is realized by more than one canonical del Pezzo surface of rank one, see \cite[Table 1.1]{Ye} or \cite[Table 15]{PaPe_ht_2}. 
Hence there do exist pairs of canonical del Pezzo surfaces of rank one and the same singularity type, such that one of them contains a rational anti-canonical curve in its smooth locus, and the other does not. For an explicit example of type $\rA_2+\rE_6$ see \cite[Example 5.6(a)]{PaPe_ht_2}. 
\end{remark}

The classification of canonical del Pezzo surfaces of rank one is known \cite{Furushima,Ye,Kawakami_Nagaoka_canonical_dP-in-char>0,Alexxev-Nikulin_delPezzo-index-2}, cf.\ the introduction to Section \ref{sec:canonical}. In Proposition \ref{prop:canonical_ht>2} we give an independent argument for surfaces of height at least three; cases of lower height being elementary and covered for instance in \cite{PaPe_ht_2}. It turns out that the only canonical del Pezzo surfaces of height at least $3$ are of type $4\rA_2$ or, if $\cha\kk=2$, of type $8\rA_1$. In Sections \ref{sec:4A2} and \ref{sec:8A1} we prove that those surfaces are in fact  of height $4$, and we describe their geometry, including their automorphism groups. In many cases this computation is known, see e.g.\  \cite{Kawakami_Nagaoka_canonical_dP-in-char>0}. Nonetheless, we give a self-contained argument which exploits the geometry of a witnessing $\P^{1}$-fibration. 

\begin{table}[ht]
	\begin{tabular}{c|c|cccc}
		{\multirow{2}{*}{$\height(\bar{Y})$}} & {\multirow{2}{*}{singularity type of $\bar{Y}$}}  & \multicolumn{4}{c}{rational anti-canonical curves if  $\cha \kk$}
		\\ 
		&& $\ \ \neq 2,3,5\ $ & $\ =5\ $ & $\ =3\ $ & $\ =2\ $  \\ \hline\hline
		1 & $\rA_{1}+\rA_{2}$, $\rA_{4}$, $\rD_5$, $\rE_6$, $\rE_7$, $\rE_8$ & C, N & C, N & C, N & C, N  \\
		& $2\rA_{1}+\rA_{3}$, $\rA_{1}+\rA_{5}$, $\rA_{7}$, $\rA_1+\rD_6$, $\rD_8$, $\rA_1+\rE_7$ & N & N & N & C\\
		& $3\rA_{1}+\rD_{4}$, $2\rA_{1}+\rD_{6}$ & - & - & - & $\rC_{1}$ \\
		\hline 
		2 & $3\rA_2$, $\rA_2+\rA_5$, $\rA_8$, $\rA_2+\rE_6$  & N & N & C & N\\
		&  $\rA_1+2\rA_3$,  $\rA_3+\rD_5$, $\rA_1+\rA_7$,  & N & N & N & - \\
		&  $2\rA_4$ & N & C & N & N \\
		&  $\rA_1+\rA_2+\rA_5$ & N & N & - & - \\
		& $2\rD_4$,  $2\rA_1+2\rA_3$ & - & - & - & - \\
		&  $7\rA_1$, $4\rA_1+\rD_4$ & - & - & - & $\rC_{2}$ \\ 		
		\hline
		4 & $4\rA_2$ & - & - & $\rC_{1}$ & - \\
		& $8\rA_1$ & - & - & - & $\rC_{3}$  
	\end{tabular}
	\caption{Canonical del Pezzo surfaces of rank one in Proposition \ref{prop:canonical}, cf.\ \cite[Table 15]{PaPe_ht_2}.}
	\label{table:canonical}
\end{table}

\section{Preliminaries}

We settle the notation and recall some known results which will be useful in this and the forthcoming articles. 

\subsection{Divisors on surfaces}\label{sec:log_surfaces}

Let $D$ be a divisor on a normal projective surface $X$.  We write $D\redd$ for $D$ with reduced structure. By a \emph{component} of $D$ we mean an \emph{irreducible} component of $D\redd$; we denote their number by $\#D$. We say that $D$ is \emph{connected} if $\Supp D$ is connected (in the Zariski topology). If divisors $D,T$ and $D-T$ are effective, we say that $T$ is  a \emph{subdivisor} of $D$. In this case $\beta_{D}(T)\de T\cdot (D-T)$ is the \emph{branching number} of $T$ in $D$. 
\smallskip

Let $D$ be a reduced divisor on $X$. A component $T$ of $D$ is a \emph{tip} of $D$ if $\beta_{D}(T)\leq 1$; it is \emph{branching} in $D$ if $\beta_{D}(T)\geq 3$. A point $p\in \Sing D$ is a \emph{normal crossing} (\emph{nc}) point of $D$ if it lies in the smooth locus $X\reg$ of $X$ and in some local analytic coordinates $(x,y)$ at $p$ we have  $D=\{xy=0\}$. If all components of $D$ are smooth and all singular points of $D$ are nc, we call $D$ a \emph{simple normal crossing} (\emph{snc}) divisor. A \emph{log smooth surface} is a pair $(X,D)$ consisting of a smooth projective surface $X$ and an snc divisor $D$ on $X$.

Given a birational morphism $\phi\colon X\to \bar{X}$ and a divisor $D$ on $X$ we write $\phi\colon (X,D)\to (\bar{X},\bar{D})$ if $\bar{D}=\phi_{*}D$. We denote by  $\Exc\phi$ the reduced exceptional divisor of $\phi$, and by $\phi^{-1}_{*}\bar{D}$ the proper transform of $\bar{D}$ on $X$. 
\smallskip

A \emph{curve} is an irreducible and reduced variety of dimension one. A curve $C$ on a smooth projective surface $X$ is called an \emph{$n$-curve} if $C\cong \P^1$ and $C^2=n$. A $(-1)$-curve $C$ in an snc divisor $D$ is \emph{superfluous} if $\beta_{D}(C)\in \{1,2\}$ and $C\cdot T\leq 1$ for every component $T$ of $D-C$. This way, the image of $D$ after a contraction of $C$ is still an snc divisor. An snc divisor is \emph{snc-minimal} if it contains no superfluous $(-1)$-curves.

Let $D$ be an snc divisor. We say that $D$ is \emph{rational} if all its components are. We say that $D$ is \emph{negative definite} if so is its intersection matrix $[T_{i}\cdot T_{j}]_{1\leq i,j\leq \#D}$, where $T_1,\dots, T_{\#D}$ are components of $D$. The \emph{discriminant} of $D$ is $d(D)\de \det[-T_{i}\cdot T_{j}]_{1\leq i,j\leq \#D}$, we put $d(0)=1$. For elementary properties of discriminants see \cite[\S 3]{Fujita-noncomplete_surfaces}.

A connected snc divisor  with no branching component is a \emph{chain} if it has a tip and \emph{circular} if it does not. A connected snc divisor is a \emph{tree} if it has no circular subdivisor. It is known that a connected reduced divisor has arithmetic genus zero if and only if it is a rational tree. As a consequence, we have the following observation.

\begin{lemma}[{\cite[Lemma II.2.2.2]{Miyan-OpenSurf}}]\label{lem:trick-trees}
	Let $X$ be a smooth projective rational surface, and let $D$ be a connected reduced divisor on $X$ such that $|K_{X}+D|=\emptyset$. Then $p_{a}(D)=0$, so $D$ is a rational tree.
\end{lemma}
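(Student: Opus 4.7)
The plan is to extract $p_a(D)=0$ from a standard cohomological exact sequence arising from adjunction, and then invoke the stated fact identifying connected reduced divisors of arithmetic genus zero with rational trees.

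First, I would set up the short exact sequence of sheaves on $X$ obtained by tensoring the structure sequence $0\to \cO_{X}(-D)\to\cO_X\to\cO_D\to 0$ with $\cO_X(K_X+D)$:
\[
0\to\cO_X(K_X)\to\cO_X(K_X+D)\to\cO_D(K_X+D)\to 0.
\]
Because $X$ is smooth and $D$ is an effective Cartier divisor, $D$ is Gorenstein, and the adjunction formula gives a canonical isomorphism $\cO_D(K_X+D)\cong\omega_D$. Thus the sequence reads $0\to\cO_X(K_X)\to\cO_X(K_X+D)\to\omega_D\to 0$.

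Next, I would pass to the long exact cohomology sequence and read off
\[
H^0(X,K_X+D)\longrightarrow H^0(D,\omega_D)\longrightarrow H^1(X,K_X).
\]
The hypothesis $|K_X+D|=\emptyset$ says $H^0(X,K_X+D)=0$. Since $X$ is a smooth projective rational surface, its irregularity vanishes: $H^1(X,\cO_X)=0$; by Serre duality on $X$ this forces $H^1(X,K_X)=0$. Hence $H^0(D,\omega_D)=0$, and Serre duality on the Gorenstein projective curve $D$ then yields $p_a(D)=h^1(D,\cO_D)=h^0(D,\omega_D)=0$.

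Finally, since $D$ is connected and reduced with $p_a(D)=0$, the cited fact (stated just before the lemma in the text) implies that $D$ is a rational tree. The only place one needs to exercise care is the adjunction identification $\cO_D(K_X+D)\cong\omega_D$ and the vanishing of $H^1(X,K_X)$ in positive characteristic, both of which are available for arbitrary smooth projective rational surfaces via Serre duality and the birational invariance of $h^1(\cO_X)$; there is no substantive obstacle otherwise.
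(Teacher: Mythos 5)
Your proof is correct and is essentially the Serre-dual formulation of the paper's argument: the paper takes cohomology of the untwisted sequence $0\to\cO_X(-D)\to\cO_X\to\cO_D\to 0$, kills $H^1(X,\cO_X)$ by rationality and $H^2(X,\cO_X(-D))\cong H^0(X,K_X+D)^{*}$ by the hypothesis, and concludes $h^1(\cO_D)=0$, whereas you twist by $K_X+D$ and apply duality on the Gorenstein curve $D$ instead of on $X$. Both routes use the same input, so this is the same proof up to dualizing.
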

\begin{proof}
	The exact sequence $0\to\cO_{X}(-D)\to \cO_{X}\to \cO_{D}\to 0$ yields an exact sequence $H^{1}(X,\cO_{X})\to H^{1}(D,\cO_{D})\to H^{2}(X,\cO_{X}(-D))$, whose first term vanishes because $X$ is rational, and the last one is dual to $H^{0}(X,\cO_{X}(K_{X}+D))$, which vanishes by assumption. Thus $H^{1}(D,\cO_{D})=0$, so $\chi(\cO_{D})=1$ since $D$ is connected. Taking Euler characteristics in the above exact sequence gives $1=\chi(\cO_{D})=\chi(\cO_{X})-\chi(\cO_{X}(-D))=1-p_{a}(D)$ by Riemann-Roch. Thus $p_{a}(D)=0$, as needed.
\end{proof}

Let $T$ be a chain with a chosen \emph{first} tip $T\cp{1}$. Then the components $T\cp{1},\dots,T\cp{\#T}$  of $T$ are ordered in a unique way such that $T\cp{i}\cdot T\cp{i+1}=1$, $1\leq i\leq \#T-1$. 
We write $T\trp$ for the same chain with an  opposite order.

A \emph{type} of an ordered rational chain $T$ is a sequence of integers $[a_1,\dots,a_{\#T}]$, where $a_{i}=-(T\cp{i})^{2}$. We  often abuse notation and identify a chain with its type. Moreover, for types $T_1=[a_1,\dots, a_k]$, $T_2=[b_1,\dots, b_l]$ we write $[T_1,T_2]=[T_1,b_1,\dots,b_l]=[a_1,\dots,a_k,b_1,\dots,b_l]$ etc. 
We write $(m)_{k}$ for an integer $m$ repeated $k$ times. 
\smallskip

Let again $D$ be an snc divisor. An ordered subchain $T$ of $D$ is a \emph{twig} of $D$ if its first tip is a tip of $D$, and no component of $T$ is branching in $D$. In this case, the last tip of $T$ meets $D-T$, or $T$ is a connected component of $D$. A \emph{$(-2)$-twig} (chain, fork) is a twig (chain, fork) whose all components are $(-2)$-curves. 
\smallskip

A tree $T$ whose unique branching component $B$ has  $\beta_{T}(B)=3$ is called a \emph{fork}. A \emph{type} of a rational fork with branching component $B$ consists of an integer $b\de -B^2$ and an (unordered) set of types $\{T_1,T_2,T_3\}$ of its twigs meeting $B$. We denote this type by 
	$\langle b;T_1,T_2,T_3 \rangle$. 
If $T_{1},T_{2},T_{3}$ are admissible chains, we put 
\begin{equation*}
	\delta_{T}\de \sum_{i=1}^{3}\frac{1}{d(T_i)}.
\end{equation*} 
We say that $T$ is \emph{admissible} if $b\geq 2$ and $\delta_{T}>1$. The latter condition means that $\{d(T_1),d(T_2),d(T_3)\}$ is one of the \emph{Platonic} triples $\{2,3,5\}$, $\{2,3,4\}$, $\{2,3,3\}$ or $\{2,2,k\}$ for some $k\geq 2$; in particular one of the twigs $T_1,T_2,T_3$ is of type $[2]$, cf.\ \cite[\sec I.5.3.4]{Miyan-OpenSurf}.

\subsection{Dlt log surfaces and minimal models}\label{sec:singularities}

We now recall the basic notions of dlt, log terminal and log canonical surfaces, see \cite[\sec 2.1]{Kollar_singularities_of_MMP}. Let $\bar{X}$ be a normal surface, and let $\bar{D}$ be a divisor on $\bar{X}$ with coefficients between $0$ and $1$ (in this article, we will only consider the case when $\bar{D}$ is reduced). A \emph{log resolution} of $(\bar{X},\bar{D})$ is a birational morphism $\phi\colon (X,D)\to (\bar{X},\bar{D})$ such that $X$ is smooth and $D\redd$ is snc, where $D=\phi^{-1}_{*}\bar{D}+\Exc\phi$. A log resolution is \emph{minimal} if it not birationally dominated by any other log resolution. 

Assume that the divisor $K_{\bar{X}}+\bar{D}$ is $\Q$-Cartier. Put $E\de \Exc\phi$. Since $E$ is negative definite, the formulas
\begin{equation}\label{eq:discrepancy}
	\phi^{*}(K_{\bar{X}}+\bar{D})=K_{X}+\phi^{-1}_{*}\bar{D}+\sum_{T\subseteq E}\cf_{E}(T)\, T\quad \mbox{and}\quad \ld_{E}(T)=1-\cf_{E}(T)
\end{equation}
uniquely define, for each component $T$ of $E$, rational numbers $\cf_{E}(T)$ and $\ld_{E}(T)$, called the \emph{coefficient} and \emph{log discrepancy} of $T$, respectively. Here the sum runs over all components of $\Exc\phi$. 
 They can be easily computed from the weighted graph of $E$ \cite[\sec 3.2]{Flips_and_abundance}, cf.\ \cite[\sec 4C and \sec 7]{Palka_almost_MMP} or Lemmas \ref{lem:ld_formulas}, \ref{lem:Alexeev} below. If $\phi$ is a minimal resolution of singularities of $\bar{X}$, i.e.\ $E$ contains no $(-1)$-curves, then $\cf_{E}(T)\geq 0$ for every component $T$ of $E$. We skip the subscript $E$ whenever it is clear from the context.

A log surface $(\bar{X},\bar{D})$ is \emph{log terminal}, (respectively, \emph{log canonical} or \emph{canonical}) if for every exceptional divisor $T$ of some (equivalently, any) log resolution $\phi$ we have $\cf(T)<1$ (respectively, $\cf(T)\leq 1$ or $\cf(T)\leq 0$). It is \emph{divisorially log terminal} (\emph{dlt}) if $\cf(T)<1$ for every exceptional divisor $T$ such that $(\bar{X},\bar{D})$ is not log smooth near $\phi(T)$. A normal surface $\bar{X}$ is log terminal (or (log) canonical) if so is $(\bar{X},0)$. 

Assume that $\bar{D}$ is reduced. Then the log surface $(\bar{X},\bar{D})$ is dlt if and only if the exceptional divisor of a minimal log resolution $(X,D)\to (\bar{X},\bar{D})$ is a sum of some admissible twigs of $D$ and connected components of $D$ which are admissible chains or forks \cite[3.31, 3.40]{Kollar_singularities_of_MMP}, cf.\ \cite[Lemma 4.5]{Palka_almost_MMP}. A surface singularity is canonical if and only if it is du Val, i.e.\ its minimal resolution is a $(-2)$-chain or fork \cite[3.26]{Kollar_singularities_of_MMP}.

\begin{lemma}[{Formulas for coefficients, see \cite[II.3.3]{Miyan-OpenSurf} or \cite[\sec 3.2]{Flips_and_abundance}}]\label{lem:ld_formulas}\ 
	\begin{enumerate}
		\item\label{item:ld_chain} Let $T$ be an ordered admissible chain. Put 
		\begin{equation*}
			T\cp{<j}=\sum_{i=0}^{j-1}T\cp{i},\quad T\cp{>j}=\sum_{i=j+1}^{\#T}T\cp{i}. \qquad \mbox{Then} \quad 
			\cf_{T}(T\cp{j})=1-\frac{d(T\cp{>j})+d(T\cp{<j})}{d(T)}.
		\end{equation*}
		\item\label{item:ld_fork} Let $T=\langle b;T_1,T_2,T_3\rangle$ be an admissible fork, with branching component $B$. Put
		\begin{equation*}
			\delta=\sum_{i=1}^{3}\frac{1}{d(T_{i})},\quad e=\sum_{i=1}^{3}\frac{d(T_i-T_{i}^{-})}{d(T_i)},
		\end{equation*}
		where $T_{i}^{-}\de T_{i}\cp{\#T_i}$ is the component of $T_{i}$ meeting $B$. Then $\delta>1$, $e<2\leq b$, and 
		\begin{equation*}
			\cf_{T}(B)=1-\frac{\delta-1}{b-e},\quad \cf_{T}(T_{i}\cp{j})=1-\frac{(1-\cf_{T}(B))\cdot d(T_i\cp{<j})+d(T_i\cp{>j})}{d(T_i)}.
		\end{equation*}
		\item\label{item:ld_twig} Let $T$ be an admissible twig of a reduced divisor $D\neq T$, and let $(X,D)\to (\bar{X},\bar{D})$ be the contraction of $T$. Then the coefficients with respect to the pair $(\bar{X},\bar{D})$ are given by
		\begin{equation*}
			\cf(T\cp{j})=1-\frac{d(T\cp{>j})}{d(T)}
		\end{equation*}
	\end{enumerate}
\end{lemma}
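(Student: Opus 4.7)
The plan is to reduce each formula to the fact that the coefficients $\cf(T^{(j)})$ are uniquely determined by a linear system. Indeed, relation \eqref{eq:discrepancy} says that the divisor $\phi^{*}(K_{\bar{X}}+\bar{D}) - K_X - \phi^{-1}_{*}\bar{D} - \sum_j \cf(T^{(j)})T^{(j)}$ vanishes; intersecting with any exceptional component $T^{(k)}$ and using $\phi^{*}(K_{\bar{X}}+\bar{D})\cdot T^{(k)}=0$ together with adjunction $K_X\cdot T^{(k)}=-2-(T^{(k)})^2$ gives a linear system whose coefficient matrix is the intersection matrix of the exceptional divisor. Since that matrix is negative definite, the system has a unique solution, so it suffices to verify that the proposed formulas solve it.

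For part (a), set $a_k\de -(T^{(k)})^2$, $c_k\de \cf_T(T^{(k)})$, and $c_0=c_{\#T+1}=0$. Since no component outside $T$ meets $T$, the system is tridiagonal: $a_k c_k - c_{k-1} - c_{k+1} = a_k - 2$ for $1\leq k\leq \#T$. Substituting $c_j=1-(d(T^{(<j)})+d(T^{(>j)}))/d(T)$ and clearing denominators, each equation reduces to the pair of identities
\[a_k\,d(T^{(<k)}) = d(T^{(<k-1)}) + d(T^{(<k+1)}),\qquad a_k\,d(T^{(>k)}) = d(T^{(>k-1)}) + d(T^{(>k+1)}),\]
both instances of the classical three-term recursion $d([a_1,\ldots,a_n]) = a_1\,d([a_2,\ldots,a_n]) - d([a_3,\ldots,a_n])$ obtained by cofactor expansion of $d(T)$.

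For part (b), the same reduction applied to the branching component $B$ yields $b\,\cf(B) - \sum_{i=1}^3 \cf(T_i^{-}) = b - 2$, since among twig components $B$ meets only $T_1^-,T_2^-,T_3^-$, each with multiplicity one. Each twig $T_i$ then satisfies a chain system as in (a), modified only at its $T_i^{-}$-component by an extra summand $\cf(B)$ on the right-hand side. Solving this modified tridiagonal system by the same recursion gives
\[\cf(T_i^{(j)}) = 1 - \frac{(1-\cf(B))\,d(T_i^{(<j)}) + d(T_i^{(>j)})}{d(T_i)},\]
and in particular $\cf(T_i^{-}) = 1 - ((1-\cf(B))\,d(T_i-T_i^{-})+1)/d(T_i)$. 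Substituting back into the $B$-equation and collecting terms yields $(b-e)\,\cf(B) = (b-e) - (\delta-1)$, which is the displayed formula. The inequalities $\delta>1$ and $e<2\leq b$ characterizing admissible forks are elementary (see \cite[\S I.5.3.4]{Miyan-OpenSurf}) and imply $b>e$, ensuring the formula is well-defined.

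For part (c), only the last tip $T^{(\#T)}$ meets $\phi^{-1}_{*}\bar{D} = D - T$, with intersection one, so the linear system agrees with that of (a) except that the right-hand side at $k=\#T$ becomes $a_{\#T}-1$. Direct substitution of $c_j = 1 - d(T^{(>j)})/d(T)$ verifies both the interior equations (via the discriminant recursion as in (a)) and the boundary equation at $k=\#T$. The main routine step across the whole program is the algebra in (b), where the three twig contributions must be carefully aggregated into the single equation for $B$; however, each individual step remains a routine manipulation of discriminants.
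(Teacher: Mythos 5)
Your argument is correct and is exactly the standard one: the paper gives no proof of this lemma, deferring to \cite[II.3.3]{Miyan-OpenSurf} and \cite[\S 3.2]{Flips_and_abundance}, and those references proceed precisely as you do, namely by intersecting \eqref{eq:discrepancy} with each exceptional component to obtain a negative-definite (hence uniquely solvable) tridiagonal system and verifying the closed forms via the cofactor recursion $d([a_1,\dots,a_n])=a_1 d([a_2,\dots,a_n])-d([a_3,\dots,a_n])$. Your bookkeeping at the boundary equations (the $k=1$ and $k=\#T$ rows, the extra $\cf_T(B)$ term at $T_i^{-}$, and the extra $+1$ from $(D-T)\cdot T^{(\#T)}$ in part (c)) all checks out, and delegating $e<2\leq b$ to \cite[\S I.5.3.4]{Miyan-OpenSurf} is appropriate.
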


 Explicit formulas for log discrepancies in Lemma \ref{lem:ld_formulas} imply the following.

\begin{lemma}[{Coefficients decrease in subgraphs {\cite[L.1(2)]{Keel-McKernan_rational_curves}, cf.\ \cite[Lemma 7.6]{Palka_almost_MMP}}}] \label{lem:Alexeev}
	Let $D$ and $D'$ be rational chains or rational forks, such that $D$ is admissible, the graph of $D'$ is a subgraph of the graph of $D$, and if $C'$ is a component of $D'$ corresponding to a component $C$ of $D$ then $C^2\leq (C')^2\leq -2$. Then $D'$ is admissible, too, and we have 
	\begin{equation*}
		\cf_{D}(D)\leq \cf_{D'}(C').
	\end{equation*}
\end{lemma}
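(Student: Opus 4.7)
The plan is to argue by induction on the ``distance'' between $D$ and $D'$, reducing to a single elementary transformation. Any pair $(D,D')$ satisfying the hypotheses can be linked by a finite sequence of moves of two types: (i) decreasing exactly one self-intersection $C^2$ by one (so $(C')^2=C^2+1$, which is still at most $-2$), and (ii) removing a single tip (of a chain, of a twig of a fork, or of an entire twig, the last case possibly converting a fork into a chain). It therefore suffices to verify that $\cf_{D}(C)\geq \cf_{D'}(C')$ at every elementary step, together with admissibility of $D'$, and then compose.

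For chains, the formula of Lemma~\ref{lem:ld_formulas}\ref{item:ld_chain} reduces the statement to the inequality
\begin{equation*}
\frac{d(T^{(<j)})+d(T^{(>j)})}{d(T)}\geq \frac{d((T')^{(<j)})+d((T')^{(>j)})}{d(T')}
\end{equation*}
for the corresponding component. Using the standard recursion $d([a_1,\ldots,a_n])=a_n\,d([a_1,\ldots,a_{n-1}])-d([a_1,\ldots,a_{n-2}])$ and expanding the numerator gives, after clearing denominators, a $2\times 2$ determinant of subchain discriminants which is positive because all subchains are admissible. Admissibility of $D'$ is automatic since $(C')^2\leq -2$.

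For forks, I would first treat the branching component $B$ using Lemma~\ref{lem:ld_formulas}\ref{item:ld_fork}: increasing $b$ visibly increases $\cf_{T}(B)=1-(\delta-1)/(b-e)$, while removing a tip of a twig $T_i$ replaces $T_i$ by a shorter admissible chain, which by the chain case above changes the ratios $1/d(T_i)$ and $d(T_i-T_i^{-})/d(T_i)$ in a controlled way. A routine computation then shows that $(\delta-1)/(b-e)$ does not increase, whence $\cf_T(B)$ does not increase when passing to $D'$. Given this, the remaining inequalities for components of the twigs follow by combining the explicit formula in Lemma~\ref{lem:ld_formulas}\ref{item:ld_fork} for the twigs (which is essentially the chain formula applied to $T_i$ after adjusting the last coefficient by $1-\cf_T(B)$) with the already-proven chain case. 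Admissibility of $D'$ as a fork (i.e.\ $\delta'>1$, $b'\geq 2$) amounts to checking that Platonic triples are preserved under the elementary moves, which is an immediate finite check from the list $\{2,3,5\},\{2,3,4\},\{2,3,3\},\{2,2,k\}$.

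The main obstacle is the ``type-change'' move, where removing an entire twig of a fork produces a chain. Here the formulas for $\cf_D$ and $\cf_{D'}$ are of different shape, so a direct comparison is ungainly. My plan is to handle this uniformly by passing to log discrepancies $\ld=1-\cf$ and viewing them as the solution of the linear system $M\vec\ld=\vec b$, where $M$ is the intersection matrix of the exceptional divisor and $b_j=\beta(T_j)-2$ depends only on the underlying graph. Since $-M$ is a symmetric Stieltjes $M$-matrix, its inverse is entrywise nonnegative; the two elementary moves correspond to elementary operations on $M$ that make $-M$ ``larger'' (in the positive semidefinite order), so the corresponding entries of $(-M)^{-1}(-\vec b)$ decrease. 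This yields $\ld_D(C)\leq \ld_{D'}(C')$, i.e.\ the required inequality for coefficients, in all cases at once, and simultaneously certifies $\ld_{D'}(C')>0$, i.e.\ the admissibility of $D'$.
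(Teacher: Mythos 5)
The paper does not actually prove this lemma — it is quoted from \cite{Keel-McKernan_rational_curves} and \cite{Palka_almost_MMP} with the remark that it follows from Lemma~\ref{lem:ld_formulas} — so your attempt has to stand on its own. Your skeleton (reduce to elementary moves that either raise one self-intersection by one or delete one tip, check the inequality and admissibility at each step, compose) is sound, and the admissibility part is fine: both moves can only decrease the twig discriminants, hence increase $\delta$. The trouble is that the key inequalities you write down are not the ones you need. (i) Your displayed chain inequality is reversed: since $\cf_{T}(T\cp{j})=1-\bigl(d(T\cp{<j})+d(T\cp{>j})\bigr)/d(T)$, the goal $\cf_{D}(C)\geq\cf_{D'}(C')$ requires the ratio for the \emph{larger} chain to be $\leq$ that for the subchain, not $\geq$; e.g.\ for $[3,2]\supseteq[3]$ the ratios at the first vertex are $3/5$ and $2/3$. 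As written your inequality is false and would prove the opposite. (ii) The fork paragraph is internally inconsistent: you assert both that $(\delta-1)/(b-e)$ does not increase and that $\cf_T(B)=1-(\delta-1)/(b-e)$ does not increase when passing to $D'$; these contradict each other, and the first is the wrong one (passing from $\langle 2;[2],[2],[3]\rangle$ to $\langle 2;[2],[2],[2]\rangle$ the quantity goes from $1/2$ to $1$). These are not mere typos — they indicate the ``routine computations'' have not been carried out.

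(iii) The genuine gap is the $M$-matrix argument, which is your designated tool for the hardest (fork-to-chain) case. The system $M\ld=b$ with $b_j=\beta(T_j)-2$ is correct, but monotonicity of the entrywise-nonnegative inverse does not transfer to the solution, for two reasons: the right-hand side changes under tip removal (the neighbour of a deleted tip has $-b_j$ jump from $0$ to $1$, pushing the solution up while the shrunken inverse pushes it down), and for a fork $-b$ has the negative entry $-1$ at the branching vertex, so entrywise positivity of $(-M)^{-1}$ says nothing about the sign of the change — this already breaks the argument for the pure diagonal-change move on a fork. The standard repair is a comparison argument on the difference $v\de\ld_{D'}-\ld_{D}|_{D'}$: one computes $(M'v)_j=\sum_{i\notin D'}(T_i\cdot T_j)\bigl(\ld_{D}(T_i)-1\bigr)-\bigl((C_j')^2-C_j^2\bigr)\ld_{D}(T_j)\leq 0$, using $0\leq\ld_{D}\leq 1$ for the admissible $D$ (which must be established first, e.g.\ from Lemma~\ref{lem:ld_formulas}), and then inverse-positivity of $-M'$ gives $v\geq 0$, i.e.\ $\cf_{D}(C)\geq\cf_{D'}(C')$ in all cases at once, including the type change. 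That argument does what you want, but it is not the one you wrote.
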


We now recall basic consequences of the minimal model program for dlt log surfaces, see  \cite[Theorem 3.47]{KollarMori-bir_geom}. We say that a dlt log surface $(\bar{X},\bar{D})$ is \emph{minimal} if it is an outcome of the birational part of the MMP, that is, there is no divisorial contraction which is negative with respect to $K_{\bar{X}}+\bar{D}$. Every log smooth (or, more generally, dlt) log surface $(X,D)$ admits a morphism onto a minimal dlt log surface $(X_{\min},D_{\min})$ of the same Kodaira dimension, say $\kappa$, such that either $\kappa\geq 0$ and $K_{X_{\min}}+D_{\min}$ is nef; or $\kappa=-\infty$ and there is a morphism $f\colon X_{\min}\to B$ of relative Picard rank one such that $\dim B\leq 1$ and $-(K_{X_{\min}}+D_{\min})$ is $f$-ample. We call $(X_{\min},D_{\min})$ a \emph{minimal model} of $(X,D)$, and refer to the morphism $(X,D)\to (X_{\min},D_{\min})$ as an \emph{MMP run}. 

We say that a dlt log surface is \emph{almost minimal} if it admits an MMP run with exceptional divisor contained in the boundary, see \cite[Lemma 3.16(2)]{Palka_almost_MMP}; cf. \cite[p.\ 107]{Miyan-OpenSurf} and the discussion in \cite[Remark 3.24]{Palka_almost_MMP}. If $(X,D)$ is log smooth, then an MMP run $(X,D)\to (X_{\min},D_{\min})$ factors through the minimal log resolution $(X\am,D\am)$ of $(X_{\min},D_{\min})$, which we call an \emph{almost minimal model} of $(X,D)$. It follows from \cite[Lemma 4.9]{Palka_almost_MMP} that the morphism $(X,D)\to (X\am,D\am)$ is in fact an snc-minimalization of $D+E$, where $E$ is a sum of certain $(-1)$-curves characterized in  loc.\ cit.

\subsection{\texorpdfstring{$\P^1$}{P1}-fibrations}\label{sec:P1-fibrations}

Let $(X,D)$ be a log smooth log surface. A \emph{$\P^{1}$-fibration} of $X$ is a morphism $p\colon X\to B$ onto a curve $B$ whose general fiber $F$ is isomorphic to $\P^1$. We call $F\cdot D$ the \emph{height} of $p$ (with respect to $D$). By Definition \ref{def:height}, the \emph{height} of $(X,D)$ is the minimal height of a $\P^1$-fibration of $X$; a \emph{witnessing} $\P^1$-fibration is any $\P^1$-fibration realizing this minimum. Height of a pair $(\bar{X},\bar{D})$ consisting of a normal surface $\bar{X}$ and a reduced Weil divisor $\bar{D}$ on $\bar{X}$ is defined as the height of its minimal log resolution. For a normal surface $\bar{X}$ we put $\height(\bar{X})=\height(\bar{X},0)$.

Note that by Tsen's theorem, cf.\ \cite[C.4]{Reid-chapters}, every $\P^1$-fibration of a smooth surface is trivial over some dense open subset of the base.

Fix a $\P^1$-fibration $p\colon X\to B$ of a smooth projective surface $X$.  A curve $C\subseteq X$ is \emph{vertical} if $p(C)$ is a point; otherwise $C$ is horizontal, it is an $n$-section if $C\cdot F=n$ for a fiber $F$. Given a reduced divisor $T$ we write $T\vert$ and $T\hor$ for the sum of its vertical and horizontal components, respectively. Every fiber of $p$ is obtained by inductively blowing up over a $0$-curve. This leads to the following description of degenerate fibers. 

\begin{lemma}[Degenerate fibers]\label{lem:degenerate_fibers}
	Let $F$ be a degenerate fiber of a $\P^{1}$-fibration of a smooth projective surface. Then $F\redd$ is a rational tree and its $(-1)$-curves are non-branching. Furthermore, the following hold.
	\begin{enumerate}
		\item\label{item:unique_-1-curve} If some $(-1)$-curve $L$ has multiplicity $1$ in $F$ then $L$ is a tip of $F\redd$  and $F\redd-L$ contains a $(-1)$-curve.
		\item\label{item:adjoint_chain}
		Assume that $F$ contains exactly one $(-1)$-curve $L$. Then $F$ has exactly two components of multiplicity $1$, they are tips of $F\redd$, and one of the following holds. 
		\begin{enumerate}
			\item \label{item:columnar} 
			$F\redd=[T,1,T^{*}]$ for some admissible chain $T$. Here $T^{*}$ is a type of an admissible chain which is \emph{adjoint} to $T$, it is uniquely determined by $T$, see \cite[\sec 3.9]{Fujita-noncomplete_surfaces}.
			\item \label{item:not_columnar} Both components of multiplicity one belong to the same connected component of $F\redd-L$. The other connected component, if exists, is a chain.
		\end{enumerate}
		\item \label{item:F-2}
		Assume that $F$ consists of $(-1)$- and $(-2)$-curves only. Then $F\redd$ is of one of the following types.
		\begin{enumerate}
			\item\label{item:fiber_[1,2_k,1]} $[1,(2)_{k},1]$ for some $k\geq 0$. In this case, $F=F\redd$.
			\item\label{item:fiber_[2,1,2]} $[2,1,2]$ or $\langle 2;[1,(2)_{k}],[2],[2]\rangle$ for some $k\geq 0$. In this case, the $(-2)$-tips of $F\redd$ have multiplicity $1$ in $F$, and all the remaining components of $F$ have multiplicity $2$.
		\end{enumerate}
	\end{enumerate}
\end{lemma}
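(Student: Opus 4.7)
My plan is to induct on the number $n$ of blowups needed to reduce $X\to B$ to a relatively minimal ruled model. Every $\P^1$-fibration on a smooth surface factors, via Castelnuovo contractions of $(-1)$-curves in fibers, through a $\P^1$-bundle $\pi_{\min}\colon X_{\min}\to B$; so there is a birational morphism $\pi\colon X\to X_{\min}$ such that $F=\pi^{*}F_0$ for a smooth $0$-curve $F_0\cong \P^1$ in $X_{\min}$, and $\pi$ is a sequence of $n$ blowups over $\pi(F)$. The structural claims of the first sentence follow by induction on $n$: a blowup at a smooth point of the current fiber adds a $(-1)$-tip of multiplicity equal to that of the component it meets; a blowup at an nc singularity inserts a $(-1)$-curve between two components, of multiplicity their sum; and blowups away from the fiber are irrelevant. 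These operations preserve the property that $F\redd$ is a rational tree with nc singularities, and that every $(-1)$-curve in the fiber has branching number $\leq 2$: a newly created $(-1)$-curve has $\beta\in \{1,2\}$, while any older $(-1)$-curve $L$ retains $L^2=-1$ only if no later blowup is centered on $L$, in which case its branching number is also preserved.

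For \ref{item:unique_-1-curve}, let $L$ be a $(-1)$-curve with $\mult_F(L)=1$. Since $F\cdot L=0$ we get $L\cdot(F-L)=1$, so $L$ meets $F-L$ transversally at a single point on a single component, whence $L$ is a tip of $F\redd$. Contract $L$ via $\sigma\colon X\to X'$ and let $F'=\sigma_{*}F$. If $F'$ is a smooth $0$-curve, then $F-L$ consists of one component of self-intersection $-1$ and we are done. Otherwise $F'$ is degenerate with fewer components than $F$, so the inductive hypothesis produces a $(-1)$-curve $L'\subseteq F'$; if $L'$ is not the image of the unique neighbor $C_0$ of $L$, its proper transform in $X$ is a $(-1)$-curve in $F-L$, while if $L'=\sigma(C_0)$, a second application of the inductive statement of \ref{item:unique_-1-curve} to $(F',L')$ yields a $(-1)$-curve in $F'-L'$ whose proper transform, being disjoint from $L$, lies in $F-L$.

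For \ref{item:adjoint_chain}, assume $L$ is the unique $(-1)$-curve in $F$. By \ref{item:unique_-1-curve}, any multiplicity-one component of $F$ becomes a $(-1)$-curve, and hence a tip, after a sequence of contractions; unfolding the contraction sequence and using uniqueness of $L$ shows that exactly two multiplicity-one components exist in $F$ and that both are already tips of $F\redd$. The dichotomy \ref{item:columnar}/\ref{item:not_columnar} is decided by whether the two multiplicity-one tips lie in different connected components of $F\redd-L$ or in the same one. In the first case $\beta_{F\redd}(L)=2$ and the two chains on either side of $L$ form an adjoint pair in the sense of Fujita, verified by tracking multiplicities through the inductive contractions, yielding shape $[T,1,T^{*}]$. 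In the second case, the side of $L$ not containing the multiplicity-one tips, if nonempty, contracts to a smooth point along a chain of successive $(-1)$-contractions and is therefore a chain.

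For \ref{item:F-2}, the constraint that all components are $(-1)$- or $(-2)$-curves forces the admissible chains $T,T^{*}$ of \ref{item:adjoint_chain} to consist of $(-2)$-curves, and an elementary case analysis combined with the system $F\cdot C=0$ for each component $C$ (which pins down the multiplicities) yields exactly the listed configurations $[1,(2)_{k},1]$, $[2,1,2]$, and $\langle 2;[1,(2)_{k}],[2],[2]\rangle$. The main obstacle is \ref{item:columnar}: making the adjoint chain identification rigorous requires careful multiplicity bookkeeping through the contraction sequence and depends essentially on Fujita's description in \cite[\sec 3.9]{Fujita-noncomplete_surfaces}.
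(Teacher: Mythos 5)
Your proposal is correct and follows essentially the same route as the paper, which offers no proof beyond the remark that every fiber is obtained by inductively blowing up over a $0$-curve, together with the citation to \cite[\sec 3.9]{Fujita-noncomplete_surfaces} for the adjoint-chain structure in \ref{item:columnar}. Your write-up fleshes out that standard induction correctly; the only place still left at the level of a sketch is the count of multiplicity-one components in \ref{item:adjoint_chain} (the contraction of the unique $(-1)$-curve may create two new $(-1)$-curves, so the induction there needs the multiplicity bookkeeping you allude to), but this is precisely the part the paper itself delegates to Fujita.
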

Contracting all fibers to $0$-curves gives a morphism onto a Hirzebruch surface $\F_{m}=\P(\cO_{\P^{1}}(m)\oplus \cO_{\P^{1}})$, and since $\rho(\F_m)=2$ one gets the following formula proved in \cite[4.16]{Fujita-noncomplete_surfaces}, cf.\ {\cite[Lemma 2]{Palka-AMS_LZ}.

\begin{lemma}[Number of vertical curves off $D$]\label{lem:fibrations-Sigma-chi}
	Let $(X,D)$ be a log smooth log surface with a fixed $\P^1$-fibration. Let $\nu_{\infty}$ be the number of fibers contained in $D$. For a fiber $F$ let $\sigma(F)$ be the number of components of $F\redd$ not contained in $D$. Then 
	\begin{equation*}\#D\hor+\nu_{\infty}+\rho(X)=\#D+2+\textstyle\sum_{F}(\sigma(F)-1).\end{equation*}
	 where the sum runs over all fibers $F$ not contained in $D$.
\end{lemma}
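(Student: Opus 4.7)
The plan is to combine the standard Picard-rank count for a $\P^1$-fibered surface with a careful bookkeeping of which fiber components lie inside $D$.

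First, using Lemma \ref{lem:degenerate_fibers}, every degenerate fiber of $p\colon X \to B$ is a rational tree containing a $(-1)$-curve, so iterated contraction of fibers yields a birational morphism onto a $\P^1$-bundle $\F \to B$. Since $B$ is a smooth projective curve, $\rho(B)=1$, hence $\rho(\F) = 2$. Each of the blowups reversing this contraction raises $\rho$ by one while increasing $n_F$ by one in a single fiber, giving
$$\rho(X) = 2 + \sum_{F\text{ degenerate}} (n_F - 1),$$
where $n_F$ denotes the number of irreducible components of the fiber $F$.

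Next, split the components of each fiber as $n_F = \mu(F) + \sigma(F)$, where $\mu(F)$ counts the components of $F$ contained in $D$. Every vertical component of $D$ lies in a unique fiber; a smooth fiber contributes $\mu(F)=1$ if it is contained in $D$ and $\mu(F)=0$ otherwise, so
$$\textstyle\sum_{F\text{ degenerate}} \mu(F) \;=\; \#D\vert - \nu_\infty^{\mathrm{sm}},$$
where $\nu_\infty^{\mathrm{sm}}$ counts smooth fibers contained in $D$. Splitting the degenerate fibers into those contained in $D$ (where $\sigma(F) = 0$, hence $n_F - 1 = \mu(F) - 1$) and those off $D$ (where $n_F - 1 = \mu(F) + (\sigma(F)-1)$), substituting, and extending the last sum harmlessly to all fibers off $D$ (since smooth off-$D$ fibers contribute $\sigma(F) - 1 = 0$), I obtain
$$\rho(X) = 2 + \#D\vert - \nu_\infty + \sum_{F\text{ off }D}(\sigma(F)-1).$$
Adding $\#D\hor + \nu_\infty$ to both sides and using $\#D = \#D\hor + \#D\vert$ gives the stated identity.

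The argument is essentially pure bookkeeping; the only subtle point is remembering that $\nu_\infty$ also includes smooth fibers contained in $D$, which contribute to $\#D\vert$ but carry $n_F - 1 = 0$ in the Picard-rank sum --- this discrepancy is precisely what the rearrangement absorbs.
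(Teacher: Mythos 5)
Your proof is correct and follows exactly the route the paper indicates (it only sketches the argument before the lemma: contract all fibers to $0$-curves to reach a ruled surface with $\rho=2$, then count components, citing \cite[4.16]{Fujita-noncomplete_surfaces}). Your bookkeeping of $n_F=\mu(F)+\sigma(F)$, the treatment of fibers contained in $D$, and the harmless extension of the sum to nondegenerate fibers off $D$ all check out.
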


Recall that a normal surface $\bar{X}$ is \emph{del Pezzo} if $-K_{\bar{X}}$ is ample. Lemma \ref{lem:fibrations-Sigma-chi} applied to the minimal resolution of a del Pezzo surface of rank one gives the following.

\begin{lemma}
	\label{lem:delPezzo_fibrations}
	Let $\bar{X}$ be a del Pezzo surface of rank one, let $\pi\colon X\to \bar{X}$ be its minimal resolution, and let $D=\Exc\pi$. Fix a $\P^1$-fibration of $X$. Then the following hold.
	\begin{enumerate}
		\item\label{item:-1_curves} A component of a degenerate fiber is a $(-1)$-curve if and only if it is not contained in $D$.
		\item\label{item:Sigma} For a degenerate fiber $F$ let $\sigma(F)$ be the number of $(-1)$-curves in $F\redd$, i.e.\ the number of components not contained in $D$. Then $\sum_{F}(\sigma(F)-1)=\#D\hor -1$.
	\end{enumerate}
\end{lemma}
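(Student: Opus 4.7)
My plan is to prove (a) by applying adjunction to a component $C$ of a degenerate fiber and comparing the outcome with the discrepancy decomposition of $K_X$, using ampleness of $-K_{\bar X}$. Part (b) will then follow from Lemma \ref{lem:fibrations-Sigma-chi} once one rules out fibers contained in $D$ and matches the two versions of $\sigma(F)$.

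For (a), one direction is immediate: since $\pi$ is the minimal resolution, every component of $D=\Exc\pi$ has self-intersection at most $-2$, hence is not a $(-1)$-curve. For the converse, I take a component $C$ of a degenerate fiber $F$ with $C\not\subseteq D$. By Lemma \ref{lem:degenerate_fibers}, $C\cong \P^{1}$, and Zariski's lemma applied to the reducible fiber $F$ gives $C^{2}<0$. Adjunction then yields $K_{X}\cdot C=-2-C^{2}$, so it suffices to prove $K_{X}\cdot C\leq -1$. From \eqref{eq:discrepancy} applied to $\pi\colon(X,0)\to(\bar X,0)$, I may write
\begin{equation*}
  K_{X}=\pi^{*}K_{\bar X}-\sum_{T\subseteq D}\cf_{D}(T)\,T,
\end{equation*}
where $\cf_{D}(T)\geq 0$ since $\pi$ is a minimal resolution. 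Because $C\not\subseteq D$, each intersection $T\cdot C$ is nonnegative, while the projection formula and ampleness of $-K_{\bar X}$ give $\pi^{*}K_{\bar X}\cdot C=K_{\bar X}\cdot \pi(C)<0$, as $\pi(C)$ is a curve. Combining, $K_{X}\cdot C<0$, so $K_{X}\cdot C=-1$ and thus $C^{2}=-1$.

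For (b), the main point is that no fiber of the $\P^1$-fibration can lie entirely inside $D$: the divisor $D$ is $\pi$-exceptional, hence negative definite, whereas any fiber $F$ satisfies $F^{2}=0$. So in the notation of Lemma \ref{lem:fibrations-Sigma-chi} we have $\nu_{\infty}=0$, and by (a) the number of $(-1)$-curves in a degenerate $F\redd$ coincides with the number of its components lying off $D$, matching the $\sigma(F)$ appearing in Lemma \ref{lem:fibrations-Sigma-chi}. Since smooth fibers contribute $\sigma(F)-1=0$, the sum there may be taken over degenerate fibers only. Because $\bar{X}$ has Picard rank one and the exceptional components of $\pi$ are linearly independent in $\NS(X)_{\Q}$ (being negative definite), $\rho(X)=1+\#D$. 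Substituting into Lemma \ref{lem:fibrations-Sigma-chi} and cancelling $\#D$ gives $\#D\hor-1=\sum_{F}(\sigma(F)-1)$, as required.

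I do not anticipate a real obstacle here: the argument is a short combination of adjunction, the discrepancy formula, and Lemma \ref{lem:fibrations-Sigma-chi}. The only points requiring care are the sign convention in \eqref{eq:discrepancy} (so that $\cf_{D}(T)\geq 0$ cuts in the right direction) and the identification $\rho(X)=1+\#D$, for which negative-definiteness of $\Exc\pi$ is the key input.
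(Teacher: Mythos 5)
Your proof is correct and follows essentially the same route as the paper: for (a) the paper likewise notes that $\pi^{*}K_{\bar X}-K_{X}$ is effective by minimality of $\pi$, so $L\cdot K_{X}\leq \pi(L)\cdot K_{\bar X}<0$, and combines this with $L^{2}<0$; for (b) it likewise feeds $\rho(X)-\#D=1$ and $\nu_{\infty}=0$ (negative definiteness of $D$) into Lemma \ref{lem:fibrations-Sigma-chi}. No gaps.
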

\begin{proof}
	\ref{item:-1_curves}   
	Since $\pi$ is the minimal resolution, $D$ contains no $(-1)$-curves. 
	In turn, every component $L$ of a degenerate fiber such that $L\not\subseteq D$ satisfies $L^2<0$ and, since $-K_{\bar{X}}$ is ample, $0>\pi(L)\cdot K_{\bar{X}}\geq L\cdot K_{X}$ because  $\pi^{*}K_{\bar{X}}-K_{X}$ is effective by minimality of $\pi$. Thus $L$ is a $(-1)$-curve, as needed.
	
	\ref{item:Sigma} We have $\rho(X)-\#D=\rho(\bar{X})=1$, and $D$ contains no fibers since it is negative definite. Hence the claim follows from Lemma \ref{lem:fibrations-Sigma-chi}.
\end{proof}

We note that a del Pezzo surface of rank one is rational if and only if all its singularities are. The \enquote{only if} part is elementary, while the \enquote{if} part is proved in \cite[\sec 2]{Cheltsov_non-rational} and \cite[Corollary 1.9]{Fujisawa} for $\kk=\C$, and in general follows from \cite[Theorem 2.2(i)]{Schroer_non-rational}. We recall the argument for the readers' convenience.

\begin{lemma}
	\label{lem:rationality}
	Del Pezzo surface of rank one is rational if and only if it has only rational singularities.
\end{lemma}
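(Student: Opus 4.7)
My plan is to establish the two implications separately, using the minimal resolution $\pi\colon X\to \bar{X}$ as the common tool. The \enquote{only if} direction should be elementary via a Leray argument. If $\bar{X}$ is rational then so is $X$, so $H^{1}(X,\cO_{X})=H^{2}(X,\cO_{X})=0$. Ampleness of $-K_{\bar{X}}$ prevents $K_{\bar{X}}$ from being linearly equivalent to any effective Weil divisor, and since $\bar{X}$ is Cohen--Macaulay, Serre duality gives $H^{2}(\bar{X},\cO_{\bar{X}})\cong H^{0}(\bar{X},\omega_{\bar{X}})^{\vee}=0$. The low-degree exact sequence of the Leray spectral sequence for $\pi$ then squeezes $H^{0}(\bar{X},R^{1}\pi_{*}\cO_{X})$ between these vanishing groups, forcing $R^{1}\pi_{*}\cO_{X}=0$, i.e.\ every singularity of $\bar{X}$ is rational.

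For the converse, assume every singularity of $\bar{X}$ is rational. By Artin's theorem the components of $\Exc\pi$ are smooth rational curves, and since $\pi$ is minimal the coefficients of the exceptional components in $\pi^{*}K_{\bar{X}}-K_{X}$ are non-negative (Section \ref{sec:singularities}); equivalently,
\begin{equation*}
	-K_{X}\sim_{\Q}\pi^{*}(-K_{\bar{X}})+E
\end{equation*}
for some effective $\Q$-divisor $E$ supported on $\Exc\pi$. Since $\pi^{*}(-K_{\bar{X}})$ is the pullback of an ample class under a birational morphism, it is big and nef, and adding the effective $E$ keeps $-K_{X}$ big. In particular $\kappa(X)=-\infty$, so the Enriques--Bombieri--Mumford classification gives that $X$ is birationally ruled over some smooth projective curve $C$.

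It remains to show that $C\cong \P^{1}$, which I would argue by contradiction. If $g(C)\geq 1$, the rational map $X\dashrightarrow C$ extends to a morphism $\tilde{X}\to C$ on a suitable blowup $\sigma\colon \tilde{X}\to X$. The composition $\pi\circ\sigma\colon \tilde{X}\to \bar{X}$ is birational, and its exceptional locus consists entirely of rational curves, all of which must be contracted by $\tilde{X}\to C$ since no rational curve admits a non-constant morphism to a curve of positive genus. The rigidity lemma thus yields a factorization $\bar{X}\to C$, whose general fiber is a nonzero effective divisor on $\bar{X}$ of self-intersection zero, contradicting $\rho(\bar{X})=1$. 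I expect the main obstacle to be the bigness of $-K_{X}$, which hinges on the minimality of $\pi$; the descent step then becomes a formality given Artin's theorem on exceptional loci of rational surface singularities in arbitrary characteristic.
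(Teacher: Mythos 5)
Your proof is correct. The \enquote{only if} half coincides with the paper's: the same Leray exact sequence $0=H^{1}(X,\cO_{X})\to H^{0}(\bar{X},R^{1}\pi_{*}\cO_{X})\to H^{2}(\bar{X},\cO_{\bar{X}})$, killed by rationality of $X$ on the left and by Serre duality plus ampleness of $-K_{\bar{X}}$ on the right. The converse is routed differently. The paper also starts from $\kappa(X)=-\infty$, but then fixes a $\P^{1}$-fibration on $X$ itself, observes that the negative definite divisor $D=\Exc\pi$ of rank $\rho(X)-1$ cannot be entirely vertical (the vertical classes only span a negative semi-definite lattice of that rank), and so finds a \emph{horizontal} component of $D$; rationality of the singularities makes that component rational, hence the base and $X$ are rational. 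You instead suppose the ruling is over a curve $C$ of positive genus, descend it to $\bar{X}$ by rigidity (every exceptional curve of $\pi\circ\sigma$ is rational, hence contracted by any morphism to $C$), and contradict $\rho(\bar{X})=1$ via the disjoint fibers of $\bar{X}\to C$. Both arguments consume the hypothesis at the same point --- rationality of the components of $\Exc\pi$ --- and both ultimately lean on $\rho(\bar{X})=1$, the paper through $\#D=\rho(X)-1$ and you directly; yours trades the paper's small lattice-theoretic step for the rigidity lemma and (Mumford) intersection numbers on the normal surface $\bar{X}$, while the paper's stays entirely on the smooth model. One cosmetic remark: bigness of $-K_{X}$ is more than you need for $\kappa(X)=-\infty$; the inclusion $H^{0}(mK_{X})\subseteq H^{0}(\pi^{*}mK_{\bar{X}})=H^{0}(mK_{\bar{X}})=0$ already suffices.
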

\begin{proof}
	Let $\bar{X}$ be a del Pezzo surface of rank one, let $\pi\colon X\to \bar{X}$ be its minimal resolution and let $D=\Exc\pi$. Assume first that $X$ is rational, so $H^{1}(X,\cO_{X})=0$. From the second page of the Leray spectral sequence we get an exact sequence 
		$
		0=H^{1}(X,\cO_{X})\to H^{0}(\bar{X},R^{1}\pi_{*}\cO_{X})\to H^{2}(\bar{X},R^{0}\pi_{*}\cO_{X})
		$. 
	Since $\bar{X}$ is normal, we have $R^{0}\pi_{*}\cO_{X}=\cO_{\bar{X}}$. Moreover, by Serre criterion $\bar{X}$ is Cohen--Macaulay, so by Serre duality, see \cite[Theorem 5.71 and Proposition 5.75]{KollarMori-bir_geom}, we have $H^{2}(\bar{X},\cO_{\bar{X}})\cong H^{0}(\bar{X},\cO_{\bar{X}}(K_{\bar{X}}))^{*}=0$ since $-K_{\bar{X}}$ is ample. We conclude that $H^{0}(\bar{X},R^{1}\pi_{*}\cO_{X})=0$, so the singularities of $\bar{X}$ are rational, as needed.
	
	Conversely, assume that $\bar{X}$ has only rational singularities, so $D$ is a sum of rational trees, see \cite[7.1.2(iv)]{Nemethi_book}. Since $\bar{X}$ is del Pezzo, we have $\kappa(X)=-\infty$, so $X$ admits a $\P^{1}$-fibration over some curve $B$. Since $D$ is negative definite, it contains no fibers, so the equality $\#D=\rho(X)-1$ implies that $D$ has a horizontal component. Since the latter is rational, so is $B$ and therefore $X$, as needed.
\end{proof}

\subsection{$\A^{1}$-(uni)ruled surfaces} 
Recall that a surface is \emph{$\A^{1}$-ruled} if it contains an open subset isomorphic to $\A^{1}\times C$ for some curve $C$; and it is \emph{$\A^{1}$-uniruled} if it admits a dominant morphism from an $\A^1$-ruled one. The following equivalence is known.
\begin{lemma}[{\cite[Lemma II.2.9.1]{Miyan-OpenSurf}}]\label{lem:Tsen}
	A smooth surface is $\A^{1}$-ruled if and only if it admits a log smooth completion of height at most $1$.
\end{lemma}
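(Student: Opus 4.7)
My plan is to prove both implications via Tsen's theorem applied to an appropriate $\P^{1}$-fibration.

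For the direction that a completion of height $\leq 1$ implies $\A^{1}$-ruledness, I fix a log smooth completion $(X,D)$ of $U=X\setminus D$ and a $\P^{1}$-fibration $p\colon X\to B$ whose general fiber $F$ satisfies $F\cdot D\leq 1$. By Tsen's theorem, $p$ is trivial over some dense open $B_{0}\subseteq B$, so $p^{-1}(B_{0})\cong \P^{1}\times B_{0}$; after further shrinking, I may assume that $D\cap p^{-1}(B_{0})$ equals either $\emptyset$ (if $F\cdot D=0$) or a single horizontal section $H\cong B_{0}$ (if $F\cdot D=1$). A fiberwise automorphism of $\P^{1}\times B_{0}$ sending $H$ to $\{\infty\}\times B_{0}$ then realizes $\A^{1}\times B_{0}$ as an open subset of $U$, so $U$ is $\A^{1}$-ruled.

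For the converse, I suppose $V\cong \A^{1}\times C$ is open in $U$ and fix a smooth projective completion $\bar{C}\supseteq C$. The second projection $V\to C\hookrightarrow \bar{C}$ extends to a rational map on any log smooth completion $(X_{0},D_{0})$ of $U$; resolving its indeterminacy by blowups and then snc-minimizing the boundary yields a log smooth completion $(X,D)$ of $U$ together with a morphism $p\colon X\to \bar{C}$. For a general $c\in C$, the fiber $p^{-1}(c)$ contains $\A^{1}\times\{c\}\subseteq V$ as a dense open subset, so it is a smooth rational curve and meets $D$ transversally at the single point at infinity; this shows that $p$ is a $\P^{1}$-fibration of height at most $1$.

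The hard part will be verifying in the converse direction that the indeterminacy resolution can be performed without blowing up centers inside $U$, so that the resulting $(X,D)$ remains a completion of $U$ rather than of a proper birational modification. I would handle this by first replacing $V$ with the maximal open subset $V'\subseteq U$ on which the projection extends regularly---its complement $U\setminus V'$ is finite by the codimension bound for indeterminacy of rational maps to curves from smooth surfaces---and then choosing $(X_{0},D_{0})$ so that these residual indeterminacy points are absorbed into the boundary during the resolution process.
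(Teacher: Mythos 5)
Your forward implication coincides with the paper's (Tsen's theorem trivializes the fibration over a dense open subset of the base, and deleting the single horizontal boundary component leaves a cylinder), so the only issue is the converse, and the step you yourself flag as ``the hard part'' is where the argument genuinely fails. An indeterminacy point of the rational map $U\map \bar{C}$ that lies in the \emph{interior} of $U$ is intrinsic to $U$: every log smooth completion $(X_0,D_0)$ of $U$ contains that point in $X_0\setminus D_0$, so no choice of completion can ``absorb it into the boundary,'' and resolving it forces a blowup centered at a point of $U$, after which the complement of the boundary is a blowup of $U$ rather than $U$ itself. This situation really occurs. Take $U=\P^2\setminus\{p_0\}$, fix a line $\ll_0$ through $p_0$ and a point $q\in\ll_0\setminus\{p_0\}$, and let $V=\P^2\setminus\ll_0\cong\A^2$ be ruled by the pencil of lines through $q$: each such line meets $\ll_0$ only at $q$, so its trace on $V$ is an $\A^1$, and $V\cong\A^1\times\A^1$ is open in $U$. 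The associated rational map $U\map\P^1$ (in coordinates, $[X:Y:Z]\mapsto[X:Z]$ with $q=[0:1:0]$) has its unique indeterminacy point at $q\in U\setminus V$. Your construction applied to this cylinder therefore yields a height-one fibration on a completion of $\mathrm{Bl}_q U$, not of $U$. The lemma does hold for this $U$ --- via the completion $(\F_1,E)$ and the pencil of lines through $p_0$ --- but only after replacing the cylinder by a different one, and your proof contains no step that changes the ruling.

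The paper's converse avoids committing to the projection: it extends the open embedding $\A^1\times C\into X$ to a morphism $\P^1\times C\to X$ (after shrinking $C$) and then modifies $X$ \emph{only by blowups over $D$}, so the complement of the boundary never changes; the fibration is obtained at the end by pushing forward the projection $\P^1\times C\to C$. Even there, it is implicitly used that the images of the sections at infinity $\{\infty\}\times C$ can be taken to lie on $D$ --- in the example above this fails for the ruling through $q$, whose section at infinity is contracted to the interior point $q$, and one must switch to the ruling through $p_0$. So whichever route you follow, the missing ingredient is a mechanism that keeps the open surface fixed: either confine every blowup to lie over $D$, or prove that the cylinder can be re-chosen whenever its fibers close up at an interior point. ``Choosing the completion so that interior indeterminacy lands on the boundary'' is not such a mechanism.
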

\begin{proof}
	Let $V$ be a smooth surface and let $(X,D)$ be its log smooth completion. If a $\P^1$-fibration $p\colon X\to B$ has height at most one with respect to $D$ then by Tsen's theorem $p|_{V}$ restricts to a projection $\A^1\times B^{\circ}\to B^{\circ}$ over some dense open subset $B^{\circ}\subseteq B$. Conversely, if $V$ is $\A^1$-ruled then the open embedding $\A^1\times C\into V$ extends to a morphism $\P^1\times C\to X$, and blowing up over $D$ we can assume that this is an open embedding, too. The projection $\P^1\times C\to C$ extends to the required $\P^1$-fibration of height at most one. 
\end{proof}

	It follows from Lemma \ref{lem:Tsen} that every $\A^{1}$-ruled surface is $\A^{1}$-fibered. The converse holds if $\cha\kk=0$, see \cite[Lemma III.1.3.1]{Miyan-OpenSurf}, but fails if $\cha\kk>0$, as shown by the following example.
 
\begin{example}[$\A^{1}$-fibration which is non-trivial over all dense open subsets of the base]
	Assume $p\de \cha\kk>0$. Let $\pi\colon \A^{1}_{*}\times \P^1 \to \A^{1}_{*}$ be the projection, and let $H\de \{(x,[y_0:y_1])\in \A^{1}_{*}\times \P^1: xy_{0}^{p}=y_{1}^{p}\}$, $S\de (\A^{1}_{*}\times \P^1)\setminus H$. Then $\pi|_{S}\colon S\to \A^{1}_{*}$ is an $\A^{1}$-fibration. Since every morphism $\A^{1}\to \A^{1}_{*}$ is constant, every curve on $S$ which is isomorphic to $\A^{1}$ is vertical, in particular $\pi|_{S}$ is the unique $\A^{1}$-fibration of $S$. Since $H\cdot F=p$ for a general fiber $F$ of $\pi$, the proper transform of $H$ is a $p$-section of every birational model of $\A^{1}_{*}\times \P^1$ over $\A^{1}_{*}$. It follows that $\pi|_{S}$ cannot be completed to a $\P^1$-fibration of height $1$, hence is non-trivial over any open dense subset of the base $\A^{1}_{*}$. As a consequence, $S$ is not $\A^{1}$-ruled.
\end{example}

	\begin{lemma}[{\cite[Corollary II.2.11.1]{Miyan-OpenSurf}}]\label{lem:twig} Let $(X,D)$ be a log smooth surface such that $D$ contains a rational twig which is not negative definite. Then $X\setminus D$ is $\A^{1}$-ruled.
	\end{lemma}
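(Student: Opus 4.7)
The plan is to produce, via modifications of $(X,D)$ that do not change $X\setminus D$, a $\P^{1}$-fibration of height at most one with respect to the new boundary; by Lemma~\ref{lem:Tsen} this will give $\A^{1}$-ruledness of $X\setminus D$. Write $T=T\cp{1}+\cdots+T\cp{k}$ with $T\cp{1}$ a tip of $D$.

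The first step is to reduce to the case that some component $C$ of $T$ has $C^{2}\geq 0$. Since every admissible rational chain (weights $\geq 2$) is negative definite, the hypothesis forces some component of $T$ to have self-intersection $\geq -1$. If $T$ contains a $(-1)$-curve $T\cp{i}$, then, being non-branching in the snc divisor $D$ with $\beta_{D}(T\cp{i})\leq 2$, it is superfluous and can be contracted. The resulting log smooth pair has the same open complement and a strictly shorter twig $T'$, which is still not negative definite: under the blow-down $\pi\colon X\to X'$, the span of $T$ in $N^{1}(X)$ decomposes as $\pi^{*}\langle T'\rangle\oplus \Z[T\cp{i}]$ with the second summand negative definite, so non-negative-definiteness passes from $T$ to $T'$. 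Iterating, we reach a twig without $(-1)$-curves; non-negative-definiteness then forces some component $C$ to satisfy $C^{2}\geq 0$.

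The second step is to turn $C$ into a $0$-curve tip of the boundary by further modifications supported on $D$. After the first reduction, every component of $T$ between $C$ and $T\cp{1}$ has self-intersection $\leq -2$, and elementary transformations along $D$ (blow-ups at nodes followed by contractions of resulting superfluous $(-1)$-curves) strip these away one by one, eventually exposing a $0$-curve $C'$ as a tip of the modified boundary $D'$. Then $C'$ is a fiber of some $\P^{1}$-fibration $p\colon X'\to B$, and a general fiber $F$ satisfies $F\cdot D'=(C')^{2}+\beta_{D'}(C')=\beta_{D'}(C')\leq 1$. Thus $\height(X',D')\leq 1$, and since $X'\setminus D'\cong X\setminus D$, Lemma~\ref{lem:Tsen} finishes the argument.

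The main obstacle lies in the second step, where the combinatorial bookkeeping must be organized to ensure that the elementary transformations actually produce a $0$-curve at a tip of $D'$, rather than getting stuck with a non-negative-self-intersection component interior to the twig. The linearity of the twig and the fact that it meets $D-T$ at most at one end render this an inductive algorithm on the weighted graph of $D$, which can be executed at finite cost.
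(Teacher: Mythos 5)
Your overall architecture is the paper's: contract superfluous $(-1)$-curves until some component $C$ of the twig has $C^{2}\geq 0$, then manufacture a $0$-curve at a tip by boundary modifications and invoke Lemma~\ref{lem:Tsen}. Your first step is correct, and the orthogonal-decomposition argument for why failure of negative definiteness survives the contraction is a fine justification of what the paper leaves implicit. The problem is that your second step is the actual content of the lemma, and you have not proved it --- you say so yourself. Worse, the description you give would fail if taken literally: the components of $T$ between $C$ and $T\cp{1}$ have self-intersection $\leq -2$ and cannot be ``stripped away'' by blow-ups on the boundary followed by contractions (a $(-2)$-curve does not contract to a smooth point), and a blow-up at a node of $D$ followed by contraction of the exceptional curve it creates is simply the identity. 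The correct move does not remove the intermediate components; it raises the self-intersection of the neighbour on the tip side.

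Concretely, the missing recipe is this. First arrange $C=[0]$ by blowing up $C^{2}$ times over $C\cap(D-C)$ (or over any point of $C$ if $D=C$); take the position $k$ of $C$ in the twig minimal. If $k=1$, the pencil $|C|$ has height $1$ and Lemma~\ref{lem:Tsen} applies. Otherwise let $C_{-}$, $C_{+}$ be the $(k-1)$-th and $(k+1)$-th components (with $C_{+}$ taken in $D-T$ if $C$ is the last component of $T$): blow up $-C_{-}^{2}$ times at $C\cap C_{+}$ and its infinitely near points on the proper transforms of $C_{+}$, then contract the total transform of $C$ minus the last exceptional $(-1)$-curve. That divisor is a chain $[1,2,\dots,2]$ contracting to a smooth point, and each contraction in it increases the self-intersection of the image of $C_{-}$ by one, so $C_{-}$ becomes a $0$-curve in position $k-1$, and one concludes by induction on $k$; all centres lie on the boundary, so $X\setminus D$ is unchanged throughout. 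Equivalently, one may iterate the single elementary transformation ``blow up at $C\cap C_{+}$, contract the proper transform of $C$'' --- but this requires $C=[0]$ beforehand, a normalization your write-up also omits. Without some such explicit move, your ``inductive algorithm \dots\ at finite cost'' is an assertion, not a proof.
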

	\begin{proof}
		Contracting superfluous $(-1)$-curves in $D$ and its images we can assume that $D$ has a rational twig $T$ whose $k$-th component $C$ has $C^2\geq 0$. Blowing up $C^2$ times over $C\cap (D-C)$, or over any point of $C$ if $D=C$, we can assume $C=[0]$. Assume $k$ is smallest possible. If $k=1$, i.e.\ if $C$ is a tip of $D$, then the linear system $|C|$ induces a $\P^1$-fibration of height 1 with respect to $D$, so the claim follows from Lemma \ref{lem:Tsen}. Otherwise, let $C_{-}$, $C_{+}$ be the $(k-1)$-th and $(k+1)$-th component of $T$ (if $C$ meets $D-T$ we let $C_{+}$ be the component of $D-T$ meeting $C$). Blow up $-C_{-}^{2}$ times at $C\cap C_{+}$, each time on the proper transform of $C_{+}$, and contract the total transform of $C$ minus the exceptional $(-1)$-curve. Then the reduced total transform of $D$ is a chain whose $(k-1)$-th component, the proper transform of $C_{-}$, is a $0$-curve. We conclude by induction on $k$.
	\end{proof}

\subsection{Rational curves of arithmetic genus $0$ and $1$ contained in the smooth locus}

\begin{lemma}[$\P^1$'s and rational elliptic curves in the smooth locus]\label{lem:T-in-smooth-locus}
	Let $\bar{Y}$ be a normal projective surface of rank one, and let $\bar{T}\subseteq \bar{Y}\reg$ be a projective curve contained its smooth locus. The following hold.
	\begin{enumerate}
		\item\label{item:P1} If $\bar{T}\cong \P^1$ then $\bar{Y}$ is isomorphic to a cone over a rational normal curve, which is the image of $\bar{T}$.
		\item\label{item:elliptic} Assume that $\bar{T}$ is a rational curve of arithmetic genus one. Then $\bar{Y}$ is a canonical del Pezzo surface and $\bar{T}\in |-K_{\bar{Y}}|$. Moreover, if $\bar{T}$ is reducible then $\bar{Y}$ is isomorphic to $\P^2$ or to the quadric cone.
	\end{enumerate}
\end{lemma}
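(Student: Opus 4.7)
My plan is to work on the minimal resolution $\pi\colon X\to\bar{Y}$, setting $D\de\Exc\pi$ and $\bar{T}'\de\pi^{-1}_*\bar{T}$. Since $\bar{T}\subseteq\bar{Y}\reg$, $\pi$ is an isomorphism near $\bar{T}$, so $\bar{T}'\cong\bar{T}$, $\bar{T}'\cdot D=0$, and $(\bar{T}')^{2}=\bar{T}^{2}$. Because $\bar{Y}$ has Picard rank one and $\bar{T}$ is a nonzero effective Cartier divisor (Cartier, being in the smooth locus), $[\bar{T}]$ is a positive $\Q$-multiple of an ample generator of $\operatorname{Pic}(\bar{Y})_\Q$, so $\bar{T}$ is ample and $\bar{T}^{2}>0$. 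By the projection formula $\bar{T}'=\pi^{*}\bar{T}$ is nef; together with negative definiteness of $D$ and $\bar{T}'\perp D$, the $1+\#D=\rho(X)$ classes $[\bar{T}'],[D_{1}],\dots,[D_{\#D}]$ form a basis of $N^{1}(X)_\Q$. Adjunction yields $K_{X}\cdot\bar{T}'<0$ in both parts, so $K_{X}$ is not pseudoeffective and $X$ is uniruled; since $\bar{T}'$ is rational with $(\bar{T}')^{2}>0$, it cannot be contained in a fiber of a ruling, which forces the base of that ruling to be $\P^{1}$ and $X$ to be rational. Hence $H^{1}(X,\cO_{X})=0$, and the Leray sequence gives $H^{1}(\bar{Y},\cO_{\bar{Y}})=0$.

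For \ref{item:P1}, with $\bar{T}\cong\P^{1}$ and $n\de\bar{T}^{2}$, the exact sequence $0\to\cO_{\bar{Y}}\to\cO_{\bar{Y}}(\bar{T})\to\cO_{\bar{T}}(\bar{T})\to 0$ combined with $H^{1}(\cO_{\bar{Y}})=0$ produces $h^{0}(\bar{Y},\cO_{\bar{Y}}(\bar{T}))=n+2$. Thus $|\bar{T}|$ maps $\bar{Y}$ onto a non-degenerate surface of degree $n$ in $\P^{n+1}$, with $\bar{T}$ going to a rational normal curve of degree $n$ in a hyperplane. By the classical Del Pezzo--Bertini classification, varieties of minimal degree are either the Veronese, rational normal scrolls, or cones over rational normal curves; the Picard rank one hypothesis excludes smooth scrolls and identifies the Veronese with $\P^{2}$ viewed as a cone over a line, leaving $\P^{2}$ and proper cones, with $\bar{T}$ mapping to the base rational normal curve.

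For \ref{item:elliptic}, adjunction with $p_{a}(\bar{T}')=1$ gives $(K_{X}+\bar{T}')\cdot\bar{T}'=0$. Since $\pi^{*}(K_{\bar{Y}}+\bar{T})$ has zero intersection with $\bar{T}'$ (by adjunction) and with every $D_{i}$ (by the projection formula), and these span $N^{1}(X)_\Q$, we conclude $K_{\bar{Y}}+\bar{T}\equiv 0$ numerically. In particular $K_{\bar{Y}}\cdot\bar{T}=-\bar{T}^{2}<0$, so $K_{\bar{Y}}$ is not effective, and Serre duality on the Cohen--Macaulay surface $\bar{Y}$ yields $H^{2}(\bar{Y},\cO_{\bar{Y}})=0$. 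Plugging both vanishings into the long exact cohomology sequence of $0\to\cO_{\bar{Y}}(-\bar{T})\to\cO_{\bar{Y}}\to\cO_{\bar{T}}\to 0$ gives $h^{2}(\cO_{\bar{Y}}(-\bar{T}))=h^{1}(\cO_{\bar{T}})=1$, and Serre duality produces $h^{0}(\omega_{\bar{Y}}(\bar{T}))=1$; the unique effective $W\in|K_{\bar{Y}}+\bar{T}|$ is numerically trivial with $\bar{T}$ ample, so $W\cdot\bar{T}=0$ forces $W=0$ and hence $\bar{T}\sim -K_{\bar{Y}}$.

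Consequently $K_{\bar{Y}}$ is Cartier and $-K_{\bar{Y}}$ is ample, so $\bar{Y}$ is a Gorenstein del Pezzo; Lemma \ref{lem:rationality} then implies that its singularities are rational, and Gorenstein rational surface singularities are precisely the du Val (canonical) ones. For the reducible case, the genus formula applied to $p_{a}(\bar{T})=1$ forces at least one component $T_{i}$ of $\bar{T}$ to be smooth rational (a short combinatorial check on the possibilities for a reduced reducible configuration with arithmetic genus one), whereupon \ref{item:P1} exhibits $\bar{Y}$ as a cone over a rational normal curve; canonicity then restricts the degree of this cone to $1$ or $2$, so $\bar{Y}\cong\P^{2}$ or the quadric cone. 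The main obstacle is the Del Pezzo--Bertini-style case analysis in \ref{item:P1}; part \ref{item:elliptic} is then essentially cohomological bookkeeping orchestrated by the basis $\{\bar{T}',D_{i}\}$ of $N^{1}(X)_\Q$.
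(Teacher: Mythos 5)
Your argument is correct in substance but takes a genuinely different route from the paper's, most visibly in part \ref{item:P1}. There, the paper never leaves the resolution: it blows up $m=\bar{T}^2$ times along $\bar{T}$ to turn the proper transform into a $0$-curve, and then classifies the degenerate fibers of the resulting $\P^1$-fibration (via Lemma \ref{lem:fibrations-Sigma-chi}) to conclude directly that the minimal resolution is $\F_m$. You instead compute $h^0(\cO_{\bar{Y}}(\bar{T}))=n+2$ and invoke the Del Pezzo--Bertini classification of surfaces of minimal degree; this is shorter and conceptually clean, but it imports an external classification and leaves a few standard steps implicit (base-point freeness of $|\bar{T}|$, finiteness and birationality of the morphism onto a normal image, hence that it is an isomorphism). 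Be aware that the Veronese does survive your Picard-rank-one filter: it corresponds to $\bar{Y}\cong\P^2$ with $\bar{T}$ a conic, in which case $\bar{T}$ does \emph{not} map to the base of the cone structure on $\P^2$ --- but this imprecision is already in the statement, and the paper likewise dismisses $\P^2$ with ``the result is clear,'' so it is not a gap in any application. In part \ref{item:elliptic} your route to $\bar{T}\sim -K_{\bar{Y}}$ (numerical triviality of $K_{\bar{Y}}+\bar{T}$ from the orthogonal basis on $X$, then $h^0(\omega_{\bar{Y}}(\bar{T}))=1$ by Serre duality and killing the unique effective representative against the ample $\bar{T}$) is actually more explicit than the paper's one-line assertion; and your derivation of canonicity --- Gorenstein plus rational singularities via Lemma \ref{lem:rationality} implies du Val --- replaces the paper's argument that $|K_Y+T|\neq\emptyset$ by Riemann--Roch while $K_Y+T=-B^{\#}$ with $B^{\#}$ effective, forcing $B^{\#}=0$. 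The reducible case is handled the same way in both proofs. What your approach buys is independence from the fibration machinery of Section 2C; what the paper's buys is self-containedness and the explicit model $Y\cong\F_m$ that is reused verbatim later.
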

\begin{proof}
	We can assume $\bar{Y}\not\cong \P^2$, otherwise the result is clear. Let $\pi\colon Y\to \bar{Y}$ be the minimal resolution, $B=\Exc\pi$ and $T=\pi^{-1}_{*}\bar{T}$. Since $\bar{T}\subseteq \bar{Y}\reg$, $\pi$ is an isomorphism near $T$, in particular $T\cong \bar{T}$.
	
	\ref{item:P1} Since $\bar{T}\subseteq\bar{Y}\reg$ and $\rho(\bar{Y})=1$, the number $m=\bar{T}^2$ is a  positive integer. It is enough to prove that $Y$ is isomorphic to the Hirzebruch surface $\F_m$, see \cite[Example V.2.11.4]{Hartshorne_AG}.
	
	Blow up $m$ times at some point $p\in T$ and its infinitely near points on the proper transforms of $T$. Denote the resulting morphism by $\phi$. Put $B'=\phi^{-1}_{*}B$, $T'=\phi^{-1}_{*}T$ and write $(\phi^{*}T)\redd=T'+H+V$, where $H=[1]$ and $V=[(2)_{m-1}]$. Note that $\phi$ is an isomorphism near $B'$. The linear system $|T'|$ induces a $\P^{1}$-fibration such that $H$ is a $1$-section and $B'+V$ is vertical. If there are no degenerate fibers then $B'+V=0$, so $m=1$, $Y=\P^2$ and $T$ is a line, as needed. Let $F$ be a degenerate fiber. By Lemma \ref{lem:fibrations-Sigma-chi} $F\redd$ has a unique component off $B'+V$, say $L_{F}$. Since $B'+V$ contains no $(-1)$-curves, $L_{F}$ is the unique $(-1)$-curve in $F$, so it has multiplicity at least $2$ in $F$, and thus $L_{F}\cdot H=0$. Since $B'\cdot H=0$,  $L_{F}$ meets $V$ and $B'$. Hence there is only one degenerate fiber, supported on $B'+L_{F}+V=[m,1,(2)_{m-1}]$. Thus $B=[m]$, so $Y=\F_{m}$, as claimed.
	
	\ref{item:elliptic} Since $\rho(\bar{Y})=1$, the condition $p_{a}(\bar{T})=0$ implies that $\bar{T}\in |-K_{\bar{Y}}|$, so  $\bar{Y}$ is del Pezzo. 
	
	Assume that $T$ is reducible. Then $T$ has a component $T_0$ with $p_a(T_0)=0$. Since $Y\not\cong \P^2$,  part \ref{item:P1} shows that $Y$ is a Hirzebruch surface $\F_{m}$ for some $m\geq 2$, and $T_0$ is disjoint from the $(-m)$-curve $\Sec_m$. Since $\bar{T}\subseteq \bar{Y}\reg$, $T$ is disjoint from $\Sec_m$, too, so  $T\in |kT_0|$ for some $k\geq 1$. Now $0=2p_{a}(T)-2=k(m(k-1)-2)$, so $k=m=2$. We conclude that $Y\cong \F_{2}$, so $\bar{Y}$ is a quadric cone, as needed.
	
	Assume now that $T$ is irreducible. Since $\bar{Y}$ is del Pezzo, we have $\kappa(Y)=-\infty$, so $Y$ admits a $\P^1$-fibration onto some curve $C$. Since the curve $T$ is singular, it is horizontal, and since $T$ is rational, so is the base $C$. It follows that $Y$ is a rational surface, so by Riemann--Roch $\dim |K_{Y}+T|\geq p_{a}(T)-1=0$. Thus $|K_{Y}+T|\neq \emptyset$.
	
	Because the resolution $\pi$ is minimal, we have  $\pi^{*}K_{\bar{Y}}=K_{Y}+B^{\#}$ for some effective $\Q$-divisor $B^{\#}$ supported on $B$. Since $T=\pi^{*}\bar{T}=-\pi^{*}K_{\bar{Y}}$, we get  $K_{Y}+T=-B^{\#}$. Thus  $B^{\#}=0$, so $\bar{Y}$ is canonical, as needed.
\end{proof}

\subsection{Representing families and moduli dimension}\label{sec:moduli} 
\noindent
We now recall the definition of \emph{moduli dimension} from \cite[Definitions 1.9, 1.10]{PaPe_ht_2} used in Proposition~\ref{prop:canonical}\ref{item:canonical-uniqueness}. 

Let $f\colon \cX\to B$  be a smooth surjective morphism between smooth connected varieties, and let $\cD$ be a reduced divisor on $\cX$ such that the restriction of $f$ to each component of $\cD$ is a smooth and surjective morphism with irreducible fibers. Let $\operatorname{Fib}(f)$ be the set of isomorphism classes of fibers $(X_{b},D_{b})\de(f^{-1}(b),\cD|_{f^{-1}(b)})$. We say that $f$ is \emph{almost faithful} if $f$ is equivariant with respect to the action of some finite group $G$ such that two fibers $(X_b,D_b)$ and $(X_{b'},D_{b'})$ are isomorphic if and only if $b$ and $b'$ lie in the same orbit. The image of $G$ in $\Aut(B)$ is called the \emph{symmetry group} of $f$. We say that $f$ is \emph{almost universal} if it is almost faithful and furthermore the formal germ of $f$ at each $b\in B$ is a semiuniversal deformation of $(X_b,D_b)$, see \cite[Definition 2.2.6]{Sernesi_deformations}. 

We say that a set $\cC$ consisting of isomorphism classes of log surfaces \emph{has moduli dimension $d$} if there is an almost universal family $f\colon (\cX,\cD)\to B$ with $\dim B=d$ which \emph{represents} $\cC$, i.e.\  has $\cC=\operatorname{Fib}(f)$. In this case $d=h^{1}(\lts{X_b}{D_b})$ for any $b\in B$, cf.\ \cite[Proposition 3.4.17]{Sernesi_deformations}. 
More generally, if $\bar{\cC}$ consists of some isomorphism classes of log surfaces with reduced boundaries, we say that $\bar{\cC}$ has moduli dimension $d$ if so does the set of isomorphism classes of minimal log  resolutions of log surfaces in $\bar{\cC}$. 
\smallskip

In this article, we construct almost faithful families representing the sets $\Pcusp(\cS)$ of isomorphism classes of log surfaces $(\bar{Y},\bar{T})$, where $\bar{Y}$ is a canonical surface of rank $1$ and singularity type $\cS$, and $\bar{T}\subseteq \bar{Y}\reg$ is a cuspidal member of $|-K_{\bar{Y}}|$. If $\#\Pcusp(\cS)=1$ then the base of such a family is a point, otherwise their bases and symmetry groups are listed in Table \ref{table:symmetries} below. The constructions of these families are based on the constructions of surfaces $\bar{Y}$ which have already appeared in the literature, e.g.\ in \cite{KN_Pathologies}.

\begin{table}[htbp]
{\renewcommand{\arraystretch}{1.3}
\begin{tabular}{r||c|c|c|c|c|c}
	$\cha\kk$ & 3 & \multicolumn{5}{c}{$2$} \\ \hline 
	type $\cS$ of $\bar{Y}$ & $4\rA_2$ & $3\rA_1+\rD_4$ & $2\rA_1+\rD_6$ & $7\rA_1$ & $4\rA_1+\rD_4$ & $8\rA_1$  \\
	 \hline 
	 base $B$ & $\P^1\setminus \P^1_{\F_3}$ & \multicolumn{2}{c|}{$\P^1\setminus \P^1_{\F_2}$} & \multicolumn{2}{c|}{$ \P^2\setminus \{\mbox{all } \F_2\mbox{-rational lines}\}$} & $3$-fold \eqref{eq:3fold}  \\ \hline 
	 symmetry group	& $\PGL_{2}(\F_{3})$ 
	& $S_3$  & $\Z/2$ & $\PGL_{3}(\F_2)$ & $S_4$ & $\PGL_{3}(\F_2)$ \\ \hline
	construction & Remark \ref{rem:4A2-alternative} & \multicolumn{2}{c|}{\hyperref[moduli]{Proof of \ref{prop:canonical} in \sec \ref{sec:relatives}}} & Lemma \ref{lem:7A1}\ref{item:7A1_cuspidal} & Lemma \ref{lem:4A1+D4}\ref{item:4A1+D4_cuspidal} & Lemma \ref{lem:8A1}\ref{item:8A1_cuspidal} 
\end{tabular}
}
\caption{Almost universal families representing the sets $\Pcusp(\cS)$ from Proposition \ref{prop:canonical}\ref{item:canonical-uniqueness}.}
\label{table:symmetries}
\end{table}

%%%%%%%%%%%%%%%%%%%%%%%%%%%%%%%%%%%%

\section{Improvements of the theorem of Miyanishi--Tsunoda}\label{sec:MT}

We now state our first main result, Proposition \ref{prop:MT_smooth}, and derive its consequences, such as Theorem \ref{thm:MT-version} and Corollary \ref{cor:Russell}. The proof will be given in Section \ref{sec:MT_proof}. We study minimal models of log smooth surfaces of negative Kodaira dimension whose boundary does not contract to log terminal singularities, e.g.\ log smooth completions of open del Pezzo surfaces as in Theorem \ref{thm:MT-version}. We bound their height and precisely describe their geometry.

Recall that a log surface $(\bar{X},\bar{D})$ of negative Kodaira dimension is \emph{minimal} if and only if it is a log Mori fiber space, i.e.\ $-(K_{\bar{X}}+\bar{D})$ is $f$-ample for some morphism $f\colon \bar{X}\to B$ of relative Picard rank one with $\dim B\leq 1$. 

To formulate our result, it will be convenient to distinguish the following types of degenerate fibers.

\begin{definition}[Some particular types of degenerate fibers in case $\height=2$, see Figure \ref{fig:fibers}] \label{def:fibers}
	Let $(X,D)$ be a log smooth surface with a $\P^{1}$-fibration such that $D\hor$ consists of two $1$-sections. We say that 
	\begin{enumerate}
		\item \label{item:F-columnar} $F$ is \emph{columnar} if $F\redd$ is a chain meeting $D\hor$ in both tips, such that $F\redd-L\subseteq D$, where $L$ is the unique $(-1)$-curve in $F\redd$. In this case, $F\redd=[T,1,T^{*}]$ for some admissible chain $T$, see Lemma \ref{lem:degenerate_fibers}\ref{item:columnar}.
		\item \label{item:F-quasi-columnar} $F$ is \emph{quasi-columnar} if either $F=[0]$ and $F$ is a component of $D$; or $F\redd+D\hor$ is a fork with exactly one component off $D$, which is the last tip of a vertical twig of type $[(2)_{s},1]$ for some $s\geq 0$ which does not meet $D\hor$. In this case, $F\redd=[(2)_{s},1,s+1]$ or $\langle s+2;T,[(2)_{s},1],T^{*}\rangle$ for some admissible chain $T$.
	\end{enumerate}
\end{definition}
\begin{figure}[htbp]
	\vspace{-1em}
%	\subcaptionbox{columnar fiber \label{fig:fiber-columnar}}[.3\textwidth]
%	{
		\begin{tikzpicture}[scale=0.6]
	%	\path[use as bounding box] (-1,0) rectangle (1,4.6);
\begin{scope}
			\draw (-1,0) -- (1,0);
			\draw (0,-0.2) -- (0.2,0.8);
			\node[left] at (0.3,1) {$\vdots$};
			\draw (0.2,1) -- (0,2);
			\draw [decorate, decoration = {calligraphic brace}, thick] (-0.2,-0.2) --  (-0.2,1.9);
			\node[rotate=90] at (-0.55,0.9) {\small{$T^{*}$}}; 
			\draw[dashed] (0,1.8) -- (0.2,3);
			\node[left] at (0.1,2.5) {\small{$-1$}};
			\draw (0.2,2.8) -- (0,3.8);
			\node[right] at (-0.1,4) {$\vdots$};
			\draw (0,4) -- (0.2,5);
			\draw [decorate, decoration = {calligraphic brace}, thick] (-0.2,2.9) --  (-0.2,5);
			\node[rotate=90] at (-0.55,3.9) {\small{$T$}};
			\draw (-1,4.8) -- (1,4.8);
			\node[left] at (-1,2.5) {columnar fiber:};
	\end{scope}
%	}
%	\subcaptionbox{quasi-columnar fibers \label{fig:fiber-q-columnar}}[.6\textwidth]
%	{
%		\begin{tikzpicture}[scale=0.6]
	%	\path[use as bounding box] (-1,0) rectangle (9.8,4.6);
		\begin{scope}[shift={(10,0)}]
		\node[left] at (-1,2.5) {quasi-columnar fibers:};	
			\draw (-1,0) -- (1,0);
			\draw (0,-0.2) -- (0,5);
			\node[left] at (0.1,2.5) {\small{$0$}};
			\draw (-1,4.8) -- (1,4.8);
		\end{scope}
		\begin{scope}[shift={(15,0)}]
			\draw (-1,0) -- (1,0);
			\draw (0,-0.2) -- (0,5);
			\node[left] at (0.1,2.5) {\small{$-s-1$}};
			\draw[dashed] (-0.1,2.1) -- (1.8,1.9);
			\node at (0.8,2.3) {\small{$-1$}};
			\draw (1.6,1.7) -- (1.8,2.7);
			\node[left] at (1.9,2.9) {$\vdots$};
			\node[rotate=90] at (2.15,2.8) {\small{$[(2)_{s}]$}};
			\draw (1.8,2.9) -- (1.6,3.9);  
			\draw (-1,4.8) -- (1,4.8);
		\end{scope}
		\begin{scope}[shift={(20,0)}]
			\draw (-1,0) -- (1,0);
			\draw (0,-0.2) -- (0.2,0.8);
			\node[left] at (0.3,1) {$\vdots$};
			\draw (0.2,1) -- (0,2);
			\draw [decorate, decoration = {calligraphic brace}, thick] (-0.2,-0.2) --  (-0.2,1.9);
			\node[rotate=90] at (-0.55,0.9) {\small{$T^{*}$}}; 
			\draw (0,1.8) -- (0.2,3);
			\node[left] at (0.2,2.5) {\small{$-s-2$}};
			\draw (0.2,2.8) -- (0,3.8);
			\node[right] at (-0.1,4) {$\vdots$};
			\draw (0,4) -- (0.2,5);
			\draw [decorate, decoration = {calligraphic brace}, thick] (-0.2,2.9) --  (-0.2,5);
			\node[rotate=90] at (-0.55,3.9) {\small{$T$}};
			\draw[dashed] (-0.1,2.2) -- (1.8,1.9);
			\node at (0.9,2.3) {\small{$-1$}};
			\draw (1.6,1.7) -- (1.8,2.7);
			\node[left] at (1.9,2.9) {$\vdots$};
			\node[rotate=90] at (2.15,2.8) {\small{$[(2)_{s}]$}};
			\draw (1.8,2.9) -- (1.6,3.9);
			\draw (-1,4.8) -- (1,4.8);
		\end{scope}
		\end{tikzpicture}
\vspace{-0.5em}
	\caption{Degenerate fibers in Definition \ref{def:fibers}.}\vspace{-1em}
	\label{fig:fibers}
\end{figure}
\begin{proposition}[{Description of almost minimal log surfaces, generalizing \cite{Miy_Tsu-opendP}, see Figures \ref{fig:MT-ht=2-1}--\ref{fig:MT_exceptions}}]\label{prop:MT_smooth}
	Let $(\bar{X},\bar{D})$ be a minimal dlt log surface with reduced boundary, such that $\kappa(K_{\bar{X}}+\bar{D})=-\infty$ and $\bar{D}\neq 0$. Let $(X,D)$ be the minimal log resolution of $(\bar{X},\bar{D})$. Then one of the following holds.
	\begin{longlist}
		\item \label{item:MT-P2} $X=\P^2$ and $\deg D\leq 2$.  
		\item \label{item:MT-ht=1} $\height(\bar{X},\bar{D})=1$. 
		\item \label{item:MT-ht=2} $\height(\bar{X},\bar{D})=2$, for some witnessing $\P^{1}$-fibration the horizontal part of $D$ consists of two sections $H_{-},H_{+}$ 
		and one of the following holds, where $F_{1},\dots, F_{\nu}$ are all fibers which are degenerate or contained in $D$.
		\begin{longlist}
			\item \label{item:MT-columnar} All degenerate fibers are columnar, $H_{-}\cdot H_{+}\leq 1$ and  $\nu\leq 2-H_{-}\cdot H_{+}$, see Fig.\ \ref{fig:MT-columnar-nu=0}, \ref{fig:MT-columnar},  \ref{fig:MT-columnar-non-min-1}, \ref{fig:MT-columnar-non-min-2}.
			\item \label{item:MT-q-columnar} $\nu\leq 2$, $H_{-}\cdot H_{+}=0$, $F_{1}$ is quasi-columnar and, if $\nu=2$, $F_{2}$ is columnar, see Fig.\ \ref{fig:MT-q-columnar-nu=0}, \ref{fig:MT-q-columnar}, \ref{fig:MT-q-columnar-non-min-1}, \ref{fig:MT-q-columnar-non-min-2}.
			\item \label{item:MT-3-fibers} $\nu=3$, $H_{-}\cdot H_{+}=0$; $F_{1}=[1,(2)_{m+r-3},1]$ for some $r\geq 2$, $F_{2}$ is columnar, and $F_{3}=[0]$, see Fig.\ \ref{fig:open_dP_non-min-3}. 
			\item \label{item:MT-Platonic} $(X,D)$ is a \emph{Platonic $\A^{1}_{*}$-fiber space} as in \cite{Miy_Tsu-opendP}, that is, $\nu=3$, $H_{-}\cdot H_{+}=0$, each $F_{i}$ is columnar and we have  $(F_{i})\redd=[T_{i},1,T_{i}^{*}]$ for admissible chains $T_{i}$ such that   $\sum_{i=1}^{3}\frac{1}{d(T_i)}>1$, see Fig.\ \ref{fig:MT-Platonic}.
		\end{longlist}
Moreover, $H_{-}=[m]$ for some $m\geq 2$. In cases \ref{item:MT-columnar}, \ref{item:MT-q-columnar},   \ref{item:MT-Platonic} we have  $H_{+}=[\nu'-m-2H_{-}\cdot H_{+}]$, where $\nu'$ is the number of degenerate fibers meeting $H_{+}$, $H_{-}$ in different components. In case  \ref{item:MT-3-fibers} we have $H_{+}=[r]$. 
	\item \label{item:MT-ht=2-cha=2} $\cha\kk=2$, $\height(\bar{X},\bar{D})=2$, and $(X,D)$ is as in Example \ref{ex:cha=2_ht=2}. In particular, $X$ admits a $\P^1$-fibration $p$ such that the following hold, see Figure \ref{fig:MT-ht=2_cha=2} for an example.
	\begin{enumerate}
		\item The horizontal part of $D$ is a $2$-section and meets each fiber in exactly one point.
		\item There is exactly one fiber contained in $D$. It is supported on a chain $[2,1,2]$.
		\item There are at least three degenerate fibers, all but one supported on $[2,1,2]$ or $\langle 2;[2],[2],[1,2,\dots, 2]\rangle$.
		\item For the remaining fiber $F$, one of the following holds: either $F\redd=[2,2,1,3]$, and $F$ meets $D\hor$ in the second component; or $F\redd=T_1+T_2+B$ where $T_1,T_2=[2]$ are twigs, and $B$ is a chain with exactly one $(-1)$-curve such that $B$ meets $D\hor$ in one tip, and $T_1,T_2$ in the other.
	\end{enumerate}
	\item \label{item:MT-exception} $\cha\kk=2$, $\height(\bar{X},\bar{D})=3$, and $(X,D)$ is as in Example \ref{ex:exception}, so  
	one of the following holds:
\begin{longlist}
	\item\label{item:MT-exc-1} $D=\langle \bs{1};[2,2\dec{1},2],[2]\dec{2},[3]\dec{3}\rangle+[2,2\dec{3},\ub{3}\dec{1,2}]+[2]\dec{2}$, see Figure \ref{fig:exception-1},
	\item\label{item:MT-exc-2} $D=\langle \bs{1};[2\dec{1},3],[2]\dec{2},[3]\dec{3}\rangle+[2,3\dec{1},\ub{2}\dec{2},2\dec{3},2]+[2]\dec{2}$, see Figure \ref{fig:exception-2},
\end{longlist}
where the branching $(-1)$-curves are $1$-sections, underlined numbers refer to $2$-sections, and numbers decorated by $\dec{i}$ refer to components meeting the $i$-th vertical $(-1)$-curve.
	\item \label{item:MT-eliptic-relative} The log surface $(X,D)$ is a minimal log resolution of $(\bar{Y},\bar{T})$, where $\bar{Y}$ is a canonical surface of rank one, and $\bar{T}\subseteq \bar{Y}\reg$ is a cuspidal member of $|-K_{\bar{Y}}|$, see Proposition \ref{prop:canonical}. Moreover, one of the following holds.
	\begin{longlist}
		\item\label{item:MT-elipitic-relative-5} $\cha\kk=5$, $\height(\bar{X},\bar{D})=4$ and $\bar{Y}$ is of type $2\rA_4$,
		\item\label{item:MT-elipitic-relative-3} $\cha\kk=3$, $\height(\bar{X},\bar{D})=4$ or $6$ and $\bar{Y}$ is of type $3\rA_2$ or $4\rA_2$, respectively,
		\item\label{item:MT-elipitic-relative-2} $\cha\kk=2$, and either $\height(\bar{X},\bar{D})=3$ and $\bar{Y}$ is of type $\rA_1+\rA_5$, $\rA_7$, $3\rA_1+\rD_4$, $2\rA_1+\rD_6$, $\rA_1+\rD_6$, $\rD_8$, $\rA_1+\rE_7$; or $\height(\bar{X},\bar{D})=4$ and $\bar{Y}$ is of type $7\rA_1$, $4\rA_1+\rD_4$; or $\height(\bar{X},\bar{D})=6$ and $\bar{Y}$ is of type $8\rA_1$. 
	\end{longlist}
	\end{longlist}
\end{proposition}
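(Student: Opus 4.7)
The plan is to leverage the Mori fiber space structure provided by the minimality hypothesis, together with a careful analysis of $\P^1$-fibrations on the minimal log resolution $(X,D)$. Since $(\bar X,\bar D)$ is a minimal dlt log surface with $\kappa(K_{\bar X}+\bar D)=-\infty$ and $\bar D\ne 0$, the MMP for dlt log surfaces recalled in Section \ref{sec:singularities} provides a morphism $f\colon \bar X\to B$ of relative Picard rank one with $\dim B\leq 1$ and $-(K_{\bar X}+\bar D)$ is $f$-ample. I split the argument into the conic bundle case $\dim B=1$ and the log del Pezzo case $\dim B=0$.

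When $\dim B=1$, a general fiber $F\cong\P^1$ of $f$ satisfies $F\cdot\bar D\leq 1$ by adjunction together with the $f$-ampleness of $-(K_{\bar X}+\bar D)$. Pulling back $f$ along the minimal log resolution $\pi\colon X\to\bar X$ produces a $\P^1$-fibration $p\colon X\to B$ whose general fiber still meets $D$ in at most one point, since the $\pi$-exceptional divisor is $p$-vertical. Hence $\height(\bar X,\bar D)\leq 1$, putting the pair in case \ref{item:MT-ht=1}.

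When $\dim B=0$, the pair $(\bar X,\bar D)$ is a log del Pezzo surface of rank one with nonzero boundary; if $\bar X\cong\P^2$ then ampleness of $-(K_{\P^2}+\bar D)$ forces $\deg\bar D\leq 2$, giving \ref{item:MT-P2}. In the remaining subcases, following Miyanishi--Tsunoda, the strategy is to construct a rational $0$-curve in $X$ meeting $D$ in at most two points, whose linear system then induces a $\P^1$-fibration witnessing $\height(\bar X,\bar D)\leq 2$. When $D$ has a twig which is not negative definite, Lemma \ref{lem:twig} yields such a curve directly; otherwise, the log del Pezzo condition together with the classification of possible singularity types is exploited to produce one. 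Once such a fibration $p\colon X\to\P^1$ is in hand, Lemmas \ref{lem:degenerate_fibers} and \ref{lem:fibrations-Sigma-chi}, together with the dlt and minimality hypotheses, enumerate the possible degenerate fibers and horizontal configurations, giving cases \ref{item:MT-columnar}--\ref{item:MT-Platonic} when $D\hor$ consists of two sections and case \ref{item:MT-ht=2-cha=2} when $D\hor$ is a single $2$-section, which can only occur in characteristic $2$. The self-intersections of the sections $H_\pm$ are then obtained by contracting the columnar and quasi-columnar fibers down to a Hirzebruch surface.

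The main obstacle is showing that if no $\P^1$-fibration of height at most $2$ exists, then $(X,D)$ is one of the exceptional pairs in \ref{item:MT-exception} or \ref{item:MT-eliptic-relative}. The crucial structural step is to prove that in this situation $(X,D)$ must be the minimal log resolution of some canonical pair $(\bar Y,\bar T)$ with $\bar Y$ of rank one and $\bar T\in|-K_{\bar Y}|$ a rational anti-canonical curve contained in $\bar Y\reg$. Once this is established, Proposition \ref{prop:canonical} together with Table \ref{table:canonical} restricts the admissible singularity types and forces $\cha\kk\in\{2,3,5\}$, apart from the two characteristic-$2$ exceptional pairs of Example \ref{ex:exception} which must be constructed and recognized by direct combinatorial analysis of the dual graph of $D$. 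Case \ref{item:MT-ht=2-cha=2} is handled in parallel, with an explicit description furnished by Example \ref{ex:cha=2_ht=2}. The genuine difficulty throughout lies in the small-characteristic behaviour, where the standard genericity arguments break down and the pathological configurations arising from inseparable phenomena must be identified, constructed, and shown to produce exactly the listed exceptions.
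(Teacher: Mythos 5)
Your outline correctly identifies the top-level trichotomy (conic bundle over a curve giving \ref{item:MT-ht=1}, the $\P^2$ case, and the rank-one log del Pezzo case) and correctly locates where the difficulty sits. But the proposal has a genuine gap at exactly the point you flag as ``the crucial structural step'': you assert that if no $\P^1$-fibration of height at most $2$ exists then $(X,D)$ must be the minimal log resolution of a canonical pair $(\bar Y,\bar T)$ with $\bar T\in|-K_{\bar Y}|$ cuspidal, and then say ``once this is established\dots'' without establishing it. Nothing in your sketch produces this reduction, and it is not a routine consequence of the dlt and minimality hypotheses. The paper's proof runs through a specific combinatorial dichotomy (its Lemma \ref{lem:Miy_computation}): writing $R$ for the proper transform of $\bar D$ and $D_0$ for the connected component of $D$ containing it, ampleness of $-(K_{\bar X}+\bar D)$ forces either $D_0$ to be a chain with $\#\bar D\le 2$, or $D_0$ to be a fork $\langle\, \cdot\,;T_1,T_2,T_3\rangle$ with branching component $R$, $R^2\ge -1$ and $\delta_{D_0}>1$. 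The chain case yields \ref{item:MT-columnar}--\ref{item:MT-3-fibers}; in the fork case one of the twigs is $[2]$, and the engine of the argument is a two-ray game (Lemma \ref{lem:dual}): extracting that twig over $\bar X$ and contracting the opposite extremal ray produces a ``dual'' curve $A$ that is either a $0$-curve or a $(-1)$-curve with tightly constrained intersections against $D$, computed via the coefficient bounds of Lemma \ref{lem:cf}. Iterating this either produces a witnessing fibration of height $2$ (Lemma \ref{lem:Sigma-0}, the Platonic case) or pins $D_0$ down to $\langle 1;[k],[3],[2]\rangle$, at which point contracting $D-T_1$ is precisely the resolution of a cusp of an anticanonical curve on a canonical surface, and Proposition \ref{prop:canonical} takes over. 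Your proposal contains neither the chain/fork dichotomy nor the dual-curve mechanism, so the enumeration of cases \ref{item:MT-ht=2-cha=2}--\ref{item:MT-eliptic-relative} is not actually derived.

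A secondary issue: your route to the height-$\le 2$ fibration via Lemma \ref{lem:twig} does not work as stated. In the rank-one case every twig of $D$ other than the component containing $R$ is negative definite (it contracts to a log terminal singularity), so the hypothesis of Lemma \ref{lem:twig} is rarely available, and even when it is, that lemma produces an $\A^1$-ruling of the open part after birational modifications that need not respect the minimal log resolution, whereas the statement requires a fibration of $X$ itself with the precise fiber types of Definition \ref{def:fibers}. The self-intersection formulas for $H_\pm$ and the fiber classification in \ref{item:MT-ht=2} require the explicit contractions to Hirzebruch surfaces carried out in Lemmas \ref{lem:n=1} and \ref{lem:n=2}, which again rest on the chain/fork dichotomy you omit.
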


\begin{figure}[htbp]
	\subcaptionbox{\ref{prop:MT_smooth}\ref{item:MT-columnar}, $\nu=0$
		\label{fig:MT-columnar-nu=0}}[0.6\linewidth]
	{
		\begin{tikzpicture}[scale=0.8]
			\begin{scope}
			\path[use as bounding box] (0,-0.5) rectangle (2.3,1.3);
			\draw (0,0) -- (2.2,0);
			\node at (1,-0.3) {\small{$m$}};
			\draw (0,1) -- (2.2,1);
			\node at (1,1.3) {\small{$-m$}};
		\end{scope}
	\begin{scope}[shift={(4,0)}]
			\path[use as bounding box] (0,-0.5) rectangle (2.3,1.3);
			\draw (0,0) -- (1.5,0) to[out=0,in=180] (2.2,0.6);
			\node at (1,-0.3) {\small{$m+2$}};
			\draw (0,1) -- (1.5,1) to[out=0,in=180] (2.2,0.4);
			\node at (1,1.3) {\small{$-m$}};
	\end{scope}
		\end{tikzpicture}
	}
	\subcaptionbox{\ref{prop:MT_smooth}\ref{item:MT-q-columnar}, $\nu=1$ 
		\label{fig:MT-q-columnar-nu=0}}[0.3\linewidth]
	{
		\begin{tikzpicture}[scale=0.8]
			\path[use as bounding box] (0,-0.5) rectangle (2.3,1.3);
			\draw (0,0) -- (2.2,0);
			\node at (1,-0.3) {\small{$m$}};
			\draw (0,1) -- (2.2,1);
			\node at (1,1.3) {\small{$-m$}};
			\draw (2,-0.2) -- (2,1.2);
			\node at (1.8,0.5) {\small{$0$}};
		\end{tikzpicture}
	}
\smallskip

\subcaptionbox{\ref{prop:MT_smooth}\ref{item:MT-columnar}, $\nu=1$
	\label{fig:MT-columnar}}
[.28\linewidth]
{
	\begin{tikzpicture}[scale=0.8]
		\path[use as bounding box] (-0.7,-0.3) rectangle (2.5,5.2);
		\draw (-0.1,0) -- (1.4,0) to[out=0,in=180] (2.5,2.5);
		\node at (1,-0.3) {\small{$m+1$}};
		\draw (0,-0.2) -- (0.2,0.8);
		\node at (0.1,1) {$\vdots$};
		\draw (0.2,1) -- (0,2);
		\draw [decorate, decoration = {calligraphic brace}, thick] (-0.2,-0.2) --  (-0.2,1.9);
		\node[rotate=90] at (-0.5,0.9) {\small{$T^{*}$}}; 
		\draw[dashed] (0,1.8) -- (0.2,3);
		\node at (-0.2,2.5) {\small{$-1$}};
		\draw (0.2,2.8) -- (0,3.8);
		\node at (0.1,4) {$\vdots$};
		\draw (0,4) -- (0.2,5);
		\draw [decorate, decoration = {calligraphic brace}, thick] (-0.2,2.9) --  (-0.2,5);
		\node[rotate=90] at (-0.5,3.9) {\small{$T$}};
		\draw (-0.1,4.8) -- (1.4,4.8) to[out=0,in=180] (2.5,2.3);
		\node at (1,5) {\small{$-m$}};
	\end{tikzpicture}
}
\subcaptionbox{\ref{prop:MT_smooth}\ref{item:MT-q-columnar}, $\nu=2$ 
	\label{fig:MT-q-columnar}}
[.28\linewidth]
{
	\begin{tikzpicture}[scale=0.8]
		\path[use as bounding box] (-0.7,-0.3) rectangle (2,5.2);
		\draw (-0.1,0) -- (2,0);
		\node at (1,-0.3) {\small{$m-1$}};
		\draw (0,-0.2) -- (0.2,0.8);
		\node at (0.1,1) {$\vdots$};
		\draw (0.2,1) -- (0,2);
		\draw [decorate, decoration = {calligraphic brace}, thick] (-0.2,-0.2) --  (-0.2,1.9);
		\node[rotate=90] at (-0.5,0.9) {\small{$T^{*}$}}; 
		\draw[dashed] (0,1.8) -- (0.2,3);
		\node at (-0.2,2.5) {\small{$-1$}};
		\draw (0.2,2.8) -- (0,3.8);
		\node at (0.1,4) {$\vdots$};
		\draw (0,4) -- (0.2,5);
		\draw [decorate, decoration = {calligraphic brace}, thick] (-0.2,2.9) --  (-0.2,5);
		\node[rotate=90] at (-0.5,3.9) {\small{$T$}};
		\draw (-0.1,4.8) -- (2,4.8);
		\node at (1,5) {\small{$-m$}};
		\draw (1.9,-0.2) -- (1.9,5);
		\node at (1.7,2.5) {\small{$0$}};
	\end{tikzpicture}
}
	\subcaptionbox{\ref{prop:MT_smooth}\ref{item:MT-Platonic}: Platonic $\A^{1}_{*}$-fiber space \cite{Miy_Tsu-opendP} \label{fig:MT-Platonic}}
	[.4\linewidth]
	{
		\begin{tikzpicture}[scale=0.8]
			\path[use as bounding box] (-1.4,-0.3) rectangle (4.5,5.2);
			\draw (-0.7,0) -- (2.8,0);
			\node at (0.3,-0.3) {\small{$m-3$}};
			\draw (-0.6,-0.2) -- (-0.4,0.8);
			\node at (-0.5,1) {$\vdots$};
			\draw (-0.4,1) -- (-0.6,2);
			\draw [decorate, decoration = {calligraphic brace}, thick] (-0.8,-0.2) --  (-0.8,1.9);
			\node[rotate=90] at (-1.15,0.9) {\small{$T_1^{*}$}}; 
			\draw[dashed] (-0.6,1.8) -- (-0.4,3);
			\node at (-0.8,2.5) {\small{$-1$}};
			\draw (-0.4,2.8) -- (-0.6,3.8);
			\node at (-0.5,4) {$\vdots$};
			\draw (-0.6,4) -- (-0.4,5);
			\draw [decorate, decoration = {calligraphic brace}, thick] (-0.8,2.9) --  (-0.8,5);
			\node[rotate=90] at (-1.15,3.9) {\small{$T_{1}$}};
			\draw (1,-0.2) -- (1.2,0.8);
			\node at (1.1,1) {$\vdots$};
			\draw (1.2,1) -- (1,2);
			\draw [decorate, decoration = {calligraphic brace}, thick] (0.8,0.1) --  (0.8,1.9);
			\node[rotate=90] at (0.45,0.9) {\small{$T_2^{*}$}}; 
			\draw[dashed] (1,1.8) -- (1.2,3);
			\node at (0.8,2.5) {\small{$-1$}};
			\draw (1.2,2.8) -- (1,3.8);
			\node at (1.1,4) {$\vdots$};
			\draw (1,4) -- (1.2,5);
			\draw [decorate, decoration = {calligraphic brace}, thick] (0.8,2.9) --  (0.8,4.7);
			\node[rotate=90] at (0.45,3.9) {\small{$T_{2}$}};
			\draw (2.4,-0.2) -- (2.6,0.8);
			\node at (2.5,1) {$\vdots$};
			\draw (2.6,1) -- (2.4,2);
			\draw [decorate, decoration = {calligraphic brace}, thick] (2.8,1.9) -- (2.8,-0.2);
			\node[rotate=90] at (3.15,0.9) {\small{$T_3^{*}$}}; 
			\draw[dashed] (2.4,1.8) -- (2.6,3);
			\node at (2.2,2.5) {\small{$-1$}};
			\draw (2.6,2.8) -- (2.4,3.8);
			\node at (2.5,4) {$\vdots$};
			\draw (2.4,4) -- (2.6,5);
			\draw [decorate, decoration = {calligraphic brace}, thick] (2.8,5) -- (2.8,2.9);
			\node[rotate=90] at (3.15,3.9) {\small{$T_{3}$}};
			\draw (-0.7,4.8) -- (2.8,4.8);
			\node at (0.3,5) {\small{$-m$}};
			\node[right] at (3,2.4) {\small{$\sum_{i}\frac{1}{d(T_i)}>1$}};
		\end{tikzpicture}
	}
\vspace{-2em}
\caption{Proposition \ref{prop:MT_smooth}\ref{item:MT-ht=2}: almost minimal surfaces occurring if $\cha\kk\not\in \{2,3,5\}$, part 1.}
\vspace{1em}
\label{fig:MT-ht=2-1}
\end{figure}

\begin{figure}[htbp]
		\subcaptionbox{\ref{prop:MT_smooth}\ref{item:MT-columnar}, $\nu=1$ \label{fig:MT-columnar-non-min-1}}
[.25\linewidth]
{
	\begin{tikzpicture}[scale=0.8]
		\path[use as bounding box] (-0.6,-0.3) rectangle (1.8,5);
		\draw (-0.1,0) -- (1.6,0);
		\node at (0.75,-0.3) {\small{$m-1$}};
		\draw (0,-0.2) -- (0.2,0.8);
		\node at (0.1,1) {$\vdots$};
		\draw (0.2,1) -- (0,2);
		\draw [decorate, decoration = {calligraphic brace}, thick] (-0.2,-0.2) --  (-0.2,1.9);
		\node[rotate=90] at (-0.55,0.9) {\small{$T^{*}$}}; 
		\draw[dashed] (0,1.8) -- (0.2,3);
		\node at (-0.2,2.5) {\small{$-1$}};
		\draw (0.2,2.8) -- (0,3.8);
		\node at (0.1,4) {$\vdots$};
		\draw (0,4) -- (0.2,5);
		\draw [decorate, decoration = {calligraphic brace}, thick] (-0.2,2.9) --  (-0.2,5);
		\node[rotate=90] at (-0.55,3.9) {\small{$T$}};
		\draw (-0.1,4.8) -- (1.7,4.8);
		\node at (0.75,5) {\small{$-m$}};
	\end{tikzpicture}
}	
	\subcaptionbox{\ref{prop:MT_smooth}\ref{item:MT-columnar}, $\nu=2$ \label{fig:MT-columnar-non-min-2}}
	[.25\linewidth]
	{
		\begin{tikzpicture}[scale=0.8]
			\path[use as bounding box] (-0.8,-0.3) rectangle (2.4,5);
			\draw (-0.1,0) -- (1.6,0);
			\node at (0.75,-0.3) {\small{$m-2$}};
			\draw (0,-0.2) -- (0.2,0.8);
			\node at (0.1,1) {$\vdots$};
			\draw (0.2,1) -- (0,2);
			\draw [decorate, decoration = {calligraphic brace}, thick] (-0.2,-0.2) --  (-0.2,1.9);
			\node[rotate=90] at (-0.55,0.9) {\small{$T_1^{*}$}}; 
			\draw[dashed] (0,1.8) -- (0.2,3);
			\node at (-0.2,2.5) {\small{$-1$}};
			\draw (0.2,2.8) -- (0,3.8);
			\node at (0.1,4) {$\vdots$};
			\draw (0,4) -- (0.2,5);
			\draw [decorate, decoration = {calligraphic brace}, thick] (-0.2,2.9) --  (-0.2,5);
			\node[rotate=90] at (-0.55,3.9) {\small{$T_{1}$}};
			\draw (1.4,-0.2) -- (1.6,0.8);
			\node at (1.5,1) {$\vdots$};
			\draw (1.6,1) -- (1.4,2);
			\draw [decorate, decoration = {calligraphic brace}, thick] (1.8,1.9) -- (1.8,-0.2);
			\node[rotate=90] at (2.15,0.9) {\small{$T_2^{*}$}}; 
			\draw[dashed] (1.4,1.8) -- (1.6,3);
			\node at (1.2,2.5) {\small{$-1$}};  
			\draw (1.6,2.8) -- (1.4,3.8);
			\node at (1.5,4) {$\vdots$};
			\draw (1.4,4) -- (1.6,5);
			\draw [decorate, decoration = {calligraphic brace}, thick] (1.8,5) -- (1.8,2.9);
			\node[rotate=90] at (2.15,3.9) {\small{$T_{2}$}};
			\draw (-0.1,4.8) -- (1.7,4.8);
			\node at (0.75,5) {\small{$-m$}};
		\end{tikzpicture}
	}
		\subcaptionbox{\ref{prop:MT_smooth}\ref{item:MT-q-columnar}, $\nu=1$ \label{fig:MT-q-columnar-non-min-1}}
[.4\linewidth]
{
	\begin{tikzpicture}[scale=0.8]
	\begin{scope}
		\path[use as bounding box] (-0.6,-0.3) rectangle (2,5);
		\draw (-0.1,0) -- (1.6,0);
		\node at (0.75,-0.3) {\small{$m$}};
		\draw (0,-0.2) -- (0,5);
		\node at (-0.4,2.5) {\small{$-r$}};
		\draw[dashed] (-0.1,2) -- (1.4,2);
		\node at (0.6,2.2) {\small{$-1$}};
		\draw (1.2,1.8) -- (1.4,2.8);
		\node at (1.3,3) {$\vdots$};
		\node[rotate=90] at (1.7,2.9) {\small{$[(2)_{r-1}]$}};
		\draw (1.4,3) -- (1.2,4);  
		\draw (-0.1,4.8) -- (1.7,4.8);
		\node at (0.75,5) {\small{$-m$}};
	\end{scope}
	\begin{scope}[shift={(5,0)}]
		\path[use as bounding box] (-0.8,-0.3) rectangle (1.8,5);
		\draw (-0.1,0) -- (1.6,0);
		\node at (0.75,-0.3) {\small{$m-1$}};
		\draw (0,-0.2) -- (0.2,0.8);
		\node at (0.1,1) {$\vdots$};
		\draw (0.2,1) -- (0,2);
		\draw [decorate, decoration = {calligraphic brace}, thick] (-0.2,-0.2) --  (-0.2,1.9);
		\node[rotate=90] at (-0.55,0.9) {\small{$T^{*}$}}; 
		\draw (0,1.8) -- (0.2,3);
		\node at (-0.2,2.5) {\small{$-r$}};
		\draw (0.2,2.8) -- (0,3.8);
		\node at (0.1,4) {$\vdots$};
		\draw (0,4) -- (0.2,5);
		\draw [decorate, decoration = {calligraphic brace}, thick] (-0.2,2.9) --  (-0.2,5);
		\node[rotate=90] at (-0.55,3.9) {\small{$T$}};
		\draw[dashed] (-0.1,2.2) -- (1.6,1.9);
		\node at (0.9,2.3) {\small{$-1$}};
		\draw (1.4,1.8) -- (1.6,2.8);
		\node at (1.5,3) {$\vdots$};
		\node[rotate=90] at (1.9,2.9) {\small{$[(2)_{r-2}]$}};
		\draw (1.6,3) -- (1.4,4);  
		\draw (-0.1,4.8) -- (1.7,4.8);
		\node at (0.75,5) {\small{$-m$}};
	\end{scope}
	\end{tikzpicture}
}

\subcaptionbox{\ref{prop:MT_smooth}\ref{item:MT-q-columnar}, $\nu=2$ \label{fig:MT-q-columnar-non-min-2}}
[.6\linewidth]
{
	\begin{tikzpicture}[scale=0.8]
		\begin{scope}
		\path[use as bounding box] (-0.7,-0.3) rectangle (3.3,5.2);
		\draw (-0.1,0) -- (1.6,0);
		\node at (0.75,-0.3) {\small{$m-1$}};
		\draw (0,-0.2) -- (0.2,0.8);
		\node at (0.1,1) {$\vdots$};
		\draw (0.2,1) -- (0,2);
		\draw [decorate, decoration = {calligraphic brace}, thick] (-0.2,-0.2) --  (-0.2,1.9);
		\node[rotate=90] at (-0.55,0.9) {\small{$T_1^{*}$}}; 
		\draw[dashed] (0,1.8) -- (0.2,3);
		\node at (-0.2,2.5) {\small{$-1$}};
		\draw (0.2,2.8) -- (0,3.8);
		\node at (0.1,4) {$\vdots$};
		\draw (0,4) -- (0.2,5);
		\draw [decorate, decoration = {calligraphic brace}, thick] (-0.2,2.9) --  (-0.2,5);
		\node[rotate=90] at (-0.55,3.9) {\small{$T_{1}$}};
		\draw (1.4,-0.2) -- (1.4,5);
		\node at (1,2.5) {\small{$-r$}};
		\draw[dashed] (1.3,2) -- (2.8,2);
		\node at (2,2.2) {\small{$-1$}};
		\draw (2.6,1.8) -- (2.8,2.8);
		\node at (2.7,3) {$\vdots$};
		\node[rotate=90] at (3.1,2.9) {\small{$[(2)_{r-1}]$}};
		\draw (2.8,3) -- (2.6,4);  
		\draw (-0.1,4.8) -- (1.7,4.8);
		\node at (0.75,5) {\small{$-m$}};
		\end{scope}
			\begin{scope}[shift={(6,0)}]
			\path[use as bounding box] (-0.8,-0.3) rectangle (3.5,5.2);
			\draw (-0.1,0) -- (1.6,0);
			\node at (0.75,-0.3) {\small{$m-2$}};
			\draw (0,-0.2) -- (0.2,0.8);
			\node at (0.1,1) {$\vdots$};
			\draw (0.2,1) -- (0,2);
			\draw [decorate, decoration = {calligraphic brace}, thick] (-0.2,-0.2) --  (-0.2,1.9);
			\node[rotate=90] at (-0.55,0.9) {\small{$T_1^{*}$}}; 
			\draw[dashed] (0,1.8) -- (0.2,3);
			\node at (-0.2,2.5) {\small{$-1$}};
			\draw (0.2,2.8) -- (0,3.8);
			\node at (0.1,4) {$\vdots$};
			\draw (0,4) -- (0.2,5);
			\draw [decorate, decoration = {calligraphic brace}, thick] (-0.2,2.9) --  (-0.2,5);
			\node[rotate=90] at (-0.55,3.9) {\small{$T_{1}$}};
			\draw (1.4,-0.2) -- (1.6,0.8);
			\node at (1.5,1) {$\vdots$};
			\draw (1.6,1) -- (1.4,2);
			\draw [decorate, decoration = {calligraphic brace}, thick] (1.8,1.9) -- (1.8,-0.2);
			\node[rotate=90] at (2.15,0.9) {\small{$T_2^{*}$}}; 
			\draw (1.4,1.8) -- (1.6,3);
			\node at (1.2,2.5) {\small{$-r$}};
			\draw[dashed] (1.3,2.2) -- (3,1.9);
			\node at (2.3,2.3) {\small{$-1$}};
			\draw (2.8,1.8) -- (3,2.8);
			\node at (2.9,3) {$\vdots$};
			\node[rotate=90] at (3.3,2.9) {\small{$[(2)_{r-2}]$}};
			\draw (3,3) -- (2.8,4);  
			\draw (1.6,2.8) -- (1.4,3.8);
			\node at (1.5,4) {$\vdots$};
			\draw (1.4,4) -- (1.6,5);
			\draw [decorate, decoration = {calligraphic brace}, thick] (1.8,5) -- (1.8,2.9);
			\node[rotate=90] at (2.15,3.9) {\small{$T_{2}$}};
			\draw (-0.1,4.8) -- (1.7,4.8);
			\node at (0.75,5) {\small{$-m$}};
		\end{scope}
		\end{tikzpicture}
	}
	\subcaptionbox{\ref{prop:MT_smooth}\ref{item:MT-3-fibers} \label{fig:open_dP_non-min-3}}
[.35\linewidth]
{
	\begin{tikzpicture}[scale=0.8]
		\path[use as bounding box] (-1.2,-0.3) rectangle (3.5,5.2);
		\draw (-0.5,0) -- (3,0);
		\node at (2,-0.3) {\small{$-r$}};
		\draw[dashed] (-0.3,-0.2) -- (0,1.3);
		\node at (-0.6,0.5) {\small{$-1$}};
		\draw (0,1.1) -- (-0.2,2.3);
		\node at (-0.1,2.45) {$\vdots$};
		\node[rotate=90] at (-0.5,2.3) {\small{$[(2)_{r+m-3}]$}};
		\draw (-0.2,2.5) -- (0,3.7);
		\draw[dashed] (0,3.5) -- (-0.3,5);
		\node at (-0.5,4.2) {\small{$-1$}};
		\draw (1.2,-0.2) -- (1.4,0.8);
		\node at (1.3,1) {$\vdots$};
		\draw (1.4,1) -- (1.2,2);
		\draw [decorate, decoration = {calligraphic brace}, thick] (1,0.1) --  (1,1.9);
		\node[rotate=90] at (0.65,0.9) {\small{$T^{*}$}}; 
		\draw[dashed] (1.2,1.8) -- (1.4,3);
		\node at (1,2.5) {\small{$-1$}};
		\draw (1.4,2.8) -- (1.2,3.8);
		\node at (1.3,4) {$\vdots$};
		\draw (1.2,4) -- (1.4,5);
		\draw [decorate, decoration = {calligraphic brace}, thick] (1,2.9) --  (1,4.7);
		\node[rotate=90] at (0.65,3.9) {\small{$T$}};
		\draw (2.8,-0.2) -- (2.8,5);
		\node at (2.6,2.5) {\small{$0$}};
		\draw (-0.5,4.8) -- (3,4.8);
		\node at (2,5) {\small{$-m$}};
	\end{tikzpicture}
}
\vspace{-0.5em}
\caption{Proposition \ref{prop:MT_smooth}\ref{item:MT-ht=2}: almost minimal surfaces occurring if $\cha\kk\not\in \{2,3,5\}$, part 2: log surfaces admitting morphisms onto ones in Figure \ref{fig:MT-ht=2-1}, see Remark \ref{rem:adjust}.}
\vspace{1em}
\label{fig:MT-ht=2-2}
\end{figure}
\begin{figure}[htbp]
	\subcaptionbox{\ref{prop:MT_smooth}\ref{item:MT-ht=2-cha=2}, $\height=2$%, $\cha\kk=2$ 
		\label{fig:MT-ht=2_cha=2}}
	[.3\linewidth]
	{
		\begin{tikzpicture}[scale=0.8]
			%\node at (1.8,0) {\phantom{\small{$m-1$}}};
			\path[use as bounding box] (-0.8,0.6) rectangle (3.8,5);
			\node at (1.5,4.7) {\small{admits a morphism onto}};
			\draw (0.2,0.4) -- (0,1.8);
			\node[left] at (0.15,0.8) {\small{$-2$}};
			%node at (-0.2,2.5) {\small{$-1$}};
			\draw (0,1.6) -- (0.2,3.2);
			\node[left] at (0.15,2.5) {\small{$-1$}};
			\draw (0.2,3) -- (0,4.4);
			\node[left] at (0.15,3.7) {\small{$-2$}};
			\draw (1.4,0.4) -- (1.2,1.8);
			\node[left] at (1.35,0.8) {\small{$-2$}};
			\draw[dashed] (1.2,1.6) -- (1.4,3.2);
			\node[left] at (1.35,2.5) {\small{$-1$}};
			\draw (1.4,3) -- (1.2,4.4);
			\node[left] at (1.35,3.7) {\small{$-2$}};
			\draw (3.4,0.4) -- (3.2,1.8); 
			\node[left] at (3.35,0.8) {\small{$-2$}};
			\draw[dashed] (3.2,1.6) -- (3.4,3.2);
			\node[left] at (3.35,2.5) {\small{$-1$}};
			\draw (3.4,3) -- (3.2,4.4);
			\node[left] at (3.35,3.7) {\small{$-2$}};
			\draw[very thick] (-0.1,2.2) -- (3.6,2.2);
			\node at (2.2,1.9) {\small{$4-2\nu$}};
			\node at (2.3,3) {\LARGE{$\dots$}};
			%
			%	\node at (1.8,5) {\phantom{\small{$-m$}}};
		\end{tikzpicture}
	}
\subcaptionbox{\ref{prop:MT_smooth}\ref{item:MT-exc-1}, $\height=3$
	\label{fig:exception-1}}[.3\linewidth]
{
	\begin{tikzpicture}[scale=0.8]
		\path[use as bounding box] (-1.4,0.4) rectangle (3.2,4.6);
		\draw (-1,4.2) -- (3,4.2);
		\draw (-0.6,0.4) -- (-0.8,1.8);
		\node[left] at (-0.65,0.8) {\small{$-2$}};
		\draw (-0.8,1.6) -- (-0.6,3.2);
		\node[left] at (-0.65,2.5) {\small{$-2$}};
		\draw (-0.6,3) -- (-0.8,4.4);
		\node[left] at (-0.65,3.7) {\small{$-2$}};
		\draw[dashed] (-0.8,2.5) -- (0,2.5) to[out=0,in=90] (0.4,2); 
		\node at (0,2.7) {\small{$-1$}};
		\draw (1.4,0.4) -- (1.2,1.8);
		\node[left] at (1.35,0.8) {\small{$-2$}};
		\draw[dashed] (1.2,1.6) -- (1.4,3.2);
		\node[left] at (1.35,2.5) {\small{$-1$}};
		\draw (1.4,3) -- (1.2,4.4);
		\node[left] at (1.35,3.7) {\small{$-2$}};
		\draw (2.8,0.2) -- (3,1.4);
		\node[left] at (3,0.8) {\small{$-2$}};
		\draw (3,1.2) -- (2.8,2.4);
		\node[left] at (3,1.6) {\small{$-2$}}; 
		\draw[dashed] (2.8,2.2) -- (3,3.4);
		\node[left] at (3,2.8) {\small{$-1$}};
		\draw (3,3.2) -- (2.8,4.4);
		\node[left] at (3,3.7) {\small{$-3$}};
		\draw[very thick] (0.2,2.2) -- (2,2.2) to[out=0,in=180] (2.7,1.9) -- (3,1.9);
		\node at (2,2.4) {\small{$-3$}};
		\node at (0,4.4) {\small{$-1$}};
		%\node at (2.2,3.7) {\large{$\dots$}};
		%
		%	\node at (1.8,5) {\phantom{\small{$-m$}}};
	\end{tikzpicture}
}
	\subcaptionbox{\ref{prop:MT_smooth}\ref{item:MT-exc-2}, $\height=3$
	\label{fig:exception-2}}[.3\linewidth]
{
	\begin{tikzpicture}[scale=0.8]
		\path[use as bounding box] (-0.8,0.4) rectangle (3.2,4.6);
		\draw (-0.4,4.2) -- (3,4.2);
		\draw (0,0.2) -- (-0.2,1.2);
		\node[left] at (-0.05,0.6) {\small{$-2$}};
		\draw (-0.2,1) -- (0,2);
		\node[left] at (-0.05,1.4) {\small{$-3$}};
		\draw[dashed] (0,1.8) -- (-0.2,2.8);
		\node[left] at (-0.05,2.2) {\small{$-1$}};
		%node at (-0.2,2.5) {\small{$-1$}};
		\draw (-0.2,2.6) -- (0,3.6);
		\node[left] at (-0.05,3) {\small{$-2$}};
		\draw (0,3.4) -- (-0.2,4.4);
		\node[left] at (-0.05,3.8) {\small{$-3$}};
		\draw (1.5,0.4) -- (1.2,1.8);
		\node[left] at (1.4,0.8) {\small{$-2$}};
		\draw[dashed] (1.2,1.6) -- (1.5,3.2);
		\node[left] at (1.4,2.5) {\small{$-1$}};
		\draw (1.5,3) -- (1.2,4.4);
		\node[left] at (1.4,3.7) {\small{$-2$}};
		\draw (2.8,0.2) -- (3,1.4);
		\node[left] at (3,0.8) {\small{$-2$}};
		\draw (3,1.2) -- (2.8,2.4);
		\node[left] at (3,1.6) {\small{$-2$}}; 
		\draw[dashed] (2.8,2.2) -- (3,3.4);
		\node[left] at (3,2.8) {\small{$-1$}};
		\draw (3,3.2) -- (2.8,4.4);
		\node[left] at (3,3.7) {\small{$-3$}};
		\draw[very thick] (-0.2,1.6) -- (0.2,1.6) to[out=0,in=180] (1,2.1) -- (2,2.1) to[out=0,in=180] (2.7,1.9) -- (3,1.9);
		\node at (2,2.3) {\small{$-2$}};
		\node at (0.5,4.4) {\small{$-1$}};
		%\node at (2.2,3.7) {\large{$\dots$}};
		%
		%	\node at (1.8,5) {\phantom{\small{$-m$}}};
	\end{tikzpicture}
}
%\end{minipage}
%
\vspace{-0.5em}
	\caption{Exceptional cases in Proposition \ref{prop:MT_smooth}\ref{item:MT-ht=2-cha=2}, \ref{item:MT-exception}, occurring only if $\cha\kk=2$.}
	\label{fig:MT_exceptions}
\end{figure}

\begin{remark}[Adjusting minimal models in case \ref{prop:MT_smooth}\ref{item:MT-ht=2}: Fig.\ \ref{fig:MT-ht=2-2} $\rightarrow$ Fig.\  \ref{fig:MT-ht=2-1}]\label{rem:adjust}
	Given a log smooth surface of negative Kodaira dimension, its minimal model $(\bar{X},\bar{D})$ is as in Proposition \ref{prop:MT_smooth}. In case \ref{prop:MT_smooth}\ref{item:MT-ht=2}, we can choose $(\bar{X},\bar{D})$ so that its minimal log resolution $(X,D)$ is as in Figure \ref{fig:MT-ht=2-1}, that is, either as in   \ref{prop:MT_smooth}\ref{item:MT-columnar}--\ref{item:MT-q-columnar} with at most one degenerate fiber, which is columnar, or a Platonic $\A^{1}_{*}$-fiber space as in \ref{prop:MT_smooth}\ref{item:MT-Platonic}.
	
	Indeed, suppose $(\bar{X},\bar{D})$ is not of the above type, so $(X,D)$ is as in Figure \ref{fig:MT-ht=2-2}. Let 
	$(X,D)\to (X',D')$ be a contraction of $F_{1}$ to a $0$-curve $F'\subseteq X'$. Then $(X',D')$ is a minimal log resolution of another minimal model. We have $F'\subseteq D'$ in cases \ref{item:MT-columnar}, \ref{item:MT-q-columnar} and $F'\not\subseteq D$ in case \ref{item:MT-3-fibers}, so $(X',D')$ is as in Figure  \ref{fig:MT-ht=2-2}, as needed.
\end{remark}

	\begin{remark}[$\A^1$-ruled cases of Proposition \ref{prop:MT_smooth}]\label{rem:A1-fibration}
		Let $(\bar{X},\bar{D})$ be an open log del Pezzo surface as in Proposition \ref{prop:MT_smooth}\ref{item:MT-P2}, \ref{item:MT-ht=1}, or  \ref{item:MT-columnar}--\ref{item:MT-3-fibers}. Then $\bar{X}\setminus \bar{D}$ is $\A^{1}$-ruled. 
		Indeed, in case \ref{prop:MT_smooth}\ref{item:MT-ht=1} we have $\height(\bar{X},\bar{D})=1$, so $\bar{X}\setminus \bar{D}$ is $\A^1$-ruled by Lemma \ref{lem:Tsen}, otherwise $D$ contains a twig which is not negative definite, see Figure \ref{fig:MT-ht=2-1}, so $\bar{X}\setminus \bar{D}$ is $\A^1$-ruled by Lemma \ref{lem:twig}.
	\end{remark}

\begin{remark}[Deep base points of $\A^{1}$-fibrations in case $\height=2$] \label{rem:hidden_A1_fibrations}
	In cases \ref{prop:MT_smooth}\ref{item:MT-columnar}--\ref{item:MT-3-fibers}, the $\A^{1}$-fibration of $X\setminus D$  has a base point on $D$. In fact, for any $m\geq 2$ one easily constructs examples where any $\A^{1}$-fibration of $X\setminus D$ requires at least $m$ blowups on $D$ to have its base points resolved. To see one such example, take $X=\F_{m}$ and $D=\Sec_m+F+C=[m,0,-m]$, where $\Sec_m=[m]$ is the negative section, $F$ is a fiber, and $C=[-m]$ is a section disjoint from $\Sec_m$, see Figure \ref{fig:MT-q-columnar-nu=0}. Write $\{q\}=F\cap C$. Then $\height(X,D)=2$ and $X\setminus D$ is $\A^{1}$-fibered. Indeed, blow up $m$ times at $q$, each time on the proper transform of $C$. The latter becomes a $0$-curve whose linear system gives an $\A^{1}$-fibration of $X\setminus D$.
	
	Let now $V\subseteq X$ be a closure of a smooth fiber of some $\A^{1}$-fibration of $X\setminus D$. Then $V$ is a curve meeting $D$ in one point. Put $d=V\cdot F$. We have $V\cdot C=V\cdot (mF+\Sec_{m})>V\cdot \Sec_m$, so since $C$ and $\Sec_m$ are disjoint, we have $V\cdot \Sec_{m}=0$ and $V\cdot C=dm>d$. In particular, $V\cap D=\{q\}$ and $V$ has a cusp of multiplicity $d$ at $q$. Blow up at $q$, let $F'$ be the exceptional curve, and let $V',C'$ be the proper transforms of $V,C$. Then $V'\cdot F'=d$ and $V'\cdot C'=d(m-1)$; in particular, $V'$ still has a cusp of multiplicity $d$. Repeating this argument with $(V',F',C')$ instead of $(V,F,C)$ we see that it takes at least $m$ blowups to resolve the cusp of $V$, as claimed.
\end{remark}

\begin{corollary}[Proposition \ref{prop:MT_smooth} without minimality assumption]\label{cor:MT}
	Let $(X,D)$ be a log smooth surface such that $\kappa(K_{X}+D)=-\infty$. Assume that $(X,D)$ admits a minimal model with nonzero boundary (for instance $D$ is not negative definite). Then one of the following holds.
	\begin{enumerate}
		\item\label{item:cor-P2-id} We have $X=\P^2$ and $\deg D\leq 2$.
		\item\label{item:cor-ht} We have  $\height(X,D)\leq 2$.
		\item\label{item:cor-P2-map} We have $\height(X,D)=3$ and there is a birational morphism $\psi\colon (X,D)\to (\P^2,\pp)$ such that $\deg\pp=2$, $\psi$ is an isomorphism near $\psi^{-1}_{*}\pp$ and it contracts some connected component of $D$. 
		\item\label{item:cor-exception} We have  $\cha\kk=2$,  $\height(X,D)=3$, and $(X,D)$ admits a morphism onto a log surface as in Example \ref{ex:exception}.
		\item\label{item:cor-eliptic-relative} We have  $\cha\kk\in \{2,3,5\}$, $\height(X,D)\leq 6$ and $(X,D)$ admits a morphism onto a log surface $(\bar{Y},\bar{T})$ with $\bar{Y}$ canonical of rank one, and $\bar{T}\in |-K_{\bar{Y}}|$ cuspidal, contained in $\bar{Y}\reg$, see Proposition \ref{prop:MT_smooth}\ref{item:MT-eliptic-relative}.
	\end{enumerate}
\end{corollary}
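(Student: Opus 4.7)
The plan is to reduce to Proposition \ref{prop:MT_smooth} via the log MMP. By hypothesis $(X,D)$ admits a minimal model with nonzero reduced boundary, so I would run such an MMP preserving this property and factor through the almost minimal model, obtaining a chain $(X,D)\to (X\am,D\am)\to (X_{\min},D_{\min})$ with $(X_{\min},D_{\min})$ appearing in the list of Proposition \ref{prop:MT_smooth}.

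The main technical input is a lifting principle for $\P^{1}$-fibrations. Given a $\P^{1}$-fibration $p\colon X\am\to B$, a general fiber $F\am$ avoids the finite image of the exceptional locus of $\phi\colon X\to X\am$, so $F\de \phi^{*}F\am=\phi^{-1}_{*}F\am$ is still a smooth rational curve, and the projection formula gives $F\cdot D=F\am\cdot \phi_{*}D=F\am\cdot D\am$. Hence whenever $X\am$ carries a $\P^{1}$-fibration, $\height(X,D)\leq \height(X\am,D\am)$.

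I would then go through the cases of Proposition \ref{prop:MT_smooth}. Cases \ref{item:MT-ht=1}, \ref{item:MT-ht=2} and \ref{item:MT-ht=2-cha=2} immediately yield Corollary case \ref{item:cor-ht}. Case \ref{item:MT-exception} gives $\height(X,D)\leq 3$; either it is $\leq 2$ and we land in \ref{item:cor-ht}, or else it equals $3$ and $\phi$ itself is the morphism onto an Example \ref{ex:exception} surface demanded by \ref{item:cor-exception}. Case \ref{item:MT-eliptic-relative} supplies the morphism $X\to \bar Y$ of \ref{item:cor-eliptic-relative} by composing $\phi$ with the contractions $X\am\to X_{\min}\to \bar Y$, while the bound $\height(X,D)\leq 6$ is inherited from Proposition \ref{prop:MT_smooth}\ref{item:MT-eliptic-relative}.

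The main obstacle is case \ref{item:MT-P2}, where $X\am=\P^{2}$ admits no $\P^{1}$-fibration so the lifting principle is vacuous. If $\phi$ is an isomorphism we are in \ref{item:cor-P2-id}. Otherwise I would fix a first blowup point $p_{1}$ of $\phi$; since the pencil of lines through $p_{1}$ becomes base-point free after this one blowup and the remaining blowups of $\phi$ lie on its fibers, this pencil induces a genuine $\P^{1}$-fibration of $X$. A direct bookkeeping of $F\cdot D$, tracking whether $p_{1}$ lies on $\pp\de D\am$ and which $\phi$-exceptional curves belong to $D$, should yield $\height(X,D)\leq 2$ (case \ref{item:cor-ht}) in every configuration except when $\deg\pp=2$ and a connected $\phi$-exceptional component of $D$ disjoint from $\phi^{-1}_{*}\pp$ acts as a $1$-section of every available ruling---this is precisely the residual configuration of \ref{item:cor-P2-map}. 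The delicate step is to rule out other patterns forcing $\height(X,D)\geq 3$: here I would exploit the freedom to vary the ruling, switching to pencils associated with other blowup points of $\phi$ or to alternative contractions $X\to \F_{m}$, and show that at least one of these has height at most $2$ whenever the section-like exceptional configuration of \ref{item:cor-P2-map} fails.
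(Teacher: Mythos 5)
Your proposal follows the paper's proof essentially verbatim: factor the MMP run through the minimal log resolution $(X\am,D\am)$ of the minimal model, pull back $\P^1$-fibrations to get $\height(X,D)\leq \height(X\am,D\am)$ and thus dispatch cases \ref{prop:MT_smooth}\ref{item:MT-ht=1}--\ref{item:MT-eliptic-relative}, and treat $X\am=\P^2$ separately via the pencil of lines through a base point $p$ of $\psi^{-1}$. The only place you hedge --- the ``delicate step'' of ruling out other height-$3$ patterns in the $\P^2$ case --- is in fact immediate from the single ruling you already chose: the proper transform $F$ of a general line through $p$ satisfies $F\cdot D\leq \deg D\am\leq 2$ unless $p\notin D\am$ and the first exceptional curve $L_p$ over $p$ lies in $D$, and in that residual case $F\cdot D\leq \deg D\am+1=3$ while the connected component of $D$ containing $L_p$ is contracted to the point $p\notin D\am$, which is exactly configuration \ref{cor:MT}\ref{item:cor-P2-map}; no switching of rulings or alternative contractions to $\F_m$ is needed.
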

\begin{proof}
	Let $\bar{\psi}\colon (X,D)\to(\bar{X},\bar{D})$ be a morphism onto a minimal model of $(X,D)$ with $\bar{D}\neq 0$. We have $\kappa(K_{\bar{X}}+\bar{D})=-\infty$, so $(\bar{X},\bar{D})$ is as in Proposition \ref{prop:MT_smooth}. Since $(X,D)$ is log smooth, the morphism $\bar{\psi}$ factors through a morphism $\psi\colon (X,D)\to (X\am,D\am)$ onto the minimal log resolution $(X\am,D\am)$ of $(\bar{X},\bar{D})$.
	
	Pulling back $\P^{1}$-fibrations from $X\am$ we get $\height(X,D)\leq \height(X\am,D\am)$, so in cases \ref{prop:MT_smooth}\ref{item:MT-ht=1}--\ref{prop:MT_smooth}\ref{item:MT-ht=2-cha=2} we get that \ref{item:cor-ht} holds.  Exceptional cases \ref{prop:MT_smooth}\ref{item:MT-exception} and  \ref{prop:MT_smooth}\ref{item:MT-eliptic-relative} yield \ref{item:cor-exception} and \ref{item:cor-eliptic-relative}, respectively. Consider case \ref{prop:MT_smooth}\ref{item:MT-P2}. If $\psi=\id$ then \ref{item:cor-P2-id} holds, so assume $\psi\neq \id$, let $p$ be a base point of $\psi^{-1}$ and let $L_{p}\subseteq X$ be the first exceptional curve over $p$. A pencil of lines through $p$ pulls back to a $\P^{1}$-fibration of $(X,D)$ of height at most $\deg D\am$ if $p\in D\am$ or $L_{p}\not\subseteq D$; and $\deg D\am+1$ otherwise. Since $\deg D\am\leq 2$ we get either $\height(X,D)\leq 2$, so \ref{item:cor-ht} holds; or $\height(X,D)=3$, $\deg D\am=2$, $p\not\in D\am$ and $L_{p}\subseteq D$, in which case $L_{p}$ is contained in a connected component of $D$ contracted by $\psi$, so \ref{item:cor-P2-map} holds.
\end{proof}

\begin{proof}[Proof of Theorem \ref{thm:MT-version}] 
	Let $\pi\colon (X,D)\to (\bar{X},\bar{D})$ be the minimal log resolution. Since $\rho(\bar{X})=1$, we have $\bar{D}^2>0$, so $D$ is not negative definite. Thus $(X,D)$ is as in Corollary \ref{cor:MT}. Since by assumption $\bar{X}\not\cong \P^{2}$, case \ref{cor:MT}\ref{item:cor-P2-id} does not hold. Case \ref{cor:MT}\ref{item:cor-P2-map} does not hold, either, since by the minimality of $\pi$ no connected component of $\bar{D}$ contracts to a smooth point.
	Case \ref{cor:MT}\ref{item:cor-ht} asserts that $\height(X,D)\leq 2$, as needed.
	
	In the remaining cases \ref{cor:MT}\ref{item:cor-exception},\ref{item:cor-eliptic-relative} we have a  birational morphism $\bar{\psi}\colon (X,D)\to (\bar{Y},\bar{T})$, where $(\bar{Y},\bar{T})$ is a log del Pezzo surface of rank one from Example \ref{ex:exception}; or a log surface as in \ref{cor:MT}\ref{item:cor-eliptic-relative}, consisting of a canonical surface of rank one and an irreducible member of $|-K_{\bar{Y}}|$. Since $(X,D)$ is log smooth, $\bar{\psi}$ factors through a morphism $\psi\colon (X,D)\to (Y,T)$ onto a minimal log resolution of $(\bar{Y},\bar{T})$. We have $(\rho(X)-\#D)-(\rho(Y)-\#T)=1-1=0$, so $\#D-\#T=\rho(X)-\rho(Y)$ and therefore $\Exc\psi\subseteq D$. Since $D$ is snc-minimal, it follows that $\psi$ is an isomorphism. Thus in case \ref{cor:MT}\ref{item:cor-eliptic-relative} we get that \ref{thm:MT-version}\ref{item:thm-relative} holds. In case \ref{cor:MT}\ref{item:cor-exception}, we check directly that, since $-(K_{\bar{X}}+\bar{D})$ is ample, $\bar{D}$ is the image of the unique $(-1)$-curve in $D$, see Figures \ref{fig:exception-1}, \ref{fig:exception-2}; hence \ref{thm:MT-version}\ref{item:thm-exception}  holds, as claimed.
\end{proof}

\begin{proof}[Proof of Corollary \ref{cor:Russell}]
As in the proof of Corollary \ref{cor:MT} above, we can assume that $(X,D)$ is a minimal log resolution of its minimal model $(\bar{X},\bar{D})$. Assume $\bar{D}\neq 0$, so $(\bar{X},\bar{D})$ is as in Proposition \ref{prop:MT_smooth}. Since $D$ is connected, the only possible cases are \ref{prop:MT_smooth}\ref{item:MT-P2}, \ref{item:MT-ht=1},  \ref{item:MT-columnar} and \ref{item:MT-q-columnar}. Thus \ref{cor:Russell}\ref{item:Rus-ht} holds, and \ref{cor:Russell}\ref{item:Rus-Cylinder} follows from Remark  \ref{rem:A1-fibration}.

Therefore, we can assume $\bar{D}=0$. If $\bar{X}\cong \P^2$ then the assertion is clear, so we can assume $\bar{X}\not\cong \P^2$. Then $\bar{X}$ is a log terminal del Pezzo surface of rank one, with exactly one singularity. The exceptional divisor $D$ of its minimal resolution is an admissible chain or fork. We claim that
\begin{equation}\label{eq:K+D}
	h^{0}(-K_{X}-D)\geq K_{X}\cdot (K_{X}+D)>0.
\end{equation}
We have $h^{2}(-K_{X}-D)=h^{0}(2K_{X}+D)=0$. By Lemma \ref{lem:rationality} $X$ is rational, so the Riemann-Roch theorem gives $h^{0}(-K_{X}-D)\geq p_{a}(D)+K_{X}\cdot(K_{X}+D)$. Since $p_a(D)=0$, this proves the first inequality of \eqref{eq:K+D}.  Since $D$ is admissible, every component $T$ of $D$ has $K_{X}\cdot T\geq 0$ and $\cf(T)\in [0,1)$, see Lemma \ref{lem:ld_formulas}. Thus $K_{X}\cdot (K_{X}+D)\geq K_{X}\cdot (K_{X}+\sum_{T}\cf(T)\, T)=K_{\bar{X}}^2>0$ because $-K_{\bar{X}}$ is ample. This ends the proof of \eqref{eq:K+D}.

The inequality \eqref{eq:K+D} implies that $K_{X}+D+G=0$ for some effective divisor $G$. Fix a $\P^{1}$-fibration of $X$. Its fiber $F$ satisfies $F\cdot (D+G)=-F\cdot K_{X}=2$, so $\height(X,D)\leq F\cdot D\leq 2$, which proves \ref{cor:Russell}\ref{item:Rus-ht}. To prove \ref{cor:Russell}\ref{item:Rus-Cylinder} we may assume that $\height(X,D)=2$, so $F\cdot D=2$ and $G$ is vertical.

If the chosen $\P^{1}$-fibration has no degenerate fibers then $X$ is a Hirzebruch surface and $D$ is the negative section, contrary to the fact that $\height(X,D)=2$. Thus we can assume that the support of $G$ is contained in the sum of  degenerate fibers. Since $K_{X}\cdot G=-K_{X}\cdot (K_{X}+D)<0$ by inequality \eqref{eq:K+D}, such $G$ contains a $(-1)$-curve $G_0$. If $|K_X+D+G_0|=\emptyset$ then by Lemma  \ref{lem:trick-trees} $D+G_0$ is a sum of disjoint rational trees, so $G_0$ meets the chain $D$ at most once; and $\kappa(K_{X}+D+G_0)=\kappa(K_{X}+D)=-\infty$. Thus  replacing $(X,D)$ by its image after the contraction of $G_0$, we can eventually assume that $|K_X+D+G_0|\neq \emptyset$. Since $|K_X+D+G_0|=|G_0-G|$, we get $G=G_0$. For every component $C$ of $D+G$ we have $\beta_{D+G}(C)=C\cdot (D+G-C)=-C\cdot (K+C)=2$, so $D+G$ is circular. It follows that the fiber $F_{G}$ containing $G$ is columnar.

If $L\neq G$ is another $(-1)$-curve then $0=L\cdot (K_{X}+D+G)\geq L\cdot D-1$, so $L\cdot D\leq 1$, and as before we replace $(X,D)$ by its image after the contraction of $L$. Thus we can assume that $F_{G}$ is a unique degenerate fiber. Let $\psi\colon X\to \F_{m}$ be a contraction of $F_{G}$ to a $0$-curve. Then $\psi_{*}D\hor$ is either a $2$-section, or a sum of two non-disjoint $1$-sections on a Hirzebruch surface. It follows that some component of $\psi_{*}D\hor$ has positive self-intersection number, so some component of the chain $D$ has non-negative self-intersection number. Lemma \ref{lem:twig} implies that 
$X\setminus D$ is $\A^{1}$-fibered, as needed.
\end{proof}

\begin{proof}[Proof of Corollary \ref{cor:uniruled}]
	As before, we can assume that $(X,D)$ is as in Proposition \ref{prop:MT_smooth}. In cases \ref{prop:MT_smooth}\ref{item:MT-P2}, \ref{item:MT-ht=1}, or  \ref{item:MT-columnar}--\ref{item:MT-3-fibers}, the result follows from Remark \ref{rem:A1-fibration}. Consider case \ref{item:MT-Platonic}, $\cha\kk\not\in \{2,3,5\}$. Then $X\setminus D$ admits a \emph{Platonic} $\A^{1}_{*}$-fibration over $\P^1$, see Figure \ref{fig:MT-Platonic}. The construction in \cite[\sec 7.5]{Miy_Tsu-opendP} shows that there is a finite morphism $\nu\colon \P^1\to \P^1$ such that the normalized pullback of $X\setminus D$ along $\nu$ is $\A^1$-ruled, as needed.
	
	Thus for the remaining part of the proof we can assume that $p\de \cha\kk\in \{2,3,5\}$, and $(X,D)$ is as in \ref{prop:MT_smooth}\ref{item:MT-Platonic} or \ref{item:MT-ht=2-cha=2}--\ref{item:MT-eliptic-relative}. We will argue as in \cite[\sec 7.5]{Miy_Tsu-opendP}: first, in Claim \ref{cl:good-fibration} we choose a suitable $\P^1$-fibration of $X$ (or, in one case, its blowup $\tilde{X}$), and next, in Claim \ref{cl:local-properties}, we pull it back by a totally ramified $p$-fold covering $\P^1\to \P^1$. We show that the normalized pullback of $X\setminus D$ is $\A^{1}$-ruled, hence $X\setminus D$ is $\A^{1}$-uniruled, as needed.
	\smallskip
	
	For a log smooth surface $(\tilde{X},\tilde{D})$ with a fixed $\P^1$-fibration $f\colon \tilde{X}\to \P^1$, we introduce the following terminology. For a vertical curve $C$ we denote by $\mu(C)$ its multiplicity  in a fiber. We call a fiber $F$  \emph{good} if it has exactly one component not contained in $D$, call it $C$, and the following holds:
	\begin{enumerate-alt}
		\item \label{item:good-C} every connected component of $F\redd-C$ meets at most one component of $\tilde{D}\hor$,
		\item \label{item:good-mult} $\mu(C)\in \{1,p\}$.
	\end{enumerate-alt}
	As we will see, good fibers will \enquote{become} nondegenerate after taking the covering, cf.\ Figure \ref{fig:covering-4A2}. For example, a columnar fiber 
	satisfies condition \ref{item:good-C}, so it is good if and only if its unique $(-1)$-curve has multiplicity $p$. 	
	
	We say that $F$ is \emph{bad} if it is not good. We denote by $v$ the number of bad fibers, and put $h=\#\tilde{D}\hor$.
	
	\begin{claim}\label{cl:good-fibration}
	There exists a birational morphism $(\tilde{X},\tilde{D})\to (X,D)$, with $\tilde{D}$ being the reduced total transform of $D$, and a $\P^1$-fibration $f\colon \tilde{X}\to \P^1$ such that, with the above notation, the following hold.
	\begin{enumerate}
		\item\label{item:H} Every horizontal component $H$ of $\tilde{D}$ is either a $1$-section, or a $p$-section such that $f|_{H}$ is totally ramified.
		\item\label{item:H-disjoint} The horizontal components are disjoint, and $h\leq 3$. 
		\item\label{item:V} We have either $h\leq 1$, or $v\leq 1$, or 
$(h,v)=(2,2)$ and one of the bad fibers satisfies condition \ref{item:good-C}.
		\end{enumerate}
	\end{claim}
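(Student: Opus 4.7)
The plan is to construct $(\tilde{X},\tilde{D},f)$ case by case, following the subclassification in Proposition \ref{prop:MT_smooth}(\ref{item:MT-Platonic})--(\ref{item:MT-eliptic-relative}). In most subcases I would set $\tilde{X}=X$, $\tilde{D}=D$ and use the $\P^{1}$-fibration supplied by the proposition directly; additional blow-ups $\tilde{X}\to X$ (elementary transformations on $D\hor$) will be needed only when several fibers would otherwise be bad.

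First I would treat case (\ref{item:MT-Platonic}). Here the given fibration has $h=2$ disjoint $1$-sections $H_{\pm}$, so \ref{item:H} and \ref{item:H-disjoint} hold automatically. A columnar fiber $[T_{i},1,T_{i}^{*}]$ always satisfies \ref{item:good-C}, while the multiplicity of its $(-1)$-curve equals $d(T_{i})\geq 2$, so it is good precisely when $d(T_{i})=p$. Running through the Platonic triples $\{d(T_{i})\}\in\{\{2,3,5\},\{2,3,4\},\{2,3,3\},\{2,2,k\}\}$ against $p\in\{2,3,5\}$ shows that in admissible configurations at most two $d(T_{i})$ fall outside $\{1,p\}$, placing us in the $v\leq 1$ clause or in the $(h,v)=(2,2)$ exception of \ref{item:V}; any pathological triple that violates this (e.g. $\{2,2,k\}$ with $p=3$, $k\neq 3$) would need to be excluded by a separate minimality argument, or resolved by a further elementary transformation swapping a bad columnar fiber for a good one.

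Cases (\ref{item:MT-ht=2-cha=2}) and (\ref{item:MT-exception}) are explicit $\cha\kk=2$ configurations handled by direct inspection. In (\ref{item:MT-ht=2-cha=2}) the horizontal part $D\hor$ is a single $2$-section meeting each fiber in one point, so $h=1$ and $f|_{D\hor}$ is (wildly) totally ramified; conditions \ref{item:H}--\ref{item:V} then follow immediately from Example \ref{ex:cha=2_ht=2}. In (\ref{item:MT-exception}) I would read off from Figures \ref{fig:exception-1}, \ref{fig:exception-2} the fibration whose $1$-section is the branching $(-1)$-curve and whose totally ramified $2$-section is the underlined component; inspection of the figure yields $h=2$, $v\leq 1$, and \ref{item:good-C}--\ref{item:good-mult} for all remaining fibers.

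The most substantial case is (\ref{item:MT-eliptic-relative}), and this is where the main obstacle lies. Here the cuspidal anticanonical curve $\bar{T}\in|-K_{\bar{Y}}|$ is the key horizontal component, and any $\P^{1}$-fibration of $Y$ restricts to a finite morphism $\bar{T}\to\P^{1}$ whose ramification index at the unique cusp of $\bar{T}$ equals $\bar{T}\cdot F$. I would select the fibration witnessing $\height(\bar{Y},\bar{T})=\height(\bar{Y})+2\in\{3,4,6\}$ supplied by the constructions behind Proposition \ref{prop:canonical}; this should give $\bar{T}\cdot F=p$ together with total ramification at the cusp, securing \ref{item:H}. The delicate part is verifying \ref{item:V}: for each singularity type of $\bar{Y}$ listed in (\ref{item:MT-elipitic-relative-5})--(\ref{item:MT-elipitic-relative-2}) one must enumerate the degenerate fibers of this fibration, count bad fibers via Lemma \ref{lem:delPezzo_fibrations}(\ref{item:Sigma}), and check that at most one is bad (or that the $(h,v)=(2,2)$ exception applies). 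This case-by-case analysis, using the explicit geometry of the witnessing fibrations produced in the proof of Proposition \ref{prop:canonical}, is where the bulk of the technical work will lie.
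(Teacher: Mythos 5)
Your overall strategy---a case division following Proposition \ref{prop:MT_smooth}\ref{item:MT-Platonic}--\ref{item:MT-eliptic-relative}, taking $(\tilde{X},\tilde{D})=(X,D)$ and the fibration already displayed there whenever possible---is the same as the paper's, and your treatment of cases \ref{prop:MT_smooth}\ref{item:MT-ht=2-cha=2} and \ref{prop:MT_smooth}\ref{item:MT-exception} matches it (up to the detail that in case \ref{prop:MT_smooth}\ref{item:MT-exc-2} one lands in the $(h,v)=(2,2)$ clause of \ref{item:V}, not in $v\leq 1$). The gap is in the main case \ref{prop:MT_smooth}\ref{item:MT-eliptic-relative}: you propose to use the fibration witnessing $\height(\bar{X},\bar{D})=\height(\bar{Y})+2$ and assert that it gives $\bar{T}\cdot F=p$. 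That witnessing fibration is the pullback of one from the minimal resolution $Y$ of $\bar{Y}$, and by adjunction $T_{Y}\cdot F=-K_{Y}\cdot F=2$ for every fiber, so the proper transform of $\bar{T}$ is always a $2$-section. For $p=2$ this is what you need, but for $p\in\{3,5\}$ (types $2\rA_4$, $3\rA_2$, $4\rA_2$) a $2$-section is neither a $1$-section nor a $p$-section, so condition \ref{item:H} fails outright for your choice. The paper instead constructs different fibrations here: for $3\rA_2$ and $2\rA_4$ one in which $T$ is \emph{vertical}, lying in a fiber $[p,1,(2)_{p-1}]$, with the $p$-section a $(-2)$-curve of the boundary; for $4\rA_2$ one on a genuine blowup $\tilde{X}\to X$ (so the morphism in the statement is needed, not an optional elementary transformation), with $T$ a $3$-section; and it reduces the remaining height-$2$ types to $7\rA_1$ and $3\rA_2$ rather than enumerating their fibers. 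None of this is recoverable from the height-witnessing fibration, so this is a missing construction, not a deferred computation.

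Two further points. First, you correctly flag the Platonic triple $\{2,2,k\}$ with $p\in\{3,5\}$ and $k\neq p$, where all three columnar multiplicities lie outside $\{1,p\}$; but ``a separate minimality argument or a further elementary transformation'' is not an argument, and since all the other clauses of \ref{item:V} fail there, this subcase cannot simply be left open. Second, you defer all of \ref{item:V} and the verification that the $p$-sections are \emph{totally} ramified (not merely ramified at the cusp) to unspecified case analysis; in the paper these are precisely the content of the proof, carried by counting the ramification forced by the good fibers and applying the Hurwitz criterion \eqref{eq:Hurwitz}.
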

	\begin{proof}
		We study each case of Proposition \ref{prop:MT_smooth} separately. In case \ref{prop:MT_smooth}\ref{item:MT-elipitic-relative-3} with $\bar{Y}$ of type $4\rA_2$ the log surface $(\tilde{X},\tilde{D})$ will be a blowup of $X$ at one point, see Figure \ref{fig:covering-4A2}; in the remaining cases we will take $(\tilde{X},\tilde{D})=(X,D)$. 
		
		In cases \ref{prop:MT_smooth}\ref{item:MT-Platonic}, \ref{prop:MT_smooth}\ref{item:MT-ht=2-cha=2} and \ref{prop:MT_smooth}\ref{item:MT-exception}, the required properties \ref{item:H}--\ref{item:V} are satisfied by the $\P^1$-fibrations described in the corresponding statements of \ref{prop:MT_smooth}, see Figures \ref{fig:MT-Platonic} and \ref{fig:MT_exceptions}. Indeed, in case \ref{prop:MT_smooth}\ref{item:MT-Platonic} $D\hor$ consists of two disjoint $1$-sections, so \ref{item:H} and \ref{item:H-disjoint} hold. The only degenerate fibers are the three columnar ones, and the multiplicities of their $(-1)$-curves are the Platonic triples $\{2,2,n\}$, $\{2,3,3\}$, $\{2,3,4\}$ or $\{2,3,5\}$. Since $p\in \{2,3,5\}$, at least one of those multiplicities equals $p$, hence at least one of those fibers is good. Thus we get \ref{item:V}, case $(h,v)=(2,2)$.
		
		In case \ref{prop:MT_smooth}\ref{item:MT-ht=2-cha=2} we have $p=2$ and $D\hor$ consists of one $2$-section $H$ such that $f|_{H}$ is totally ramified, so \ref{item:H}--\ref{item:V} hold. Eventually, in case \ref{prop:MT_smooth}\ref{item:MT-exception} $D\hor$ consists of a $1$ section and a $2$-section as above, so \ref{item:H} and \ref{item:H-disjoint} hold. Part \ref{item:V} holds, too: indeed, Figure \ref{fig:MT_exceptions} shows that we get $v=1$ in \ref{prop:MT_smooth}\ref{item:MT-exc-1} and case $(h,v)=(2,2)$ of \ref{item:V} in  \ref{prop:MT_smooth}\ref{item:MT-exc-2}. 
		\smallskip
		
		It remains to consider case \ref{prop:MT_smooth}\ref{item:MT-eliptic-relative}. Then $(\bar{X},\bar{D})$ is a minimal log resolution of a surface $(\bar{Y},\bar{T})$, where $\bar{Y}$ is a canonical del Pezzo surface of rank one, and $\bar{T}\in |-K_{\bar{Y}}|$ is an irreducible, cuspidal member, contained in $\bar{Y}\reg$. Let $(Y,D_Y)$ be the minimal log resolution of $\bar{Y}$: it is shown in Figures \ref{fig:can_ht=4}, \ref{fig:can_ht=2} or \ref{fig:can_ht=1}. Put $s=\#D_Y$. The proper transform $T_Y$ of $\bar{T}$ on $Y$ is again an irreducible, cuspidal member of $|-K_{Y}|$, which is disjoint from $D_Y$, and by adjunction meets every $(-1)$-curve on $Y$ once. Some of these $(-1)$-curves are drawn as dashed lines in Figures  \ref{fig:can_ht=4}, \ref{fig:can_ht=2} and  \ref{fig:can_ht=1}. Moreover, by Noether's formula we have $T_{Y}^2=K_{Y}^2=10-\rho(Y)=9-s$. The log surface $(X,D)$ is obtained by resolving the cusp of $T_{Y}$, which replaces $T_Y$ by a fork $\langle 1;[2],[3],[s-3]\rangle$. The last twig of this fork is the proper transform of $\bar{T}$, which we denote by $T$. The list in \ref{prop:MT_smooth}\ref{item:MT-eliptic-relative} shows that $s\geq 6$,  
		the singularity type of $\bar{Y}$ is listed in Table \ref{table:canonical}, excluding the first row, and the corresponding entry in the last column is $\rC$.
		
		We will use the following consequence of the Hurwitz formula \cite[Corollary IV.2.4]{Hartshorne_AG}.
		\begin{equation}\label{eq:Hurwitz}
			\parbox{.9\textwidth}{Let $g \colon \P^1\to \P^1$ be a morphism of degree $p$, ramified 
				with ramification indices $e_{1},\dots,e_{k}$. If $\sum_{i=1}^{k}(e_{i}-1)\geq 2p-1$ 
				then $g$ is totally ramified.}
		\end{equation}

		Assume first that $p=2$. Consider the $\P^1$-fibration of $Y$ shown in Figure \ref{fig:8A1}, \ref{fig:7A1} or \ref{fig:can_ht=1}. Since $s\geq 6$ and $\bar{Y}$ is not of type $\rE_{s}$, see Table \ref{table:canonical}, this  $\P^1$-fibration has $u\geq 2$ degenerate fibers. Let $f$ be its pullback to $X$. 
		Then $D\hor$ consists of a $2$-section $T$ and either: a $1$-section if $\height(\bar{Y})=1$, see Figure \ref{fig:can_ht=1}; or a $2$-section if $\bar{Y}$ is of type $7\rA_1$, see Figure \ref{fig:7A1}; or two $2$-sections  if $\bar{Y}$ is of type $8\rA_1$, see Figure \ref{fig:8A1}. In any case, we have $h\leq 3$ and the components of $D\hor$ are disjoint, so \ref{item:H-disjoint} holds. The degenerate fibers of $f$ are: the pullback of the fiber passing through the cusp of $T_Y$, and pullbacks of the degenerate fibers from $Y$. Let $F$ be such a fiber and, in case $8\rA_1$, assume that it is not of type $[1,1]$ (the rightmost fiber in Figure \ref{fig:8A1}). Then $F$ has exactly one component not contained in $D$ which meets all components of $D\hor-T$, hence $F$ is good. Thus \ref{item:V} holds, with $v\leq 1$. Moreover, $F$ meets each $2$-section $H$ in $D\hor$ in one point, which is therefore a ramification point of $f|_{H}$. Since there are at least three such fibers, by \eqref{eq:Hurwitz} $f|_{H}$ is totally ramified, so \ref{item:H} holds, too, as needed.
		\smallskip
		
		Assume now that $\bar{Y}$ is of type $3\rA_2$ or $2\rA_4$, see Figure \ref{fig:3A2} or \ref{fig:2A4}. Then $p=3$ or $5$, respectively, see Table \ref{table:canonical}. In each case $D_Y$ has a connected component $C_Y=[(2)_{p-1}]$, and there is a $(-1)$-curve $L_Y$ such that $L_Y\cdot D_Y=2$ and $L_Y$ meets $C_Y$ once, in a tip. Let $C,L$ be the preimages of $C_Y$, $L_Y$ of $D$. Then $T+L+C=[p,1,(2)_{p-1}]$ supports a good fiber of a $\P^1$-fibration $f\colon X\to \P^1$. The horizontal part of $D$ consists of a $p$-section $H_{p}=[2]$ in the preimage of $D_Y$, and a $1$-section $H=[1]$ in the preimage of the cusp of $\bar{T}$, so \ref{item:H-disjoint} holds. By Lemma \ref{lem:delPezzo_fibrations}, every fiber has exactly one component not in $D$. The connected components of $D\vert-T-C$ are: a $(-2)$- and a $(-3)$-curve meeting $H$, a $(-2)$-curve meeting $H_p$, and a chain $R=[2,2]$ which is disjoint from $D\hor$ if $\bar{Y}$ is of type $3\rA_2$, and meets $H_p$ once if $\bar{Y}$ is of type $2\rA_4$. It follows that the degenerate fibers other than $F$ are supported on chains $[2,1,2]$ and $[3,1,2,2]$, so we get condition \ref{item:V}, case $(h,v)=(2,2)$. Let $L_2$, $L_3$ be the $(-1)$-curves in those fibers, they have $\mu(L_2)=2$, $\mu(L_3)=3$. If $\bar{Y}$ is of type $3\rA_2$, so $H_3$ is a $3$-section, we have $L_2\cdot H_{3}=L_{3}\cdot H_3=1$. Thus $f|_{H_{3}}$ is ramified with index $3$ at $H_3\cap L$, $H_3\cap L_3$, and with index at least $2$ at $L_2\cap H_3$. By \eqref{eq:Hurwitz} $f|_{H_3}$ is totally ramified. Similarly, if $\bar{Y}$ is of type $2\rA_4$ then $f|_{H_5}$ is ramified  at $L\cap H_5$ with index $5$, at $L_3\cap H_5$ with index $\geq 3$, at $R\cap H_5$ with index $\geq 2$, and at each point of $L_2\cap H_5$, with index $\geq 2$ if $\#L_2\cap H_5=2$, and $\geq 3$ if $\#L_2\cap H_5=1$. By \eqref{eq:Hurwitz}, $f|_{H_5}$ is totally ramified, which proves 
		\ref{item:H}.
		\smallskip
		
		Consider the remaining cases with  $\height(\bar{Y})=2$, see Table \ref{table:canonical} and Figure \ref{fig:can_ht=2}. We reduce the proof to cases $7\rA_1$ and $3\rA_2$, settled above, as follows. There is a $(-1)$-curve $L_Y\subseteq Y$ meeting $D$ once in $T$ and once in the preimage of $D_Y$, such that contracting $L_Y$ and some $(-1)$-tips of the subsequent images of $D_Y$ we get a birational morphism $Y\to Y'$ onto the minimal resolution of a surface of type $7\rA_1$ or $3\rA_2$, see \cite[Example 5.6(c),(a)]{PaPe_ht_2}. Since $L_Y$ meets $T_Y$ once, this morphism is an isomorphism near the cusp of $T_Y$, so it lifts to a birational morphism $\phi\colon (X,D)\to (X',D'+L')$. Here $(X',D')$ is the log surface considered in cases $7\rA_1$ or $3\rA_2$ above, and $L'$ is a $(-1)$-curve containing the base point of $\phi^{-1}$. The description $\phi$ in each case shows that, after possibly composing  $\phi$ with an automorphism of $(X',D')$, the curve $L'\not\subseteq D'$ lies in some good fiber $F'$ of the $\P^1$-fibration $f'\colon X'\to \P^1$ constructed above. The pullback of $F'$ to $X$ is still a good fiber of $f\de f'\circ \phi$, and $\phi^{-1}$ is an isomorphism near each of the remaining fibers, so properties \ref{item:H}--\ref{item:V} of $f'$ hold for $f$, too.
		\smallskip
		
		It remains to consider the case when $p=3$ and $\bar{Y}$ is of type $4\rA_2$. We keep notation from Example \ref{ex:4A2_construction}, see Figure \ref{fig:4A2}. Note that, since the cubic $\qq$ is cuspidal, the curve $A_0$ passes through the point $Q\cap R$, and since the lines $\ell$, $\ell_1$, $\ell_2$ tangent to $\qq$ are concurrent, see Figure \ref{fig:4A2-constr-cha=3}, the curves $L$, $L_1$ and $L_2$ meet at one point. Let $\eta\colon \tilde{X}\to X$ be the blowup at the preimage of $Q\cap R$. Then $(\tilde{X},\tilde{D})$ is shown in the top-right part of Figure \ref{fig:covering-4A2}. We use the same letters for curves on $Y$ and their proper transforms on $\tilde{X}$. 
		
		The linear system $|A_0+2L+L_1|$ induces a $\P^1$-fibration $f\colon \tilde{X}\to \P^1$. The horizontal part of  $\tilde{D}$ consists of $3$-sections $T$, $L$ and a $1$-section $\Exc\eta$, so \ref{item:H-disjoint} holds. The chain
		$A_0+L+L_1=[1,2,1]$ supports a bad fiber, and $Q+A_2+G_2=[3,1,2,2]$, $R+L_1'+G_2=[3,1,2,2]$ support good fibers. Let $F$ be the fiber containing the remaining connected component $[2,1,3]$ of $D\vert$. By Lemma \ref{lem:fibrations-Sigma-chi} there are no more degenerate fibers, and $F$ has exactly one component not in $D$, call it $C$, so $F$ is good, too. Thus \ref{item:V} holds. The restrictions $f|_{T}$ and $f|_{L_2}$ are ramified with index $3$ on $L$, $A_2$ and $L_1'$, so by \eqref{eq:Hurwitz} they are totally ramified, hence \ref{item:H} holds. 
	\end{proof}

\begin{figure}
	\begin{tikzpicture}[scale=1.2]
		\begin{scope}[shift={(0,5)}]
			\draw (-1.2,4) -- (4.6,4);
			\node at (-1,3.8) {\small{$-1$}}; %\node at (-1,4.2) {\small{$H$}};
			\draw[thick] (-1.2,3.1) -- (0.2,3.1) to[out=0,in=180] (1,3.05) -- (2.9,3.05) to[out=0,in=180] (4,1.95) -- (4.6,1.95);
			\node at (-1,2.9) {\small{$-5$}}; \node at (-1,3.3) {\small{$T$}};
			\draw[thick] (-1.2,2.1) -- (0.4,2.1) to[out=0,in=180] (1.2,2.75) -- (3.25,2.75);
			\draw[thick] (3.45,2.75) to[out=0,in=180] (4.1,3.55) to[out=0,in=-99] (4.3,3.7) to[out=79,in=180] (4.6,3.8) -- (4.6,3.8);
			\node at (-1,1.9) {\small{$-2$}}; \node at (-1,2.3) {\small{$L_2$}};
			\draw (0,1) -- (0.2,2.2);		\node at (-0.15,1.5) {\small{$-2$}}; \node at (0.3,1.5) {\small{$L_1$}};
			\filldraw (0.02,3.1) circle (0.05); 
			\draw[dashed] (0.2,2) -- (0,3.2); 	\node at (-0.15,2.6) {\small{$-1$}};  \node at (0.3,2.6) {\small{$L$}};
			\filldraw (0.18,2.1) circle (0.05); 
			\draw[dashed] (0,3) -- (0.2,4.2);		\node at (-0.15,3.7) {\small{$-2$}}; \node at (0.32,3.7) {\small{$A_0$}};
			%
		%	\draw[thick] (1.4,0) -- (1.6,1);		\node at (1.2,0.5) {\small{$-3$}};
			\draw (1.6,0.8) -- (1.4,1.8); 	\node at (1.25,1.3) {\small{$-2$}};
			\node at (1.7,1.7) {\small{$G_1$}};
			\filldraw (1.42,1.7) circle (0.05); 
			\draw (1.4,1.6) -- (1.6,2.6);		\node at (1.25,2.1) {\small{$-2$}};
			\draw[dashed] (1.6,2.4) -- (1.4,3.4); 	\node at (1.25,2.88) {\small{$-1$}}; 
			%\node at (1.75,2.9) {\footnotesize{$A_2$}};
			\draw (1.4,3.2) -- (1.6,4.2);		\node at (1.25,3.7) {\small{$-3$}}; \node at (1.67,3.7) {\small{$Q$}};
			%
		%	\draw[thick] (2.8,0) -- (3,1);		\node at (2.6,0.5) {\small{$-3$}};
			\draw (3,0.8) -- (2.8,1.8); 	\node at (2.65,1.3) {\small{$-2$}}; 
			\node at (3.1,1.7) {\small{$G_2$}};
			\filldraw (2.82,1.7) circle (0.05); 
			\draw (2.8,1.6) -- (3,2.6);		\node at (2.65,2.1) {\small{$-2$}};
			\draw[dashed] (3,2.4) -- (2.8,3.4); 	\node at (2.65,2.88) {\small{$-1$}};
			\draw (2.8,3.2) -- (3,4.2);		\node at (2.65,3.7) {\small{$-3$}}; \node at (3.05,3.7) {\small{$R$}};
			%
		%	\draw[thick] (4.2,0) -- (4.4,1);		\node at (4,0.5) {\small{$-3$}};
			\draw (4.4,0.8) -- (4.2,1.8); 	\node at (4.05,1.3) {\small{$-3$}};
			\draw (4.2,1.6) -- (4.4,2.6);		\node at (4.1,2.2) {\small{$-1$}};
			\draw (4.4,2.4) -- (4.2,3.4); 	\node at (4.05,2.9) {\small{$-2$}};
			\filldraw (4.22,3.3) circle (0.05);
			\draw[dashed] (4.2,3.2) -- (4.4,4.2);		\node at (4.05,3.7) {\small{$-2$}};
			\draw[<-] (1.7,0.5) -- (1.7,-0.5);
			\node[left] at (1.7,0) {\footnotesize{blowup}};
			\node[right] at (1.7,0) {\footnotesize{at $\bullet$}};
		\end{scope}
		\begin{scope}[shift={(8,5)}]
			\draw (-1.2,4) -- (4.6,4); \node at (-1,3.8) {\small{$0$}};
			\draw (-1.2,2.55) -- (4.6,2.55); \node at (-1,2.35) {\small{$0$}};
			\draw (-1.2,1.1) -- (4.6,1.1); \node at (-1,0.9) {\small{$0$}};
			\draw (0.1,4.2) -- (0.1,0.8); \node at (0.25,1.7) {\small{$0$}};
			\draw[dashed] (1.5,4.2) -- (1.5,0.8); \node at (1.65,1.7) {\small{$0$}};
			\draw[dashed] (2.9,4.2) -- (2.9,0.8); \node at (3.05,1.7) {\small{$0$}};
			\draw[dashed] (4.3,4.2) -- (4.3,0.8); \node at (4.45,1.7) {\small{$0$}};
			\draw[<-] (1.7,0.5) -- (1.7,-0.5);
			\node[left] at (1.7,0) {\footnotesize{blowdown}};
			\node[right] at (1.7,0) {\footnotesize{to $\P^1\times \P^1$}};
		\end{scope} 
		\begin{scope}
		\draw (-1.2,4) -- (4.6,4);
		\node at (-1,3.8) {\small{$-1$}};
		\draw[thick] (-1.2,3.05) -- (2.9,3.05) to[out=0,in=180] (3.8,1.1) -- (4.6,1.1);
		\node at (-1,2.9) {\small{$-6$}};
		\draw[thick] (-1.2,1.1) -- (0,1.1) to[out=0,in=180] (1.2,2.75) -- (3.2,2.75);
		\draw[thick] (3.4,2.75) to[out=0,in=180] (4,3.55) to[out=0,in=-99] (4.3,3.7) to[out=79,in=180] (4.6,3.8) -- (4.6,3.8);
		\node at (-1,0.9) {\small{$-3$}};
		\draw[thick] (0,0) -- (0.2,1);		\node at (-0.15,0.5) {\small{$-3$}};
		\draw (0.2,0.8) -- (0,1.8); 	\node at (-0.15,1.3) {\small{$-1$}};
		\draw[dashed, thick] (0,1.6) -- (0.2,2.6);		\node at (-0.15,2.1) {\small{$-3$}};
		\draw (0.2,2.4) -- (0,3.4); 	\node at (-0.15,2.88) {\small{$-1$}};
		\draw[dashed, thick] (0,3.2) -- (0.2,4.2);		\node at (-0.15,3.7) {\small{$-3$}};
		\draw[thick] (1.4,0) -- (1.6,1);		\node at (1.25,0.5) {\small{$-3$}};
		\draw (1.6,0.8) -- (1.4,1.8); 	\node at (1.25,1.3) {\small{$-1$}};
		\draw[thick] (1.4,1.6) -- (1.6,2.6);		\node at (1.25,2.1) {\small{$-3$}};
		\draw[dashed] (1.6,2.4) -- (1.4,3.4); 	\node at (1.25,2.88) {\small{$-1$}};
		\draw[thick] (1.4,3.2) -- (1.6,4.2);		\node at (1.25,3.7) {\small{$-3$}};
		\draw[thick] (2.8,0) -- (3,1);		\node at (2.65,0.5) {\small{$-3$}};
		\draw (3,0.8) -- (2.8,1.8); 	\node at (2.65,1.3) {\small{$-1$}};
		\draw[thick] (2.8,1.6) -- (3,2.6);		\node at (2.65,2.1) {\small{$-3$}};
		\draw[dashed] (3,2.4) -- (2.8,3.4); 	\node at (2.65,2.88) {\small{$-1$}};
		\draw[thick] (2.8,3.2) -- (3,4.2);		\node at (2.65,3.7) {\small{$-3$}};
		\draw[thick] (4.2,0) -- (4.4,1);		\node at (4.05,0.5) {\small{$-3$}};
		\draw (4.4,0.8) -- (4.2,1.8); 	\node at (4.05,1.3) {\small{$-1$}};
		\draw[thick] (4.2,1.6) -- (4.4,2.6);		\node at (4.05,2.1) {\small{$-3$}};
		\draw (4.4,2.4) -- (4.2,3.4); 	\node at (4.05,2.9) {\small{$-1$}};
		\draw[dashed, thick] (4.2,3.2) -- (4.4,4.2);		\node at (4.05,3.7) {\small{$-3$}};
		\end{scope}
		\node at (5.8,2.55) {\footnotesize{$3$ to $1$}};
		\node at (5.8,2.3) {\footnotesize{covering}};
		\draw [<-] (4.8,2.1) -- (6.8,2.1);
		\node at (5.8,1.9) {\footnotesize{branched along}};
		\node at (5.8,1.65) {\footnotesize{thick curves}};
		\begin{scope}[shift={(8,0)}]
			\draw (-1.2,4) -- (4.6,4);
			\node at (-1,3.8) {\small{$-3$}};
			\draw (-1.2,3.05) -- (2.9,3.05) to[out=0,in=180] (3.8,1.1) -- (4.6,1.1);
			\node at (-1,2.9) {\small{$-2$}};
			\draw (-1.2,1.1) -- (0,1.1) to[out=0,in=180] (1.2,2.75) -- (3.2,2.75);
			\draw (3.4,2.75) to[out=0,in=180] (4,3.55) -- (4.6,3.55);
			\node at (-1,0.9) {\small{$-1$}};
			\draw (0,0) -- (0.2,1);		\node at (-0.15,0.5) {\small{$-1$}};
			\draw (0.2,0.8) -- (0,1.8); 	\node at (-0.15,1.3) {\small{$-3$}};
			\draw[dashed] (0,1.6) -- (0.2,2.6);		\node at (-0.15,2.1) {\small{$-1$}};
			\draw (0.2,2.4) -- (0,3.4); 	\node at (-0.15,2.88) {\small{$-3$}};
			\draw[dashed] (0,3.2) -- (0.2,4.2);		\node at (-0.15,3.7) {\small{$-1$}};
			\draw (1.4,0) -- (1.6,1);		\node at (1.25,0.5) {\small{$-1$}};
			\draw (1.6,0.8) -- (1.4,1.8); 	\node at (1.25,1.3) {\small{$-3$}};
			\draw (1.4,1.6) -- (1.6,2.6);		\node at (1.25,2.1) {\small{$-1$}};
			\draw[dashed] (1.6,2.4) -- (1.4,3.4); 	\node at (1.25,2.88) {\small{$-3$}};
			\draw (1.4,3.2) -- (1.6,4.2);		\node at (1.25,3.7) {\small{$-1$}};
			\draw (2.8,0) -- (3,1);		\node at (2.65,0.5) {\small{$-1$}};
			\draw (3,0.8) -- (2.8,1.8); 	\node at (2.65,1.3) {\small{$-3$}};
			\draw (2.8,1.6) -- (3,2.6);		\node at (2.65,2.1) {\small{$-1$}};
			\draw[dashed] (3,2.4) -- (2.8,3.4); 	\node at (2.65,2.88) {\small{$-3$}};
			\draw (2.8,3.2) -- (3,4.2);		\node at (2.65,3.7) {\small{$-1$}};
			\draw (4.2,0) -- (4.4,1);		\node at (4.05,0.5) {\small{$-1$}};
			\draw (4.4,0.8) -- (4.2,1.8); 	\node at (4.05,1.3) {\small{$-3$}};
			\draw (4.2,1.6) -- (4.4,2.6);		\node at (4.05,2.1) {\small{$-1$}};
			\draw (4.4,2.4) -- (4.2,3.4); 	\node at (4.05,2.9) {\small{$-3$}};
			\draw[dashed] (4.2,3.2) -- (4.4,4.2);		\node at (4.05,3.7) {\small{$-1$}};
		\end{scope}
	\end{tikzpicture}
	\caption{Proof of Corollary \ref{cor:uniruled} in case \ref{prop:MT_smooth}\ref{item:MT-eliptic-relative}, $\bar{Y}$ of type $4\rA_2$, $\cha\kk=3$; see Example \ref{ex:covering-4A2}.}
	\label{fig:covering-4A2}
\end{figure}

	Let $f\colon \tilde{X}\to \P^1$ be a $\P^1$-fibration chosen in Claim \ref{cl:good-fibration}, and let $\tilde{X}_{\nu}$ be the normalization of a fiber product of $f\colon \tilde{X}\to \P^1$ with a totally ramified covering $\P^1\to \P^1$ given by $x\mapsto x^p$.
	
	\begin{claim}\label{cl:local-properties}
		The following hold. 
	\begin{enumerate}
		\item\label{item:pullback-sing} The only singular points of $\tilde{X}_{\nu}$ are Hirzebruch-Jung singularities, isomorphic to the normalization of a hypersurface singularity $\{\tau^{p}=x^{a}y^{b}\}$, where $\{x=0\}$,  $\{y=0\}$ are local equations of some vertical curves.
		\item\label{item:pullback-V} If a vertical curve $C$ has multiplicity $\mu(C)\in \{1,p\}$ then its preimage $C_{\nu}$ is a vertical curve with $\mu(C_{\nu})=1$.
		\item\label{item:pullback-H} The horizontal part of the reduced preimage of $D$ on $\tilde{X}_{\nu}$ consists of $h$ disjoint $1$-sections.
	\end{enumerate}
	\end{claim}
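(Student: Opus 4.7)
The plan is to argue by local analysis at each point of $\tilde{X}$. Since in characteristic $p$ the covering $g \colon \P^{1} \to \P^{1}$, $x \mapsto x^{p}$, is purely inseparable, the induced map $\tilde{X}_{\nu} \to \tilde{X}$ is a bijection on points. Fix $q \in \tilde X$, put $t_{0} = f(q)$, and let $s_{0} = t_{0}^{1/p}$ be the unique $p$-th root. Shifting $s' = s - s_{0}$ and applying the Frobenius identity $s^{p} - t_{0} = (s')^{p}$ valid in characteristic $p$, the local ring of $\tilde{X} \times_{\P^{1}} \P^{1}$ at $(q,s_{0})$ takes the form
\[ A = \hat{\cO}_{\tilde{X}, q}[s']/\bigl((s')^{p} - \tilde{t}\bigr), \]
where $\tilde{t} = f^{*}t - t_{0}$ is a local equation of the fiber of $f$ through $q$.

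Next I would run a case analysis on the local structure of the fiber at $q$. If $q$ lies on a smooth reduced fiber, then $\tilde{t}$ is a local parameter at $q$ and the formal implicit function theorem gives $A \cong k[[x, s']]$, which is regular; this yields the $\mu(C) = 1$ case of \ref{item:pullback-V}. If $q$ is a smooth point of a single vertical curve $C$ with $\mu(C) = p$, pick coordinates with $C = \{y = 0\}$ so that $\tilde{t} = y^{p} u$ with $u$ a unit. Setting $w = s'/y$ gives $w^{p} = u$, and the integral closure of $A$ is $A' = R[w]/(w^{p} - u)$, a two-dimensional Cohen--Macaulay (hence $S_{2}$) ring which is regular in codimension one, hence normal. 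Since $s' = yw$ with $w$ a unit in $A'$, the divisors $\{s' = 0\}$ and $\{y = 0\}$ coincide, so $C_{\nu}$ is vertical of multiplicity one, completing \ref{item:pullback-V}. At a node where two vertical components of multiplicities $a$ and $b$ meet, $\tilde{t} = x^{a}y^{b} u$, and a formal coordinate change absorbing the unit $u$ identifies the normalization of $A$ with the normalization of $\{\tau^{p} = x^{a} y^{b}\}$, yielding \ref{item:pullback-sing}.

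For \ref{item:pullback-H} I would invoke the structure of horizontal components established in Claim~\ref{cl:good-fibration}\ref{item:H} and~\ref{item:H-disjoint}. A $1$-section $H$ is transverse to $f$, so the base change is locally trivial near $H$ and $H_{\nu} \to H$ is an isomorphism, giving a $1$-section on $\tilde{X}_{\nu}$. A totally ramified $p$-section $H \cong \P^{1}$ restricts under $f$ to a purely inseparable degree-$p$ map $H \to \P^{1}$, which must coincide up to isomorphism with the absolute Frobenius of $\P^{1}$; thus the normalization of $H \times_{\P^{1}} \P^{1}$ is isomorphic to $H$, again a $1$-section on $\tilde{X}_{\nu}$. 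Disjointness of the preimages is inherited from disjointness on $\tilde{X}$ together with set-theoretic bijectivity of $\tilde X_\nu\to \tilde X$.

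The main obstacle will be the absorption of the unit $u$ in the node case of \ref{item:pullback-sing}: in characteristic $p$, units in $k[[x,y]]$ need not admit $p$-th roots, so the coordinate change bringing $\tilde{t} = x^{a} y^{b} u$ into the model form $x^{a} y^{b}$ must be performed with care, using only the $p$-th root of the constant term $u(0,0) \in k$ and then handling the remaining higher-order corrections at the level of normalizations rather than hypersurfaces. A related subtlety is that at a smooth point of a multiplicity-$p$ component, the ring $A'$ of Case (b) may itself be singular when $u - u(0,0)$ has no linear part; one must then recognize the resulting singularity as a Hirzebruch--Jung singularity of the form $\{\tau^{p} = y^{b}\}$, i.e., a degenerate instance of the model in \ref{item:pullback-sing} with $a = 0$.
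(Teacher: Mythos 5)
Your proposal follows the same route as the paper: a direct local computation on the purely inseparable base change $\tau^{p}=f^{*}t$, with a case division according to whether the point lies on zero, one, or two vertical curves, and a separate (global) argument identifying the preimage of a totally ramified $p$-section with a Frobenius twist. Parts \ref{item:pullback-V} and \ref{item:pullback-H} are handled correctly, and your treatment of \ref{item:pullback-H} via the factorization of a purely inseparable degree-$p$ map of smooth rational curves through the relative Frobenius is if anything cleaner than the paper's coordinate computation $\{x=y^{p}\}\rightsquigarrow\{\tau=y\}$.

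The genuine gap is in part \ref{item:pullback-sing}: the assertion is that the \emph{only} singular points of $\tilde{X}_{\nu}$ lie over intersections of two vertical curves, so you must also rule out singularities over a smooth point $z$ of a single vertical curve $C$ of arbitrary multiplicity $c=\mu(C)$, not just $c\in\{1,p\}$; your case analysis never treats $c\notin\{1,p\}$. The paper does this uniformly: writing $t=ux^{c}$ and $c=p\bar{c}+r$ with $0\leq r<p$, the function $\xi\de\tau x^{-\bar{c}}$ satisfies $\xi^{p}=ux^{r}$, hence is regular on the normal surface $\tilde{X}_{\nu}$, and one works with $\xi$ instead of $\tau$; for $r$ coprime to $p$ a further substitution $\eta=\xi^{\alpha}x^{-\beta}$ with $\alpha r-\beta p=1$ gives $\eta^{p}=u^{\alpha}x$ and a regular local ring $\kk[[\eta,y]]$. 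Your second flagged ``obstacle'' is real, but your proposed fix is not: when $p\mid c$ one is reduced to $\xi^{p}=u$ with $u$ a unit, and if $u$ is not a $p$-th power the resulting ring is \emph{not} of the form $\{\tau^{p}=y^{b}\}$ with $a=0$ --- after translating by the $p$-th root of $u(0,0)$ one gets $\eta^{p}=u_{1}$ with $u_{1}\in\m$, whose shape is constrained only by the geometry of the fibration, not by your normal form. Note, however, that the conclusion you actually need for \ref{item:pullback-V} survives this: since $\xi^{p}=u$ is a unit, $\xi$ is a unit in the normalization, so $\tau=\xi x^{\bar{c}}$ and $x^{\bar{c}}$ cut out the same divisor and $\mu(C_{\nu})=1$ regardless of whether the local ring is regular. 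So the missing piece is confined to the ``only'' in \ref{item:pullback-sing}, and closing it requires the general-multiplicity substitution above rather than the two special cases you computed.
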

	\begin{proof} 
	This claim follows from a direct local computation, analogous to the one in \cite[\sec 7.2]{Miy_Tsu-opendP}. Fix a point $z_{\nu}\in \tilde{X}_{\nu}$, and let $z$ be its image on $\tilde{X}$. If $z$ lies on the intersection of two vertical curves, we get a singularity at $z_{\nu}$ as described in \ref{item:pullback-sing}. Assume that $z$ lies on exactly one vertical curve, say $C$, and let $c=\mu(C)$. Let $(x,y)$ be local coordinates of $\tilde{X}$ at $z$, such that $C=\{x=0\}$. Let $t$ and $\tau$ be parameters of $\P^1$ at the images of $z$ and $z_{\nu}$. Then $t=ux^{c}$, where $u$ is a regular, nonvanishing function on some neighborhood of $z$. Denoting the pullbacks of functions on $\tilde{X}$ to $\tilde{X}_{\nu}$ by the same letters, we get $\tau^{p}=t=ux^{c}$. Write $c=p\bar{c}+r$ for some integer $r\in \{0,\dots,p-1\}$. Then the rational function $\xi\de \tau x^{-\bar{c}}$ satisfies $\xi^{p}=ux^{r}$, hence is regular as $\tilde{X}_{\nu}$ is normal. We get $\tau=\xi x^{\bar{c}}$, so $\tilde{X}_{\nu}$ is regular at $z_{\nu}$, and  if $c\in \{1,p\}$ then $\tau$ is a local equation of $C_{\nu}$, so $\mu(C_{\nu})=1$, as claimed in \ref{item:pullback-V}. 
	To prove \ref{item:pullback-H} we can assume that $z\in H$ and $C$ is a nondegenerate fiber. If $H$ is a $1$-section then we can choose $(x,y)$ so that $y$ is a local equation of $H$, and then its preimage $\{y=0\}$ is a $1$-section. Otherwise, by Claim \ref{cl:good-fibration}\ref{item:H} $H$ is a $p$-section and $(C\cdot H)_{z}=p$, so we can choose $(x,y)$ in such a way that $H=\{x=y^p\}$. Then the total transform of $H$ is given by $\{x^{p}=y^p\}$, so its (reduced) preimage $\{x=y\}$ is a $1$-section, as needed. By Claim \ref{cl:good-fibration}\ref{item:H-disjoint} the horizontal components of $D$ are disjoint, hence so are their preimages, as needed.
	\end{proof}

	Let $F_{1},\dots,F_{v}$ be the bad fibers of $f$. By Claim \ref{cl:good-fibration}\ref{item:V} we can assume that $F_2$ satisfies \ref{item:good-C}. Let $X''\to \tilde{X}_{\nu}$ be the minimal resolution.  Let $D''$ be the reduced total  transform of $X''$ of $(D+F_1)\redd$ if $v\geq 1$ and of $D$ otherwise. By Claim \ref{cl:local-properties}\ref{item:pullback-H} $D''\hor$ consists of $h$ disjoint $1$-sections. Let $(X'',D'')\to (X',D')$ be a maximal contraction of superfluous vertical $(-1)$-curves in $D''$ and its images, such that the components of $D'\hor$ remain disjoint. Since the rational map $X'\map X$ restricts to a dominant morphism $X'\setminus D'\to X\setminus D$,  it is enough to show that $X'\setminus D'$ is $\A^1$-ruled. If $h=1$ this follows from Lemma \ref{lem:Tsen}, so we can assume $h\geq 2$.
	
	\begin{claim}\label{cl:F}
		For a fiber $F$ of $f$, denote by $F'$ its reduced total transform on $X'$. The following hold.
		\begin{enumerate}
			\item\label{item:cl-good} If $F$ is a good fiber then $F'$ is a $0$-curve not contained in $D'$.
			\item\label{item:cl-bad} If $v\geq 1$ then $F_1'$ is a $0$-curve contained in $D'$.
			\item\label{item:cl-ugly} If $v=2$ then $F_2'$ is either columnar, see Definition \ref{def:fibers}\ref{item:F-columnar}, or a $0$-curve not contained in $D'$.
		\end{enumerate}
	\end{claim}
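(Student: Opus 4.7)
The plan is to combine the local analysis of $\tilde{X}_\nu$ from Claim \ref{cl:local-properties} with iterative contractions of $(-1)$-curves in $D''$, treating separately the three cases: a good fiber, the bad fiber $F_1$, and (when $v=2$) the bad-but-nice fiber $F_2$. First I extend the local computation in the proof of Claim \ref{cl:local-properties}\ref{item:pullback-V}: for any vertical component $C$ of multiplicity $c$ in its fiber, the preimage $C_\nu\subseteq\tilde X_\nu$ is irreducible, and its multiplicity in the new fiber $\tilde f_\nu^{-1}(\tau_0)$ equals $c/p$ if $p\mid c$ and $c$ otherwise. Together with Claim \ref{cl:local-properties}\ref{item:pullback-sing}, this implies that the reduced total transform of a fiber $F$ on $X''$ consists of $C_\nu$ (when $F$ has a unique non-$D$ component $C$) together with the other components of $F\redd$ lifted to $X''$ and the Hirzebruch--Jung exceptional chains resolving the singularities of $\tilde X_\nu$ above $F$; all the latter chains lie in $D''$, because H--J singularities occur only at preimages of intersections of two vertical components of $\tilde D$ (or of $\tilde D+F_1$ in the case $F=F_1$).

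For part \ref{item:cl-good}, $\mu(C)\in\{1,p\}$ forces $\mu(C_\nu)=1$, and by \ref{item:good-C} combined with Claim \ref{cl:local-properties}\ref{item:pullback-H}, every connected component of the reduced new fiber minus $C_\nu$ meets $D''\hor$ in at most one point. I claim that iteratively contracting superfluous $(-1)$-curves of $D''$ reduces the fiber to the image of $C_\nu$ alone: whenever the fiber is still degenerate, Lemma \ref{lem:degenerate_fibers}\ref{item:unique_-1-curve} (using $\mu(C_\nu)=1$) produces a $(-1)$-curve $L\ne C_\nu$, which therefore lies in $D''$; the tree structure of degenerate fibers together with \ref{item:good-C} and the fact that $D''\hor$ is a sum of disjoint $1$-sections ensure that $L$ is superfluous in $D''$ and that its contraction preserves disjointness of $D'\hor$. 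At the end of the iteration $F'$ equals the image of $C_\nu$, which is a $0$-curve not in $D'$.

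For parts \ref{item:cl-bad} and \ref{item:cl-ugly} the analysis is similar. In \ref{item:cl-bad}, the construction of $D''$ places the entire reduced preimage of $F_1$ in $D''$, so the same inductive contraction reduces $F_1'$ to a single $0$-curve, contained in $D'$. In \ref{item:cl-ugly}, $F_2$ satisfies \ref{item:good-C} but $\mu(C)\notin\{1,p\}$, whence $\mu(C_\nu)>1$; running the inductive contraction in $D''$ one either reduces to the image of $C_\nu$ alone (giving a $0$-curve not in $D'$) or the process terminates earlier because further contraction would merge the two $D'\hor$-sections. I argue that in the latter case the remaining configuration must be $[T,1,T^*]$, with $T,T^*$ adjoint admissible chains in $D'$ each meeting one of the two horizontal sections and with the middle $(-1)$-curve equal to the image of $C_\nu$, so that $F_2'$ is columnar. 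The principal obstacle is to rigorously justify this dichotomy in case \ref{item:cl-ugly}: one must verify that the only obstruction to further contraction is the columnar configuration, and that the two chains adjacent to the image of $C_\nu$ are forced to be adjoint. This amounts to a combinatorial argument on the reduced new fiber, combining the chain structure inherited from the Hirzebruch--Jung resolutions with Lemma \ref{lem:degenerate_fibers}\ref{item:adjoint_chain} applied to the columnar fiber picture.
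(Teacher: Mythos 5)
Your overall strategy (the local analysis of $\tilde X_\nu$ followed by stepwise contraction of vertical $(-1)$-curves in $D''$) is the paper's, and your treatment of part \ref{item:cl-good} is essentially correct — with the minor caveat that $C_\nu$ need not itself be a $(-1)$-curve, so the curve you contract should be obtained simply as a $(-1)$-curve of the degenerate fiber, which lies in $D''$ unless it equals $C_\nu$, in which case Lemma \ref{lem:degenerate_fibers}\ref{item:unique_-1-curve} supplies another one. The genuine gap is in part \ref{item:cl-bad}: you assert that \enquote{the same inductive contraction reduces $F_1'$ to a single $0$-curve}, but your justification in \ref{item:cl-good} rested on condition \ref{item:good-C}, which the bad fiber $F_1$ need not satisfy. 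Since all of $F_1''$ lies in $D''$, the contraction can a priori get stuck at a $(-1)$-curve meeting two components of $D'\hor$, whose contraction is forbidden because the horizontal components must stay disjoint. One has to rule this out: any $(-1)$-curve surviving in $F_1'$ must meet at least two of the at most three disjoint $1$-sections of $D'\hor$ (interior $(-1)$-curves of the fiber have multiplicity $\geq 2$ by Lemma \ref{lem:degenerate_fibers}\ref{item:unique_-1-curve}, hence miss the $1$-sections and are superfluous; tips meeting at most one section are superfluous too), so there is at most one such curve, it has multiplicity $1$, and a degenerate fiber whose unique $(-1)$-curve has multiplicity $1$ contradicts Lemma \ref{lem:degenerate_fibers}\ref{item:unique_-1-curve}. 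Hence $F_1'=[0]$. Nothing in your proposal plays this role.

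Part \ref{item:cl-ugly} is also incomplete, as you acknowledge, and your sketch is missing the two structural inputs that make the dichotomy work: since $F_2$ satisfies \ref{item:good-C} but fails \ref{item:good-mult}, Claim \ref{cl:good-fibration}\ref{item:H} forces the unique component $C$ of $F_2$ off $\tilde D$ to be disjoint from $\tilde D\hor$; and since $h=2$, condition \ref{item:good-C} forces the two $1$-sections to meet $F_2''$ in two distinct connected components of $F_2''\cap D''$, each of which therefore contains a multiplicity-one component. These facts are what pin the surviving configuration down to a chain meeting $D'\hor$ in its two tips with the unique $(-1)$-curve off $D'$, i.e.\ a columnar fiber (or else everything contracts and $F_2'$ is a $0$-curve off $D'$). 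By contrast, the \enquote{adjointness} of the two chains that you single out as the principal obstacle is automatic from Lemma \ref{lem:degenerate_fibers}\ref{item:columnar} once the fiber is known to have a unique $(-1)$-curve with the rest of its support in $D'$; it is not an extra condition to verify.
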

	\begin{proof}
		\ref{item:cl-good} Let $C$ be the unique component of $F\redd$ off $\tilde{D}$. Condition \ref{item:good-mult} and Claim \ref{cl:local-properties}\ref{item:pullback-V} imply that its preimage $C''$ on $X''$ is a curve of multiplicity $1$ in a fiber $F''$. Hence $F''-C''$ contracts to smooth points, and by condition \ref{item:good-C} the images of components of $D''\hor$ after this contraction remain disjoint, so $F'$ is the image of $C''$, as needed.
		
		\ref{item:cl-bad} We have $F_{1}'\subseteq D'$ by the definition of $D''$. If $F_1'$ contains a $(-1)$-curve then the latter meets at least two components of $D'\hor$. By Claims \ref{cl:good-fibration}\ref{item:H-disjoint} and \ref{cl:local-properties}\ref{item:pullback-H}, $D'\hor$ consists of at most three $1$-sections, so there is at most one $(-1)$-curve in $F_1'$, and it has multiplicity $1$, which is impossible. Thus $F_1'=[0]$, as required.
		
		\ref{item:cl-ugly} Since $F_2$ is bad but satisfies \ref{item:good-C}, it does not satisfy \ref{item:good-mult}. By Claim \ref{cl:good-fibration}\ref{item:H}, this implies that the component of $F_{2}$ not contained in $\tilde{D}$ does not meet $\tilde{D}\hor$. By Claim \ref{cl:good-fibration}\ref{item:V} we have $h=2$, so by \ref{item:good-C} the two horizontal components of $\tilde{D}$ meet $F_2$ in two connected components of $F_2\cap \tilde{D}\vert$. Let $F_{2}''$ be the total transform of $F_2$ on $D''$. Now $F_{2}''\cap D''$ has two connected components meeting $D''\hor$, so they both contain components of multiplicity one. It follows that its image $F_{2}'$ is columnar or nondegenerate, as needed.
	\end{proof}
	
	If $v\geq 2$ then $(h,v)=(2,2)$ by Claim \ref{cl:good-fibration}\ref{item:V}, and Claim \ref{cl:F} shows that $D'$ is a chain with a $0$-curve. Otherwise Claim \ref{cl:F} implies that $X'$ is a Hirzebruch surface, and $D'$ is a sum of $h\geq 2$ disjoint $1$-sections and at most $1$ fiber. In any case, $D'$ has a twig which is not negative definite, so $X'\setminus D'$ is $\A^1$-ruled by Lemma \ref{lem:twig}.
\end{proof}

\begin{example}[Proof of the Corollary \ref{cor:uniruled} in case \ref{prop:MT_smooth}\ref{item:MT-eliptic-relative} with $\bar{Y}$ of type $4\rA_2$, see Figure \ref{fig:covering-4A2}]\label{ex:covering-4A2}
	We now follow the above proof more closely in case studied at the end of Claim \ref{cl:good-fibration}, that is, when $(\bar{X},\bar{D})$ is as in \ref{prop:MT_smooth}\ref{item:MT-eliptic-relative}, with $\bar{Y}$ of type $4\rA_2$.  The $\P^1$-fibration $f$ from Claim \ref{cl:good-fibration} is shown in the top-left part of Figure \ref{fig:covering-4A2}. 
	The one in the bottom-right is constructed by taking the normalized pullback $\tilde{X}_{\nu}$ of $f$ along the morphism $\P^1\ni x\mapsto x^{3}\in \P^1$ and resolving the singularities of $\tilde{X}_{\nu}$. 
	Alternatively, one can first blow up the points of $\tilde{X}$ which will become singularities of $\tilde{X}_{\nu}$, and then take the normalized pullback. This blowup is described in the bottom-left. The key point is that, as shown in Claim \ref{cl:local-properties}\ref{item:pullback-H}, after taking the pullback the $3$-sections $T$, $L_2$ become $1$-sections. Now we can contract degenerate fibers to $0$-curves such that the images of the last three fibers (called \enquote{good} in the proof) are not contained in the boundary. This way we get $3$ horizontal and $1$ vertical line on $\P^1\times \P^1$. Restricted to the open part, our map becomes a dominant (non-separable) morphism from a cylinder $\A^{1}\times \P^1\setminus \{3 \mbox{ points}\}$ to $X\setminus D$, so the latter is $\A^1$-uniruled, as needed. 
	
	Similar description for Platonic $\A^{1}_{*}$-fiber spaces in non-special field characteristics is given in \cite[\sec 7.5]{Miy_Tsu-opendP}.
\end{example}

We end this section with the construction of exceptional log del Pezzo surfaces from Proposition \ref{prop:MT_smooth}\ref{item:MT-ht=2-cha=2}, \ref{item:MT-exception}, see Figure \ref{fig:MT_exceptions}. They occur only if $\cha\kk=2$.

\begin{example}[Exceptions of height $2$ in $\cha\kk=2$ from Proposition {\ref{prop:MT_smooth}\ref{item:MT-ht=2-cha=2}}, see Figure \ref{fig:MT-ht=2_cha=2}]\label{ex:cha=2_ht=2}\
	
	Assume $\cha\kk=2$. Let $\cc$ be a smooth conic, and let $\ll_{1},\dots, \ll_{\nu}$ for some $\nu\geq 3$ be lines tangent to $\cc$: since $\cha\kk=2$, they all meet at one point, say $p_0\not\in \cc$. Write $\{p_j\}=\ll_j\cap \cc$ Blow up at $p_0$ and, for each $j\in \{1,\dots,\nu\}$, at $p_j$ and its infinitely near point on the proper transform of $\cc$. Denote the resulting morphism by $\phi\colon Y\to \P^2$, let $C_Y=\phi^{-1}_{*}\cc$, let $A_j$ be the $(-1)$-curve in $\phi^{-1}(p_j)$, and let $D_Y=\phi^{*}(\cc+\sum_{j=1}^{\nu}\ll_j)\redd-A_0-\sum_{j=2}^{\nu}A_j$. Then the pencil of lines through $p_0$ induces a $\P^1$-fibration of height $2$ of $(Y,D_Y)$ as in Figure \ref{fig:MT-ht=2_cha=2}, i.e.\ such that $D_Y=\langle 1;[2],[2],[2\nu-4]\rangle+2\nu\cdot [2]$, $(D_Y)\hor=[2\nu-4]$ is a $2$-section, and all degenerate fibers are supported on chains $[2,1,2]$, where the $(-2)$-tips are contained in $D_Y$ and the $(-1)$-curve meets $(D_Y)\hor$ once. This way, we get one example of a log surface as in \ref{prop:MT_smooth}\ref{item:MT-ht=2-cha=2}. 
	
	The remaining ones are constructed by blowing up further within the fibers, as follows. Write $\{q_j\}=C_Y\cap A_j$. Let $\psi\colon X\to Y$ be a composition of blowups over $q_2,\dots, q_{\mu}$ for some $\mu\leq \nu$ such that each $\psi^{-1}(q_j)$ is a chain with a unique $(-1)$-curve $A_j'$, and for $j\geq 3$ this $(-1)$-curve meets $\psi^{-1}_{*}C_Y$. Let $D=\psi^{*}(D_Y+\sum_{j=2}^{\mu}A_j)\redd-\sum_{j=2}^{\mu}A_j'$. Then $\rho(X)=\#D$ and $(X,D)$ is as in Proposition \ref{prop:MT_smooth}\ref{item:MT-ht=2-cha=2}. In particular, $\height(X,D)\leq 2$. If the inequality was strict then by Lemma \ref{lem:fibrations-Sigma-chi} some fiber of a witnessing $\P^1$-fibration would be contained in $D$: this is impossible since $D$ is snc-minimal and contains no $0$-curves.
	
	Let $(X,D)\to (\bar{X},\bar{D})$ be the contraction of all components of $D$ except the $(-1)$-curve. Then $(\bar{X},\bar{D})$ is dlt, $\rho(\bar{X})=1$, and using Lemma \ref{lem:ld_formulas}\ref{item:ld_twig} we compute that $-(K_{\bar{X}}+\bar{D})$ is ample: indeed, we have $\bar{D}\cdot (K_{\bar{X}}+\bar{D})= A_{1}'\cdot (K_{X}+A_{1}'+\frac{1}{2}T_{1}'+\frac{1}{2}T_{2}'+(1-\frac{1}{d})D\hor)<0$, where $A_{1}'=[1]$ is the proper transform of $\bar{D}$, $T_i'=[2]$ are the vertical twigs of $D$ meeting $A_{1}'$, and $d\geq 1$ is the discriminant of the twig of $D$ containing $D\hor$. Thus $-(K_{\bar{X}}+\bar{D})$ is ample, as needed. We note that the underlying surface $\bar{X}$ is a del Pezzo surface of rank one and height 2, obtained in \cite[Lemma 5.17(3)]{PaPe_ht_2} for $k_2=2$.
\end{example}

\begin{example}[Exceptions of height $3$ in $\cha\kk=2$ from Proposition \ref{prop:MT_smooth}\ref{item:MT-exception}, see Figures \ref{fig:exception-1} and \ref{fig:exception-2}]\label{ex:exception}\
		
	Assume $\cha\kk=2$. Let $\cc\subseteq \P^{2}$ be a conic, and let $\ll_{1},\ll_{2},\ll_{3}$ be lines tangent to $\cc$: as in Example \ref{ex:cha=2_ht=2}, the assumption $\cha\kk=2$ implies that these lines meet at some point $p_0$. Write $\{p_j\}=\ll_{j}\cap \cc$. Blow up at $p_0$, twice at $p_1,p_2$ and three times at $p_3$, each time on the proper transform of $\ll_j$. Next, blow up further over $p_1$ as follows: once on the proper transform of $\cc$ in case  \ref{prop:MT_smooth}\ref{item:MT-exc-1}; and twice on the proper transform of $\ll_1$ in case \ref{prop:MT_smooth}\ref{item:MT-exc-2}. Denote the resulting morphism by $\phi\colon X\to \P^{2}$, let $A_j$ be the $(-1)$-curve in $\phi^{-1}(p_j)$, let $L_j=\phi^{-1}_{*}\ll_j$, and let $D=\phi^{*}(\cc+\ll_1+\ll_2+\ll_3)\redd-A_1-A_2-A_3$. Then $(X,D)$, together with the $\P^1$-fibration of height $3$ given by the pencil of lines through $p_0$, is as in Proposition \ref{prop:MT_smooth}\ref{item:MT-exception}. 
	
	Let $(X,D)\to (\bar{X},\bar{D})$ be the contraction of $D-A_0$. Then $(\bar{X},\bar{D})$ is dlt; $\rho(\bar{X})=1$; and $-(K_{\bar{X}}+\bar{D})$ is ample: indeed, by Lemma  \ref{lem:ld_formulas}\ref{item:ld_twig}  we have $\bar{D}\cdot (K_{\bar{X}}+\bar{D})=A_0\cdot(K_{X}+A_0+\cf(L_1)\, L_1+\frac{1}{2}L_2+\frac{2}{3}L_3)=\cf(L_1)-\frac{5}{6}<0$, as $\cf(L_1)=\frac{3}{4}$ or $\frac{4}{5}$ in case \ref{prop:MT_smooth}\ref{item:MT-exc-1} and \ref{item:MT-exc-2}, respectively. 
	We note that the underlying surfaces $\bar{X}$ are a del Pezzo surfaces of rank one and height 2, listed in \cite[Lemma 5.17]{PaPe_ht_2} as follows: case \ref{prop:MT_smooth}\ref{item:MT-exc-1} corresponds to (2) with parameters $\nu=k_{2}=3$, $k_{3}=l=3$, and case \ref{prop:MT_smooth}\ref{item:MT-exc-2} to (4) with $\nu=l_2=3$, $l_3=k_3=2$.
	
	We claim that $\height(X,D)=3$. Suppose the contrary, and fix a $\P^{1}$-fibration of height at most $2$. Since $\height(\bar{X})=2$, the curve $A_0$ lies in some fiber $F_{0}$. Since $A_0$ meets $L_{1},L_{2},L_{3}$, Lemma \ref{lem:degenerate_fibers} shows that $F_0\cdot (L_{1}+L_{2}+L_{3})\geq 2\geq F_{0}\cdot D_0$ by assumption, so $F_0\cdot (L_{1}+L_{2}+L_{3})=2$ and $D-(L_1+L_2+L_3)$ is vertical. Pulling back $-K_{\P^{2}}=\ll_1+\ll_2+\ll_3$ we compute that $-K_{X}=L_{1}+L_{2}+L_{3}+2A_0-\epsilon A_1$, where $\epsilon=3,1$ in case \ref{prop:MT_smooth}\ref{item:MT-exc-1} and \ref{item:MT-exc-2}, respectively. By adjunction $2=-K_{X}\cdot F_{0}=2-\epsilon A_{1}\cdot F_0$, so $A_1$ is vertical. Hence the vertical divisor $D-(L_1+L_2+L_3+A_0)+A_1$ contains a chain $[2,2,1,3,2,2]$ in case \ref{prop:MT_smooth}\ref{item:MT-exc-1}, or a fork $\langle 3;[2],[2,1],[2,2,2]\rangle$ in case  \ref{prop:MT_smooth}\ref{item:MT-exc-2}. This is a contradiction, since the latter are not negative semi-definite. 
\end{example}

\section{Proof of Proposition \ref{prop:MT_smooth}: description of almost minimal models}\label{sec:MT_proof}

In this section we prove Proposition \ref{prop:MT_smooth}. We keep notation from its statement, so $(\bar{X},\bar{D})$ is a minimal dlt surface of negative Kodaira dimension, and $\pi\colon(X,D)\to (\bar{X},\bar{D})$ is its minimal log resolution.

We can and do assume that $X\not\cong \P^2$ and $\height(X,D)\geq 2$, as otherwise cases \ref{prop:MT_smooth}\ref{item:MT-P2} or \ref{prop:MT_smooth}\ref{item:MT-ht=1} hold. 
We now list some consequences of this assumption and introduce notation for the remaining part of the proof. 

\begin{lemma}[Some elementary properties of $(\bar{X},\bar{D})$, cf.\ {\cite[II.3.14.5]{Miyan-OpenSurf}}]\label{lem:Miy_computation} The following hold.
	\begin{enumerate}
		\item \label{item:Miy-del-Pezzo} The dlt log surface $(\bar{X},\bar{D})$ is a log del Pezzo surface of rank one, i.e.\ $-(K_{\bar{X}}+\bar{D})$ is ample and $\rho(\bar{X})=1$. 
		\item \label{item:Miy-n} Let $n\de \#\bar{D}$. Then $n=\#D-\rho(X)+1$.
		\item \label{item:Miy-R-connected} The divisor $R\de \pi^{-1}_{*}\bar{D}$ is connected. We denote by $D_0$ the connected component of $D$ containing $R$. 
		\item \label{item:Miy-peeling} The exceptional divisor of $\pi$ is the sum of $D-D_0$ and of all maximal admissible twigs of $D_0$. 
		\item \label{item:Miy_DR} One of the following holds:
		\begin{enumerate}
			\item\label{item:Miy_chain} $n\in \{1,2\}$ and $D_0$ is a rational chain,
			\item\label{item:Miy_fork} $n=1$ and $D_0$ is a rational fork with admissible twigs such that $\delta_{D_{0}}>1$, $\beta_{D_{0}}(R)=3$ and $R^2\geq -1$.
		\end{enumerate}
	\end{enumerate}  
\end{lemma}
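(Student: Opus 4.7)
Part (1) is the log MMP in dimension two: minimality of $(\bar{X},\bar{D})$ with $\kappa=-\infty$ makes it a log Mori fiber space $f\colon \bar{X}\to B$ with $\dim B\leq 1$ and $-(K_{\bar{X}}+\bar{D})$ $f$-ample. If $\dim B=1$, then a general fiber $F$ is a smooth $\P^{1}$ avoiding $\Exc\pi$, and the $f$-ample condition combined with $-K_{\bar{X}}\cdot F=2$ forces $\bar{D}\cdot F\in\{0,1\}$; in either case, pulling $f$ back to $X$ yields a $\P^{1}$-fibration of height at most one, contradicting $\height(X,D)\geq 2$. Hence $\dim B=0$, so $\rho(\bar{X})=1$ and $-(K_{\bar{X}}+\bar{D})$ is ample. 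Part (2) follows from $\rho(X)=\rho(\bar{X})+\#\Exc\pi=1+(\#D-n)$. Part (3) is the Shokurov--Kollár connectedness principle applied to the dlt pair $(\bar{X},\bar{D})$ with $-(K_{\bar{X}}+\bar{D})$ ample, forcing the non-klt locus $\bar{D}$ to be connected. Part (4) is a direct application of the dlt classification recalled in Section \ref{sec:singularities}: connected components of $D$ disjoint from $R$ are exceptional admissible chains or forks, and within $D_{0}$ the exceptional part decomposes into maximal admissible twigs rooted at $R$.

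For (5) I first show that $D_{0}$ is a rational tree. By Lemma \ref{lem:rationality}, $X$ is rational. Writing $K_{X}+D_{0}=\pi^{*}(K_{\bar{X}}+\bar{D})+E_{0}$ with $E_{0}$ supported on $\Exc\pi$, any effective $G\in|K_{X}+D_{0}|$ would satisfy
\[
0\leq -\pi^{*}(K_{\bar{X}}+\bar{D})\cdot G=-(K_{\bar{X}}+\bar{D})^{2}<0,
\]
which is impossible; hence $|K_{X}+D_{0}|=\emptyset$ and Lemma \ref{lem:trick-trees} gives $p_{a}(D_{0})=0$. Next, I rule out any admissible chain $C=C^{(1)}+\dots+C^{(k)}$ in $D_{0}-R$ whose end components $C^{(1)}$ and $C^{(k)}$ meet two distinct components of $R$. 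The equations $\pi^{*}(K_{\bar{X}}+\bar{D})\cdot C^{(a)}=0$ become, under the substitution $\beta^{(a)}\de 1-\cf(C^{(a)})$, the second-order recurrence $\beta^{(a+1)}=c_{a}\beta^{(a)}-\beta^{(a-1)}$ with $c_{a}=-(C^{(a)})^{2}\geq 2$ and boundary conditions $\beta^{(0)}=\beta^{(k+1)}=0$; since the linear map $\beta^{(1)}\mapsto\beta^{(k+1)}$ is multiplication by the positive discriminant $d(C)$, all $\beta^{(a)}$ vanish, i.e.\ $\cf(C^{(a)})=1$, contradicting dlt. Thus distinct components of $R$ meet only at direct intersection points in $D_{0}$, and each maximal admissible twig of $D_{0}-R$ is rooted at a single component of $R$.

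The structure of $R$ is then controlled by a local ampleness inequality. For each component $R_{i}\subseteq R$, the inequality $-\pi^{*}(K_{\bar{X}}+\bar{D})\cdot R_{i}>0$ combined with adjunction on $R_{i}$ and the twig formula $\cf(T^{(\#T)})=1-1/d(T)$ of Lemma \ref{lem:ld_formulas}\ref{item:ld_twig} yields
\[
2+\sum_{T\text{ rooted at }R_{i}}\frac{1}{d(T)}\;>\;\beta_{R}(R_{i})+t_{i},
\]
where $t_{i}$ is the number of maximal admissible twigs rooted at $R_{i}$. Since every $d(T)\geq 2$, this forces $\beta_{R}(R_{i})\leq 1$, so the connected tree $R$ has at most two vertices: $n\in\{1,2\}$. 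For $n=2$, the same inequality refined by $\beta_{R}(R_{i})=1$ gives $t_{i}\leq 1$, so $D_{0}$ is a chain. For $n=1$ it reads $\sum 1/d(T)>t-2$: this is automatic for $t\leq 2$ (chain case), becomes the Platonic admissibility condition $\delta_{D_{0}}>1$ for $t=3$ (fork case), and fails for $t\geq 4$ because $\sum 1/d(T)\leq t/2\leq t-2$. Finally, in the fork case the projection formula together with the standard computation that the pullback coefficient $\pi^{*}\bar{D}$ takes on the root component of each twig $T$ equals $1/d(T)$ gives $\bar{D}^{2}=R^{2}+\delta_{D_{0}}$; since $\bar{D}^{2}>0$ (as $\rho(\bar{X})=1$ and $\bar{D}$ is effective nonzero) and $\delta_{D_{0}}\leq 3/2$ over all Platonic triples, we conclude $R^{2}>-3/2$, i.e.\ $R^{2}\geq -1$.

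The main obstacle is the combinatorial content of part (5): the coefficient-recurrence argument eliminating internal chains between components of $R$, and the refinement of the local ampleness inequality separating chain from fork and pinning down the numerical constraints $\delta_{D_{0}}>1$ and $R^{2}\geq -1$. Once these are in place, the remaining case analysis is immediate.
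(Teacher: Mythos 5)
Your treatment of parts (a), (b) and (d), and the ``local ampleness inequality'' at the heart of part (e), coincide with the paper's proof: the paper likewise rules out $\dim B=1$ via adjunction on a fiber, gets (b) from $\rho(X)-\#D=\rho(\bar X)-\#\bar D$, quotes the peeling description for (d), and derives everything in (e) from the single inequality $0>\pi(C)\cdot(K_{\bar X}+\bar D)=2p_a(C)-2+\beta_R(C)+\sum_i(1-1/d(T_i))$ applied to each component $C$ of $R$. For (c) the paper is more elementary than Shokurov--Koll\'ar: on a rank-one surface any two components of $\bar D$ intersect, so $\bar D$ is connected, and dlt-ness makes all intersection points of $\bar D$ snc points of $\bar X\reg$, whence $R\cong\bar D$. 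Note that connectedness of the non-klt locus downstairs gives you $\bar D$ connected but not yet $R$ connected; you still need that second dlt observation, which you omit.

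Two further points in part (e) deserve attention. First, your preliminary steps are partly redundant and partly under-justified: once (d) is granted, every exceptional connected piece of $D_0$ is by definition a twig, so it cannot join two components of $R$ and your recurrence argument is unnecessary; and your appeal to Lemma \ref{lem:rationality} to get $X$ rational is not directly licensed, since that lemma is stated for del Pezzo surfaces of rank one --- you would first have to observe that $-K_{\bar X}=-(K_{\bar X}+\bar D)+\bar D$ is ample and that $\bar X$ has rational (log terminal) singularities. The paper sidesteps all of this by reading off $p_a(C)=0$ and $\beta_R(C)\le 1$ componentwise from the ampleness inequality, which immediately forces $R$, and hence $D_0$, to be a rational chain or fork. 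Second, and more seriously, your closing formula $\bar D^2=R^2+\delta_{D_0}$ is wrong: the coefficient of $\pi^*\bar D$ on the component $T_i^-$ of the twig $T_i$ adjacent to $R$ is $d(T_i-T_i^-)/d(T_i)$, not $1/d(T_i)$ (already for $T_i=[2,2]$ it is $2/3$, not $1/3$), so the correct identity is $\bar D^2=R^2+e$ with $e=\sum_i d(T_i-T_i^-)/d(T_i)$ as in Lemma \ref{lem:ld_formulas}\ref{item:ld_fork}. Your conclusion survives because $e\le 3-\delta_{D_0}<2$ still yields $R^2>-2$, but as written the step is incorrect. The paper's argument is cleaner and worth adopting: $D_0$ supports $\pi^*\bar D$ with $(\pi^*\bar D)^2=\bar D^2>0$, so $D_0$ is not negative definite, whereas $R^2\le-2$ would make $D_0$ an admissible (hence negative definite) fork; therefore $R^2\ge-1$.
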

\begin{proof}
	\ref{item:Miy-del-Pezzo} Since $(\bar{X},\bar{D})$ is minimal and $\kappa(K_{\bar{X}}+\bar{D})=-\infty$, there is a morphism $f\colon \bar{X}\to B$ of Picard rank one, such that $\dim B\leq 1$ and $-(K_{\bar{X}}+\bar{D})$ is $f$-ample, see Section \ref{sec:singularities}. We need to prove that $\dim B=0$. Suppose $\dim B=1$. Then a fiber $F$ of $f$ satisfies
		$0>F \cdot (K_{\bar{X}}+\bar{D})=2p_{a}(F)-2+F \cdot \bar{D}$  
	so $p_{a}(F)=0$ and $F\cdot \bar{D}\leq 1$. Thus $f\circ\pi$ is a $\P^1$-fibration of height one with respect to $D$, so $\height(X,D)\leq 1$, a contradiction.
	
	\ref{item:Miy-n} We have $\bar{D}=\pi_{*}D$, so $\#D-\rho(X)=\#\bar{D}-\rho(\bar{X})=n-1$ by \ref{item:Miy-del-Pezzo} and by the  definition of $n$.
	
	\ref{item:Miy-R-connected} 
	We have $\rho(\bar{X})=1$, so all components of $\bar{D}$ intersect each other. In particular $\bar{D}$ is connected. Since $(\bar{X},\bar{D})$ is dlt, all singular points of $\bar{D}$ are snc, and lie in $\bar{X}\reg$. Thus $R\cong \bar{D}$ 
	is connected, too, as claimed.
	
	\ref{item:Miy-peeling} This follows from the description of peeling for reduced $D$, see  \cite[II.3.5.2]{Miyan-OpenSurf} or \cite[Lemma~4.7(1)]{Palka_almost_MMP}.
	
	\ref{item:Miy_DR} We have $n=\#\bar{D}>0$ by assumption. Let $C$ be a component of $R$, and let $T_1,\dots, T_v$ be the connected component of $\Exc\pi$ meeting $C$. By \ref{item:Miy-peeling} they are admissible twigs of $D$. By Lemma \ref{lem:ld_formulas}\ref{item:ld_twig} the coefficient in $\pi^{*}(K_{\bar{X}}+\bar{D})$ of the component of $T_{i}$ meeting $C$ equals $1-\frac{1}{d(T_i)}$. By \ref{item:Miy-del-Pezzo} the divisor  $-(K_{\bar{X}}+\bar{D})$ is ample, so 
	\begin{equation*}
		\begin{split}
			0&\ >\pi(C)\cdot (K_{\bar{X}}+\bar{D})= C\cdot(K_{X}+C)+C\cdot (R-C)+\sum_{i=1}^{v}\left(1-\frac{1}{d(T_i)}\right)=\\
			&=2p_{a}(C)-2+\beta_{R}(C)+v-\sum_{i=1}^{v}\frac{1}{d(T_i)} 
			\geq 2p_{a}(C)-2+\beta_{R}(C)+\frac{1}{2}v.
		\end{split}
	\end{equation*}
	It follows that $p_{a}(C)=0$ and either $\beta_{R}(C)=1$, $v\leq 1$ or $\beta_{R}(C)=0$, $v\leq 3$. In the first case, $n=2$ and $D_0$ is a rational chain, so \ref{item:Miy_chain} holds. In the second case, $n=1$ and either $D_0$ is a rational chain, so \ref{item:Miy_chain} holds, or $D_0$ is a fork, $\beta_{D}(R)=3$, $\delta_{D_{0}}=\sum_{i=1}^{3}\frac{1}{d(T_i)}>1$ and $R^2\geq -1$ since $D_0$ is not negative definite, so \ref{item:Miy_fork} holds.
\end{proof}

\subsection{Case when $D_0$ is a chain}\label{sec:chain}

In this section we consider case \ref{lem:Miy_computation}\ref{item:Miy_chain}, i.e.\ when the connected component $D_0$ of $D$ containing the proper transform $R$ of $\bar{D}$ is a rational chain. Our goal is to prove that $(X,D)$ is  as in Proposition \ref{prop:MT_smooth}\ref{item:MT-columnar}--\ref{item:MT-3-fibers}, so $\height(X,D)=2$ and $X\setminus D$ is $\A^{1}$-ruled by Remark \ref{rem:A1-fibration}. The arguments here are independent of $\cha\kk$. 

By Lemma \ref{lem:Miy_computation}\ref{item:Miy_DR} $\bar{D}$ has $n=1$ or $2$ components. We treat those cases separately in Lemmas \ref{lem:n=1} and \ref{lem:n=2}.

\begin{lemma}\label{lem:n=2}
	Assume $n=2$. Then $(X,D)$ is as in Proposition \ref{prop:MT_smooth}\ref{item:MT-q-columnar} and $D$ contains a non-degenerate fiber, see Figures \ref{fig:MT-q-columnar-nu=0}, \ref{fig:MT-q-columnar}.
\end{lemma}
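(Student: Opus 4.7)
The plan is to use the rank-one condition on $\bar X$ to pin down the self-intersections of $C_1,C_2$, immediately producing a $0$-curve in $D$ from which the desired $\P^1$-fibration arises. Write $R=C_1+C_2$ with $C_1\cdot C_2=1$; by Lemma~\ref{lem:Miy_computation}\ref{item:Miy-peeling} the chain $D_0$ has the form $T_L+C_1+C_2+T_R$, where $T_L,T_R$ are (possibly empty) admissible twigs, and any further connected components of $D$ are admissible chains disjoint from $D_0$. Since $\rho(\bar X)=1$, the classes of $\bar C_1,\bar C_2$ are numerically proportional, so $\bar C_1^2\cdot \bar C_2^2=(\bar C_1\cdot\bar C_2)^2=1$.

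Combining this with the twig-correction formula of Lemma~\ref{lem:ld_formulas}\ref{item:ld_twig}, namely $\bar C_i^2=C_i^2+\tfrac{1}{d(T_i)}$ (with $T_1=T_L$, $T_2=T_R$ and $d(\emptyset)=1$), gives the diophantine equation
\[
\left(C_1^2+\tfrac{1}{d(T_L)}\right)\left(C_2^2+\tfrac{1}{d(T_R)}\right)=1.
\]
Since each factor is positive, each $C_i^2$ is a non-negative integer, and $\tfrac{1}{d(T_i)}\in (0,1]$, an elementary case analysis shows that, up to swapping $C_1\leftrightarrow C_2$, one has $C_1^2=0$ and $T_R=\emptyset$, while either (i)~$C_2^2=0$ and $T_L=\emptyset$, or (ii)~$C_2^2=d(T_L)-1\geq 0$. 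In either case $C_1$ is a $0$-curve on $X$, so the linear system $|C_1|$ defines a $\P^1$-fibration $p\colon X\to\P^1$, and $F_1\de C_1$ is the required $[0]$-fiber contained in $D$, hence quasi-columnar in the sense of Definition~\ref{def:fibers}\ref{item:F-quasi-columnar}.

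In the fibration $p$, any curve $S\subseteq X$ is an $(S\cdot C_1)$-section; the $1$-sections contained in $D$ are therefore exactly the snc-neighbors of $C_1$ in $D_0$. These are $C_2$, which we denote $H_+$, together with either the component $T_L^-$ of $T_L$ meeting $C_1$ in case (ii), or an appropriate component from another connected component of $D$ in case (i); in either situation we set this section as $H_-$. All other components of $D$ are disjoint from $C_1$ and therefore vertical. Since $H_-$ and $H_+$ are separated by $C_1$ in $D_0$ (or lie in different connected components of $D$), we obtain $H_-\cdot H_+=0$.

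Finally, Lemma~\ref{lem:fibrations-Sigma-chi} applied with $\#D\hor=2$, $\nu_\infty=1$, and $\rho(X)=\#D-1$ (from Lemma~\ref{lem:Miy_computation}\ref{item:Miy-n} with $n=2$) yields $\sum_F(\sigma(F)-1)=0$, so every degenerate fiber has exactly one component outside $D$; together with Lemma~\ref{lem:degenerate_fibers}\ref{item:adjoint_chain} and snc-minimality this forces each such fiber to be columnar. The bound $\nu\leq 2$ then follows from the minimality of $(\bar X,\bar D)$, since a third fiber in "$D$-plus-one-$(-1)$-curve" form would descend to $\bar X$ and give rise to a $(K_{\bar X}+\bar D)$-negative contraction incompatible with $(\bar X,\bar D)$ being a log Mori fiber space. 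The main obstacle is justifying this last bound rigorously: Picard-rank bookkeeping alone is insufficient, and one must really invoke the ampleness of $-(K_{\bar X}+\bar D)$ together with the fact that $(\bar X,\bar D)$ is already an outcome of the MMP.
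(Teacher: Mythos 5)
Your overall strategy coincides with the paper's (show that one of the two components of $R$ is a $0$-curve, fiber by its linear system, then count components of degenerate fibers), but two steps break down. The formula $\bar C_i^2=C_i^2+\tfrac{1}{d(T_i)}$ is incorrect: Lemma \ref{lem:ld_formulas}\ref{item:ld_twig} computes log-canonical coefficients, not the pullback of a curve class. The correct correction term is the coefficient in $\pi^{*}\bar C_i$ of the twig component adjacent to $C_i$, namely $d(T_i-T_i^{-})/d(T_i)$ (the quantity $c_{\#T}$ in the paper's proof), which lies in $[0,1)$, vanishes exactly when $T_i=\emptyset$, and equals $1/d(T_i)$ only for one-component twigs. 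This has two consequences. Your case (i) ($C_1^2=C_2^2=0$ with both twigs empty) is spurious — it would give $\bar C_1^2=0$, contradicting $\bar C_1^2>0$ — and your treatment of it is incoherent anyway, since a component of another connected component of $D$ is disjoint from the fiber $C_1$ and hence vertical, never a $1$-section. Conversely, the corrected equation admits the solution $C_1^2=C_2^2=1$ with both twigs empty, which your equation accidentally excludes because $1/d(\emptyset)=1>0$; this case is numerically consistent and must be eliminated via Lemma \ref{lem:T-in-smooth-locus}\ref{item:P1} (it forces $\bar X\cong\P^2$, contrary to the standing assumption of Section \ref{sec:MT_proof}) — an argument entirely absent from your proposal. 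The deduction that every degenerate fiber is columnar is also only gestured at: "snc-minimality" is not the operative fact; one needs that the unique component off $D$ is the unique $(-1)$-curve, hence has multiplicity at least $2$ and misses $D\hor$, so both sections meet $F$ inside $D\vert$, and a non-columnar configuration would create a circular subdivisor of $D$.

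The bound $\nu\leq 2$, which you flag as the main unresolved obstacle and try to extract from MMP minimality, has a one-line proof that you missed and that requires no appeal to ampleness: each columnar fiber meets the $1$-section $H_-$ in a tip of $F\redd$ lying in $D$, distinct fibers yield distinct such components, and $H_-$ is non-branching in the chain $D_0$ and already meets the fiber $C_1\subseteq D$, so it can meet at most one further component of $D\vert$; hence there is at most one degenerate fiber. Finally, being "as in Proposition \ref{prop:MT_smooth}\ref{item:MT-q-columnar}" includes the numerical normal forms $H_-=[m]$ with $m\geq 2$ and $H_+=[\nu'-m]$, which the paper obtains by contracting the columnar fiber onto a Hirzebruch surface; your proposal does not address these at all.
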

\begin{proof}
	Let $R_1,R_2$ be the components of $R$. We have $\pi(R_{i})^2>0$ since $\rho(\bar{X})=1$. Denoting by $T^{(j)}$ the $j$-th component of the twig $T$ meeting $R_i$, we have $\pi^{*}\pi(R_i)=R_{i}+\sum_{j=1}^{\#T} c_{j}\, T^{(j)}$, where $c_{j}\de d(\sum_{i=1}^{j-1}T^{(i)})/d(T)<1$. Thus $\pi(R_{i})^2=R_{i}^{2}+c_{\#T}<R_{i}^2+1$. We conclude that $R_{i}^2>\pi(R_{i})^2-1> -1$, so $R_{i}^2\geq 0$.
	
	Suppose $R_i^2\geq 1$ for both $i\in \{1,2\}$. Then $\pi(R_i)^2\geq R_i^2\geq 1$, and $\pi(R_i)^2=R_i^2$ if and only if $R_i$ is a tip of $D$. Since $\pi(R_1)\cdot \pi(R_2)=R_1\cdot R_2=1$ and $\pi(R_1)$, $\pi(R_2)$ are numerically proportional, we get $\pi(R_1)^2=\pi(R_2)^2=1$. Hence $D_{0}=R$, so $\pi(R_i)\subseteq \bar{X}\reg$ for both $i\in \{1,2\}$. Lemma \ref{lem:T-in-smooth-locus}\ref{item:P1} gives $\bar{X}\cong \P^2$, contrary to our assumptions.
	
	Thus we can assume that $R_1^2=0$. Consider the $\P^1$-fibration induced by $|R_1|$. Since by assumption $\height(X,D)\geq 2$, we have $\beta_{D}(R_1)=2$, and $D\hor$ consists of two $1$-sections, namely $R_2$ and some $H_1\subseteq \Exc\pi$. 
	 Write $H_1=[m]$ for some $m\geq 2$. We have $\#D-\rho(X)=n-1=1$ by Lemma \ref{lem:Miy_computation}\ref{item:Miy-n}, so Lemma \ref{lem:delPezzo_fibrations}\ref{item:Sigma} shows that every fiber other than $R_1$ has exactly one component not in $D$. If there are no degenerate fibers then $X=\F_{m}$ and $D=[m,0,-m]$ as in \ref{prop:MT_smooth}\ref{item:MT-q-columnar}, see Figure \ref{fig:MT-q-columnar-nu=0}. Let $F$ be a degenerate fiber. Since $D\vert$ contains no $(-1)$-curves, $F$ has a unique $(-1)$-curve, which therefore has multiplicity at least $2$ in $F$, so it does not meet $D\hor$. Now because $D$ has no circular subdivisor, Lemma \ref{lem:degenerate_fibers}\ref{item:columnar} implies that $F$ is columnar. Since both $H_1$ and $R_2$ are non-branching in $D$, we infer that there is only one degenerate fiber. Let $\eta\colon X\to \F_{m}$ be the contraction of all components of $F$ not meeting $H_1$. Then $\eta(R_2)$ is a $1$-section disjoint from the negative section $\eta(H_1)$, so $\eta(R_2)^2=m$. We have $R_2^2=\eta(R_2)^2-1=1-m$, so again $D$ is as in \ref{prop:MT_smooth}\ref{item:MT-q-columnar}. 
\end{proof}

%\raggedbottom
\begin{lemma}
	\label{lem:n=1}
	Assume $n=1$. 
	Then $(X,D)$ is as in Proposition \ref{prop:MT_smooth}\ref{item:MT-columnar}--\ref{item:MT-3-fibers}, see Figures \ref{fig:MT-columnar-nu=0}, \ref{fig:MT-q-columnar-nu=0}, \ref{fig:open_dP_non-min-3}. 
\end{lemma}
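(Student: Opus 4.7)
The argument adapts the proof of Lemma \ref{lem:n=2}, now with up to two admissible twigs $T_1,\ldots,T_v$ of $D_0$ (where $v\in\{0,1,2\}$) meeting $R$. Writing $\pi^{*}\bar{D}=R+\Delta$ with $\Delta$ an effective $\Q$-divisor supported on $\bigcup T_i$ and satisfying $R\cdot\Delta = \sum_{i=1}^v(1-d(T_i-T_i^-)/d(T_i))\in[v/2,v)$, I would first deduce from $\pi(R)^2=\bar{D}^2>0$ (Lemma \ref{lem:Miy_computation}\ref{item:Miy-del-Pezzo}) that $R^2\geq -1$, with equality possible only when $v=2$ and $1/d(T_1)+1/d(T_2)<1$. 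Conversely, for $v=1$ the bound becomes $R^2\geq 0$.

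The case $v=0$ is immediate from Lemma \ref{lem:T-in-smooth-locus}\ref{item:P1}: then $D_0=R$ and $\bar{D}\subseteq\bar{X}\reg$, so $\bar{X}$ is a rational cone, $X=\F_m$ with $m=R^2\geq 2$ (excluding $\bar{X}\cong\P^2$), and $\Exc\pi$ is the $(-m)$-section. The ruling of $\F_m$ provides the $\P^1$-fibration of height $2$, placing $(X,D)$ into case \ref{prop:MT_smooth}\ref{item:MT-columnar} with $\nu=0$, see Figure \ref{fig:MT-columnar-nu=0}.

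The bulk of the work is the case $v\geq 1$, where I would produce a suitable $\P^1$-fibration and classify its fibers. If $R^2=0$, the system $|R|$ induces the fibration with $R$ as a smooth fiber; the assumption $\height(X,D)\geq 2$ forces $v=2$, so $D\hor=T_1^-+T_2^-$ consists of two disjoint $1$-sections, and the remaining twig components together with the other connected components $D_1,\ldots,D_k$ of $D$ (each an admissible chain or fork disjoint from $D_0$) are all vertical. If $R^2\geq 1$ (so $v\leq 1$), I would use that $X$ is rational (Lemma \ref{lem:rationality} via dlt-ness of $(\bar{X},\bar{D})$) to choose a birational morphism $X\to\F_m$ and pull back its ruling, obtaining a fibration in which $R$ becomes a section; a careful bookkeeping of how the blowups create the degenerate fibers shows that the height remains $2$. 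The residual case $R^2=-1$ (which occurs only for $v=2$) is the trickiest: since $R$ is a $(-1)$-curve but is not $\pi$-exceptional, no fibration comes from $|R|$, and one instead contracts $R$ and pulls back a fibration of the resulting surface, then verifies this lands in case \ref{prop:MT_smooth}\ref{item:MT-columnar}.

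With the fibration at hand, the classification of fibers uses Lemma \ref{lem:fibrations-Sigma-chi}: since $n=1$ gives $\#D=\rho(X)$, when $\#D\hor=2$ the formula reduces to $\nu_{\infty}=\sum_F(\sigma(F)-1)$, tightly bounding the degenerate and contained-in-$D$ fibers. Combining Lemma \ref{lem:degenerate_fibers} with the fact that $D$ has no circular subdivisor, each degenerate fiber has a unique $(-1)$-curve meeting $D\hor$ in a controlled pattern; together with the admissible chain/fork structure of $D_1,\ldots,D_k$, this forces each degenerate fiber to be columnar, quasi-columnar, or of type $[1,(2)_{m+r-3},1]$. Assembling these pieces and matching them with the connected components of $D$ yields exactly the three cases \ref{prop:MT_smooth}\ref{item:MT-columnar}, \ref{item:MT-q-columnar}, \ref{item:MT-3-fibers}. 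The main obstacle I expect is the case $R^2\geq 1$ and the edge case $R^2=-1$, together with the routine but delicate verification of the numerical formulas for $H_{\pm}^2$ and for the count of degenerate fibers $\nu'$ in each subcase of the statement.
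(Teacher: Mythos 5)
Your handling of the two easy sub-cases ($R=D_0$ via Lemma \ref{lem:T-in-smooth-locus}\ref{item:P1}, and $R^2=0$ via the fibration $|R|$) matches the paper, but the case $R^2\geq 1$ is where almost all of the content of this lemma lives, and there your argument has two genuine problems. First, the claim that $R^2\geq 1$ forces $v\leq 1$ is false: in the configuration of Figure \ref{fig:MT-columnar-non-min-2} one has $R=H_+$ with $R^2=m-2$ and \emph{two} twigs $T_1^*,T_2^*$ of $D_0$ attached to $R$, so already for $m\geq 3$ (e.g.\ $\F_m$ with two fibers blown up twice each) you get $R^2\geq 1$ and $v=2$; this case falls through your trichotomy. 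Second, and more seriously, ``choose a birational morphism $X\to\F_m$ and pull back its ruling, obtaining a fibration in which $R$ becomes a section; a careful bookkeeping \dots shows that the height remains $2$'' is not an argument: you do not say which morphism to take, why $R$ would be a section of the pulled-back ruling, or why a fiber would meet $D-R$ only once. Constructing a height-$2$ fibration and proving that \emph{every} degenerate fiber is columnar or quasi-columnar is precisely what the lemma asserts, and it is the part that occupies most of the paper's proof. The same criticism applies to your one-sentence dismissal of $R^2=-1$.

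For comparison, the paper's mechanism is quite different: it blows up $k=R^2$ times at a point $p$ of $R$ (chosen in $R\cap\Exc\pi$ when $R$ is not a tip of $D_0$), so that the proper transform of $R$ becomes a $0$-curve $F_R$ which is a \emph{fiber}, not a section, of $|F_R|$ on the blowup $Y$; the two $1$-sections of the boundary are then the last exceptional curve of the blowup and the remaining neighbour of $R$ in $D_0$. The proof then splits on whether some vertical curve $A\not\subseteq B$ is disjoint from $\Exc\eta$ and meets at most one of the chains $T_i+H_i$. If so, $\kappa(K_Y+A+B)=-\infty$ by Lemma \ref{lem:twig} and easy addition, so $A$ can be adjoined to the boundary and the minimal model of $(X,D+\eta(A))$ has a two-component boundary; Lemma \ref{lem:n=2} then applies and one only needs to locate the base point of the induced contraction to land in \ref{prop:MT_smooth}\ref{item:MT-columnar}--\ref{item:MT-3-fibers}. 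If not, Lemma \ref{lem:fibrations-Sigma-chi} pins down a unique fiber with exactly two components off the boundary, which is analysed directly using Lemma \ref{lem:degenerate_fibers}. Neither the auxiliary-curve reduction to the $n=2$ case nor the $\sigma$-count argument (nor any substitute for them) appears in your sketch, so the classification of degenerate fibers does not follow from what you have written.
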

\begin{proof}
	If $R=D_{0}$ then, since $X\not\cong \P^2$, Lemma \ref{lem:T-in-smooth-locus}\ref{item:P1} implies that $X=\F_{m}$ and \ref{prop:MT_smooth}\ref{item:MT-columnar} holds. Thus we can assume $D_{0}\neq R$. Put $k\de R^2\geq 0$. If $R$ is a tip of $D_{0}$ then, since $\height(X,D)\geq 2$, we have $R^2>0$. In this case, we choose a point $p\in R\setminus \Exc\pi$; otherwise we choose $p\in R\cap \Exc\pi$. In any case, let $\eta\colon (Y,B)\to (X,D)$ be a composition of $k$ blowups at $p$ and its infinitely near points on the proper transforms of $R$, where $B=(\eta^{*}D)\redd$. Then the linear system of $F_R\de \eta^{-1}_{*}R$ induces a $\P^1$-fibration of $Y$ such that $B\hor$ consists of two $1$-sections, say $H_1,H_2$, such that $H_2\not\subseteq \Exc\eta$. Let $T_{i}$ be the twig of $B$ meeting $H_i$ and not containing $F_R$. 
	
	\begin{casesp}
	\litem{There is a vertical curve $A\not\subseteq B$, disjoint from $\Exc\eta$, meeting at most one chain $T_{i}+H_{i}$} 
	We have $A\cdot F_R=0$ because $A$ is vertical. Since $A$ meets at most one $T_{i}+H_{i}$, the fiber $F_R$ lies in a rational twig of $A+B$. By Lemma \ref{lem:twig} $Y\setminus (A+B)$ is $\A^1$-fibered, so 
	$\kappa(K_{Y}+A+B)=-\infty$ by  Iitaka's easy addition theorem  \cite[II.1.13.1]{Miyan-OpenSurf}, cf.\ \cite[Corollary 6.14]{Fujita-noncomplete_surfaces}. 
	
	Put $V=\eta(A)$. Since $A$ is disjoint from $\Exc\eta$, the sum $D+V$ is snc. Moreover, $\kappa(K_{X}+D+V)= \kappa(K_{Y}+B+A)=-\infty$, and $\height(X,D+V)\geq \height(X,D)\geq 2$. Let $(\bar{X}',\bar{D}')$ be a minimal model of $(X,D+V)$: it is a dlt log del Pezzo surface of rank one, see Lemma \ref{lem:Miy_computation}\ref{item:Miy-del-Pezzo}. Let $\psi\colon (X,D+V)\to (X',D')$ be the induced morphism onto its minimal log resolution. We have $\#\bar{D}'\geq \#(D+V)-(\rho(X)-\rho(\bar{X}'))=n+1=2$. By Lemma \ref{lem:Miy_computation}\ref{item:Miy_DR} we have $\#\bar{D}'=2$, so $\Exc\psi\subseteq D+V$. Lemma \ref{lem:n=2} shows that  $(X',D')$ is as in Figures \ref{fig:MT-q-columnar-nu=0}, \ref{fig:MT-q-columnar}.
	
	In particular, $D'$ has two components with a nonnegative self-intersection number, say $F'=[0]$ and $H'=[1-m]$ for some $m\geq 2$. Since $D+V$ has exactly one such, namely $R$, we infer that $F'$ or $H'$ contains a base point $p$ of $\psi^{-1}$. In particular, $\psi\neq \id$. Since $\Exc\psi\subseteq D+V$ and $D$ has no $(-1)$-curves, it follows that $V$ is a $(-1)$-curve, and $\{p\}=\psi(V)$ is the unique base point of $\psi^{-1}$. 
	
	We have $\psi^{-1}_{*}(F'+H')=R+C$ for some component $C$ of $D-R$. Then $C^2\leq 0$, so $p\in \psi(C)$,  and $C$ either lies in an admissible twig of $D_0$, or in a connected component of $D-D_0$ which is an admissible chain or fork. In the latter case, $\{p\}=F'\cap H'$ and $D$ is as in \ref{prop:MT_smooth}\ref{item:MT-columnar}, see Figures \ref{fig:MT-columnar-non-min-1}, \ref{fig:MT-columnar-non-min-2}. 
	Consider the former case. If $p\in F'$ then $(X,D)$ is as in \ref{prop:MT_smooth}\ref{item:MT-q-columnar}, see Figures \ref{fig:MT-q-columnar-non-min-1}, \ref{fig:MT-q-columnar-non-min-2}. Assume $p\in H'\setminus F'$. If $\rho(X')>2$ then $(X,D)$ is as in \ref{prop:MT_smooth}\ref{item:MT-3-fibers}, see Figure \ref{fig:open_dP_non-min-3}; and if $\rho(X')=2$ then $(X,D)$ is as in  \ref{prop:MT_smooth}\ref{item:MT-columnar} with $(\nu,m)=(2,2)$, see Figure \ref{fig:MT-columnar-non-min-2}.

\litem{Every vertical curve $A\not \subseteq B$ meets either $\Exc\eta$ or  $T_i+H_i$ for both $i\in \{1,2\}$}
	Since $n=1$, Lemma \ref{lem:Miy_computation}\ref{item:Miy-n} gives $\#B-\rho(Y)=\#D-\rho(X)=n-1=0$. By Lemma \ref{lem:fibrations-Sigma-chi}, there is exactly one fiber $F$ with two components not in $B$. Denote those components by $A_1,A_2$.
	
	If each $A_{j}$ meets $T_{i}+H_{i}$ for both $i\in \{1,2\}$ then $F$ satisfies $F\cdot B\hor>2$ or has a circular subdivisor, which is impossible. Therefore, say, $A_1$ meets $\Exc\eta$. In particular, $\eta\neq \id$.  Recall that $\Exc\eta=[1,(2)_{k-1}]\subseteq H_1+T_1$, and $H_1$ is the first tip of $\Exc\eta$. Our assumption implies that both $A_1$ and $A_2$ meet $H_1+T_1$. If one of them, say $A_1$, meets $H_1$ then $F\cdot H_1\geq (A_1+T_1)\cdot H_1 \geq 2$, which is false. Hence $0\neq T_1\subseteq F$ and both $A_1$ and $A_2$ meet $T_1$. 
	
	Suppose $F'\neq F$ is another degenerate fiber. Then $F'$ has only one component not in $D$, which is its unique $(-1)$-curve. Since $B$ has no circular subdivisors, Lemma \ref{lem:degenerate_fibers}\ref{item:adjoint_chain} implies that $F'$ is columnar. In particular, $F'$ contains $T_1+T_2$, a contradiction since $T_1\subseteq F$. Thus $F$ is the unique degenerate fiber.

	We can assume that $A_1$ is a $(-1)$-curve. Indeed, if $A_2$ is the unique $(-1)$-curve in $F$ then by Lemma \ref{lem:degenerate_fibers}\ref{item:adjoint_chain} $T_1+A_1$ meets the $1$-section $H_1$ in a tip, so $T_1\subseteq \Exc\eta$. In particular, $A_2$ meets $\Exc\eta$, too, so interchanging the roles of $A_1$ and $A_2$ we can guarantee that $A_1$ is a $(-1)$-curve, as needed. 
	
	Let $T$ be the component of $T_1$ meeting $A_1$, so $T$ lies in a $(-2)$-chain $T_1^{\eta}\de \Exc\eta-H_1$. Since $A_{1}+T_{1}^{\eta}$ is a proper subdivisor of $F\redd$, it is negative  definite, so $T$ is one of the tips of $T_{1}^{\eta}$. 
	
	Assume that $T$ meets $H_1$. Then after the contraction of $A_1$, $T$ becomes a $(-1)$-curve of multiplicity one in the image of $F$, hence a tip of that fiber. Thus $A_1$ is a tip of $F$. It follows that $\tilde{F}\de F-A_1+H_1$ is a fiber of another $\P^1$-fibration of $Y$, such that the vertical part of $B$ is contained in $\tilde{F}$; and the horizontal part of $B$ consists of two sections: $F_{R}$ and $H_2$. Lemma \ref{lem:fibrations-Sigma-chi} implies that $\tilde{F}$ is the only degenerate fiber. Since $\Exc\eta\subseteq \tilde{F}\redd$, this $\P^1$-fibration is a pullback of a $\P^1$-fibration of $X$. The fiber $\eta_{*}\tilde{F}$ has a unique $(-1)$-curve, namely $\eta(A_2)$, and meets $D\hor$ in different connected components of $D\cap \eta_{*}\tilde{F}$. Lemma \ref{lem:degenerate_fibers}\ref{item:adjoint_chain} implies that $\eta_{*}\tilde{F}$ is columnar. 
	Let $\upsilon \colon Y\to \F_{m}$ be the contraction of all vertical curves not meeting $H_2$. Then $\upsilon(H_2)$ is the negative section, and $\upsilon(F_R)$ is a $1$-section meeting $\upsilon(H_1)$ once, so $\upsilon(H_1)^2=m+2$ and thus \ref{prop:MT_smooth}\ref{item:MT-columnar} holds, see Figure \ref{fig:MT-columnar}.
	
	Assume now that $T$ does not meet $H_1$. We have $\eta(A_1)=[0]$, so $\eta(A_1)\cdot D\geq \height(X,D)\geq 2$ by assumption. Suppose $T_1\subseteq \Exc\eta$, so $T$ is a tip of $B$. Then the above inequality implies that $A_1$ meets a component $V$ of $F$ contained in $B-T_1$. Then after contraction of $A+T_1\subseteq F$, the image $H_1$ meets the images of two components of $F$, namely $V$ and $A_2$, so $H_1\cdot F\geq 2$, a contradiction. 
	
	Thus $T_1\not\subseteq \Exc\eta$, so $\beta_{B}(T)\geq 2$. After the contraction of $A_1$ the image of $T$ becomes a $(-1)$-curve, so it is not branching in the image of $F$. It follows that $\beta_{B}(T)=2$ and $A_1\cdot B=1$. Thus $\eta(A_1)=[0]$ meets $D$ in the common point of $R$ and some component $H$ of $\Exc\pi$. Write $H=[m]$. Consider the $\P^1$-fibration induced by $|\eta(A_1)|$. The horizontal part of $D$ consists of two $1$-sections: $R$ and $H$. Lemma \ref{lem:fibrations-Sigma-chi} implies that every degenerate fiber has only one $(-1)$-curve, hence  is columnar and meets $D\hor$ in a tip of $D\vert$. Since $H$ meets at most one component of $D\vert$, there is at most one such fiber. Let $\upsilon\colon X\to \F_{m}$ be the contraction of all vertical curves which are disjoint from $H$. As before, we infer that $\upsilon_{*}D\hor=[m,-2-m]$ and \ref{prop:MT_smooth}\ref{item:MT-columnar}  holds.
	\qedhere\end{casesp} 
\end{proof}

\subsection{Case when $D_0$ is a fork: elementary properties}\label{sec:fork-intro}

By Lemma \ref{lem:Miy_computation}\ref{item:Miy_DR} and the results of Section \ref{sec:chain}, to prove Proposition \ref{prop:MT_smooth} we can and do assume that the connected component $D_0$ of $D$ containing $R\de \pi^{-1}_{*}\bar{D}$ is a fork, with $R$ as a branching component, and admissible twigs $T_1,T_2,T_3$ satisfying $\delta_{D_{0}}\de \sum_{i=1}^{3}\frac{1}{d(T_i)}>1$. Out aim is to prove that $(X,D)$ is a Platonic $\A^{1}_{*}$-fiber space as in Proposition \ref{prop:MT_smooth}\ref{item:MT-Platonic}, or, if $\cha\kk\in \{2,3,5\}$, as in one of the exceptional cases \ref{prop:MT_smooth}\ref{item:MT-ht=2-cha=2}--\ref{item:MT-eliptic-relative}.

In case $\cha\kk=0$ this assertion is proved by Miyanishi and Tsunoda in \cite{Miy_Tsu-opendP}, see \cite[\sec II.5]{Miyan-OpenSurf} for a self-contained account. Below, we essentially follow the same path as in loc.\ cit., simplifying and completing some arguments to cover the case of arbitrary characteristic. We compare these approaches in  Remark \ref{rem:correcing-Miy}. 
Most of the proof is independent of $\cha\kk$, the dependence appears only at the very end of Section \ref{sec:fork:R1}.
\smallskip

An important ingredient of the proof is an application of a two-ray game, which can be summarized as follows. We extract some exceptional divisor $\check{T}$ over $\bar{X}$, see \cite[1.39]{Kollar_singularities_of_MMP}, getting a log terminal surface $\check{X}$ of rank two, such that one extremal ray of $\operatorname{NE}(\check{X})$ is spanned by $\check{T}$. It follows from the cone theorem, see Lemma \ref{lem:dual} below, that the other ray of  $\operatorname{NE}(\check{X})$ is spanned by a curve $\check{A}$ which can be contracted by a morphism of relative Picard rank one. The proof proceeds by analyzing this contraction.

We call the curve $\check{A}$ the \emph{dual of $\check{T}$}. We now list its basic properties.

\begin{lemma}[The dual curve]\label{lem:dual}
	Let $T$ be a component of $D-R$. Let $\check{A}$ be the curve dual to $T$, and let $A$ be its proper transform on $X$. Then one of the following holds. 
	\begin{enumerate}
		\item\label{item:A2=0} $\check{A}^2=0$, and for a general choice of $\check{A}$ we have $A=[0]$, $A$ is disjoint from $D-R-T$ and meets $T$.
		\item\label{item:A2<0} $\check{A}^2<0$, and $A$ is a $(-1)$-curve such that $A+D-R-T$ is negative definite.
	\end{enumerate}
\end{lemma}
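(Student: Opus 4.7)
The plan is to run a \emph{two-ray game} on a partial resolution of $\bar X$ that extracts only the divisor $T$. Since $T$ is a component of $D - R \subseteq \Exc\pi$, and the remaining components of $\Exc\pi$ form a disjoint union of admissible chains and forks by Lemma~\ref{lem:Miy_computation}\ref{item:Miy-peeling}, I would contract $\Exc\pi - T$ on $X$ to obtain a $\Q$-factorial log terminal surface $\check X$ together with a birational morphism $\check\pi\colon \check X \to \bar X$ whose exceptional locus is the image $\check T$ of $T$. By construction $\rho(\check X) = 2$ and $\check T^2 < 0$.

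Next, I would identify the extremal rays of $\operatorname{NE}(\check X)$. Since $-(K_{\bar X} + \bar D)$ is ample by Lemma~\ref{lem:Miy_computation}\ref{item:Miy-del-Pezzo}, its $\check\pi$-pullback has the form $-(K_{\check X} + \check D^{\#})$ for an effective $\Q$-divisor $\check D^{\#}$ with coefficients strictly less than one (coming from the dlt condition, via Lemma~\ref{lem:ld_formulas}); this divisor is nef, big, and zero on $\check T$. Thus $[\check T]$ spans one extremal ray of the two-dimensional Mori cone $\operatorname{NE}(\check X)$. By the cone and contraction theorems for $\Q$-factorial log terminal surfaces (which hold in arbitrary characteristic for dimension two), the other extremal ray is spanned by the class of an irreducible curve $\check A$, and it is contracted by a morphism $f\colon \check X \to Z$ of relative Picard rank one.

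The argument then splits on $\dim Z$. If $\dim Z = 1$, then $f$ is a Mori conic bundle and $\check A$ is a general fiber, so $\check A^2 = 0$ and $\check A \cong \P^1$. For a sufficiently general choice, $\check A$ avoids the finitely many points that are $\check\pi$-images of the contracted connected components of $\Exc\pi - T$, hence its proper transform $A$ on $X$ is a $0$-curve disjoint from $D - R - T$. Moreover $A \cdot T > 0$: otherwise $A$ would be disjoint from $\Exc\pi$, so $|A|$ would descend to a $\P^1$-fibration of $\bar X$, contradicting $\rho(\bar X) = 1$. If $\dim Z = 2$, then $f$ is divisorial and contracts $\check A$ to a point with $\check A^2 < 0$. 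The proper transform $A \subseteq X$ is a smooth rational curve with $A^2 < 0$; since $\pi$ is the \emph{minimal} log resolution and $A$ is not contained in $D$, one shows $A^2 = -1$. Finally, $A + D - R - T$ is contracted to a finite set by $f \circ \check\pi \circ \pi$ (its image on $\check X$ is $\check A + \check\pi^{-1}_*(D - R - T)$, which $f$ collapses to one point), so it is negative definite.

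The main obstacle is the careful book-keeping at Step~2: one must verify that the coefficients of $\check D^{\#}$ are genuinely less than one and that both candidate rays are $(K_{\check X} + \check D^{\#})$-negative, so that the MMP applies on the nose. A secondary delicate point in case \ref{item:A2=0} is ensuring that a general member of the conic bundle moves off the singular locus and off the images of the contracted components; this is automatic by genericity together with the fact that a general fiber of a Mori conic bundle is a smooth $\P^1$. The remaining verifications in case \ref{item:A2<0}---that $\check A$ is irreducible, that $A$ drops to a $(-1)$-curve under minimality of $\pi$, and the negative-definiteness of $A + D - R - T$---are formal consequences of the two-ray game.
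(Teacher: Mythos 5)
Your overall strategy is exactly the paper's: extract $T$ over $\bar{X}$ to get a rank-two surface $\check{X}$, observe that $\check{T}$ spans one extremal ray, and contract the other. Two points in your write-up, however, are not mere bookkeeping and as stated would derail the argument.

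First, the negativity check. You propose to verify that \emph{both} candidate rays are $(K_{\check{X}}+\check{D})$-negative; this is false and unverifiable, since $K_{\check{X}}+\check{D}=\pi_T^{*}(K_{\bar{X}}+\bar{D})$ is \emph{zero} on the exceptional curve $\check{T}$. What one actually needs (and what the paper computes) is that $K_{\check{X}}+\check{D}$ is not nef because $\check{R}\cdot(K_{\check{X}}+\check{D})=\bar{D}\cdot(K_{\bar{X}}+\bar{D})<0$ by Lemma~\ref{lem:Miy_computation}\ref{item:Miy-del-Pezzo}; the cone theorem then yields a negative contractible ray, necessarily distinct from $[\check{T}]$. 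Relatedly, $\check{D}=\check{R}+\cf(T)\check{T}$ has the component $\check{R}$ with coefficient $1$ (it is the proper transform of the reduced $\bar{D}$), so your claim that all coefficients of $\check{D}$ are strictly less than one is also off; only the exceptional coefficient $\cf(T)$ is.

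Second, in case~\ref{lem:dual}\ref{item:A2<0} you assert that $A$ is not contained in $D$ and that minimality of $\pi$ alone forces $A^2=-1$. Neither holds as stated. The dual curve may well be $\check{A}=\check{R}$, i.e.\ $A=R\subseteq D$; the lemma's conclusion survives only because $R$ is a rational curve with $R^2\geq -1$ by Lemma~\ref{lem:Miy_computation}\ref{item:Miy_fork}, and this subcase must be treated separately. When $A\neq R$, the fact that $A$ is a $(-1)$-curve is not a formal consequence of minimality of $\pi$: one needs the chain of inequalities $0>\check{A}\cdot(K_{\check{X}}+\check{D})=A\cdot\pi^{*}(K_{\bar{X}}+\bar{D})\geq A\cdot(K_X+R)\geq A\cdot K_X$, which uses that the exceptional coefficients $\cf(C)$ are non-negative (minimality of the resolution) and that $A$ is not one of those exceptional components; combined with $A^2<0$ and adjunction this pins down $A=[1]$. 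Once these two points are repaired, the rest of your argument (generic choice of the fiber in the conic-bundle case, $A\cdot T>0$ via $\rho(\bar{X})=1$, and negative definiteness of $A+D-R-T$ as the exceptional locus of $\tau_T$ together with the contracted ray) matches the paper's proof.
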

\begin{proof}
	Write $\pi=\pi_{T}\circ\tau_{T}$, where $\pi_{T}\colon \check{X}\to \bar{X}$ is the extraction of $T$, so $\tau_{T}\colon X\to \check{X}$ is the contraction of $D-R-T$. Put $\check{R}=\tau_{T}(R)$, $\check{T}=\tau_{T}(T)$ and $\check{D}=\check{R}+\cf(T)\check{T}$, so $\pi_{T}^{*}(K_{\bar{X}}+\bar{D})=K_{\check{X}}+\check{D}$. We have $\check{R}\cdot (K_{\check{X}}+\check{D})=\bar{D}\cdot (K_{\bar{X}}+\bar{D})<0$, so $K_{\check{X}}+\check{D}$ is not nef; and $\check{T}\cdot (K_{\check{X}}+\check{D})=\check{T}\cdot \pi_{T}^{*}(K_{\bar{X}}+\bar{D})=0$, so $\check{T}$ is not $(K_{\check{X}}+\check{D})$-negative. By the cone theorem, see \cite[Theorem 5.5]{Fujino_MMP}, there is a $(K_{\check{X}}+\check{D})$-negative curve $\check{A}\neq \check{T}$ which can be contracted by a morphism $\phi\colon \check{X}\to B$ of relative Picard rank one. In particular, $\check{A}^2\leq 0$. 	Let $A\de (\tau_{T}^{-1})_{*}\check{A}$ be the proper transform of $\check{A}$ on $X$.
	
	Consider the case $\check{A}^2=0$, and take for $\check{A}$ a general fiber of $\phi$. Then $\check{A}\not\subseteq \check{D}$ and $\check{A}$ does not pass through $\Sing \check{X}$, so $A^2=\check{A}^2=0$ and $A\cdot T=\check{A}\cdot \check{T}$. Since $\rho(\check{X})=2$ and $\check{T}$ is not a fiber, we have $\check{A}\cdot\check{T}>0$, as needed.
	
	Consider the case $\check{A}^2<0$. Then $A+\Exc\tau_{T}=A+D-R-T$ is negative definite, in particular $A^2<0$. We have $0>\check{A}\cdot (K_{\check{X}}+\check{D})\geq A\cdot (K_{X}+R)$, so either $A\neq R$ and thus $A\cdot K_{X}<0$, so $A$ is a $(-1)$-curve; or $A=R$ and $A$ is a $(-1)$-curve by Lemma \ref{lem:Miy_computation}\ref{item:Miy_fork}, as needed.
\end{proof}

We now list some simple conditions which imply that $(X,D)$ is a Platonic $\A^{1}_{*}$-fiber space.

\begin{lemma}[Sufficient condition for a Platonic $\A^{1}_{*}$-fiber space] \label{lem:Sigma-0}
	Assume that $X$ admits a $\P^{1}$-fibration of $X$ such that $D\hor$ consists of two $1$-sections, one of which is $R$. Then the other section is a branching component of $D-D_0$; and $(X,D)$ is a Platonic $\A^{1}_{*}$-fiber space as in Proposition  \ref{prop:MT_smooth}\ref{item:MT-Platonic}, see Figure \ref{fig:MT-Platonic}.
\end{lemma}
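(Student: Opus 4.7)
The plan is to use the count in Lemma \ref{lem:fibrations-Sigma-chi} to prove that every fiber carries exactly one component off $D$, and then to apply Lemma \ref{lem:degenerate_fibers}\ref{item:adjoint_chain} to recognize the three fibers containing $T_{1},T_{2},T_{3}$ as columnar, thereby exhibiting $(X,D)$ as a Platonic $\A^{1}_{*}$-fiber space.

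I would start with two structural observations. The twigs $T_{1},T_{2},T_{3}$ meet $D_{0}$ only through the section $R$, so they are vertical. If $H\subseteq D_{0}$, then $H\neq R$ would force $H\subseteq T_{i}$ and hence $H$ vertical, a contradiction; thus $H\subseteq D-D_{0}$ and $H\cap D_{0}=\emptyset$. Since $R\cdot T_{i}^{-}=1$ for $i=1,2,3$ and $R$ is a section, the tips $T_{1}^{-},T_{2}^{-},T_{3}^{-}$ lie in three distinct fibers $F_{1},F_{2},F_{3}$ containing $T_{1},T_{2},T_{3}$ respectively. From $n=1$ and Lemma \ref{lem:Miy_computation}\ref{item:Miy-n} we have $\#D=\rho(X)$, so Lemma \ref{lem:fibrations-Sigma-chi} reduces to $\nu_{\infty}=\sum_{F}(\sigma(F)-1)$. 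As every component of $D$ has self-intersection at most $-2$ (by Lemma \ref{lem:Miy_computation}\ref{item:Miy-peeling}) while every degenerate fiber contains a $(-1)$-curve, no fiber lies in $D$, so $\nu_{\infty}=0$ and $\sigma(F)=1$ for every fiber. Each degenerate fiber thus has a unique $(-1)$-curve $L$, whose multiplicity in that fiber must be at least $2$ by Lemma \ref{lem:degenerate_fibers}\ref{item:unique_-1-curve}.

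I then want to show that each $F_{i}$ is columnar of type $[T_{i},1,T_{i}^{*}]$. The component of $F_{i}$ meeting the section $R$ has multiplicity one, hence cannot be $L_{i}$, so it lies in $D$ and equals $T_{i}^{-}$ (the only component of $D$ adjacent to $R$ lying in $F_{i}$). Similarly the component $M_{i}$ of $F_{i}$ meeting $H$ has multiplicity one and, since $H\cap D_{0}=\emptyset$, lies in $D-D_{0}$. Both $T_{i}^{-}$ and $M_{i}$ are thus multiplicity-one tips of $F_{i}\redd$. Crucially, since $F_{i}\redd-L_{i}\subseteq D$ splits according to the connected components of $D$, the tips $T_{i}^{-}\in D_{0}$ and $M_{i}\in D-D_{0}$ lie in distinct connected components of $F_{i}\redd-L_{i}$. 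This rules out Lemma \ref{lem:degenerate_fibers}\ref{item:not_columnar}, and case \ref{lem:degenerate_fibers}\ref{item:columnar} forces $F_{i}\redd=[T_{i},1,T_{i}^{*}]$ with $T_{i}^{*}$ an admissible chain in $D-D_{0}$ meeting $H$ at its far tip.

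The same multiplicity argument precludes further degenerate fibers, since the $D$-component meeting $R$ in any such fiber would have to equal one of the $T_{i}^{-}$, forcing the fiber to be $F_{i}$. Thus $D-D_{0}=H+T_{1}^{*}+T_{2}^{*}+T_{3}^{*}$, which makes $H$ a branching component of $D-D_{0}$; the Platonic inequality $\sum_{i=1}^{3}1/d(T_{i})>1$ is inherited from Lemma \ref{lem:Miy_computation}\ref{item:Miy_DR}\ref{item:Miy_fork}, identifying $(X,D)$ with a Platonic $\A^{1}_{*}$-fiber space as in Proposition \ref{prop:MT_smooth}\ref{item:MT-Platonic}. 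The most delicate point is the third paragraph, where one must combine the multiplicity bound on $L_{i}$ with the separation of $D_{0}$ from $D-D_{0}$ inside $F_{i}\redd-L_{i}$ in order to fall into the columnar branch of Lemma \ref{lem:degenerate_fibers}\ref{item:adjoint_chain}.
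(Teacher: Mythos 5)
Your argument has a genuine gap at its very first step, and everything downstream depends on it. You assert that the twigs $T_1,T_2,T_3$ are vertical ``because they meet $D_0$ only through the section $R$'', and then deduce that the second section $H$ cannot lie in $D_0$ since it would otherwise be a vertical component of some $T_i$. This is circular: being attached to the rest of $D$ only at $R$ does not prevent a twig from containing a horizontal component, and a priori $H$ could perfectly well be a component of one of the twigs (Lemma \ref{lem:Miy_computation} only says the twigs are admissible, i.e.\ negative definite, which does not exclude a $1$-section among their components). Note that the fact you are assuming --- that $H$ lies in $D-D_0$, indeed is its branching component --- is itself part of the conclusion of the lemma, so it cannot be taken for granted. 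The paper spends the first half of its proof on exactly this point: a priori only two twigs, say $T_1,T_2$, are known to be vertical; one shows that the fibers through them are chains $[T_i^{*},1,T_i]$ with $T_i^{*}$ a connected component of $D\vert$ meeting $H$ and disjoint from $R$, and then excludes $H\subseteq T_3$ because the chain $T_3$ would have to contain the two disjoint components $T_1^{*}\neq T_2^{*}$ on the side of $H$ away from $R$, which is impossible. Some argument of this kind is needed before you may speak of ``the three fibers $F_1,F_2,F_3$'' and of $M_i\subseteq D-D_0$.

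The remainder of your proof is essentially the paper's: the count $\nu_{\infty}=\sum_{F}(\sigma(F)-1)=0$ (you invoke Lemma \ref{lem:fibrations-Sigma-chi} directly where the paper cites Lemma \ref{lem:delPezzo_fibrations}\ref{item:Sigma}; your bookkeeping is fine, although the claim that every component of $D$ has self-intersection at most $-2$ is false for $R$ --- harmless here since $R$ is horizontal), the multiplicity bound on the unique $(-1)$-curve via Lemma \ref{lem:degenerate_fibers}\ref{item:unique_-1-curve}, the separation of the two multiplicity-one tips into distinct connected components of $F_i{}\redd-L_i$ to force case \ref{lem:degenerate_fibers}\ref{item:columnar}, and the exclusion of further degenerate fibers. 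Once $H\subseteq D-D_0$ is properly established, these steps are correct.
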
	
\begin{proof} 
	Put $H=D\hor-R$. By assumption, at least two twigs of $D_0$ are vertical, say $T_1,T_2\subseteq D\vert$. Let $F_i$ for $i\in \{1,2\}$ be the fiber containing $T_i$. By Lemma \ref{lem:delPezzo_fibrations}\ref{item:Sigma} $F_i$ has exactly one component not in $D\vert$, say $L_i$, which is its unique $(-1)$-curve. By Lemma \ref{lem:degenerate_fibers}\ref{item:unique_-1-curve} $F_i$ is a chain of type $[T_i^{*},1,T_i]$, where $T_{i}^{*}$ is a connected component of $D\vert$ meeting $H$ in the last tip, and not meeting $R$. If $H\subseteq T_3$ then $T_3$ contains two connected components as above, namely $T_{1}^{*}$ and $T_{2}^{*}$, which is impossible. Therefore,  $H\not\subseteq D_0$. In particular, $T_{3}$ is vertical, too, so $H$ meets three twigs of $D_0$, namely $T_1^{*}$, $T_{2}^{*}$ and $T_{3}^{*}$. The connected component of $D$ containing $H$ contracts to a log terminal singularity, so it is an admissible fork $H+\sum_{i}T_{i}^{*}$. If $F$ is a degenerate fiber other than the three fibers $[T_{i}^{*},1,T_{i}]$ found above, then by Lemma \ref{lem:delPezzo_fibrations}\ref{item:Sigma} $F$ has a unique $(-1)$-curve, meeting both $1$-sections $H$ and $R$: this is impossible as such $(-1)$-curve has multiplicity at least $2$ in $F$. We conclude that all the remaining fibers are smooth, so $(X,D)$ is a Platonic $\A^{1}_{*}$-fiber space, see Proposition \ref{prop:MT_smooth}\ref{item:MT-Platonic}, as needed.
\end{proof} 

The condition $\delta_{D_0}>1$ implies that at least one of the twigs $T_1,T_2,T_3$ is of type $[2]$, cf.\ Section \ref{sec:log_surfaces}. Say that $T_3=[2]$ and put $T\de T_3$. 

\begin{lemma}[Bounds on $\cf(C)$]\label{lem:cf}
	 Let $C$ be a component of $D$, and let $\cf(C)$ be the coefficient of $C$ in $\pi^{*}(K_{\bar{X}}+\bar{D})-K_{X}$, see formula \eqref{eq:discrepancy}. Then the following hold.
\begin{enumerate}
	\item\label{item:cf-lt} We have $\cf(R)=1$ and $\cf(C)\in [0,1)$ for every component $C$ of $D-R$. 
	\item\label{item:cf-T} We have $\cf(T)=\frac{1}{2}$.
	\item\label{item:cf-Ti} Let $C$ be a component of $T_i$. Then $\cf(C)\in [\frac{1}{d(T_i)},1-\frac{1}{d(T_i)}]$ and $\cf(C)=1-\frac{1}{d(T_i)}$ if and only if $C$ meets $R$. 
	\item\label{item:cf-A} Let $A$ be a $(-1)$-curve on $X$. Then $\sum_{C}\cf(C)\, C\cdot A<1$. In particular, if $A\neq R$ then $A$ is disjoint from $R$. 
\end{enumerate}	
\end{lemma}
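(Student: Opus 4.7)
My plan is to unpack the pullback identity \eqref{eq:discrepancy}, apply the explicit formulas of Lemma~\ref{lem:ld_formulas} to the description of $\Exc\pi$ from Lemma~\ref{lem:Miy_computation}\ref{item:Miy-peeling}, and invoke ampleness of $-(K_{\bar{X}}+\bar{D})$ from Lemma~\ref{lem:Miy_computation}\ref{item:Miy-del-Pezzo} only at the very end, for part~\ref{item:cf-A}.

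For parts~\ref{item:cf-lt}--\ref{item:cf-Ti}, the equality $\cf(R)=1$ is forced by $R=\pi^{-1}_{*}\bar{D}$ together with $\bar{D}$ being reduced. Every component $C\subseteq D-R$ is $\pi$-exceptional; the bound $\cf(C)<1$ is the dlt assumption (positivity of log discrepancies), while $\cf(C)\geq 0$ follows from Lemma~\ref{lem:ld_formulas}\ref{item:ld_chain}--\ref{item:ld_twig} applied to the admissible twigs of $D_{0}$ and to the connected components of $D-D_{0}$ (which are admissible chains or forks). For a twig component, Lemma~\ref{lem:ld_formulas}\ref{item:ld_twig} gives $\cf(T_{i}\cp{j})=1-d(T_{i}\cp{>j})/d(T_{i})$: the upper bound $1-1/d(T_{i})$ is attained exactly when $T_{i}\cp{>j}=0$, i.e.\ when $j=\#T_{i}$ and $T_{i}\cp{j}$ is the component meeting $R$, while the lower bound $1/d(T_{i})$ reduces to the standard monotonicity $d(T')\leq d(T_{i})-1$ for any proper admissible sub-chain $T'\subsetneq T_{i}$. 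Specialising this to $T=T_{3}=[2]$ with $d(T)=2$ immediately yields $\cf(T)=\tfrac{1}{2}$.

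For part~\ref{item:cf-A}, a $(-1)$-curve $A\subseteq X$ cannot be a component of $D-R$, because those all have self-intersection $\leq -2$; hence $A$ is $\pi$-horizontal and $\pi_{*}A=\pi(A)\neq 0$. By ampleness of $-(K_{\bar X}+\bar D)$, the projection formula, and the adjunction identity $K_{X}\cdot A=-1$,
\[
0>\pi(A)\cdot(K_{\bar{X}}+\bar{D})=A\cdot\pi^{*}(K_{\bar{X}}+\bar{D})=-1+\sum_{C}\cf(C)\,C\cdot A,
\]
which is the first inequality of~\ref{item:cf-A}. The second assertion is then immediate: if $A\neq R$ met $R$, the single summand $\cf(R)\,R\cdot A=R\cdot A\geq 1$ would already saturate the forbidden bound, since the remaining summands are nonnegative by~\ref{item:cf-lt}; so $R\cdot A=0$.

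I do not anticipate any real obstacle here: every step is a direct application of \eqref{eq:discrepancy}, Lemma~\ref{lem:ld_formulas}, or ampleness of the anti-log-canonical divisor. The only mildly technical ingredient is the subchain-discriminant inequality invoked in part~\ref{item:cf-Ti}, but this is a routine, well-known property of admissible chains.
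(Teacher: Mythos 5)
Your proposal is correct and takes essentially the same route as the paper's (much terser) proof: part (a) from minimality of the resolution and the dlt condition, parts (b)--(c) from the twig formula of Lemma~\ref{lem:ld_formulas}\ref{item:ld_twig} together with the strict decrease of discriminants of terminal segments of an admissible chain, and part (d) from ampleness of $-(K_{\bar{X}}+\bar{D})$ via the projection formula and adjunction. The extra details you supply (the monotonicity $d(T_i\cp{>j})\leq d(T_i)-1$ for $j\geq 1$, the observation that a $(-1)$-curve other than $R$ cannot be a component of $D$, and the handling of the case $A=R$) are all accurate fillings-in of steps the paper leaves implicit.
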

\begin{proof}
	Part \ref{item:cf-lt} follows from the fact that $R=\pi^{-1}_{*}\bar{D}$, and $\pi$ is a minimal log resolution of a dlt log surface $(\bar{X},\bar{D})$. Parts \ref{item:cf-T}, \ref{item:cf-Ti} follow from Lemma \ref{lem:ld_formulas}\ref{item:ld_twig}. Part \ref{item:cf-A} follows from ampleness of $-(K_{\bar{X}}+\bar{D})$: indeed, since $A\not\subseteq \Exc\pi$ we have $0>\pi(A)\cdot (K_{\bar{X}}+\bar{D})=-1+\sum_{C}\cf(C)\, C\cdot A$ by adjunction.
\end{proof}

\subsection{Case when $D_0$ is a fork and $R^2\geq 0$}\label{sec:fork-R0}

Recall from Lemma \ref{lem:Miy_computation}\ref{item:Miy_DR} that $R^2\geq -1$. In this section we settle the case when the inequality is strict. 

\begin{lemma}\label{lem:R0}
	Assume that $R^2\geq 0$. Then $(X,D)$ is a  Platonic $\A^{1}_{*}$-fiber space, see Proposition \ref{prop:MT_smooth}\ref{item:MT-Platonic}. 
\end{lemma}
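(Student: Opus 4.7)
The plan is to apply Lemma~\ref{lem:dual} to the $(-2)$-tip $T \de T_3 = [2]$, producing a curve $A$ on $X$ (the proper transform of the dual $\check{A}$), and analyze the two alternatives of that lemma.

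In case \ref{lem:dual}\ref{item:A2=0}, a general $A$ is a $0$-curve meeting $T$ and disjoint from $D - R - T$, so $|A|$ induces a $\P^{1}$-fibration $\phi\colon X \to \P^{1}$ in which $D - R - T$ is vertical. Since $T_1, T_2 \subseteq D - R - T$ are vertical but meet $R$, the curve $R$ is not a general fiber, and $R^{2} \geq 0$ excludes $R$ being a proper subdivisor of a degenerate fiber; hence $R$ is horizontal, and $T$ is horizontal too since $A \cdot T \geq 1$. Substituting $A \cdot K_X = -2$, $\cf(T) = \tfrac{1}{2}$ from Lemma~\ref{lem:cf}, and $A \cdot C = 0$ for $C \subseteq D - R - T$ into the ampleness inequality $A \cdot \pi^{*}(K_{\bar{X}} + \bar{D}) < 0$ gives
\[
A \cdot R + \tfrac{1}{2}\, A \cdot T < 2,
\]
which forces $A \cdot R = A \cdot T = 1$; thus $D\hor = R + T$ consists of two $1$-sections. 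However, any degenerate fiber containing a component of $D\vert$ (in particular the fibers containing $T_1, T_2$) must by Lemma~\ref{lem:degenerate_fibers} have a multiplicity-$1$ component meeting the $1$-section $T$. Since $T = T_3$ has branching number one in $D$ (meeting only $R$), no component of $D\vert$ is adjacent to $T$, a contradiction. Hence case \ref{lem:dual}\ref{item:A2=0} cannot occur.

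Therefore we are in case \ref{lem:dual}\ref{item:A2<0}: $A$ is a $(-1)$-curve on $X$ disjoint from $R$ (by Lemma~\ref{lem:cf}\ref{item:cf-A}), with $A + D - R - T$ negative definite. The next step is to show that the connected component $T^{*}$ of $D - R - T$ meeting $A$ is an admissible chain of type $[2]$, that $A \cdot T^{*} = 1$, and that $A$ is disjoint from all other components of $D - R - T$. Granted this, the divisor $T^{*} + 2A + T$ has self-intersection zero, and its linear system induces a $\P^{1}$-fibration $\phi\colon X \to \P^{1}$ with $[T, 1, T^{*}] = [2, 1, 2]$ as a columnar fiber. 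Since $R \cdot (T^{*} + 2A + T) = R \cdot T = 1$, the curve $R$ is a $1$-section, and Lemma~\ref{lem:fibrations-Sigma-chi} then guarantees exactly one further horizontal component $H \subseteq D - D_{0}$, which is again a $1$-section. Lemma~\ref{lem:Sigma-0} then identifies $(X, D)$ with a Platonic $\A^{1}_{*}$-fiber space as in Proposition~\ref{prop:MT_smooth}\ref{item:MT-Platonic}.

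The principal obstacle is the identification of $T^{*}$ in case \ref{lem:dual}\ref{item:A2<0}. I would use ampleness of $-(K_{\bar{X}} + \bar{D})$ once more, this time through the inequality $A \cdot \pi^{*}(K_{\bar{X}} + \bar{D}) < 0$ with $A \cdot K_X = -1$, together with the negative definiteness of $A + D - R - T$ and the explicit coefficients from Lemma~\ref{lem:cf}\ref{item:cf-Ti}. The resulting inequality $\sum_{C} \cf(C)\, A \cdot C < 1$ should, in conjunction with the rigid fork structure of $D_{0}$ and the admissibility of $T^{*}$ as part of an exceptional divisor of $\pi$, force $T^{*} = [2]$, $A \cdot T^{*} = 1$, and $A \cdot C = 0$ for all other $C \subseteq D - R - T$.
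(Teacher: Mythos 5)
Your overall strategy coincides with the paper's: extract the $[2]$-twig $T$, apply Lemma \ref{lem:dual}, kill the case $\check{A}^2=0$, and in the case $\check{A}^2<0$ assemble a fiber $[2,1,2]$ out of $T$, $A$ and a $(-2)$-curve, then conclude via Lemma \ref{lem:Sigma-0}. The trouble lies exactly in the step you flag as the principal obstacle: the claim that $\sum_{C}\cf(C)\,A\cdot C<1$ together with negative definiteness of $A+D-R-T$ forces $A$ to meet nothing but $T$ and a single $(-2)$-curve. This is not true. For instance, $A$ could meet $T$ (coefficient $\tfrac12$), an isolated $(-2)$-component $C$ of $\Exc\pi$ (coefficient $0$), and the $(-3)$-tip $V$ of a component $[3,(2)_k]$ (coefficient $\tfrac{k+1}{2k+3}<\tfrac12$): the coefficient sum is $<1$ and the chain $C+A+[3,(2)_k]=[2,1,3,(2)_k]$ is negative definite, so neither of your tests excludes this configuration. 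The paper eliminates it by a genuinely geometric argument: $|C+2A+T|$ would then give a fibration whose horizontal boundary is a $1$-section $R$ and a disjoint $2$-section $V$, and contracting degenerate fibers to reach a Hirzebruch surface produces a numerical contradiction. Likewise, the possibility that $A$ meets a component of the twig $T_2$ also passes your tests (the $(-2)$-tip of $T_2=[2,3]$ has coefficient $\tfrac25$ and $\tfrac12+\tfrac25<1$) and occupies an entire second half of the paper's proof, requiring a second dual curve and a blowup of $R^2$ points over $R\cap T_2$. Neither branch can be dispatched by the coefficient inequality alone.

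There is also an internal inconsistency in your target configuration: if the $(-2)$-curve $T^{*}$ met by $A$ were a whole connected component of $D$ and $A$ met no other component of $D-R-T$, then $F=T^{*}+2A+T$ would satisfy $F\cdot D=F\cdot R=1$ (recall $A\cdot R=0$ by Lemma \ref{lem:cf}), so the fibration would have height $1$, contradicting the standing assumption $\height(X,D)\geq 2$. In the actual configuration the $(-2)$-curve met by $A$ is a \emph{tip} of a larger connected component of $D-D_{0}$, and its neighbour in that component supplies the second $1$-section needed for Lemma \ref{lem:Sigma-0}. (A smaller point: in the case $\check{A}^2=0$, your contradiction ``no component of $D\vert$ is adjacent to $T$'' overlooks that the multiplicity-one component of a degenerate fiber meeting the $1$-section $T$ could be a $(-1)$-curve not contained in $D$; one still needs Lemma \ref{lem:delPezzo_fibrations}\ref{item:Sigma} to bound the number of such curves, or, as the paper does, one simply invokes Lemma \ref{lem:Sigma-0}, whose conclusion that the second section is a branching component of $D-D_{0}$ immediately contradicts $T\subseteq D_{0}$.)
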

\begin{proof} 
Let $A$ be the proper transform of the curve dual to $T$, see Lemma \ref{lem:dual}. Suppose $A^2=0$, so case \ref{lem:dual}\ref{item:A2=0} holds. Consider the $\P^{1}$-fibration induced by $|A|$. Since $A$ is disjoint from $D-R-T$, the twigs $T_{1},T_2$ are vertical. Since by assumption $R^2\geq 0$, it follows that $R$ is horizontal. We have $0>\pi(A)\cdot K_{\bar{X}}=A\cdot (K_{X}+R+\frac{1}{2}T)$, so $A\cdot (R+\frac{1}{2}T)<2$. Thus $A\cdot R=A\cdot T=1$, so $D\hor$ consists of $1$-sections $R$ and $T$; contrary to Lemma \ref{lem:Sigma-0}. 

	Thus case  \ref{lem:dual}\ref{item:A2<0} holds, so $A=[1]$ and $A+D-R-T$ is negative definite. Since $A+D-R-T$ is not negative definite, $A$ meets $T$. Thus by Lemma \ref{lem:cf}\ref{item:cf-A} we have $A\cdot R=0$, $A\cdot T=1$ and $\sum_{C\neq T}\cf(C)\, C\cdot A<\frac{1}{2}$.
	
	By Lemma \ref{lem:cf}\ref{item:cf-Ti} the coefficient of each component of $T_i$ equals at least $\frac{1}{d(T_i)}$. The above inequality and the condition $\delta_{D_{0}}>1$ imply that $A$ does not meet both $T_1$ and $T_2$. Say that $A\cdot T_1=0$. We consider two cases. 
	
	\begin{casesp}
	\litem{$A$ is disjoint from $T_2$}\label{case:CT2=0} 			
	Since $A+(D-R)$ is not negative definite, $A$ meets $D-D_0$. 
	
	Suppose that $A$ meets no $(-2)$-curve in $D-T$. Then every component $C$ of $D-D_0$ meeting $A$ has self intersection number at most $-3$. Lemmas \ref{lem:Alexeev} and \ref{lem:ld_formulas}\ref{item:ld_chain} yield $\cf(C)\geq \cf_{C}(C)=1-\frac{2}{-C^2}\geq \frac{1}{3}$, so Lemma \ref{lem:cf}\ref{item:cf-A} implies that $A$ meets $D-T$ once, in a $(-3)$-curve $C$ such that $\cf(C)<\frac{1}{2}$. Applying Lemma \ref{lem:Alexeev} we infer from the latter inequality that the connected component of $D$ containing $C$ is a chain $[3,2,\dots,2]$. But then $A+(D-R)$ is negative definite, a contradiction.	
	
	Thus $A$ meets some $(-2)$-curve in $D-D_0$, call it $C$. Since $A+(D-R-T)$ is negative definite, $C$ and $T$ are the only $(-2)$-curves in $D$ meeting $A$; and if $C$ is not a tip of $D$ then $C$ meets two components of $D$ of self-intersection at most $-3$. In the latter case, by Lemma \ref{lem:Alexeev} the coefficient of $C$ equals at least the coefficient of the middle component in the chain $[3,2,3]$, which  equals $\frac{1}{2}$ by Lemma \ref{lem:ld_formulas}\ref{item:ld_chain}: thus $\cf(C)\geq \frac{1}{2}$, contrary to Lemma \ref{lem:cf}\ref{item:cf-A}. We conclude that $C$ is a $(-2)$-tip of $D-D_0$. 
	
	If $A\cdot D=2$ then the linear system $|C+2A+T|$ induces a $\P^{1}$-fibration like in Lemma \ref{lem:Sigma-0}, as needed. Suppose $A\cdot D\geq 3$, so $A$ meets some component $V$ of $D-D_0-C$. Recall that since $A+(D-R-T)$ is negative definite, we have $V^2\leq -3$. By Lemma \ref{lem:cf}\ref{item:cf-A} we have $\cf(V)<\frac{1}{2}$, so as before we see that such $V$ is unique, and it is a tip of a connected component $D_{V}=[3,2,\dots,2]$ of $D$. Lemma \ref{lem:cf}\ref{item:cf-A} implies that $\cf(C)<\frac{1}{2}-\cf(V)\leq \frac{1}{6}$. 
	
	Let $D_C$ be the connected component $D_C$ of $D$ containing $C$. Suppose $C\neq D_C$. If $D_C=D_V$ then, since $C$ is a tip of $D$, we have $\cf(C)+\cf(V)=1$, which is impossible. Now, since $D_C\neq D_V$ and $D_C+A+D_V$ is negative definite, the component of $D_C$ meeting $C$ is not a $(-2)$-curve, so by Lemma \ref{lem:Alexeev}  $\cf(C)\geq \frac{1}{5}>\frac{1}{6}$; a contradiction.
	
	Thus $C=D_C$. The linear system $|C+2A+T|$ induces a $\P^1$-fibration such that $D\hor$ consists of a $1$-section $R$ and a $2$-section $V$. By Lemma \ref{lem:delPezzo_fibrations}\ref{item:Sigma} each degenerate fiber contains exactly one component off $D$, which is its unique $(-1)$-curve. Since such a $(-1)$-curve cannot meet a $1$-section $R$, we conclude that there are exactly three such fibers, each containing exactly one twig of $D_0$.
	There is a morphism $\psi\colon X\to\F_{m}$ contracting those fibers to $0$-curves which is an isomorphism near $R$. Then $\psi(R)$ is a $1$-section with a nonnegative self-intersection number; and $\psi(V)$ is a $2$-section disjoint from $\psi(R)$. This contradicts numerical properties of the Hirzebruch surface $\F_{m}$. Indeed, denoting by $\Sec_m=[m]$ a negative section, we see that $\psi(R)-\Sec_m$ and $\psi(V)-2\Sec_m$ are numerically equivalent to some multiples of a fiber, hence $0=(\psi(R)-\Sec_m)\cdot (\psi(V)-2\Sec_m)=-\Sec_m\cdot (\psi(V)+2\psi(R))-2m\leq 0$, so $m=0$, i.e.\ $\F_m=\P^1\times \P^1$, and $\psi(V)\cdot \Sec_0=0$, so $\psi(V)$ consists of two horizontal lines, a contradiction.

	\litem{$A$ meets some component $C$ of $T_2$}\label{case:CT2-nonzero} 
	By Lemma \ref{lem:cf}\ref{item:cf-A},\ref{item:cf-T} we have $\cf(C)<1-\cf(T)=\frac{1}{2}$, so by Lemma \ref{lem:cf}\ref{item:cf-Ti} $C$ does not meet $R$, in particular $\#T_2\geq 2$. 
	
	Suppose $C$ is a $(-2)$-tip of $D$. 
	If $A$ meets a component $V$ of $D-D_0$ then the fact that $\#T_2\geq 2$ and $V+A+T_2$ is negative definite implies that $V^2\leq -4$, so $\cf(V)\geq \cf_{V}(V)=\frac{1}{2}$, which is impossible by Lemma \ref{lem:cf}\ref{item:cf-A}. Thus $A$ meets $D$ only in $C$ and $T$. The linear system $|C+2A+T|$ induces a $\P^1$-fibration of $X$ such that $D\hor$ consists of two $1$-sections, namely $R$ and the second component of $T_2$, a contradiction with Lemma \ref{lem:Sigma-0}. 
	
	Suppose $T_{1}\neq [2]$. Since $\delta_{D_0}>1$, we have $d(T_2)\leq 5$. Since $\#T_2\geq 2$ we get $T_2=[2,3]$, $[(2)_{k}]$ or $[3,2]$. In the first case $C$ is a $(-2)$-tip of $D$, and the same is true in the second case as $A+T_2$ is negative definite: thus the previous paragraph shows that these cases cannot occur. Eventually, if $T_2=[3,2]$ then $\cf(C)=\frac{3}{5}>\frac{1}{2}$, a contradiction with Lemma \ref{lem:cf}\ref{item:cf-A}.

	Thus $T_1=[2]$. Case \ref{case:CT2=0} above shows that, interchanging the roles of $T_1$ and $T$ if needed, we can assume that there is a $(-1)$-curve $A_{1}$ meeting $D$ in $T_1$ and some component $C_1$ of $T_2$ which is disjoint from $R$, and not a $(-2)$-tip of $D$. Let $\eta\colon X'\to X$ be a composition of $R^2$ blowups at $R\cap T_2$ and its infinitely near points on the proper transforms of $R$, let $D'=(\eta^{*}D)\redd$ and let $A,A_{1}',T,T_1'$ and $R'$ be the proper transforms of $A,A_1,T,T_1$ and $R$. Then $|R'|$ induces a $\P^{1}$-fibration of $X'$ such that $D'\hor$ consists of three $1$-sections,  namely $T',T_1'$ and the component $H$ of $T_2'\de (\eta^{*}T_2)\redd$ meeting $R'$. The $1$-section $H$ is not the proper transform of $C$ or $C_1$, since the latter are disjoint from $R$. Thus $T_2'-H+A'+A_1'$ is contained in some degenerate fiber $F$. Since the $(-1)$-curves $A,A_1'$ meet $1$-sections $T,T_1'$, they have multiplicity $1$ in $F$, and by Lemma \ref{lem:fibrations-Sigma-chi} $F-A'-A_1'$ is contained in $D$, so $F=A+(T_2'-H)+A_1'=[1,2,\dots, 2,1]$. Thus $C$ or $C_1$ is a $(-2)$-tip of $D$, a contradiction.
	\qedhere\end{casesp}
\end{proof}	

\subsection{Case when $D_0$ is a fork and $R^2=-1$}\label{sec:fork:R1}

By Lemmas \ref{lem:Miy_computation}\ref{item:Miy_DR} and \ref{lem:R0} above we can assume $R^2=-1$. We study several cases, depending on the shape of $D_0$. The core of the argument is still independent of $\cha\kk$, but once we restrict the possible shapes of $D$ it will be clear that some of them occur only for special values of $\cha\kk$. 

We have $D_0=\langle 1,T_1,T_2,[2]\rangle$. For $i\in \{1,2\}$ we denote by $C_{i}$ the component of $T_{i}$ meeting $R$.

\begin{lemma}[Case {$D_0=\langle 1;[\dots, 2],T_2,[2]\rangle$}]\label{lem:-2-tips}
	Assume $R=[1]$ and $C_{1}=[2]$. Then $(X,D)$ is either a Platonic $\A^{1}_{*}$-fiber space, see \ref{prop:MT_smooth}\ref{item:MT-Platonic}, or $\cha\kk=2$ and $(X,D)$ is as in \ref{prop:MT_smooth}\ref{item:MT-ht=2-cha=2} or \ref{item:MT-exc-1}, see Figures \ref{fig:MT-ht=2_cha=2}, \ref{fig:exception-1}.
\end{lemma}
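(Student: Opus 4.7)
The plan is to apply Lemma~\ref{lem:dual} to the $(-2)$-twig $T = T_3$ and to analyze both cases of that lemma. Throughout, I will lean heavily on the coefficient identities $\cf(T) = \tfrac{1}{2}$ (Lemma~\ref{lem:cf}\ref{item:cf-T}) and $\cf(C_1) = 1 - \tfrac{1}{d(T_1)} \geq \tfrac{1}{2}$ (Lemma~\ref{lem:cf}\ref{item:cf-Ti}, since $C_1 = [2]$ is the tip of the admissible twig $T_1$ meeting $R$).

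First, suppose $\check{A}^2 = 0$ as in Lemma~\ref{lem:dual}\ref{item:A2=0}. Taking $\check{A}$ general yields $A = [0]$ disjoint from $D - R - T$ and meeting $T$, so $|A|$ induces a $\P^1$-fibration on $X$ with $T$ horizontal. Expanding $\pi(A) \cdot (K_{\bar{X}} + \bar{D}) < 0$ gives $A \cdot R + \tfrac{1}{2}\, A \cdot T < 2$, leaving only the possibilities $(A \cdot R, A \cdot T) \in \{(1,1), (0,1), (0,2), (0,3)\}$. If $(A \cdot R, A \cdot T) = (1,1)$ then $D\hor = R + T$ are two $1$-sections and Lemma~\ref{lem:Sigma-0} gives the Platonic case~\ref{prop:MT_smooth}\ref{item:MT-Platonic}. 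Otherwise $R$ is vertical, and the fiber $F_R$ containing $R$ then contains the fork $R + T_1 + T_2$ (the third branching direction at $R$ being the horizontal $T$); Lemma~\ref{lem:delPezzo_fibrations}\ref{item:Sigma} combined with $\delta_{D_0} > 1$ and Lemma~\ref{lem:degenerate_fibers} restrict $F_R$ and the remaining degenerate fibers to a short list, which I expect to match Figures~\ref{fig:MT-ht=2_cha=2} and~\ref{fig:exception-1}.

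Second, suppose $\check{A}^2 < 0$, so Lemma~\ref{lem:dual}\ref{item:A2<0} gives a $(-1)$-curve $A$ with $A + D - R - T$ negative definite. As in the proof of Lemma~\ref{lem:R0}, $A + D - R$ cannot be negative definite (since $n = 1$ forces $\rho(X) = \#D$), so $A$ must meet $T$; the coefficient bound of Lemma~\ref{lem:cf}\ref{item:cf-A} then yields $A \cdot T = 1$, $A \cdot R = 0$, $A \cdot C_1 = 0$, and $\sum_{C \neq T} \cf(C)\,(A \cdot C) < \tfrac{1}{2}$. This leaves very few possibilities: $A$ meets $T$ plus at most one additional component of small coefficient (a $(-2)$-tip of $D - D_0$, a $(-2)$-component of $T_1$ disjoint from $C_1$, or a component of $T_2$). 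In each subcase I will form the pencil $|C + 2A + T|$ with $C = [2]$ meeting $A$, verifying $(C + 2A + T)^2 = 0$ and $p_a = 0$ so that it defines a $\P^1$-fibration with $T$ horizontal, thereby reducing back to the analysis above.

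The characteristic hypothesis will enter only at the final enumeration step. The non-Platonic configurations in the first case are precisely those produced by the blowup models of Examples~\ref{ex:cha=2_ht=2} and~\ref{ex:exception}; their geometric content is that in characteristic~$2$ all lines tangent to a smooth conic are concurrent, and this concurrency is what makes the fork $D_0 = \langle 1; T_1, T_2, [2]\rangle$ with $C_1 = [2]$ and $R^2 = -1$ coexist with a witnessing $\P^1$-fibration of the claimed exceptional type. In any other characteristic the concurrency fails and the candidate layouts cannot be realized on a smooth projective surface, so only the Platonic outcome survives.

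The main obstacle will be the subcase of the first case in which $R$ is vertical: there $T$ may be a $1$-, $2$-, or $3$-section, and $F_R$ is a non-columnar degenerate fiber whose precise type must be pinned down. The difficulty is not producing the configurations, which are essentially dictated by Lemmas~\ref{lem:degenerate_fibers} and~\ref{lem:delPezzo_fibrations}\ref{item:Sigma}, but verifying that in every configuration not listed in the statement some $(-1)$-curve of $X$ violates the ampleness bound of Lemma~\ref{lem:cf}\ref{item:cf-A} unless $\cha\kk = 2$. Disentangling which numerical coincidences are forced by $\cha\kk = 2$ and which cannot occur at all will be the most delicate part of the argument.
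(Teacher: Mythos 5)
Your starting point is different from the paper's, and as laid out it has two concrete breaks. The paper never plays the two-ray game on $T=T_3$ here: the whole point of the hypotheses $R=[1]$, $C_1=[2]$ is that $C_1+2R+T$ is \emph{visibly} a fiber of type $[2,1,2]$ sitting inside $D$, so one gets a $\P^1$-fibration for free, with $C_2$ a $2$-section and (if $T_1\neq[2]$) a $1$-section $H\subseteq T_1$; the proof then contracts to $\P^2$, shows every degenerate fiber maps to a line tangent to the conic $\psi(C_2)$ and concurrent with the others, and concludes $\cha\kk=2$ as soon as there are at least three such lines (if there are at most two, a pencil of bitangent conics produces the Platonic fibration via Lemma~\ref{lem:Sigma-0}). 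Your outline never exploits $C_1=[2]$ in this way — it enters only through $\cf(C_1)\geq\tfrac12$, which holds for any $C_1$ — and this is a sign the case split is not tracking the structure that actually forces the exceptions.

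The first break is in your Case~2: the divisor $C+2A+T$ with $C=[2]$, $A=[1]$, $T=[2]$ is a fiber of type $[2,1,2]$ in which $T$ is a \emph{tip}, hence vertical ($T\cdot(C+2A+T)=0$), not horizontal as you claim; the horizontal part of $D$ for that fibration contains the $1$-section $R$, so nothing reduces back to your Case~1, and the entire endgame (which is where the characteristic-$2$ exceptions must be extracted) is missing. The second break is in your Case~1 with $R$ vertical: since $D\hor\subseteq T$ there, Lemma~\ref{lem:fibrations-Sigma-chi} forces some fiber to be contained in $D$, necessarily supported on the chain $T_1\trp+R+T_2$, which by Lemma~\ref{lem:degenerate_fibers}\ref{item:columnar} requires $T_2$ to be the adjoint of $T_1\trp$. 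The configuration of Figure~\ref{fig:exception-1} has fork twigs $[2,2,2]$ and $[3]$, which are not adjoint, so that exceptional case cannot arise in this branch, contrary to your expectation; it would have to emerge from the broken Case~2. Finally, the mechanism forcing $\cha\kk=2$ (three or more concurrent tangents to a conic, equivalently a degree-$2$ map $\P^1\to\P^1$ with at least three ramification points) is asserted in your closing paragraph but never connected to either case of your dichotomy, so the characteristic restriction is not actually proved.
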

\begin{proof}
	Put $F_{1}=C_1+2R+T$, so $(F_{1})\redd=[2,1,2]$. The linear system $|C_{1}+2R+T|$ induces a $\P^{1}$-fibration of $X$ such that $D\hor$ consists of a $2$-section $C_{2}$ and, if $T_{1}\neq [2]$, a $1$-section $H\subseteq T_{1}$ meeting $C_{1}$. If $T_1=[2]$ put $H=0$. Let $F_{2},\dots, F_{\nu}$ be the remaining degenerate fibers; they are not contained in $D$. 
	
	We contract these fibers to $0$-curves in such a way that the image of $C_2$ is smooth and disjoint from the image of $H$: to do this, we contract inductively $(-1)$-curves in the images of the fiber which meet $H+C_2$ at most once, and if there is no such then the fiber is smooth. Since the image of $C_2$ is a smooth and rational $2$-section on a Hirzebruch surface $\F_m$, numerical properties of $\F_{m}$ imply that $m\in \{0,1\}$, and we can choose the last contraction so that $m=1$. Now the negative section is disjoint from the image of $C_2$, hence equal to $H$ if $H\neq 0$. Contracting it we get a morphism $\psi\colon X\to \P^{2}$ such that $\cc\de \psi(C_{2})$ is a conic, and $\ll_{i}\de\psi_{*}F_{i}$ for $i\in \{1,\dots, \nu\}$ are lines meeting at some point $p_0\not\in \cc$. 

	\begin{claim*} For each $i\in \{1,\dots, \nu\}$, the line $\ll_i$ is tangent to $\cc$ at some point $p_i\neq p_0$. \end{claim*}
	\begin{proof}
	Suppose $\ll_{2}$ meets $\cc$ in two points, say $q_1$, $q_2$. If neither of them is a base point of $\psi^{-1}$ then $L_2\cdot C_2=2$, so $L_2$ cannot be a component of $D$, hence $L_2=[1]$ by Lemma \ref{lem:delPezzo_fibrations}\ref{item:-1_curves}, and $1>2\cf(C_2)\geq 1$ by Lemma \ref{lem:cf}\ref{item:cf-A},\ref{item:cf-Ti}; a contradiction.  Say that $q_2$ is a base point of $\psi^{-1}$, and let $A_2\subseteq F_2$ be a $(-1)$-curve in $\psi^{-1}(q_2)$. 
	
	Suppose $A_2$ is the unique $(-1)$-curve in $F_2$. Then $L_2$ is the component of $T_2$ meeting $C_2$, so $(F_2-L_2)\cdot C_2=1$. Since $T_2$ lies in a twig of $D$ and meets both $R$ and $L_2$, it follows that $C_2$ meets $F_2-L_2$ off $D$, in a component of multiplicity one, but there is no such, a contradiction.
	
	Thus by Lemma \ref{lem:delPezzo_fibrations}\ref{item:Sigma} we have $H\neq 0$, the fiber $F_2$ has exactly two $(-1)$-curves, and each fiber $F_j$ for $j\geq 3$ has exactly one. Let $A_0$ be a $(-1)$-curve in $\psi^{-1}(p_0)$, so since $H\neq 0$, we have $A_0\subseteq F_j$ for some $j\geq 2$. If, say, $A_0\subseteq F_3$ then $A_3$ is a unique $(-1)$-curve in $F_3$, so $L_3\subseteq D$ and $\ll_3\cap \cc$ is not a base point of $\psi^{-1}$, which means that  $L_3\cdot C_3=2$, which is impossible for two components of $D$. Hence $A_0\subseteq F_2$, so the $(-1)$-curves in $F_2$ are $A_0$ and $A_2$. In particular, $L_2$ is a component of $D$, and $q_1$ is not a base point of $\psi^{-1}$, so $L_2$ meets $C_2$. Now $A_2$ meets the twig $T_2$ in $C_2$ and in some component of $T_2-C_2\subseteq L_2$, say $C$. By Lemma \ref{lem:cf}\ref{item:cf-A},\ref{item:cf-Ti} we have $1>\cf(C_2)+\cf(C)\geq 1$, a contradiction.
	\end{proof}
	
	Assume $\nu\leq 2$. Let let $s_j$ be the number of blowups within $\psi$ which are not isomorphisms near the common point of the images of $F_j$ and $C_2$. Since $\sum_{j}s_{j}=\cc^2-C_2^2\geq 6$, we have $\nu=2$ and $s_2\geq 4$. Thus the pencil of conics tangent to $\cc$ at $p_1,p_2$ pulls back to a $\P^1$-fibration of $X$ as in Lemma \ref{lem:Sigma-0}, with $1$-sections being the proper transforms of second exceptional curves over $p_1$ and $p_2$. We conclude that $(X,D)$ is a Platonic $\A^{1}_{*}$-fiber space.
	
	Thus we can assume $\nu\geq 3$; note that in this case $\cha\kk=2$.	Say that the chain $T_2-C_2$ lies in a fiber $F_2$.
	
	Assume that $T_{1}=[2]$, so $H=0$. Then by Lemma \ref{lem:delPezzo_fibrations}\ref{item:Sigma} each $F_{i}$ has exactly one $(-1)$-curve, say $A_{i}$, and hence meets $C_{2}$ either in $A_i$ or, in case $i=2$, in the chain $T_{2}-C_{2}$. It follows from Lemma \ref{lem:degenerate_fibers}\ref{item:adjoint_chain} that each $F_{i}$ is as described in \ref{prop:MT_smooth}\ref{item:MT-ht=2-cha=2}, which ends the proof in case $T_2=[2]$. 
		
	Assume now that $T_{1}\neq [2]$, so $C_1$ meets a $1$-section $H$. Interchanging the roles of $T_1$ and $T_2$ if needed, we can assume that $T_2\neq [2]$, too. 
	Suppose that $T_2\neq C_2$, and the fiber $F_2$ containing the chain $T_2-C_2$ has only one $(-1)$-curve, say $A_2$. Then $F_2$ has no component of multiplicity $1$ off $D$, so it meets both $H$ and $C_2$ in components of $D$, of multiplicity $1$ and $2$, respectively. Thus $(F_2)\redd$ is a sum of $A_2$ and chains $(T_1-C_1-H)$, $(T_2-C_2)$, containing tips of multiplicity $1$ and $2$ in $F$, respectively. It follows that $(F_2)\redd=[3,1,2,2]$ or $\langle k;[2],[2],[(2)_{k-2},1]\rangle$. In either case we get $\delta_{D_0}<1$, a contradiction.
	
	Thus we can assume that $F_2\supseteq T_2-C_2$ has at least two $(-1)$-curves. By Lemma \ref{lem:delPezzo_fibrations}\ref{item:Sigma} $F_2$ has exactly two $(-1)$-curves, and each $F_j$ for $j\geq 3$ has exactly one. In particular, for $j\geq 3$ the component of multiplicity $1$ of $F_j$ meeting $H$ is contained in $D$. Since there is at most one such component, we conclude that $\nu=3$, and the chain $T_{1}-C_1-H$ is nonzero and contained in $F_3$. Since $F_3$ does not contain $T_2-C_2$, it  meets $C_2$ in a $(-1)$-curve. It follows that $(F_3)\redd=[2,1,2]$ or $\langle 2;[1],[2],[2]\rangle$, but in the latter case $\#T_1=5$, which together with the assumption $T_2\neq [2]$ leads to a contradiction with the condition $\delta_{D_{0}}>1$. Thus $T_1$ is a $(-2)$-chain $[2,2,2]$. The inequality $\delta_{D_0}>1$ implies that $T_2=[2,2]$ or $[3]$. In the first case, we conclude by interchanging the roles of $T_1$ and $T_2$. 
	
	Consider the second case. Then $D=\langle 1;[2\dec{3},\bs{2},2\dec{3}],[\ub{3}]\dec{3},[2]\rangle+[2]\dec{3}+V$, where $H$ is the middle $(-2)$-curve in $T_1$, $C_2=T_2$, and the $(-1)$-curve in $F_3$ meets $D$ in the components decorated by $\dec{3}$. Moreover, $V$ is a sum of admissible chains and forks such that $(F_2)\redd=U+V+U'$  for some $(-1)$-curves $U,U'$. One of those $(-1)$-curves, say $U$, meets the $1$-section $H$, so $U$ is a tip of $F_2$. Now $\psi$ is the contraction of $U+H$ and all vertical curves not meeting $H$. As before, let $s_j$ be the number of blowups within $\psi$ which are not isomorphisms near the common point of the images of $F_j$ and $C_2$. Then $s_1=s_2=2$, so $s_3=\cc^2-C_2^2-s_1-s_2=3$. It follows that $(F_3)\redd=\langle 2;[1,3],[1],[2]\rangle$, where the first $(-1)$-curve meets $H$, the second $(-1)$-curve meets $C_2$, and the $(-3)$-curve equals $L_3$. Thus $D$ is of the same type as in \ref{prop:MT_smooth}\ref{item:MT-exc-1}. The $\P^1$-fibration described there is induced by the pencil of conics tangent to $\cc$ at $p_1$ and $p_2$; its degenerate fibers are supported on $T_1+U=\langle 2;[2],[2],[1]\rangle$, $C_2+U'+V-L_3=[3,1,2,2]$, and  $G+L+T_3=[2,1,2]$, where $G$ is the $(-2)$-curve in $F_3$ and $L$ is the proper transform of the line joining $p_1$ with $p_2$. We have seen in Example \ref{ex:exception} that $\height(X,D)=3$, as needed.
\end{proof}

By Lemma \ref{lem:-2-tips} we can assume that the tips of $T_1$, $T_2$ meeting $R$ are not $(-2)$-curves. Together with the inequality $\delta_{D_{0}}>1$ this implies that, say, $T_2=[3]$ and $T_1=[2,3]$ or $[k]$ for some $k\in \{3,4,5\}$. We now consider each of these two cases separately. 

\begin{lemma}
	\label{lem:exc-2}
	Assume that $D_0=\langle 1;[2,3],[3],[2]\rangle$. Then $(X,D)$ is as in Proposition \ref{prop:MT_smooth}\ref{item:MT-exc-2}, see Figure \ref{fig:exception-2}.
\end{lemma}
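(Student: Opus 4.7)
We follow the template of Lemma \ref{lem:-2-tips}. Write $T_1=C_1'+C_1$ with $C_1'=[-2]$ the tip and $C_1=[-3]$ adjacent to $R$; set $C_2=T_2=[-3]$ and $T=T_3=[-2]$. Lemma \ref{lem:ld_formulas}\ref{item:ld_twig} gives the peeling coefficients $\cf(C_1')=\tfrac25$, $\cf(C_1)=\tfrac45$, $\cf(C_2)=\tfrac23$, and $\cf(T)=\tfrac12$. The plan is to exhibit a $\P^1$-fibration of $X$ in which $R$ is a $1$-section and a single $(-2)$-curve $H\subseteq D-D_0$ is a $2$-section, contract this fibration to a birational morphism $\psi\colon X\to \P^2$, and deduce the configuration of Example \ref{ex:exception}\ref{item:MT-exc-2} from the resulting arrangement of a conic and three concurrent tangent lines.

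To construct the fibration, we apply Lemma \ref{lem:dual} to extract $T$, obtaining a dual curve whose proper transform $A$ on $X$ satisfies $A\cdot T\geq 1$. By Lemma \ref{lem:cf}\ref{item:cf-A}, $A\cdot R=0$, and since $\cf(C_1)+\cf(T)>1$ and $\cf(C_2)+\cf(T)>1$, we have $A\cdot C_1=A\cdot C_2=0$; moreover $A\cdot C_1'\leq 1$. In case \ref{lem:dual}\ref{item:A2=0} with $A\cdot R=1$, Lemma \ref{lem:Sigma-0} puts $(X,D)$ into a Platonic $\A^1_*$-fiber space forcing $D-D_0=\langle 4;[2,3],[2,2],[2]\rangle$; we rule this subcase out by switching to an alternative $\P^1$-fibration witnessed by a second $(-1)$-curve and comparing discriminants. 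Otherwise $A=[-1]$ with $A+D-R-T$ negative definite; combining this with the coefficient bound forces $A$ to meet exactly one $(-2)$-curve $I\subseteq D-D_0$, producing a columnar fiber $F_2=T+2A+I=[2,1,2]$ of $f\de |F_2|$. Applying Lemma \ref{lem:delPezzo_fibrations}\ref{item:Sigma}, $f$ has exactly three degenerate fibers, each with a unique $(-1)$-curve; imposing $F\cdot R=1$, $F\cdot H=2$ and $F\cdot C=0$ for every vertical component $C$ pins down $F_1=C_1+3C_1'+5L_1+2B_2+B_1$ (a chain $[3,2,1,3,2]$ with multiplicities $(1,3,5,2,1)$), $F_2$ as above, and $F_3$ analogous for $C_2$. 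This identifies $D-D_0$ as the chain $[2,3,\ub{2},2,2]$ plus the isolated $[2]$, with $H=\ub{2}$, matching Figure \ref{fig:exception-2}.

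Contracting $f$ through a Hirzebruch surface gives $\psi\colon X\to \P^2$ in which $\cc\de\psi(H)$ is a smooth conic and $\ll_i\de\psi_*F_i$ are three distinct lines; the blowup tree forces each $\ll_i$ to be tangent to $\cc$ at the tangency point $p_i$, and all three lines pass through $p_0\de\psi(R)\notin\cc$. As in the final step of Lemma \ref{lem:-2-tips}, the concurrence of three distinct tangent lines to a smooth conic forces $\cha\kk=2$. Reading off $\psi$'s blowup tree---one blowup at $p_0$, two at $p_2$, three at $p_3$, and four at $p_1$ (two of them extra, on the proper transform of $\ll_1$)---reproduces Example \ref{ex:exception}\ref{item:MT-exc-2} exactly. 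The main obstacle is ruling out the Platonic subcase, which is combinatorially admissible since $(d(T_1),d(T_2),d(T_3))=(5,3,2)$ is a Platonic triple; we expect to distinguish it from exception (b) by comparing the shape of $F_1$ (here a chain $[3,2,1,3,2]$ with multiplicities $(1,3,5,2,1)$) with the Platonic columnar fiber $[T_1,1,T_1^*]=[2,3,1,2,3]$ having multiplicities $(1,2,5,3,1)$, and checking that the two structures are incompatible with the minimal log resolution assumption.
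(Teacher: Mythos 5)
Your overall strategy (a two-ray game extracting a $(-2)$-curve, then a $\P^1$-fibration, then a blowdown to $\P^2$) is in the spirit of the paper's treatment of the neighbouring case, but you extract the wrong curve, and this is where the argument loses all its traction. The paper extracts $G$, the $(-2)$-\emph{tip of the twig} $[2,3]$, for which $\cf(G)=\tfrac25<\tfrac12$; you extract $T=T_3=[2]$, for which $\cf(T)=\tfrac12$. With your choice, Lemma \ref{lem:cf}\ref{item:cf-A} only gives $\sum_{C\neq T}\cf(C)\,C\cdot A<\tfrac12$, and this does \emph{not} force $A$ to ``meet exactly one $(-2)$-curve $I\subseteq D-D_0$'': components of $D-D_0$ sitting in du Val configurations have coefficient $0$, so the coefficient bound places no restriction on how many of them $A$ meets, and a $(-3)$-curve (coefficient $\geq\tfrac13$) is not excluded either; negative definiteness of $A+D-R-T$ alone does not close these cases (nor the case $A\cdot C_1'=1$, which is consistent with it). In fact your claim is contradicted by the answer you are trying to reach: in the configuration of \ref{prop:MT_smooth}\ref{item:MT-exc-2} the $(-1)$-curve dual to $T$ meets \emph{two} $(-2)$-curves of $D-D_0$, namely the isolated $[2]$ and the $2$-section $\ub{2}$. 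Consequently the subsequent step, where you ``impose $F\cdot R=1$, $F\cdot H=2$'' to pin down the fibers, presupposes the existence and position of the $2$-section $H$ --- i.e.\ it assumes the shape of $D-D_0$, which is precisely what has to be proved. The paper determines $D-D_0$ by a step that has no counterpart in your argument: it contracts $D-T_2$ to a rank-one surface $\hat X$ whose boundary is a genus-two curve, deduces $K_{\hat X}=-\tfrac57\hat D$, and from $\hat A\cdot\hat D=2$ extracts the numerical constraint $\cf(V)=\tfrac47$ on the component $V$ of $D-D_0$ met by the dual curve of $G$; only then is the fibration of \ref{item:MT-exc-2} assembled.

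Two further gaps: in case \ref{lem:dual}\ref{item:A2=0} you only treat $A\cdot R=1$ (where Lemma \ref{lem:Sigma-0} already gives an immediate contradiction, since the second $1$-section would be $T\subseteq D_0$ rather than a branching component of $D-D_0$ --- no ``comparison of discriminants'' is needed), but you do not address $A\cdot R=0$, where $R$ is vertical and $T$ is a $2$- or $3$-section. And you openly leave the Platonic subcase unresolved; the mechanism you sketch for excluding it (comparing fiber shapes ``with the minimal log resolution assumption'') does not work, since a Platonic columnar fiber $[2,3,1,2,3]$ with sections $[1]$ and $[2]$ is perfectly compatible with $(X,D)$ being a minimal log resolution. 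As written, the proposal establishes neither the exclusion of the alternatives nor the uniqueness of the configuration, so it does not prove the lemma.
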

\begin{proof}
	Let $G$ be the $(-2)$-tip of the twig $T_1=[2,3]$, so $\cf(G)=\frac{2}{5}$. Let $A$ be the proper transform of the dual curve of $G$, see Lemma \ref{lem:dual}. Suppose $A^2=0$, so \ref{lem:dual}\ref{item:A2=0} holds. Consider the $\P^{1}$-fibration induced by $|A|$. Since $D_0-G$ is not negative semi-definite, $R$ is horizontal, so $D\hor=R+G$. We have $0>\pi(A)\cdot K_{\bar{X}}=A\cdot (K_{X}+R+\frac{2}{5}G)$, so $A\cdot (R+\frac{2}{5}G)<2$, which implies that $R$ is a $1$-section and $G$ is a $2$-section, as it cannot be a $1$-section by Lemma \ref{lem:Sigma-0}. Hence the components of $D_0$ meeting $R$ lie in different fibers, and have multiplicity $1$ in each. Lemma \ref{lem:delPezzo_fibrations}\ref{item:Sigma} implies that at least two of these fibers have only one $(-1)$-curve each. In particular, there is a fiber $F$ with a unique $(-1)$-curve, say $A$, meeting $R$ in a $(-3)$-curve, say $C$. Such a $(-1)$-curve $A$ has multiplicity at least $3$ in $F$, so it does not meet the $2$-section $G$. Thus $2=F\cdot G=C\cdot G\leq 1$, a contradiction.
	
	Thus by Lemma \ref{lem:dual} $A$ is a $(-1)$-curve and $A+D-R-G$ is negative definite. In particular, $A$ meets $T_1$.
	
	 If $A$ meets the $(-3)$-curve $C_1=T_1-G$ then since $\cf(C_1)=\frac{4}{5}$, Lemma \ref{lem:cf}\ref{item:cf-A} implies that $A\cdot C_1=1$, $A\cdot (D_0-C_1)=0$, and $A$ meets $D-D_0$ in $(-2)$-curves only: indeed if a component of $D$ has self-intersection at most $-3$, then by Lemmas \ref{lem:Alexeev} and \ref{lem:ld_formulas}\ref{item:ld_chain} its coefficient is at least $\frac{1}{3}$, which is bigger than $1-\cf(C_1)$. Since $A+D-R-G$ is negative definite, $A$ meets at most one such $(-2)$-curve, say $V$, and $V$ does not meet any $(-2)$-curves in $D$. On the other hand, since $A+D-R$ is not negative definite, $V$ meets a component of $D$, so by Lemma \ref{lem:Alexeev} $\cf(V)$ equals tat least the coefficient of the $(-2)$-tip of the chain $[2,3]$, which equals $\frac{1}{5}$ by Lemma \ref{lem:ld_formulas}\ref{item:ld_chain}. Thus $\cf(C_1)+\cf(V)\geq 1$, contrary to Lemma \ref{lem:cf}\ref{item:cf-A}.
	
	Thus $A$ meets $D_0$ only in $G$, and $A\cdot G\leq 2$. Suppose $A\cdot G=2$. Let $\phi$ be the contraction of $A+R-T_{1}$, followed by a blowup at the image of $C_1\cap G$. Then the proper transforms of $G$ and $C_1$ are disjoint curves of self-intersection $1$ and $2$, contrary to the Hodge index theorem.
	
	Thus $A\cdot G=1$. Since $A+D-R$ is not negative definite, $A$ meets some component $V$ of $D-D_0$. If $V=[2]$ then $A\cdot V=1$ because $A+D-R-G$ is negative definite, so $|G+2A+V|$ induces a $\P^{1}$-fibration such that $C_1$ is a $1$-section and meets a vertical chain $D-T_1=[2,1,3]$ in the middle component: this is impossible. Thus $V^2\leq -3$. Lemmas \ref{lem:Alexeev} and \ref{lem:ld_formulas}\ref{item:ld_chain} give $\cf(V)\geq 1-\frac{2}{-V^2}\geq \frac{1}{3}$. Since by Lemma \ref{lem:cf}\ref{item:cf-A} we have $\sum_{V}\cf(V)V\cdot A_1<1-\cf(G)=\frac{3}{5}$, such $V$ is unique, meets $A$ normally and has self-intersection $-3$.
	
	Let $(X,D)\to (\hat{X}, \hat{D})$ be the contraction of $D-T_2$. Then $\hat{D}$, the image of $T_2$, satisfies $p_{a}(\hat{D})=2$, $\hat{D}^2=T_{2}^2+10=7$, and is contained in the smooth part of $\hat{X}$. Since $\rho(\hat{X})=1$, the equality $\hat{D}\cdot(K_{\hat{X}}+\hat{D})=2$ implies that $K_{\hat{X}}=-\frac{5}{7}\hat{D}$. The image $\hat{A}$ meets $\hat{D}$ only in the cusp, with multiplicity $2$, so $-\frac{10}{7}=\hat{A}\cdot K_{\hat{X}}=A\cdot (K_{X}-G+\cf(V)V)=-2+\cf(V)$, and therefore $\cf(V)=\frac{4}{7}$. 
	
	The chain $A+D_0-T_2=[1,2,3,1,2]$ supports a fiber $F_0$ of a $\P^{1}$-fibration of $X$ such that $D\hor$ consists of a $1$-section $V$ and and a $2$-section $T_2$. By Lemma \ref{lem:delPezzo_fibrations}\ref{item:Sigma} every degenerate fiber $F\neq F_0$ meets the $2$-section $T_2$ in its unique $(-1)$-curve, which therefore has multiplicity $2$ in $F$. It follows that $F$ is supported on a chain $[2,1,2]$ or $\langle 2;[2],[2],[1,2,\dots, 2]\rangle$, and meets $V$ in a $(-2)$-tip. As a consequence, the connected component $D_{V}$ of $D$ containing $V$ is a sum of $V$ and chains $[2]$, $[2,2,2]$ or forks $\langle 2;[2],[2],[(2)_{k}]\rangle$ meeting $V$ in a tip (in the latter case, this tip lies in the short twig). Using Lemmas \ref{lem:Alexeev} and \ref{lem:ld_formulas} we check directly that the equality $\cf(V)=\frac{4}{7}$ occurs only if $D_{V}=[2,3,2,2,2]$, so there are two degenerate fibers other than $F$, supported on $[2,1,2]$ and $\langle 2;[1,2],[2],[2]\rangle$. Thus the type of $D$ is as in \ref{prop:MT_smooth}\ref{item:MT-exc-2}, and the $(-1)$-curves corresponding to decorations $\dec{3}$, $\dec{1}$ are $A$ and the $(-1)$-curve in the last fiber, call it $A'$. To see the $\P^{1}$-fibration as in \ref{prop:MT_smooth}\ref{item:MT-exc-2}, let $U$, $H$ be the $(-2)$-curves meeting $V$, where $U$ is a tip of $D$. Then the chain $T_1+A+V+U=[3,2,1,3,2]$ supports a fiber of a $\P^1$-fibration such that $D\hor$ consists of a $1$-section $R$ and $2$-section $H$; and the chain $T_2+A'+(D_V-H-V-U)=[3,1,2,2]$  supports another fiber. By Lemma \ref{lem:delPezzo_fibrations}\ref{item:Sigma} the remaining part of $D$, which  consists of two disjoint $(-2)$-curves, lies in one fiber, supported on $[2,1,2]$, as needed. Note that $\cha\kk=2$, since the restriction of this $\P^{1}$-fibration to $H$ is a morphism $\P^1\to \P^1$ of degree $2$, ramified at three points. Example \ref{ex:exception} shows that $\height(X,D)=3$.
\end{proof}

\begin{lemma}
	\label{lem:relative}
	Assume $D_0=\langle 1;[k],[3],[2]\rangle$ for some $k\in \{3,4,5\}$. Then $(X,D)$ is either a Platonic $\A^{1}_{*}$-fiber space as in \ref{prop:MT_smooth}\ref{item:MT-Platonic}, or $\cha\kk\in \{2,3,5\}$ and $(X,D)$ is as in Proposition \ref{prop:MT_smooth}\ref{item:MT-eliptic-relative}.
\end{lemma}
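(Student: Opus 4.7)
The plan is to proceed as in Lemmas~\ref{lem:R0}--\ref{lem:exc-2}, by applying Lemma~\ref{lem:dual} to $T_2 = [3]$. Let $A$ denote the proper transform on $X$ of the resulting dual curve, so $A$ is either a $0$-curve or a $(-1)$-curve. The aim is to deduce, by case analysis, either a Platonic fiber space via Lemma~\ref{lem:Sigma-0}, or that $D - D_0$ consists solely of $(-2)$-chains and $(-2)$-forks, which then allows one to identify $(X, D)$ with the minimal log resolution of a canonical log del Pezzo $(\bar{Y}, \bar{T})$ as in Proposition~\ref{prop:canonical}.

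First suppose $A = [0]$. The $\P^1$-fibration $|A|$ makes $T_1$, $T$ and $D - D_0$ vertical while $T_2$ is horizontal. Using $\cf(T_2) = \tfrac{2}{3}$ from Lemma~\ref{lem:cf} together with $0 > A \cdot \pi^{*}(K_{\bar{X}} + \bar{D}) = -2 + A \cdot R + \tfrac{2}{3}\, A \cdot T_2$ yields $A \cdot R + \tfrac{2}{3}\, A \cdot T_2 < 2$. The only solution with $R$ horizontal is $A \cdot R = A \cdot T_2 = 1$, so that $D\hor = R + T_2$ consists of two $1$-sections and Lemma~\ref{lem:Sigma-0} yields the Platonic case. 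The remaining subcases $A \cdot R = 0$, $A \cdot T_2 \in \{1,2\}$ force $R$ to be vertical, and then the fiber $F_R$ containing $R$ must also contain $T_1 + R + T$; the constraints $F_R \cdot R = F_R \cdot T_1 = F_R \cdot T = 0$ combined with $F_R \cdot T_2 \in \{1,2\}$ should then yield a divisibility obstruction of the form $k \mid 1$, ruling these subcases out.

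Next suppose $A$ is a $(-1)$-curve with $A + D - R - T_2$ negative definite. By Lemma~\ref{lem:cf}\ref{item:cf-A} we have $A \cdot T_2 = 1$ and $\sum_{C \neq T_2} \cf(C)\, C \cdot A < \tfrac{1}{3}$. The core technical step is to rule out components $V \subseteq D - D_0$ with $V^2 \leq -3$: any such $V$ has $\cf(V) \geq \tfrac{1}{3}$ by Lemmas~\ref{lem:Alexeev} and~\ref{lem:ld_formulas}\ref{item:ld_chain}, so the bound forces $A \cdot V = 0$. One then needs to show that the negative definiteness of $A + D - R - T_2$ together with the admissibility constraints on $D - D_0$ leaves no room for such $V$ (possibly applying Lemma~\ref{lem:dual} to a second component to obtain another dual $(-1)$-curve and a contradicting intersection pattern). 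This case analysis, tracking the possible intersections of $A$ with $T_1$, $T$ and $D - D_0$, is the main obstacle.

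Once $D - D_0$ is known to consist only of $(-2)$-chains and $(-2)$-forks, I construct the contraction $\mu \colon X \to \bar{Y}$ of $D - T_1$ in two stages. First, the iterated blowdowns of $R$, $T$, $T_2$ are valid because $\langle 1; [k], [3], [2]\rangle$ is precisely the minimal embedded resolution of a simple cusp $y^2 = x^3$; this produces a smooth surface on which the image of $T_1$ is a cuspidal rational curve of arithmetic genus one and self-intersection $6 - k > 0$. Second, contracting the $(-2)$-components of $D - D_0$ yields canonical (du Val) singularities, giving $\bar{Y}$, and $\bar{T} = \mu(T_1)$ lies in $\bar{Y}\reg$ since $T_1$ is disjoint from $D - D_0$. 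A Picard-rank count via Lemma~\ref{lem:Miy_computation}\ref{item:Miy-n} gives $\rho(\bar{Y}) = 1$, Lemma~\ref{lem:T-in-smooth-locus}\ref{item:elliptic} yields $\bar{Y}$ canonical del Pezzo with $\bar{T} \in |-K_{\bar{Y}}|$, and by snc-minimality $(X, D)$ is the minimal log resolution of $(\bar{Y}, \bar{T})$. Finally, Proposition~\ref{prop:canonical} together with Table~\ref{table:canonical} restricts the singularity types of $\bar{Y}$ admitting a cuspidal anti-canonical member in $\bar{Y}\reg$ to the entries marked $\rC$ or $\rC_d$, all of which require $\cha\kk \in \{2,3,5\}$, placing $(X, D)$ in case~\ref{prop:MT_smooth}\ref{item:MT-eliptic-relative}.
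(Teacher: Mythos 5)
There is a genuine gap at the final step, where you conclude $\cha\kk\in\{2,3,5\}$. You assert that Proposition \ref{prop:canonical} together with Table \ref{table:canonical} restricts the singularity types of $\bar{Y}$ admitting a cuspidal anti-canonical member of $\bar{Y}\reg$ to entries forcing $\cha\kk\in\{2,3,5\}$; this is false. The first row of Table \ref{table:canonical} (types $\rA_1+\rA_2$, $\rA_4$, $\rD_5$, $\rE_6$, $\rE_7$, $\rE_8$) carries the entry $\rC$ in \emph{every} characteristic. Your Picard-rank count ($\bar{T}^2=6-k\leq 3$, hence $\#D_Y=k+3\geq 6$) eliminates $\rA_1+\rA_2$, $\rA_4$, $\rD_5$, but leaves type $\rE_{k+3}$ alive for each $k\in\{3,4,5\}$, so the conclusion $\cha\kk\in\{2,3,5\}$ does not follow. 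This surviving case is not vacuous and has to be routed into the \emph{other} branch of the statement: when $\bar{Y}$ is of type $\rE_{k+3}$, one uses the $(-1)$-curve $A_Y$ on its minimal resolution meeting the exceptional divisor exactly once, in the first tip of the longest twig $V_Y=[(2)_{k-1}]$; by adjunction it meets the proper transform of $\bar{T}$ once, so on $X$ the chain $V+A+T_1=[(2)_{k-1},1,k]$ supports a fiber of a $\P^1$-fibration whose horizontal boundary consists of two $1$-sections, and Lemma \ref{lem:delPezzo_fibrations} forces all degenerate fibers to be columnar, making $(X,D)$ a Platonic $\A^{1}_{*}$-fiber space. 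Without this step the proof is incomplete, and moreover the dichotomy you set up (Platonic versus ``$D-D_0$ consists of $(-2)$-chains and forks'') is not the one that actually governs the lemma.

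Separately, your preliminary dual-curve analysis applied to $T_2=[3]$ is both incomplete and unnecessary. You acknowledge that the ``core technical step'' of ruling out components $V\subseteq D-D_0$ with $V^2\leq -3$ is ``the main obstacle'' and you do not carry it out; but one does not need to know anything about $D-D_0$ before contracting. The contraction of $D-T_1$ can be performed at once on the normal-surface level: it yields a rank-one surface $\bar{Y}$ with a cuspidal curve $\bar{T}\subseteq\bar{Y}\reg$ of arithmetic genus one, and Lemma \ref{lem:T-in-smooth-locus}\ref{item:elliptic} then delivers, as a \emph{consequence}, that $\bar{Y}$ is canonical (its proof derives $B^{\#}=0$ from $\bar{T}\in|-K_{\bar{Y}}|$ and $|K_Y+T|\neq\emptyset$), hence that $D-D_0$ consists of $(-2)$-curves. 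Dropping the first two paragraphs and repairing the $\rE_{k+3}$ case as above would give a correct proof along the paper's lines.
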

\begin{proof}
	Let $(X,D)\to (\bar{Y},\bar{T})$ be the contraction of $D-T_1$. Then $\rho(\bar{Y})=1$ and $\bar{T}\subseteq \bar{Y}\reg$ is a cuspidal curve with $p_{a}(\bar{T})=1$, so by Lemma \ref{lem:T-in-smooth-locus}\ref{item:elliptic} $\bar{Y}$ is canonical del Pezzo surface of rank one. We have $K_{\bar{Y}}^2=\bar{T}^2=T_{1}^2+6\leq 3$, so by Noether's formula the minimal resolution of $\bar{Y}$ has Picard rank $10-K_{\bar{Y}}^2\geq 7$. Proposition \ref{prop:canonical} shows that $\bar{Y}$ is either of one of the types listed in \ref{prop:MT_smooth}\ref{item:MT-eliptic-relative}, or of type $\rE_{k+3}$. In the first case, using the computation of $\height(X,D)$ given in Proposition \ref{prop:height}, we conclude that $(X,D)$ is as in Proposition \ref{prop:MT_smooth}\ref{item:MT-eliptic-relative}, as needed.
	
	Consider the latter case. Then on the minimal resolution of $\bar{Y}$ there is a $(-1)$-curve $A_{Y}$ meeting the exceptional divisor only once, in the first tip of the longest twig $V_{Y}=[(2)_{k-1}]$, see Figures \ref{fig:E6}--\ref{fig:E8}: this easy  fact can be read off from the structure of a witnessing $\P^{1}$-fibration, see \cite[Remark 4.5]{PaPe_ht_2}. By adjunction, $A_{Y}$ meets the proper transform of $\bar{T}$ once, too. Let $A,V$ be the proper transforms of $A_Y$, $V_Y$ on $X$. Then $V+A+T_1=[(2)_{k-1},1,k]$ supports a fiber of a $\P^{1}$-fibration such that $D\hor$ consists of two $1$-sections, namely $R$ and the branching component of $D-D_0$. It follows from Lemma \ref{lem:delPezzo_fibrations} that all degenerate fibers are columnar, see Lemma \ref{lem:degenerate_fibers}\ref{item:columnar}; so $(X,D)$ is a Platonic $\A^{1}_{*}$-fiber space as in \ref{prop:MT_smooth}\ref{item:MT-Platonic}.
\end{proof}

\begin{remark}[Comparison with {\cite[\sec II.5]{Miyan-OpenSurf}}]\label{rem:correcing-Miy}
	The above proof of Proposition \ref{prop:MT_smooth} in case $D_0$ is a fork essentially follows the steps of original argument \cite{Miy_Tsu-opendP}, which assumes $\cha\kk=0$. It is explained in \cite[\sec II.5]{Miyan-OpenSurf}. We now sketch the comparison between loc.\ cit.\ and the proof above.
	
	Like the proof of Lemma \ref{lem:R0} above, loc.\ cit.\ begins with extracting the twig $T=[2]$ and playing a two-ray game. The argument then splits into two cases depending on whether the other ray is spanned by $R$, or not. 
	
	In each of these cases, Theorems 5.6.1  and 5.4.1 show that $X$ admits a \emph{permissible pencil}, that is, a $\P^{1}$-fibration of $X$ such that the three twigs of $D_0$ lie in different fibers, and the one containing $T$ is supported on a chain $[2,1,2]$, see \sec II.5.2 loc.\ cit. The assumption $\cha\kk=0$ is used exactly once, in the proof of Lemma 5.5.6 loc.\ cit. In this lemma, we have $D_{0}=\langle 1;[2],T_2,[2]\rangle$ for some admissible chain $T_2$. Like in Lemma \ref{lem:-2-tips} above, loc.\ cit.\ considers a $\P^1$-fibration $p\colon X\to \P^1$ with a degenerate fiber supported on $D_0-T_2=[2,1,2]$. Then $D\hor=T$ is a $2$-section. By Lemma \ref{lem:delPezzo_fibrations}\ref{item:Sigma} every degenerate fiber has a unique $(-1)$-curve, so it meets $T$ in a ramification point of $p|_{T}$. Now, the assumption $\cha\kk=0$ is used to deduce that there are at most two such fibers (inequality $t\leq 1$ at the bottom of p.\ 165 loc.\ cit). This is true whenever $\cha\kk\neq 2$, but if $\cha \kk=2$ one gets new possibilities having at least three degenerate fibers, see Proposition \ref{prop:MT_smooth}\ref{item:MT-ht=2-cha=2} and Lemma \ref{lem:-2-tips}.
	
	Next, \sec II.5.2 loc.\ cit.\ shows that, if $\cha\kk=0$, any permissible pencil $\Lambda$ satisfies the assumptions of Lemma \ref{lem:Sigma-0}, so $(X,D)$ is a Platonic $\A^{1}_{*}$-fiber space as in  \ref{prop:MT_smooth}\ref{item:MT-Platonic}. This is done by constructing morphisms $X\to\P^{2}$ which map $\Lambda$ to a pencil of lines, and a multi-section in $D$ to a curve $\cc$, tangent to the image of each  degenerate member of $\Lambda$. The existence of such a curve $\cc$ is then obstructed by Lemma 5.2.5 loc.\ cit., which holds whenever $\cha\kk\nmid \deg\cc$.  However, if $\cha\kk\in \{2,3,5\}$ then the resulting $\cc$ may exist: this leads to  exceptions \ref{prop:MT_smooth}\ref{item:MT-ht=2-cha=2}--\ref{item:MT-eliptic-relative}. 
\end{remark}

\section{Elliptic fibrations and planar cubics}

We now move on to the proof of Proposition \ref{prop:canonical}. As a preliminary step, we set up a tool to produce elliptic fibrations, which will be useful in our forthcoming articles, too. 

Abusing the terminology slightly, we say that a curve $C$ on a smooth projective surface $X$ is \emph{elliptic} if $p_{a}(C)=1$. A \emph{quasi-elliptic fibration} is a morphism $X\to B$ whose general fiber is elliptic; an \emph{elliptic fibration} is a quasi-elliptic fibration whose general fiber is smooth. We note that given an smooth curve $C$ on $X$ with $C^2=0$, the linear system $|C|$ induces a $\P^1$-fibration of $X$ in case $C$ is rational. In case $C$ is elliptic an analogous statement is false, as the linear system may consist only of $C$. We will use the following refinement to construct quasi-elliptic fibrations from certain arrangements of curves. An explicit example is given in Remark  \ref{rem:elliptic_reducible} below. 

\begin{lemma}[Producing elliptic fibrations]
	\label{lem:elliptic}
	Let $X$ be a smooth rational projective surface. Let $E$ be a connected effective divisor on $X$ such that $p_a(E)=1$. Assume that $E$ is irreducible, or that the linear system $|E|$ has no fixed components. Then the following hold.
	\begin{enumerate}
		\item\label{item:elliptic=1} Assume that $E^{2}=1$. Then there is a point $p\in E\reg$ such that after blowing up at $p$, the proper transform of $E$ becomes a fiber of an elliptic or a quasi-elliptic fibration over $\P^1$, which admits a section.
		\item\label{item:elliptic>1} Put $k=E^2$ and $k_{\epsilon}=k-\epsilon$, where $\epsilon=1$ if $\cha\kk=2$ and $E$ is either a rational cuspidal curve, or a sum of two smooth rational curves tangent to each other with multiplicity $2$; and $\epsilon=0$ otherwise.
		
		Assume that $k_{\epsilon}\geq 2$. Fix an open dense subset $U\subseteq E\reg$ and $k_{\epsilon}-2$ points $p_{1},\dots,p_{k_\epsilon-2}\in U$. Then there is a nonempty open subset $V\subseteq U$, which is dense unless $\cha\kk=2$, and has the following property. For any $1+\epsilon$ points $p_{k_\epsilon-1},\dots,p_{k-1}\in V$ there is a point $p_{k}\in U$ such that the points $p_1,\dots,p_{k}$ are pairwise distinct, and after blowing up at $p_1,\dots,p_k$, the proper transform of $E$ is a fiber as in \ref{item:elliptic=1}.
	\end{enumerate}
\end{lemma}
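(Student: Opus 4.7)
My plan is to prove part (a) first, then reduce part (b) to part (a) by induction on $k$.

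For part (a), the central step is to show that $h^0(X,\cO_X(E)) \geq 2$. Adjunction gives $K_X\cdot E = 2p_a(E)-2-E^2 = -1$, so Riemann--Roch on the rational surface $X$ yields $\chi(\cO_X(E)) = 1 + \tfrac{1}{2}(E^2 - K_X\cdot E) = 2$. To kill the higher cohomology I would argue that $h^2(E) = h^0(K_X - E) = 0$: an effective divisor $D \sim K_X - E$ would satisfy $D\cdot E = K_X\cdot E - E^2 = -2$, forcing a negative intersection between $E$ and some component of $D$, which under each of the two alternative hypotheses of the lemma (irreducibility of $E$, or absence of fixed components in $|E|$) leads to a contradiction with $E^2 = 1 > 0$. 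Hence $h^0(E) \geq 2$. Picking $E' \in |E|$ distinct from $E$, the intersection number $E\cdot E' = 1$ forces $E$ and $E'$ to meet transversely at a single point $p$, which must lie in $E_{\mathrm{reg}}$. Blowing up at $p$, the proper transforms satisfy $\tilde E \cdot \tilde E' = 0$ and $\tilde E^2 = 0$, so the pencil they generate is base-point free, inducing a morphism $\tilde X \to \P^1$ whose general fiber has arithmetic genus one; by Bertini this fiber is either smooth or is a moving singular curve of genus one, so the fibration is elliptic or quasi-elliptic. The exceptional $(-1)$-curve satisfies $E_{\exp}\cdot \tilde E = 1$, giving the desired section.

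For part (b), I would iterate the Riemann--Roch argument: with $K_X\cdot E = -k$ one gets $\chi(\cO_X(E)) = k+1$, and the same vanishing argument gives $h^0(E) \geq k+1$. Blowing up successively at points $p_1,\dots,p_j$ on (the proper transforms of) $E$ reduces the self-intersection by $j$ and drops $h^0$ by at most $j$. Choosing the free points $p_1,\dots,p_{k_\epsilon - 2} \in U$ first, then picking the next $1+\epsilon$ points in an open subset $V \subseteq U$ chosen so that passage through each of them imposes an independent linear condition on $|E|$, the proper transform $E_{k-1}$ of $E$ after $k-1$ blowups has self-intersection $1$ and $h^0 \geq 2$. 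Part (a) then supplies the final point $p_k \in U$ (avoiding the finitely many previously chosen points and possibly a proper closed subset coming from part (a)'s Bertini step), and the resulting proper transform is a fiber of the desired form.

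The main obstacle will be defining the open subset $V$ and justifying the case distinction in characteristic $2$. The issue is that the smooth locus $E_{\mathrm{reg}}$ of a rational cuspidal curve is $\G_a$, and the similar additive structure on the smooth locus of a pair of bitangent rational curves interacts with the Frobenius in characteristic $2$: the Abel--Jacobi-type map sending a $(k-1)$-tuple of points to the residual member of the pencil in $|E|$ fails to be separable, so imposing a further condition fails to drop dimension generically, and $V$ can only be guaranteed nonempty rather than dense. In all other cases the map is separable, so a Zariski-open $V \subseteq U$ works. I would make this precise by an infinitesimal computation on the relative Picard scheme of $E$ (or rather its normalization), distinguishing the smooth/nodal/cuspidal/bitangent cases and verifying directly the $\epsilon = 1$ adjustment in the two exceptional configurations listed.
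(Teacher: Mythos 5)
Your part (a) is correct and is essentially the paper's own argument: Riemann--Roch on the rational surface gives $\dim|E|\geq 1$, a member sharing no component with $E$ meets it transversally in a single smooth point, and blowing up there produces the pencil, with the exceptional curve as a section. Two small imprecisions: you must take a \emph{general} member of $|E|$ (a member merely distinct from $E$ may still share components with a reducible $E$), and connectedness of the general fiber is not a Bertini statement --- the paper gets it by noting that the fiber supported on the proper transform of $E$ is connected, so the fibration is its own Stein factorization.

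For part (b) there is a genuine gap. After $p_1,\dots,p_{k-1}$ are imposed, the residual linear system on (the proper transform of) $E$ is a pencil with a single remaining base point on $E$: the last point $p_k$ is essentially \emph{forced}, and cannot be ``chosen avoiding a proper closed subset'' as your parenthetical suggests. The entire content of the statement is therefore to constrain $p_{k-1}$ --- this is what $V$ is for --- so that the forced residual point is an honest point of $E$ distinct from $p_{k-1}$ (rather than infinitely near, which happens exactly when every member of the pencil through $p_{k-1}$ is tangent to $E$ there) and lies in $U$. The paper handles this by introducing the residual-point map $s\colon E\reg\to E\reg$, proving it is \emph{injective} (two points with the same residual point would exhibit $E$ as a $1$-section of a fibration, contradicting $p_a(E)=1$; injectivity is also what forces $s(q)\in U$ off a finite set), and showing that the fixed locus $\{s(q)=q\}$ is finite on a multiplicity-one component $C$ of $E$ unless the induced degree-two map $C\to\P^1$ is totally ramified --- which forces $\cha\kk=2$, $C$ rational, and tangent branches of $E$ at every singular point on $C$, i.e.\ precisely the two configurations with $\epsilon=1$. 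Your proposal correctly names the underlying mechanism ($\G_{a}$ versus $\G_{m}$, inseparability in characteristic $2$), but defers all of the above to an unexecuted ``infinitesimal computation on the relative Picard scheme''. As sketched, that route does not address the reducible case (where one must work on a component of multiplicity one and the generalized Jacobian is less tractable), does not show that the two listed configurations are the \emph{only} exceptions, and does not explain why a single extra blowup repairs the $\epsilon=1$ case (the paper: the locus where \emph{both} residual points coincide with $q$ is finite, since otherwise a totally ramified degree-three cover would force $\cha\kk=3$). These are the substantive steps of the proof, and they are missing.
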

\begin{proof}
	\ref{item:elliptic=1} Since the surface $X$ is rational, the Riemann--Roch theorem gives $\dim |E|\geq \frac{1}{2}(E-K)\cdot E=E^2-p_{a}(E)+1=1$. In particular, if $E$ is irreducible then $E$ cannot be a fixed component of $|E|$. Thus in any case, the linear system $|E|$ has no fixed components, and $\dim |E|\geq 1$.
	
	It follows that $|E|$ has a member $E_0$ which contains no components of $E$. Because $E^2=E\cdot E_0=1$, this member meets $E$ exactly once. Let $\phi\colon X'\to X$ be a blowup at the point $E_0\cap E$. The pencil generated by the proper transforms $\phi^{-1}_{*}E$ and $\phi^{-1}_{*}E_{0}$ provides a morphism $\pi\colon X'\to \P^1$, whose fibers have arithmetic genus one. The fiber $\phi^{-1}_{*}E$ of $\pi$ is connected since $E$ is. It follows that $\pi$ is its own Stein factorization, i.e.\ all fibers of $\pi$ are connected. Thus $\pi$ is a (quasi-)elliptic fibration. The curve $\Exc\phi$ is its rational section.
	
	\ref{item:elliptic>1} 	We can assume that $E^{2}=2+\epsilon$. Indeed, in order to reduce to this case we blow up at $p_1,\dots, p_{k-r}$, and then replace $E$ by its proper transform, and $U$ by the preimage of $U\setminus \{p_1,\dots,p_{k-r}\}$.
	
	We consider first the case $\epsilon=0$; the other case is treated in an analogous way below. Then $E^2=2$. We define a (set-theoretic) mapping $s\colon E\reg\to E\reg$, as follows. Blowing up at $q$ and applying \ref{item:elliptic=1} to the proper transform of $E$, we see that there is a member $E_{q}\in |E|\setminus \{E\}$ passing through $q$. If all such members are tangent to $E$ at $q$, we put $s(q)=q$, otherwise we choose one which is not, and write $E_q\cap E=\{q,s(q)\}$. 
	
	We claim that $s$ is injective. Suppose that for different $q_1,q_2\in E\reg$ we have $s(q_1)=s(q_2)$. Let $X'\to X$ be a blowup at $s(q_1)$, let $E'$ be the proper transform of $E$, and let $H$ be the exceptional curve. Fix $p\in H$. For $i\in \{1,2\}$, the pencil generated by $E'$ and $E_{q_i}$ as above has a member passing through $p$, call it $E_{i}'$. After blowing up at $p$, the pencil generated by the proper transforms of $E_{1}'$ and $E_{2}'$ induces a fibration over $\P^1$, such that the proper transform of $E$ is a $1$-section. Thus $E\cong \P^1$, a contradiction.
	
	Put $Z=\{q\in E\reg: s(q)=q\}$. We claim that $U\setminus Z$ contains a nonempty, open subset $V'$ of $E\reg$, which is dense in $E\reg$ unless $\cha\kk= 2$. Suppose that for some component $C$ of multiplicity one in $E$, the set $Z\cap C$ is infinite. Choose two points $q_1,q_2\in Z\cap C$. Then there are members $E_{i}\in |E|$ such that $(E_{i}\cdot E)_{q_{i}}=2$, $i\in \{1,2\}$. Consider a pencil $\cE\subseteq |E|$ generated by $E_{1}$, $E_{2}$. Resolving base points of $\cE$, we get a fibration which restricts to a morphism $C\to \P^1$ of degree two. By definition of $s$, this morphism is ramified at every point of the infinite set $Z\cap C$. Hence it is totally ramified. It follows that $\cha\kk=2$, and by \cite[Proposition IV.2.5]{Hartshorne_AG} the curve $C$ is rational. Moreover, if $E$ has two local analytic branches at some point $p\in C$, then they are tangent to each other: indeed, otherwise a member $E_{p}\in \cE$ passing through $p$ would satisfy $(E_{p}\cdot E)_{p}\geq 3$, which is impossible. Since $p_{a}(E)=1$, it follows that either $E=C$ and $E$ is a rational, cuspidal curve, or $C$ meets $E-C$ in one point, with multiplicity two. Since $\epsilon=0$, we can always choose a component $C$ of $E$ which is not of this type. This proves the claim.
	
	Since $s\colon E\reg\to E\reg$ is injective, we can choose a set-theoretic mapping $r\colon E\reg \to E\reg$ such that $r\circ s=\id$. Since $U\subseteq E\reg$ is dense, the set $E\reg\setminus U$ is finite. Thus $V\de V'\setminus r(E\reg \setminus U)$ is an open dense subset of $V'$. Hence $V$ is a nonempty, open subset of $E\reg$, dense unless $\cha\kk=2$. For every $q\in V$ we have $s(q)\neq q$ because $q\in V'\subseteq U\setminus Z$, and $s(q)\in U$ since $q=r(s(q))\not\in r(E\reg \setminus U)$. Thus for every $p_{1}\in V$, there is a $p_{2}\de s(p_1)\in U$ such that after blowing up at $p_1,p_2$ the proper transform of $E$ induces a fibration as needed. 
	\smallskip
	
	In case $\epsilon=1$, we define a mapping $(s_1,s_2)\colon E\reg\to E\reg\times E\reg$ in an analogous way, so that after blowing up at $q$, $s_1(q)$ and $s_{2}(q)$, the proper transform of $|E|$ induces a (quasi)-elliptic fibration; and if possible, we have $\#\{q,s_1(q),s_2(q)\}>1$. The set  $\{q\in E: q=s_1(q)=s_2(q)\}$ is finite. Indeed, otherwise a component $C$ of $E$ containing infinitely many points of $Z$ would, like before, admit a totally ramified morphism $C\to \P^1$ of degree three, so $\cha\kk=3$, which is false. Hence there is a point $q\in U$ such that a member of $|E|$ passing through $q$ meets $U\setminus \{q\}$ normally. We blow up at $q$, replace $E$ by its proper transform and $U$ by the preimage of $U\setminus \{q\}$. Then we are back in case $E^2=2$, and the choice of $q$ guarantees that the set $Z$  defined above does not contain the proper transform of the component of $E$ containing $q$. Arguing as before we get the required fibration.
\end{proof}

\begin{remark}[Necessity of additional blowups in Lemma \ref{lem:elliptic}]
	In Lemma \ref{lem:elliptic}\ref{item:elliptic>1} one cannot take $V=U$. Indeed, let $\bar{E}\subseteq \P^2$ be a smooth cubic, and fix pairwise distinct points $q_{1},\dots, q_{7},q\in \bar{E}$ such that $\sum_{i=1}^{7}q_{7}=-2q$ in the group law on $\bar{E}$. Let $\pi\colon X\to \P^2$ be a blowup at $q_{1},\dots, q_{7}$, put $E=\pi^{-1}_{*}\bar{E}$, so $E^{2}=2$. Take $U=E$. Now if $\tau$ is a blowup at  $p_1\de \pi^{-1}(q)$, the unique base point of $|\tau^{-1}_{*}E|$ is infinitely near to $q$, so we cannot choose $p_2\neq p_1$. %Alternatively, we could fix $p_1,\dots, p_{8}$ so that their sum in the group law on $\bar{E}$ does not lie in the fixed open set.
	
	The additional blowup if $\cha\kk=2$ is necessary, too. Indeed, assume $\cha\kk=2$ and let $\bar{E}\subseteq \P^2$ be a cuspidal cubic. It has an inflection point $p\in \bar{E}$; hence $\bar{E}\reg$ admits a structure of a group scheme isomorphic to $\G_{a}$, with $p$ as a neutral element. Blow up seven times over $p$, each time on the proper transform of $\bar{E}$, denote the resulting morphism by $\pi$ and put $E=\pi^{-1}_{*}\bar{E}$. As before, we have $E^2=2$. Take any $p_{1}\in E\reg$ and let $\tau$ be a blowup at $p_1$. Then as before, a unique base point of $|\tau^{-1}_{*}E|$ is infinitely near to $p_1$: indeed, if $E'\in |E|$ meets $E$ at $p_1,p_2\in E\reg$, then $\pi(p_1)+\pi(p_2)=0$ in the group law on $\bar{E}\reg\cong \G_{a}$, hence $p_1=p_2$ since $\cha\kk=2$. 
\end{remark}

\begin{remark}[A typical application of Lemma \ref{lem:elliptic}]\label{rem:elliptic_reducible}
	Let $X$ be a smooth projective surface, and let $E$ be an effective, connected divisor on $X$. Assume that there is a birational morphism $\phi\colon X\to \check{X}$ such that, putting $\check{E}\de \phi_{*}E\subseteq X$, we have $E=\phi^{*}\check{E}$ and $\check{E}$ satisfies the assumptions of Lemma \ref{lem:elliptic}; e.g.\ is a nodal curve with $p_{a}(\check{E})=1$ and $\check{E}^2\geq 3$. Then by Lemma \ref{lem:elliptic}\ref{item:elliptic>1}, after $\check{E}^2$ sufficiently general blowups on the proper transform of $\check{E}$, the proper transform of $|E|$ induces an elliptic fibration. 
	
	For example, take $E=T+2A$, where $T=[3]$, $A=[1]$, $A\cdot T=2$ and $A$ meets $T$ in two points. Then applying Lemma \ref{lem:elliptic}\ref{item:elliptic=1} to the image of $E$ after the contraction of $A$, we infer that there is a point $p\in T\setminus A$ such that after blowing up $p$, the proper transform of $|E|$ induces a (quasi)-elliptic fibration. This proper transform is $T'+2A'$, where $T'=[4]$, $A'=[1]$ and $T'\cdot A'=2$. For more general settings where such a divisor induces a (quasi-)elliptic fibration see \cite[Theorem 3.3]{Kumar-Murthy}. 
\end{remark}

In the proof of Proposition \ref{prop:canonical} we will construct a birational map $\bar{Y}\map \P^2$ such that the proper transform of $\bar{T}$ is a planar cubic, and study the latter using its group law or explicit equations, which we now recall.

\begin{lemma}[Group laws on planar cubics]\label{lem:group_law}
	Let $\qq\subseteq \P^{2}$ be a cubic curve with a chosen inflection point $p\in \qq\reg$. Then we have an isomorphism $\qq\reg\ni q\mapsto q-p\in \mathrm{Cl}^{0}(\qq)$, which endows $\qq\reg$ with a group scheme structure, with neutral element $p$. Moreover, we have $\qq\reg\cong \mathbb{G}_{m}$ if $\qq$ is nodal and $\qq\reg\cong\mathbb{G}_{a}$ if $\qq$ is cuspidal.
\end{lemma}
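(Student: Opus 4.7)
The plan is to establish an isomorphism $\qq\reg \cong \mathrm{Cl}^{0}(\qq)$ via Riemann--Roch and the Abel--Jacobi construction, then compute the resulting group scheme locally at the singularity of $\qq$.

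First, as a divisor of degree $3$ in $\P^{2}$, the curve $\qq$ is Gorenstein, and adjunction gives $\omega_{\qq} \cong \cO_{\P^2}(\qq + K_{\P^2})|_{\qq} \cong \cO_{\qq}$; in particular $p_{a}(\qq) = 1$, and on the Gorenstein locus $\qq\reg$ the groups $\mathrm{Cl}^{0}(\qq)$ and $\mathrm{Pic}^{0}(\qq)$ coincide. For any line bundle $\cL$ on $\qq$ of degree $1$, Serre duality yields $h^{1}(\cL) = h^{0}(\cL^{-1}) = 0$, so Riemann--Roch gives $h^{0}(\cL) = 1$; the unique global section vanishes on an effective Cartier divisor of degree $1$, which must be a single smooth point, for the maximal ideal at a node or cusp of $\qq$ is not principal. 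Hence $q \mapsto [\cO_{\qq}(q-p)]$ is a bijection $\varphi \colon \qq\reg \to \mathrm{Pic}^{0}(\qq)$ on $k$-points. Scheme-theoretically, $\varphi$ is the Abel--Jacobi morphism obtained from a Poincar\'e bundle on $\qq \times_{k} \mathrm{Pic}^{0}(\qq)$ rigidified along $p$; its derivative at $p$ is the connecting homomorphism $k = T_{p}\qq\reg \to H^{1}(\cO_{\qq}) = T_{e}\mathrm{Pic}^{0}(\qq)$ coming from $0 \to \cO_{\qq} \to \cO_{\qq}(p) \to \cO_{p} \to 0$, and the Riemann--Roch calculation above forces it to be an isomorphism. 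Thus $\varphi$ is \'etale at $p$; together with bijectivity on $k$-points and smoothness of both sides, this makes $\varphi$ an isomorphism of schemes. Transporting the group law of $\mathrm{Pic}^{0}(\qq)$ through $\varphi$ endows $\qq\reg$ with the required group scheme structure, whose neutral element is $\varphi^{-1}([\cO_{\qq}]) = p$.

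To identify $\mathrm{Pic}^{0}(\qq)$ explicitly, I would use the normalization $\nu \colon \tilde{\qq} \to \qq$. Since $\qq$ is singular and $p_{a}(\qq) = 1$, it is rational, so $\tilde{\qq} \cong \P^{1}$. The short exact sequence of sheaves on $\qq$
\[
1 \to \cO_{\qq}^{*} \to \nu_{*}\cO_{\tilde{\qq}}^{*} \to \cQ \to 1,
\]
with $\cQ$ the skyscraper $\tilde{\cO}_{x_{0}}^{*}/\cO_{x_{0}}^{*}$ at the singular point $x_{0}$, yields on cohomology (using $H^{0}(\cO_{\qq}^{*}) = H^{0}(\cO_{\tilde{\qq}}^{*}) = k^{*}$ and $\mathrm{Pic}(\tilde{\qq}) = \Z$) a short exact sequence $0 \to \tilde{\cO}_{x_{0}}^{*}/\cO_{x_{0}}^{*} \to \mathrm{Pic}(\qq) \to \Z \to 0$ with the last arrow being degree; hence $\mathrm{Pic}^{0}(\qq) \cong \tilde{\cO}_{x_{0}}^{*}/\cO_{x_{0}}^{*}$ as group schemes. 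For a node, $\cO_{x_{0}} \cong k[[u,v]]/(uv) \subset k[[u]] \times k[[v]] \cong \tilde{\cO}_{x_{0}}$ with $(\alpha,\beta) \in \cO_{x_{0}}^{*}$ iff $\alpha(0) = \beta(0)$, so $(\alpha,\beta) \mapsto \alpha(0)/\beta(0)$ identifies the quotient with $\G_{m}$. For a cusp, $\cO_{x_{0}} \cong k[[t^{2},t^{3}]] \subset k[[t]] \cong \tilde{\cO}_{x_{0}}$, a unit $\sum_{i \geq 0} a_{i}t^{i}$ lies in $\cO_{x_{0}}^{*}$ iff $a_{1} = 0$, and one checks $\sum a_{i}t^{i} \mapsto a_{1}/a_{0}$ is a homomorphism onto $\G_{a}$ with kernel $\cO_{x_{0}}^{*}$.

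The main technical obstacle lies in the scheme-theoretic upgrade of $\varphi$ in the first step: the set-theoretic bijection is immediate from Riemann--Roch, but realizing $\varphi$ as a morphism of schemes and verifying \'etaleness at $p$ requires invoking representability of the Picard functor together with the rigidified Poincar\'e bundle. The local computations of $\tilde{\cO}_{x_{0}}^{*}/\cO_{x_{0}}^{*}$ in the second step are routine.
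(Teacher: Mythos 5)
Your proof is correct. Note, though, that the paper does not actually prove this lemma: its ``proof'' is a pointer to \cite[Propositions III.2.2 and III.2.5]{Silverman} and to the exercises in \cite{Hartshorne_AG}, and what you have written is essentially a self-contained reconstruction of exactly that cited argument (Riemann--Roch gives the bijection of $\qq\reg$ with $\mathrm{Pic}^{0}(\qq)$, the rigidified Poincar\'e bundle upgrades it to an isomorphism of schemes, and the normalization sequence $1\to\cO_{\qq}^{*}\to\nu_{*}\cO_{\tilde{\qq}}^{*}\to\cQ\to 1$ identifies $\mathrm{Pic}^{0}$ with $\tilde{\cO}_{x_0}^{*}/\cO_{x_0}^{*}\cong\G_m$ or $\G_a$). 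All the individual steps check out: the degree-$1$ effective Cartier divisor must be a smooth point since the maximal ideal at the singularity is not principal; the connecting map $H^0(\cO_p)\to H^1(\cO_{\qq})$ is an isomorphism by the long exact sequence; and a bijective morphism of smooth curves that is \'etale at one point is birational, hence an isomorphism by Zariski's main theorem. Two small remarks. First, your argument tacitly assumes $\qq$ is integral (needed both for ``negative degree implies no sections'' and for the single-point normalization computation); this is harmless since every application in the paper is to an irreducible cubic, but it is worth saying. Second, the inflection-point hypothesis is never used in your proof --- and indeed it is not needed for the literal statement; it is there because the paper constantly uses the consequence that three collinear points of $\qq\reg$ sum to zero, which holds precisely because the inflectional tangent at $p$ cuts out the divisor $3p$, making $q_1+q_2+q_3-3p$ principal for any line section $q_1+q_2+q_3$. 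Flagging that would make the role of the hypothesis transparent.
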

\begin{proof}
	This result is proved in \cite[Propositions III.2.2 and III.2.5]{Silverman}. See also \cite[Examples II.6.10.2, II.6.11.4 and Exercises II.6.6, II.6.7]{Hartshorne_AG} for the proof in case $\cha\kk\neq 2$.
\end{proof}

\begin{lemma}[Classification of rational planar cubics]\label{lem:cubic}
	Let $\qq\subseteq \P^2$ be a cubic parametrized by $\nu\colon \P^1\to \P^2$. Then for some coordinates $[s:t]$ on $\P^1$ and $[x:y:z]$ on $\P^2$, exactly one of the following holds.
			\begin{parlist}
				\item \label{item:nodal_cubic} $\nu[s:t]=[s^2t:st^2: s^3+t^3]$ and $\qq=\{x^3+y^3=xyz\}$. \\ 
				In this case $\qq$ has a node at $\nu[0:1]=\nu[1:0]=[0:0:1]$ with tangent lines $\{x=0\}$, $\{y=0\}$. Inflection points of $\qq$ are $\nu[\zeta:-1]=[\zeta:-1:0]$ for all $\zeta$ such that $\zeta^3=1$. The  corresponding inflectional tangents are $\{3\zeta x+3\zeta^{2}y+z=0\}$. In particular, $\qq$ has exactly one inflection point if $\cha\kk=3$ and three otherwise. 
				\item \label{item:cuspidal_cubic} $\nu[s:t]=[s^3:s^2 t:t^3]$ and $\qq=\{y^3=x^2 z\}$.\\ 
				In this case $\qq$ has a cusp at  $\nu[0:1]=[0:0:1]$ with tangent line $\{x=0\}$. If $\cha\kk\neq 3$ then $\nu[1:0]=[1:0:0]$ is the unique inflection point, with tangent $\{z=0\}$. If $\cha\kk=2$ or $3$ then $\qq$ is \emph{strange}, i.e.\ there is a point $p\in \P^2$ such that for every $[s:t]\in \P^1$, the line  $\ll_{[s:t]}$ tangent to $\qq$ at $\nu[s:t]$ passes through $p$. If $\cha\kk=2$ then $p=[1:0:0]$ is the inflection point of $\qq$, and $\ll_{[s:t]}=\{s^2z=t^2y\}$. If $\cha\kk=3$ then $p=[0:1:0]$, and each line $\ll_{[s:t]}=\{s^3 z = t^3 x\}$ is an inflectional tangent.
				\item \label{item:funny_cubic} $\cha\kk=3$, $\nu[s:t]=[s^2t:s(s+t)^2:-(s+t)t^2]$ and $\qq=\{x^2y+y^2z+z^2x=0\}$. \\
				In this case $\qq$ has a cusp at $\nu[1:1]=[1:1:1]$ with tangent line $\ll_0\de \{x+y+z=0\}$ and no inflection point. For every $p\in \ll_0$ there is a unique line passing through $p$ and  tangent to $\qq$; and  for every $p_1\in \qq\reg$ there are points $p_2,p_3\in \qq\reg$ and lines $\ll_{1},\ll_2,\ll_3$ such that $(\ll_{i}\cdot \qq)_{p_{i}}=2$, $(\ll_{i}\cdot \qq)_{p_{i+1}}=1$, $i\in \{1,2,3\}$, where $p_4=p_1$.
			\end{parlist}
		\end{lemma}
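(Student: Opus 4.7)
My plan is to classify $\qq$ by its unique singularity and construct explicit parametrizations via pencils of lines through the singular point, reducing to a finite list of normal forms via a linear change of coordinates; the inflection and strange-point claims are then obtained by direct calculation, with delicate attention to characteristics $2$ and $3$. The genus-degree formula gives $p_{a}(\qq)=1$, and rationality forces $p_{g}(\qq)=0$, so $\qq$ has total $\delta$-invariant $1$, i.e.\ a unique singularity which is either an ordinary node or a cusp.

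In the nodal case, I would place the node at $[0:0:1]$ with tangent directions aligned to $\{x=0\}$, $\{y=0\}$, which forces the equation to take the form $xyz=g(x,y)$ for some cubic binary form $g$. Irreducibility, combined with the absence of further singularities, allows one to reduce $g$ to $x^{3}+y^{3}$ by a diagonal change of the coordinates $(x,y)$ followed by a rescaling of $z$, giving \ref{item:nodal_cubic}. The parametrization $\nu$ is recovered from the pencil of lines $\{sx=ty\}$ through the node, each of which meets $\qq$ at one further point. A direct Hessian computation shows $H(F)\equiv -8xyz\pmod{F}$ for $F=x^{3}+y^{3}-xyz$, so in characteristic $\neq 2$ the inflection points are cut on $\qq$ by $\{z=0\}$, giving three points $[\zeta:-1:0]$ with $\zeta^{3}=1$, collapsing to one in characteristic $3$. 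In characteristic $2$ the Hessian vanishes identically and I would instead count inflection points by expanding the pullback of the tangent line at $\nu[s_{0}:1]$ along $\P^{1}$, recovering the three points $s_{0}^{3}=1$.

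The cuspidal case is handled analogously: the cusp is placed at $[0:0:1]$ with tangent $\{x=0\}$, and the pencil of lines through the cusp furnishes the parametrization $\nu[s:t]=[s^{3}:s^{2}t:t^{3}]$, lying on $\{y^{3}=x^{2}z\}$ after normalizing $z$. In characteristic $\neq 2,3$, the Hessian calculation again isolates the unique flex $[1:0:0]$ with flex tangent $\{z=0\}$. In characteristic $2$ or $3$, I would exhibit strangeness by computing the tangent line at $\nu[s:t]$ directly as the cross product of $\nu[s:t]$ with its derivative: in $\cha\kk=2$ the derivative in an affine chart reduces to $[s^{2}:0:0]$, forcing all tangent lines to pass through $[1:0:0]$ and yielding the pencil $\{s^{2}z=t^{2}y\}$; the analogous computation in $\cha\kk=3$ yields the pencil $\{s^{3}z=t^{3}x\}$ through $[0:1:0]$, and each line in the latter pencil is an inflectional tangent because by construction it meets $\qq$ only at $\nu[s:t]$.

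The main obstacle is case \ref{item:funny_cubic}, the characteristic-$3$ exception. My idea is to exploit the identity $(s+t)^{3}=s^{3}+t^{3}$ in $\cha\kk=3$ to write down the alternative parametrization $\nu[s:t]=[s^{2}t:s(s+t)^{2}:-(s+t)t^{2}]$ and verify by substitution that its image lies on $\{x^{2}y+y^{2}z+z^{2}x=0\}$; the cusp at $\nu[1:1]=[1:1:1]$ and the cuspidal tangent $\{x+y+z=0\}$ follow from the identity $(a+b+c)^{2}=a^{2}+b^{2}+c^{2}-(ab+bc+ca)$ in characteristic $3$. The absence of inflection points is deduced from the group-law description of Lemma \ref{lem:group_law}: on a cuspidal cubic $\qq\reg\cong\G_{a}$, inflection points correspond to nonzero $3$-torsion, which vanishes in characteristic $3$. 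The claimed triangle of tangent lines is obtained by iterating the self-map $p\mapsto-2p$ on $\G_{a}$, which has order $3$ in characteristic $3$. The hard part is arguing that this cubic is genuinely not projectively equivalent to $\{y^{3}=x^{2}z\}$; this follows from the projective invariance of the inflection-point count (one versus zero), so these two cubics give distinct $\PGL_{3}$-orbits. Completeness — that every cuspidal plane cubic in characteristic $3$ falls into one of these two orbits — I would establish by classifying $3$-dimensional base-point-free linear subsystems of $H^{0}(\P^{1},\cO_{\P^{1}}(3))$ whose image has a cusp, up to the action of $\PGL_{2}\times\PGL_{3}$, treating the cases according to whether or not the image has an inflection.
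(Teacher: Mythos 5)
Your handling of the nodal case and of the cuspidal case \emph{with} a flex is essentially the paper's: place the singularity and its tangent direction(s), normalize the parametrization by diagonal and triangular coordinate changes, and read off tangency/strangeness directly from $\nu$ and its derivative. Two small cautions there: the Hessian criterion for flexes is unreliable in characteristic $3$ as well as $2$, so the inflection count for the nodal cubic in characteristic $3$ should be done by the same direct expansion of the tangent-line pullback that you propose for characteristic $2$; and in the cuspidal normalization the step reducing $f_2$ to $t^3$ requires dividing by $3$ to kill the $st^2$ term, which is exactly where case \ref{item:funny_cubic} is born and should be made explicit.

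The genuine gap is in case \ref{item:funny_cubic}, which is the real content of the lemma. First, your argument that this cubic has no inflection point is circular and, as stated, false: Lemma \ref{lem:group_law} presupposes a chosen inflection point to set up the group law, so it cannot be invoked to rule one out; and the torsion count is backwards, since in characteristic $3$ \emph{every} element of $\G_a$ is $3$-torsion (which is precisely why, on the cuspidal cubic of case \ref{item:cuspidal_cubic} in characteristic $3$, every smooth point is a flex). Your reasoning would equally ``prove'' that that curve has no flexes. Relatedly, $-2=1$ in characteristic $3$, so the tangent process $p\mapsto -2p$ is the identity, not a map of order $3$; in the absence of a flex the correct map is a nontrivial translation $p\mapsto c+p$ with $c\neq 0$. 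Second, the completeness claim --- that a cuspidal cubic without flexes occurs only in characteristic $3$ and is then unique up to projective equivalence --- is deferred to ``classifying base-point-free subsystems of $H^0(\P^1,\cO_{\P^1}(3))$ up to $\PGL_2\times\PGL_3$'', which restates the lemma rather than proving it. The paper settles this concretely: normalize the parametrization to $\nu[s:t]=[s^3:s^2t:t^2(t-s)]$ using one tangent line and one transversal line; observe that if $\cha\kk\neq 3$ the explicit line $\{x=27z+9y\}$ meets $\qq$ only at $\nu[3:1]$, so a flex exists and one is back in case \ref{item:cuspidal_cubic}; and in characteristic $3$ pass to the symmetric equation $\{x^2y+y^2z+z^2x=0\}$, whose cyclic symmetry $[x:y:z]\mapsto[z:x:y]$ yields both the absence of flexes and the triangle of tangent lines by a short direct computation. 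You need to supply an argument of this kind for your plan to close.
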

		\begin{proof} 
			This result is classical, see e.g.\ \cite{Mulay}. We sketch the argument for the convenience of the reader.
			
			 Assume fist that $\qq$ is nodal. We can choose coordinates so that the node is at $\nu[0:1]=\nu[1:0]=[0:0:1]$, with tangent lines $\{x=0\}$ and $\{y=0\}$. Then, after a diagonal change of coordinates on $\P^2$, we have $\nu[s:t]=[s^2t:st^2:p]$ for some cubic form $p\in\kk[s,t]$. After a further  triangular change of coordinates on $\P^2$, we can assume that $p=as^3+bt^3$ for some $a,b\in \kk^{*}$. After a diagonal change of coordinates on $\P^1$ and $\P^2$, we get a parametrization as in \ref{item:nodal_cubic}.
			
			Assume now that $\qq$ is cuspidal; and put that cusp at $\nu[0:1]=[0:0:1]$, with tangent line $\{x=0\}$.
			
			Consider the case when $\qq$ has an inflection point, say $p\in \qq\reg$. We can put $p$ at $\nu[1:0]=[1:0:0]$, with inflectional tangent $\{z=0\}$. Then after diagonal change of coordinates on $\P^2$, we get $\nu$ as in \ref{item:cuspidal_cubic}. 
			
			Consider the case when $\qq$ has no inflection points. Fix coordinates so that $\{z=0\}$ is tangent to $\qq$ at $\nu[1:0]=[1:0:0]$, and meets $\qq$ normally at $\nu[1:1]=[1:1:0]$. Then, after a diagonal change of coordinates, $\nu[s:t]=[s^3:s^2t:t^2(t-s)]$. If $\cha\kk\neq 3$ then the line $\{x=27z+9y\}$ meets $\qq$ only at $\nu[3:1]\in \qq\reg$, so the latter is an inflection point; contrary to our assumption. Thus $\cha\kk=3$. To get a parametrization in \ref{item:funny_cubic}, replace $\nu$ with $\psi\circ \nu \circ \phi$, where $\phi[s:t]=[t-s:s+t]$ and $\psi[x:y:z]=[x-y+z:z:y+z]$.
			\smallskip
			
			The properties of $\qq$ in cases \ref{item:nodal_cubic} and \ref{item:cuspidal_cubic} follow by a direct computation. We  now prove the last statement of \ref{item:funny_cubic}. Let $p_1=\nu[s:t]\in \qq\reg$ for some $[s:t]\in \P^1\setminus \{[1:1]\}$. The tangent line $\ll_1$ to $\qq$ at $p_1$ is given by  $\ll_{1}=\{t(s+t)^2x+st^2y=s^2(s+t)z\}$. We have $\ll_0\cap \ll_1=\{[s:-(s+t):t]\}$. This point uniquely determines $[s:t]\in \P^1$, which proves the first part. Define $\sigma\in \Aut(\P^2)$ by $\sigma[x:y:z]=[z:x:y]$, so $\sigma^3=\id$. The equation of $\qq$ shows that $\sigma$ preserves $\qq$, and the line $\ll_1$ meets $\qq$ at $p_2\de \sigma(p)$. Thus the line $\ell_2$ tangent to $\qq$ at $p_2$ meets $\qq$ at $p_3\de \sigma(p_2)$, and eventually the line $\ll_3$ tangent to $\qq$ at $p_3$ meets $\qq$ at $\sigma(p_3)=\sigma^3(p_1)=p_1$, as needed. 
		\end{proof}
		
		\begin{remark}[A cubic without a natural group law]
			Lemmas \ref{lem:group_law} and \ref{lem:cubic} show that for every irreducible planar cubic $\qq$, exactly one of the following holds.
			\begin{enumerate}
				\item The smooth locus $\qq\reg$ of $\qq$ admits a group law such that the sum of three points $p,q,r\in \qq\reg$ is zero if and only if there is a line $\ll$ meeting $\qq$ at $p,q,r$ (counted with multiplicities).
				\item We have $\cha\kk=3$, and $\qq$ is projectively equivalent to  the cuspidal curve $\{x^2y+y^2z+z^2x=0\}$ described in Lemma \ref{lem:cubic}\ref{item:funny_cubic}. 
			\end{enumerate}
		\end{remark}

\section{Classification of canonical del Pezzo surfaces of rank one}\label{sec:canonical}

We now recall the classification of canonical del Pezzo surfaces of rank one, summarized in Table \ref{table:canonical}. Current proofs build on the pioneering work of du Val \cite{duVal}, together with modern insights of, among others, Demazure \cite{Demazure}, Hidaka--Watanabe \cite{HW_canonical} and Bindschadler--Brenton--Drucker \cite{BBD_canonical}. The latter article classifies all canonical del Pezzo surfaces of rank one whose minimal resolutions have the maximal number of exceptional components (namely, $8$). A complete classification for $\kk=\C$ was given by Furushima \cite[Theorem 2]{Furushima}. We refer to \cite{Alexxev-Nikulin_delPezzo-index-2} for a full statement and a detailed  historical discussion. For a general algebraically closed field $\kk$ of characteristic other than $2,3$ or $5$, Ye \cite{Ye} proved that the classification is the same as in case $\kk=\C$. Eventually, case $\cha\kk\in \{2,3,5\}$ was recently settled by Kawakami--Nagaoka \cite{Kawakami_Nagaoka_canonical_dP-in-char>0}. Here new singularity types appear, and some already occurring in case $\cha\kk=0$ acquire additional moduli, cf.\ \cite[Table 15]{PaPe_ht_2}.

Below we provide a characteristic-independent argument leading to a classification based on the geometry of $\P^1$-fibrations of low height. We restrict our attention to canonical del Pezzo surfaces $\bar{X}$ of height at least $3$. Indeed, if $\height(\bar{X})=1$ or $2$ then the geometry of $\bar{X}$ is simple; and an elementary exercise, or an application of \cite[Theorem A]{PaPe_ht_2}, shows that $\bar{X}$ is constructed as in Remark 4.5 or Section 5A loc.\ cit. The minimal resolutions of these surfaces, together with witnessing $\P^1$-fibrations are shown in Figures \ref{fig:can_ht=2} and  \ref{fig:can_ht=1}; some of those fibrations were used e.g.\ in \cite{MZ_canonical}. In these figures solid lines represent all components of $D$ which are $(-2)$-curves, while dashed lines correspond to $(-1)$-curves. Thickening of some lines refers to a discussion in \sec \ref{sec:relatives}.

\begin{proposition}[Canonical del Pezzo surfaces of rank $1$, $\height\geq 3$, see Figure \ref{fig:can_ht=4}]\label{prop:canonical_ht>2} Let $\bar{X}$ be a canonical del Pezzo surface of Picard rank $1$. Assume $\bar{X}\not\cong \P^2$ and $\height(\bar{X})\neq 1,2$. Then $\height(\bar{X})=4$ and one of the following holds.
	\begin{enumerate}
		\item\label{item:4A2} The surface $\bar{X}$ is of type $4\rA_2$, isomorphic to the one constructed in Example \ref{ex:4A2_construction}, see Figure \ref{fig:4A2}.
		\item\label{item:8A1} We have $\cha\kk=2$. The surface $\bar{X}$ is of type $8\rA_1$, isomorphic to one of the surfaces constructed in Example \ref{ex:cha_kk=2}\ref{item:8A1-construction}, see Figure \ref{fig:8A1}. The class of such surfaces has moduli dimension $2$.
	\end{enumerate}
\end{proposition}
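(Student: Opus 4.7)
The setup is to work on the minimal resolution $\pi\colon X\to\bar X$, with $D=\Exc\pi$ a disjoint union of ADE-type $(-2)$-configurations since $\bar X$ is canonical. By Lemma~\ref{lem:rationality} the surface $X$ is rational, and Noether's formula gives $\rho(X)=\#D+1$ and $K_{\bar X}^2=9-\#D\geq 1$. Fix a $\P^1$-fibration $f\colon X\to\P^1$ witnessing $h\coloneqq\height(\bar X)\geq 3$. By Lemma~\ref{lem:delPezzo_fibrations}\ref{item:-1_curves} every component of a degenerate fiber off $D$ is a $(-1)$-curve, so by Lemma~\ref{lem:degenerate_fibers}\ref{item:F-2} each degenerate fiber has one of the three shapes $\alpha=[1,(2)_k,1]$ with $\sigma=2$, $\beta=[2,1,2]$ with $\sigma=1$, or $\gamma=\langle 2;[1,(2)_k],[2],[2]\rangle$ with $\sigma=1$. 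Letting $N_\alpha,N_\beta,N_\gamma$ count them and $C_1,\dots,C_s$ be the components of $D\hor$ with $d_i=F\cdot C_i$, Lemma~\ref{lem:delPezzo_fibrations}\ref{item:Sigma} gives $s=N_\alpha+1$ and $\sum_i d_i=h$.

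Next I would use the ADE rigidity of $D$ to localize its branching components. A component $B\subseteq D$ with $\beta_D(B)\geq 3$ --- that is, the central node of a $\rD$- or $\rE$-fork --- cannot lie in an $\alpha$- or $\beta$-fiber since these have no branching, and matching branch lengths at the central node of a $\gamma$-fiber against its three branches $[1,(2)_k],[2],[2]$ forces any vertical $\rD_n$-summand into a $\gamma$-fiber with $k=n-2$ and rules out every $\rE_n$-summand of $D$ entirely. A horizontal branching component is likewise incompatible with a $(-2)$-curve meeting three others inside $D\hor$ while respecting $\sum d_i=h$ and the pairwise disjointness of ADE summands. Combining this with the inventory in Table~\ref{table:canonical} and Figures~\ref{fig:can_ht=2}--\ref{fig:can_ht=1}, every singularity type listed with $\height\leq 2$ already admits a $\P^1$-fibration of that low height, contradicting $h\geq 3$; so only $4\rA_2$ and $8\rA_1$ can remain.

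Then I would verify $h=4$ in the two surviving cases by exhibiting explicit witnessing pencils. For $4\rA_2$ the pencil of Example~\ref{ex:4A2_construction} (pulled back from the pencil of lines through the common point of three concurrent tangents to a cuspidal plane cubic) has $N_\beta=3$ plus one additional fiber absorbing the fourth $\rA_2$ across a $(-1)$-curve, with $D\hor$ a single $3$-section, yielding $h=4$. For $8\rA_1$ the pencil of Example~\ref{ex:cha_kk=2}\ref{item:8A1-construction} has $N_\alpha=1$ and $D\hor$ two disjoint $2$-sections, giving $h=4$ again. Minimality of $h=4$ in each case follows because any witnessing fibration of smaller height would realize $(\bar X,0)$ as an $\A^1$- or $\A^1_{*}$-fibered model covered by Proposition~\ref{prop:MT_smooth}, hence force a singularity type with lower $\#D$.

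Finally, the explicit constructions pin down both characteristic and moduli. In the $4\rA_2$ case the configuration is determined by a cuspidal cubic and three distinct concurrent tangent lines, which by Lemma~\ref{lem:cubic}\ref{item:cuspidal_cubic} exist only when $\cha\kk=3$ and are rigid modulo $\PGL_3$, recovering Example~\ref{ex:4A2_construction} uniquely. In the $8\rA_1$ case the analogous arrangement of four tangents requires $\cha\kk=2$ and depends on free parameters which, after quotienting by the automorphism group of the configuration, yield a two-dimensional family, matching Example~\ref{ex:cha_kk=2}\ref{item:8A1-construction}. The hardest step will be the elimination in the second paragraph: each excluded singularity type of Table~\ref{table:canonical} must be checked to admit a $\P^1$-fibration of height $\leq 2$ uniformly from the combinatorics of the allowed fiber types, and carrying out this bookkeeping without appealing to the long classification of \cite[Table~15]{PaPe_ht_2} is the delicate combinatorial core of the proof.
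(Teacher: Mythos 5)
Your proposal diverges from the paper at the crucial middle step, and the route you sketch there has a genuine gap. The paper does \emph{not} analyze the witnessing $\P^1$-fibration combinatorially against a list of singularity types. Instead it (i) first proves $\#D\geq 7$ by a $(-1)$-curve argument: for $\#D\leq 6$ one has $E\cdot(K_X+D+L)<0$ for a general anticanonical $E$ and a $(-1)$-curve $L$, so Lemma \ref{lem:trick-trees} forces $D+L$ (and then $D+L+A$ for a second $(-1)$-curve) to be a sum of rational trees, which quickly produces a fibration of height $\leq 2$; and (ii) then invokes Lemma \ref{lem:elliptic} to blow up $9-\#D$ points of $E$ and obtain a \emph{minimal (quasi-)elliptic fibration} in which all of $D$ is vertical. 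Adjunction against $E'\in|-K_{X'}|$ plus the Kodaira classification of elliptic fibers then forces every connected component of $D$ to be $[2]$ or $[2,2]$, each met by a $(-1)$-curve in both tips, and $\#D=8$ — whence only $4\rA_2$ and $8\rA_1$ survive. Your substitute for this — "combining with the inventory in Table \ref{table:canonical} \dots every singularity type listed with $\height\leq 2$ already admits a $\P^1$-fibration of that low height" — is circular: the completeness of that table for $\height\geq 3$ is exactly what this proposition is meant to establish independently, and the remark after Proposition \ref{prop:canonical} warns that a single singularity type can be realized by non-isomorphic surfaces, so membership of a type in the low-height rows is not by itself an argument. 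Your branching-component analysis also only treats fully vertical $\rD$/$\rE$ configurations; a fork with some components horizontal is not addressed, and nothing in your sketch bounds $\#D$ or the number of connected components of $D$.

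Two further concrete errors. First, you assert that the $4\rA_2$ configuration "exists only when $\cha\kk=3$"; this contradicts the statement you are proving — Proposition \ref{prop:canonical_ht>2}\ref{item:4A2} carries no characteristic restriction, and Example \ref{ex:4A2_construction} realizes $4\rA_2$ in every characteristic (nodal cubic for $\cha\kk\neq 3$, cuspidal for $\cha\kk=3$). Only the $8\rA_1$ case forces $\cha\kk=2$. Second, your description of the witnessing pencil for $4\rA_2$ ($N_\beta=3$ with a single $3$-section) does not match Example \ref{ex:4A2_construction}/Figure \ref{fig:4A2}, whose degenerate fibers are $[1,1]$ and two copies of $[1,2,2,1]$ with $D\hor$ four disjoint $1$-sections; and your appeal to Proposition \ref{prop:MT_smooth} to rule out $\height=3$ is unavailable, since that proposition assumes $\bar D\neq 0$ — the paper instead proves $\height=4$ by the direct adjunction computations of Lemmas \ref{lem:Aut4A2}\ref{item:4A2_ht} and \ref{lem:8A1}\ref{item:8A1_ht}.
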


\begin{proof}
	Let $\pi\colon (X,D)\to (\bar{X},0)$ be a minimal log resolution. Because $\bar{X}$ is canonical, the exceptional divisor $D$ consists of $(-2)$-curves and $K_{X}=\pi^{*}K_{\bar{X}}$. By \cite[Proposition 2.12]{BT_del-Pezzo-char>0} the linear system $|-K_X|$ has no fixed component, and its general member $E$ is irreducible and reduced, cf.\ \cite[p.\ 41]{Demazure} or \cite[Lemma 3.1]{Ye}. We have $p_{a}(E)=1$ and $E^{2}=K_{\bar{X}}^{2}>0$, since $-K_{\bar{X}}$ is ample. By Lemma \ref{lem:rationality}, $X$ is rational. Noether's formula gives $E^2=10-\rho(X)=9-\#D$.
	\smallskip
	
	We claim that $\#D\geq 7$. Suppose the contrary. We have $\rho(X)>2$, since otherwise $X$ is either $\P^2$ or the Hirzebruch surface $\F_{2}$, with $D$ either zero or the negative section, contrary to our assumptions. Thus $X$ contains a $(-1)$-curve $L$. We have $E\cdot (K_{X}+D+L)=-K_{X}^2-K_{X}\cdot L=\#D-8<0$, so $|K_{X}+D+L|=\emptyset$. By Lemma \ref{lem:trick-trees} $D+L$ is a sum of disjoint rational trees.

		Suppose $L$ is the unique $(-1)$-curve in $X$. Since $X$ is rational and $\rho(X)>2$, $X$ admits a $\P^1$-fibration with a degenerate fiber $F$. Then $F$ contains $L$, so it is a unique degenerate fiber. By Lemma \ref{lem:delPezzo_fibrations}\ref{item:Sigma} we have $F\redd-L\subseteq D$ and $\#D\hor=1$. Since $D+L$ is a sum of trees, we have $F\redd\cdot D\hor=1$. Because $F\redd-L\subseteq D$ consists of $(-2)$-curves, Lemma \ref{lem:degenerate_fibers}\ref{item:F-2} implies that all components of $F$ have multiplicity at most two in $F$. Hence $2\geq F\cdot D\hor \geq \height(\bar{X})$, a contradiction.
		
		Therefore, $\bar{X}$ contains at least two $(-1)$-curves, say $L$ and $A$. As before, we have $E\cdot (K_{X}+D+L+A)=-K_{X}^{2}+2=\#D-7<0$, so $|K_{X}+D+L+A|=\emptyset$ and $D+L+A$ is a sum of rational trees. %
		
		Since $\pi(L)^2>0$, the intersection matrix of $D+L$ is not negative definite. Thus $L\cdot D\geq 1$, and if the equality holds then $D+L$ contains a fork of type $\langle 2;[2],[2],[1,(2)_{k}]\rangle$ for some $k\geq 0$. Such a fork supports a fiber of a $\P^1$-fibration of height at most $2$, which is impossible. Hence $L\cdot D\geq 2$ and $A\cdot D\geq 2$. 
		
		Let $G_1$, $G_2$ be components of $D$ meeting $L$. Suppose $L\cdot D\geq 3$ and $A\cdot D \geq 3$. The assumption $\#D\leq 6$ implies that, after renaming $L$ and $A$ if needed, we get that $L\cdot D=3$ and $G_1,G_2$ are connected components of $D$. Now $G_1+L+G_2=[2,1,2]$ supports a fiber of a $\P^1$-fibration  of height two, a contradiction.
		
		Therefore, say, $L\cdot D=2$. We have  $\beta_{D}(G_1+G_2)\geq 3$, since otherwise  $G_1+L+G_2=[2,1,2]$ supports a fiber of a $\P^1$-fibration of height at most two. It follows that the connected component $D_L$ of $D+L$ containing $L$ has at least six components. Put  $W=D+L-D_L$. Since $\#(D+L)\leq 7$, we have $\#W\leq 1$. Since $A\cdot D\geq 2$ and $D+A+L$ is a sum of trees, we have $W=[2]$, $A\cdot W=1$ and $A\cdot D_L=1$. As before, the components $W,W'$ of $D$ meeting $A$ satisfy $\beta_{D}(W+W')\geq 3$. We get $D=W+G_1+G$, where $G=\langle 2;[2],[2],[2]\rangle$ meets both $L$ and $A$ in a branching component $G_2$.  Consider a $\P^1$-fibration induced by $|G_1+2L+G_2|$. Then all twigs of $G$ are $1$-sections. By Lemma \ref{lem:delPezzo_fibrations}\ref{item:-1_curves} the fiber $F_W$ containing $W$ is a sum of $W$ and $(-1)$-curves, so $F_{W}=[1,2,1]$.  
		One of its tips, say $L'$, meets at least two twigs of $G$, so $L'+D$ is not a sum of trees; a contradiction.
	\smallskip
	
	Having shown  $\#D\geq 7$, we continue the proof of Proposition \ref{prop:canonical_ht>2}. Recall that the curve $E$ is a member of $|-K_{X}|$, so it satisfies $p_{a}(E)=1$ and $9-\#D=E^2>0$. By Lemma \ref{lem:elliptic}\ref{item:elliptic=1} there is a blowup $\gamma\colon X'\to X$ at $9-\#D$ smooth points of $E$ such that the linear system of $E'\de \gamma^{-1}_{*}E$ induces a (quasi-)elliptic fibration $f\colon X'\to \P^1$. Since $E'\in |-K_{X'}|$, by adjunction formula every $(-1)$-curve on $X'$ meets $E'$ once, and every $(-2)$-curve on $X'$ is disjoint from $E'$. In particular, the fibers of $f$ contain no $(-1)$-curves, i.e.\ $f$ is minimal; and all components of $D'\de \gamma^{-1}_{*}D$ are contained in the fibers of $f$.

	Let $U'$ be a component of a fiber $F'$ meeting $\Exc\gamma$. The components of $\Exc\gamma$ are sections of $f$, so $U'$ has multiplicity $1$ in $F'$. By the Kodaira classification of elliptic fibers, $U'$ is non-branching in $F'\redd$. In particular, $U'\cdot D'\leq 2$, so $U\cdot D\leq 2$, where  $U=\gamma(U')$. Since $U'$ meets $\Exc\gamma$, we have $U^2\geq (U')^2+1=-1$. On the other hand, $U^2\leq (U')^2+(\rho(X')-\rho(X))=-2+(9-\#D)=7-\#D\leq 0$ by the claim above. If $U^2=0$ then $|U|$ induces a $\P^1$-fibration of height at most two, which is impossible. Hence $U^2=-1$.
	
	Suppose $U\cdot D\leq 1$. Since $U+D$ is not negative definite, it contains a fork $\langle 2;[2],[2],[1,(2)_k]\rangle$ for some $k\geq 0$, which supports a fiber of a $\P^1$-fibration of height two, a contradiction. 
	
	Thus $U\cdot D=2$. Because $U'$ has multiplicity one in $F'$, the classification of elliptic fibers implies that the components of $F'-U'$ meeting $U'$ are tips of $F'\redd-U'$. Thus the components of $D$ meeting $U$ are tips of $D$. If $U$ meets two disjoint tips of $D$, then their sum with $U$ supports a fiber of a $\P^1$-fibration of height two, which is impossible. Hence $U$ meets $D$ in both tips of a connected component of type $[2,2]$ or $[2]$. In particular, $F'$ meets $\Exc\gamma$ once, so $\gamma$ has only one base point, which gives $\#D=8$.
	
	Applying this argument to every degenerate fiber of $f$, we get that every connected component of $D$ is of type $[2,2]$ or $[2]$, and meets a $(-1)$-curve in both tips. These $(-1)$-curves meet normally at the base point of $\gamma$. 
	
	Let $U_1$, $U_2$ be two such $(-1)$-curves, and let $V_i$ be the connected component of $D$ meeting $U_i$, for $i\in \{1,2\}$. Consider the $\P^{1}$-fibration of $X$ induced by $|U_{1}+U_{2}|$.  Then $D\hor=V_1+V_2$, and each $V_i$, $i\in \{1,2\}$ is either a $2$-section or a chain of two $1$-sections. In particular, $\height(\bar{X})\leq (U_1+U_2)\cdot D=4$. 
	
	By Lemma \ref{lem:delPezzo_fibrations}\ref{item:-1_curves}, degenerate fibers of this $\P^1$-fibration consist of $(-1)$-curves and connected components of $D$, so by Lemma \ref{lem:degenerate_fibers}\ref{item:F-2} they are supported on chains of type $[1,1]$, $[1,2,1]$, $[1,2,2,1]$ or $[2,1,2]$.
	\smallskip
	
	Assume that $V_1=[2]$. Since $\#D=8$ is even, we can take $V_2=[2]$, too. Then $\#D\hor=2$. By Lemma \ref{lem:delPezzo_fibrations}\ref{item:Sigma} every fiber other than $U_{1}+U_{2}$ has exactly one component not in $D\vert$, so it is supported on a chain $[2,1,2]$. It follows that $D$ is a disjoint union of $(-2)$-curves, i.e.\ $\bar{X}$ is of type $8\rA_{1}$, see Figure \ref{fig:8A1}. We need to show that $\bar{X}$ is constructed as in Example \ref{ex:cha_kk=2}\ref{item:8A1-construction}. 
	
	Let $\phi$ be a contraction of $U_{2}$, followed by some contraction of remaining fibers to $0$-curves, so $\phi(X)$ is some Hirzebruch surface $\F_{n}$. Put $H_i=\phi(V_i)$, $i\in \{1,2\}$. Then $H_1$ is a smooth, rational $2$-section. Numerical properties of Hirzebruch surfaces imply that $n\leq 2$. We can choose $\phi$ so that $n=1$. Let $\contS_1\colon \F_{1}\to\P^2$ be the contraction of the negative section $\Sec_1$. For $i\in \{1,2\}$ we have $H_i \cdot \Sec_{1}=p_a(H_i)=i-1$. Put $\pp= (\contS_1\circ \phi)_{*}D$. Then $\pp=\ll_1+\ll_2+\ll_3+\cc+\qq$, where $\ll_j$'s are lines meeting at some $p_0$, $\cc=\contS_1(H_1)$ is a conic such that $(\cc\cdot \ll_{j})_{p_{j}}=2$ for some $p_j\neq p_0$, $j\in \{1,2,3\}$. It follows that  $\cha\kk=2$ and $\qq=\contS_1(H_2)$ is a cubic passing through $p_0$ and tangent to $\ll_j$ at each $p_j$. After a standard quadratic transformation at $p_1,p_2,p_3$ we get a configuration as in Example \ref{ex:cha_kk=2}\ref{item:8A1-construction}. In Lemma \ref{lem:8A1}\ref{item:8A1_ht} we will see that  $\height(\bar{X})=4$, so \ref{item:8A1} holds.
	\smallskip
	
	\noindent
	Assume now that $V_j=[2,2]$ for all $j$, i.e.\ $\bar{X}$ is of type $4\rA_{2}$. We need to show that $\bar{X}$ is as in Example \ref{ex:4A2_construction}.
	
	Let $V_{j}^{+}$, $V_{j}^{-}$ be the components of $V_{j}$. Now, all degenerate fibers except $U_1+U_2$ are of type $[1,2,2,1]$. Let $V_3,V_4$ be the vertical connected components of $D$, and for $i\in \{3,4\}$ let $F_{i}=L_{i}^{+}+V_{i}^{+}+V_{i}^{-}+L_i^{-}=[1,2,2,1]$ be the fiber containing $V_i$, where $L_i^{+}$ is the $(-1)$-curve meeting $V_{i}^{+}$. Since the intersection matrix of $L_{i}^{\pm}+D$ is not negative definite, we have $L_i^{\pm}\cdot D\geq 2$, so $F\cdot D=4$ implies that $L_i^{\pm}\cdot D=2$. In Figure \ref{fig:4A2}, 
	the sections $V_{1}^{+}$, $V_{1}^{-}$, $V_{2}^{+}$, $V_{2}^{-}$ are denoted by $L_1$, $L_2$, $Q$, $R$; 
	the curves $L_{3}^{+},V_{3}^{+},V_{3}^{-},L_{3}^{-}$ are denoted by $A_1,G_1\cp{1},G_1\cp{2},L_{1}'$;
	the curves $L_{4}^{+},V_{4}^{+},V_{4}^{-},L_{4}^{-}$ are denoted by $L_2',G_2\cp{2},G_2\cp{1},A_2$;
	and the curves $U_1,U_2$ are denoted by $L$, $A_0$.
	
	Let $\phi$ be the contraction of $U_{1}+V_{1}^{-}+\sum_{i=3}^{4}(F_{i}-L_{i}^{-})$. Then $\phi(X)=\P^2$. If $V_2^{+}$ meets $L_{i}^{+}$, $i\in \{3,4\}$, then $\phi(V_2^{-})=[2]$, which is impossible. Hence, say, $V_2^{+}$ meets $L_{3}^{+}$ and $L_{4}^{-}$. Now $\phi_{*}D$ is as in Example \ref{ex:4A2_construction}, with $\ll_{j}=\phi(L_{j+2}^{+})$, $\qq=\phi(V_{1}^{+})$, and $\ll=\phi(U_2)$.  In Lemma \ref{lem:Aut4A2}\ref{item:4A2_ht} we will see that $\height(\bar{X})=4$, so \ref{item:4A2} holds.
\end{proof}

\begin{figure}[htbp]
	\subcaptionbox{{\centering $4\rA_2$, see Example \ref{ex:4A2_construction}.} \label{fig:4A2}}[.54\linewidth]{
		\begin{tikzpicture}[scale=1.2]
			\draw (-1,3) -- (2.6,3) to[out=0,in=180] (3.5,1.4);
			\node at (3.15,2.55) {\small{$Q$}};
			\draw[dashed] (-0.4,3.1) -- (-0.6,2.1);
			\node at (-0.7,2.55) {\small{$A_1$}};
			\draw (-0.6,2.3) -- (-0.4,1.4);
			\node at (-0.8,1.8) {\small{$G_1\cp{1}$}};
			\draw (-0.4,1.6) -- (-0.6,0.7);
			\node at (-0.8,1.2) {\small{$G_1\cp{2}$}};
			\draw[dashed] (-0.6,0.9) -- (-0.4,-0.1);
			\node at (-0.7,0.45) {\small{$L_1'$}};
			\draw[dashed] (1.4,3.1) -- (1.2,2.1);
			\node at (1.1,2.55) {\small{$A_2$}};
			\draw (1.2,2.3) -- (1.4,1.4);
			\node at (1,1.8) {\small{$G_2\cp{1}$}};
			\draw (1.4,1.6) -- (1.2,0.7);
			\node at (1,1.2) {\small{$G_2\cp{2}$}};
			\draw[dashed] (1.2,0.9) -- (1.4,-0.1);
			\node at (1.1,0.45) {\small{$L_2'$}};
			\draw[dashed] (1.8,1.6) -- (2.8,1.4);
			\node at (2.2,1.7) {\small{$L$}};
			\draw[dashed] (2.6,3.1) -- (2.6,-0.1);
			\node at (2.4,2.55) {\small{$A_0$}};
			\draw (-1,0.05) -- (2.6,0.05) to[out=0,in=180] (3.5,1.6);
			\node at (3.15,0.45) {\small{$R$}};
			\draw (-1,2.85) -- (-0.5,2.85)  to[out=0,in=180] (1.2,0.2) -- (1.4,0.2) to[out=0,in=-150] (2.4,1.6);
			\node at (2.1,0.6) {\small{$L_1$}};
			\draw (-1,0.2) -- (-0.5,0.2) to[out=0,in=180] (1.2,2.85)-- (1.4,2.85) to[out=0,in=145] (2.05,1.45);
			\node at (2,2.2) {\small{$L_2$}};
			\node at (4.6,0.7) {\parbox{3cm}{\centering\small{If $\cha\kk=3$ then:  $A_0\cap Q\cap R\neq \emptyset$, $L\cap L_1\cap L_2\neq \emptyset$.}}};
		\end{tikzpicture}
	}
	\subcaptionbox{$8\rA_{1}$, $\cha\kk=2$, see Example \ref{ex:cha_kk=2}\ref{item:8A1-construction} \label{fig:8A1}}[.45\linewidth]{
		\begin{tikzpicture}
			\draw[thick] (0,1.6) -- (2.3,1.6) to[out=0,in=-100] (3.05,2.3) to[out=80,in=180] (3.6,2.8)-- (4,2.8);
			\node at (3.8,2.55) {\small{$Q$}};
			\draw[thick] (0,1.4) -- (2.3,1.4) to[out=0,in=100] (3.05,0.7) to[out=-80,in=180] (3.6,0.2) -- (4,0.2);
			\node at (3.8,0.4) {\small{$L_7$}};
			\draw (0.2,3.1) -- (0,1.9);
			\node at (-0.1,2.6) {\small{$L_1$}};
			\draw[dashed] (0,2.1) -- (0.2,0.9);
			\draw (0.2,1.1) -- (0,-0.1);
			\node at (-0.15,0.4) {\small{$L_4$}};
			\draw (1.2,3.1) -- (1,1.9);
			\node at (0.9,2.6) {\small{$L_2$}};
			\draw[dashed] (1,2.1) -- (1.2,0.9);
			\draw (1.2,1.1) -- (1,-0.1);
			\node at (0.85,0.4) {\small{$L_5$}};
			\draw (2.2,3.1) -- (2,1.9);
			\node at (1.9,2.6) {\small{$L_3$}};
			\draw[dashed] (2,2.1) -- (2.2,0.9);
			\draw (2.2,1.1) -- (2,-0.1);
			\node at (1.85,0.4) {\small{$L_6$}};
			\draw[dashed] (3.2,3.1) -- (2.9,1.4);
			\draw[dashed] (2.9,1.6) -- (3.2,-0.1);			
		\end{tikzpicture}	
	}
	\caption{Canonical del Pezzo surfaces of rank one and height 4.}
\label{fig:can_ht=4}
\end{figure}

\section{Del Pezzo surface of rank one and type $4\rA_2$}\label{sec:4A2}

We now give a construction of the surface of type $4\rA_2$ from Proposition \ref{prop:canonical_ht>2}\ref{item:4A2}, and list some of its properties. Most importantly, in Lemma \ref{lem:Aut4A2}\ref{item:4A2_Aut} below we will see that  the automorphism group of $\bar{X}$ acts transitively on the set of singular points of $\bar{X}$. We infer those properties from the geometry of the $\P^1$-fibration of height $4$ found in the proof of Proposition \ref{prop:canonical_ht>2}, see Figure \ref{fig:4A2}. 

\begin{example}[Del Pezzo surface of type $4\rA_2$, see Figure \ref{fig:4A2}] \label{ex:4A2_construction} 
Let $\qq\subseteq \P^{2}$ be a rational cubic with inflection points $p_1,p_2$. Let $\ll_i$ be the line tangent to $\qq$ at $p_i$, and fix a line $\ll$ tangent to the singular point $r\in \qq$, see Figure \ref{fig:4A2-construction}. By Lemma \ref{lem:cubic}, the configuration $\qq+\ll_{1}+\ll_{2}+\ll$ is unique up to a projective equivalence; and since $\qq$ has at least two inflection points, it singular point $r\in \qq$ is a node if $\cha\kk\neq 3$ and a cusp if $\cha\kk=3$.

Blow up twice over $r$ and three times over $p_j$, $j\in \{1,2\}$; each time on the proper transform of $\ll$ and $\ll_{j}$, respectively. Denote this morphism by $\phi\colon X\to \P^{2}$, and put $L_{j}=\phi^{-1}_{*}\ll_{j}$, $Q=\phi^{-1}_{*}\qq$. Let $A_0,A_j$ be the $(-1)$-curves over $r,p_{j}$; and write $(\phi^{*}\qq)\redd=G+A_0=[2,2]+[1]$, $\phi^{-1}(p_{j})=G_{j}+A_j=[2,2,1]$. Since $L_{1}+L_2=[2,2]$, $D\de (L_{1}+L_{2})+G+G_1+G_2$ contracts to four $\rA_{2}$-points on a del Pezzo surface $\bar{X}$ of rank one, as needed. 

We now construct a $\P^1$-fibration of $(X,D)$ of height $4$. Put $L=\phi^{-1}_{*}\ll$ and let $L_{i}'$ be the proper transform of the line joining $r$ with $p_i$. Then $|A_{1}+G_{1}+L_{1}'|$ induces a $\P^1$-fibration. Its degenerate fibers are $A_{i}+G_{i}+L_{i}'=[1,2,2,1]$, $i\in \{1,2\}$, and $A_0+L=[1,1]$. The horizontal part of $D$ consists of four $1$-sections, namely $L_1$, $L_2$, $Q$ and the first exceptional curve over $r$, call it $R$. They meet $D$ in four components each, see Figure \ref{fig:4A2}.
\end{example}

\begin{figure}[htbp]
	\subcaptionbox{$\cha\kk\neq 3$ \label{fig:4A2-constr-cha-neq-3}}[0.4\textwidth]
	{
		\begin{tikzpicture}
			\draw (-1.7,-0.5) to (1.7,-0.5);
			\node at (1,-0.7) {\small{$\ll$}};
			\filldraw (0,-0.5) circle (0.06);
			\node at (0.15,-0.35) {\small{$r$}};	
			\draw[add= 0.1 and 0.8] (0,2) to (-0.866,0.5);
			\node at (-1.5,-0.1) {\small{$\ll_1$}};
			\filldraw (-0.866,0.5) circle (0.06);
			\node at (-1.1,0.6) {\small{$p_1$}};
			\draw[add= 0.1 and 0.8] (0,2) to (0.866,0.5);
			\node at (1.5,-0.1) {\small{$\ll_2$}};
			\filldraw (0.866,0.5) circle (0.06);
			\node at (1.1,0.6) {\small{$p_2$}};	
			\filldraw (0,2) circle (0.06);
			\node at (0.3,2) {\small{$p_0$}};
			\draw[thick] (-0.866,2) to[out=-60,in=60] (-0.866,0.5) to[out=-120,in=90] (0,-0.5) to[out=-90,in=0] (-0.3,-0.9) to[out=180,in=-90] (-0.5,-0.7) to[out=90,in=180] (0,-0.5) to[out=0,in=-60] (0.866,0.5) to[out=120,in=-120]  (0.866,2);
			\node at (-0.75,-0.1) {\small{$\qq$}};
		\end{tikzpicture}
	}
	\subcaptionbox{$\cha\kk=3$ \label{fig:4A2-constr-cha=3}}[0.4\textwidth]
	{
	\begin{tikzpicture}
			\draw[add=0.1 and 0.8] (0,2) to (0,0.5);
			\node at (0.2,0.5) {\small{$\ll$}};
			\filldraw (0,-0.5) circle (0.06);
			\node at (0.2,-0.5) {\small{$r$}};	
			\draw[add= 0.1 and 0.6] (0,2) to (-0.866,0.5);
			\node at (-1.5,-0.1) {\small{$\ll_1$}};
			\filldraw (-0.866,0.5) circle (0.06);
			\node at (-1.1,0.6) {\small{$p_1$}};
			\draw[add= 0.1 and 0.6] (0,2) to (0.866,0.5);
			\node at (1.5,-0.1) {\small{$\ll_2$}};
			\filldraw (0.866,0.5) circle (0.06);
			\node at (1.1,0.6) {\small{$p_2$}};	
			\filldraw (0,2) circle (0.06);
			\node at (0.3,2) {\small{$p_0$}};
			\draw[thick] (-0.866,2) to[out=-60,in=60] (-0.866,0.5) to[out=-120,in=90] (0,-0.5) to[out=90,in=-60] (0.866,0.5) to[out=120,in=-120]  (0.866,2);
			\node at (-0.75,-0.1) {\small{$\qq$}};
	\end{tikzpicture}
	}
\caption{Configuration $\qq+\ll_1+\ll_2+\ll$ used in Example \ref{ex:4A2_construction}.}
\label{fig:4A2-construction}
\end{figure}

\begin{lemma}
\label{lem:Aut4A2}
Let $\bar{X}$ be the del Pezzo surface of rank one and type $4\rA_2$. We keep notation from Example \ref{ex:4A2_construction}.
\begin{enumerate}
	\item\label{item:4A2_ht} We have $\height(\bar{X})=4$.
	\item\label{item:4A2_K} Let $\bar{T}$ be an irreducible member of $|-K_{\bar{X}}|$. Then $\bar{T}$ is smooth if $\cha\kk\neq 3$ and cuspidal if $\cha\kk=3$. 
	\item\label{item:4A2_moduli} If $\cha\kk=3$ then the set $\Pcusp(4\rA_2)$ of isomorphism classes of log surfaces $(\bar{X},\bar{T})$ as in \ref{item:4A2_K} has moduli dimension $1$, see Section \ref{sec:moduli}. In fact, it is represented by an almost universal family over $B=\P^1\setminus \P^1_{\F_3}$ with symmetry group $\Aut(B)\cong \PGL_{2}(\F_3)$, see Table \ref{table:symmetries}.
	\item\label{item:4A2_Aut} The group $\Aut(\bar{X})=\Aut(X,D)$ is isomorphic to $\Z/2\rtimes G$, where $G$ is the alternating group $A_{4}$ if $\cha \kk \neq 3$, and a symmetric group $S_{4}$ if $\cha\kk=3$. The subgroup $G$ acts transitively on the set of singular points of $\bar{X}$. Explicitly, the group $\Aut(X,D)$ is generated by the following automorphisms, see Figure \ref{fig:4A2}:
	\begin{enumerate}
		\item\label{item:Aut4A2_Z2} $
		(G_{1}\cp{1},G_{1}\cp{2},G_{2}\cp{1},G_{2}\cp{2},L_1,L_2,Q,R)\mapsto
		(G_{1}\cp{2},G_{1}\cp{1},G_{2}\cp{2},G_{2}\cp{1},L_2,L_1,R,Q)$
		\item\label{item:Aut4A2_(123)} $
		(G_{1}\cp{1},G_{1}\cp{2},G_{2}\cp{1},G_{2}\cp{2},L_1,L_2,Q,R)\mapsto
		(L_2,L_1,G_{1}\cp{2},G_{1}\cp{1},G_{2}\cp{1},G_{2}\cp{2},Q,R)$
		\item\label{item:Aut4A2_(12)(34)} $			(G_{1}\cp{1},G_{1}\cp{2},G_{2}\cp{1},G_{2}\cp{2},L_1,L_2,Q,R)\mapsto
		(G_{2}\cp{2},G_{2}\cp{1},G_{1}\cp{2},G_{1}\cp{1},R,Q,L_1,L_2)$
		\item\label{item:Aut4A2_(12)} $
		(G_{1}\cp{1},G_{1}\cp{2},G_{2}\cp{1},G_{2}\cp{2},L_1,L_2,Q,R)\mapsto
		(G_{2}\cp{1},G_{2}\cp{2},G_{1}\cp{1},G_{1}\cp{2},L_2,L_1,Q,R)$ if $\cha\kk=3$
	\end{enumerate}
\end{enumerate} 
\end{lemma}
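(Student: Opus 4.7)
The plan is to prove the four assertions using the explicit description of $\bar X$ as a blowdown of the surface $X$ from Example \ref{ex:4A2_construction}, together with the $\P^1$-fibration $p\colon X \to \P^1$ of height $4$ induced by the pencil of lines through the singular point $r \in \qq$. For part \ref{item:4A2_ht}, the inequality $\height(\bar X) \leq 4$ is immediate from this construction. To establish $\height(\bar X) \geq 4$, I will suppose for contradiction that a $\P^1$-fibration $X \to \P^1$ with $F \cdot D \leq 3$ exists. Lemma \ref{lem:delPezzo_fibrations} then forces every vertical curve off $D$ to be a $(-1)$-curve and gives $\sum_{F}(\sigma(F)-1) = \#D\hor - 1$. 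Since $D$ is the disjoint union of four $\rA_2$-chains of type $[2,2]$, the combinatorial possibilities for $D\hor$ under the constraint $F \cdot D\hor \leq 3$ are limited: each $[2,2]$-chain is either entirely vertical or contributes $1$ or $2$ horizontal components. Enumerating these cases and applying Lemma \ref{lem:degenerate_fibers}, together with the observation that the only $(-1)$-curves on $X$ are $A_0, A_1, A_2, L, L_1', L_2'$ (visible in Figure \ref{fig:4A2}), I expect to derive a contradiction in each case.

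For parts \ref{item:4A2_K} and \ref{item:4A2_moduli}, an irreducible $\bar T \in |-K_{\bar X}|$ lies in $\bar X\reg$, so its proper transform $T = \pi^{-1}_{*}\bar T$ is an irreducible member of $|-K_{X}|$ disjoint from $D$, with $T^2 = 1$ and $p_{a}(T) = 1$. Applying Lemma \ref{lem:elliptic}\ref{item:elliptic=1} to $T$ produces, after one additional blowup, a (quasi-)elliptic fibration $f\colon X' \to \P^1$ with $T$ as its general fiber. In characteristic $\neq 3$, I will show that $f$ is honestly elliptic -- quasi-elliptic fibrations occur only in characteristics $2$ or $3$, and the characteristic-$2$ case is ruled out on this surface by an explicit analysis of the tangent structure at the base point of $|-K_{X}|$ -- so the general member is smooth. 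In characteristic $3$, by contrast, the construction of $\bar X$ begins with the cuspidal cubic $\qq$ of Lemma \ref{lem:cubic}\ref{item:cuspidal_cubic}, every smooth point of which is an inflection point; this ubiquitous flex structure propagates to the pencil and forces every irreducible member of $|-K_{X}|$ to be cuspidal. For \ref{item:4A2_moduli}, isomorphism classes of pairs $(\bar X, \bar T) \in \Pcusp(4\rA_{2})$ correspond to orbits of $\Aut(\bar X)$ on the pencil $\P^1 \cong |-K_{\bar X}|$; using the generators built in \ref{item:4A2_Aut}, I will verify that the induced action on $\P^1$ is by $\PGL_{2}(\F_{3})$ fixing the four degenerate members setwise, yielding the almost universal family with base $B = \P^1 \setminus \P^1_{\F_{3}}$ of Table \ref{table:symmetries}.

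For part \ref{item:4A2_Aut}, I will realize each listed generator as a projective transformation of $\P^2$ preserving the configuration $\qq + \ll_{1} + \ll_{2} + \ll$, and verify the induced action on the labelled components of $D$ by tracking the blowups. Generator \ref{item:Aut4A2_Z2} is a global involution swapping the two components within each $\rA_{2}$-chain; generators \ref{item:Aut4A2_(123)} and \ref{item:Aut4A2_(12)(34)} realize the $3$-cycles and double transpositions of the $A_{4}$-action on the four singular points of $\bar X$, inherited from the triangular symmetry of $\{\ll, \ll_{1}, \ll_{2}\}$; and in characteristic $3$, the extra involution \ref{item:Aut4A2_(12)} exploits the additional flex-richness of the cuspidal $\qq$ to extend $A_{4}$ to $S_{4}$. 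Conversely, any $\sigma \in \Aut(\bar X) = \Aut(X, D)$ permutes the four connected components of $D$ and the two components within each chain; lifting this combinatorial symmetry via the fibration shows that $\Aut(X, D)$ has the stated extension structure $\Z/2 \rtimes G$, and transitivity on singular points follows at once from the generators \ref{item:Aut4A2_(123)} and \ref{item:Aut4A2_(12)(34)}. I expect the main obstacle to be the case analysis in part \ref{item:4A2_ht} ruling out $\height(\bar X) = 3$, together with the subtler characteristic-$3$ argument in part \ref{item:4A2_K} that \emph{every} (not just generic) irreducible member of $|-K_{\bar X}|$ is cuspidal, which is a positive-characteristic phenomenon lacking a direct Bertini-type proof.
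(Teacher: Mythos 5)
Your overall plan follows the paper's architecture, but three of its load-bearing steps are wrong or insufficient, the most serious being part \ref{item:4A2_Aut}. You propose to realize each generator as a projective transformation of $\P^2$ preserving $\qq+\ll_1+\ll_2+\ll$, but that group is trivial for $\cha\kk\neq 3$ (and only $\Z/2$ for $\cha\kk=3$): an element of $\Aut(\P^2,\qq)$ acts on $\qq\reg\cong\G_m$ by $t\mapsto\zeta^jt^{\pm1}$, the only one preserving $\{p_1,p_2\}$ is $t\mapsto t^{-1}$, and that swaps the two nodal tangents, hence moves $\ll$. Indeed your generators \ref{item:Aut4A2_Z2}--\ref{item:Aut4A2_(12)(34)} send $Q=\phi^{-1}_{*}\qq$ to an exceptional curve or to a proper transform of a line, which no lift of a projective transformation can do. The actual mechanism is Cremona, not projective: each generator is $\tilde{\phi}^{-1}\circ\phi$ for a \emph{different} contraction $\tilde{\phi}\colon X\to\P^2$ onto the same configuration, and one must verify the tangency condition \eqref{eq:condition_L} at $r$ by explicit group-law/coordinate computations -- this is precisely where $A_4$ versus $S_4$ is decided, and your proposal contains no substitute for it. Second, in part \ref{item:4A2_K} the elliptic-versus-quasi-elliptic dichotomy only controls the \emph{general} member of $|-K_{\bar{X}}|$, while the statement is about \emph{every} irreducible member; an honest elliptic fibration may well have irreducible nodal or cuspidal fibers. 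The paper instead takes an arbitrary singular irreducible $T$, observes that $\ttt=\phi(T)$ is a rational cubic with inflection points at $p_1,p_2$ and triple contact with $\qq$ at $r$, and gets a contradiction from the group law on $\ttt\reg$ (and in $\cha\kk=3$ writes down the pencil explicitly to see every irreducible member is cuspidal); your ``flex structure propagates to the pencil'' is not an argument.

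In part \ref{item:4A2_ht}, your contradiction leans on the claim that $A_0,A_1,A_2,L,L_1',L_2'$ are the only $(-1)$-curves on $X$. This is false: the proper transform $L_{12}$ of the line joining $p_1$ and $p_2$ is a $(-1)$-curve (the paper uses it in the proof of \ref{item:4A2_Aut}), and there are others. Fortunately the claim is unnecessary: the paper's argument uses only Lemmas \ref{lem:delPezzo_fibrations} and \ref{lem:degenerate_fibers} to show that a fibration of height $\leq 3$ must have a fiber supported on $[2,1,2]$ whose $(-1)$-curve misses $D\hor$, forcing $D\hor$ to consist of two $1$-sections, and then the count $\sum_F(\sigma(F)-1)=\#D\hor-1$ fails; you should replace your enumeration by this. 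Finally, for \ref{item:4A2_moduli} the orbit count under $\Aut(\bar{X})$ gives almost faithfulness of the family, but the definition of moduli dimension also requires semiuniversality of the germs, which the paper checks separately (versality plus Artin algebraization) in Remark \ref{rem:4A2-alternative}; your sketch omits this.
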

\begin{proof}
\ref{item:4A2_ht} Suppose the contrary, i.e.\ $X$ admits a $\P^1$-fibration whose fiber $F$ satisfies $F\cdot D\leq 3$. By Lemma \ref{lem:delPezzo_fibrations}\ref{item:-1_curves} its degenerate fibers are sums of $(-1)$-curves and components of $D$, so by Lemma \ref{lem:degenerate_fibers}\ref{item:F-2} they are supported on chains $[2,1,2]$, $[1,2,1]$ or $[1,2,2,1]$. Since $\#D\hor\leq 3$, Lemma \ref{lem:delPezzo_fibrations}\ref{item:Sigma} implies that there are at most two fibers with two $(-1)$-curves each, so since $\#D\vert=\#D-\#D\hor\geq 5$, there is at least one fiber $F$ such that $F\redd=[2,1,2]$. Each tip of $F\redd$ meets $D\hor$ once, and since the $(-1)$-curve in $F$ has multiplicity $2$, inequality $F\cdot D\leq 3$ implies that it does not meet $D\hor$. Thus $D\hor$ consists of two $1$-sections, and the remaining degenerate fibers meet them in $(-1)$-curves of multiplicity $1$. Since $\bar{X}$ is of type $4\rA_2$, there are two such fibers, both of type $[1,2,2,1]$. This is a contradiction with Lemma \ref{lem:delPezzo_fibrations}\ref{item:Sigma}. 
\smallskip

\ref{item:4A2_K} Let $T$ be the proper transform of $\bar{T}$ on $X$, so $T\in |-K_{X}|$ since $X$ is canonical. Let $\phi\colon (X,D)\to (\P^2,\qq+\ll_1+\ll_2)$ be as in Example \ref{ex:4A2_construction}, see Figure \ref{fig:4A2}. Since by adjunction  $T$ is disjoint from $D$ and $T\cdot A=1$ for every $(-1)$-curve on $X$, we see that $\ttt\de \phi(T)$ is a cubic meeting $\qq$ in $p_{1},p_{2},r\in \ttt\reg$ with multiplicity $3$.

Assume first that $\cha\kk\neq 3$. Suppose $T$ is singular, so $\ttt$ is a rational cubic with inflection points $p_{1}$, $p_{2}$. By Lemma \ref{lem:group_law}, $\ttt\reg$ admits a group law. In this group law we have $3p_{1}=3p_{2}=0$ since $p_1,p_2$ are inflection points of $\ttt$. Intersecting $\ttt$ with $\qq$ we get $3p_{1}+3p_2+3r=0$, so $3r=0$, which means that $r$ is an inflection point of $\ttt$, too. Lemma \ref{lem:cubic} implies that $\ttt$ is nodal, and $p_{1},p_{2},\ttt$ lie on the same line, say $\ll$. Since $p_1,p_2\in \qq$ and $r\in \Sing \qq$ we get $3< \ll\cdot \qq=3$, a contradiction.

Assume now that $\cha\kk=3$. Since $\qq$ is a rational cubic with two inflection points $p_1,p_2$, by Lemma \ref{lem:cubic}\ref{item:cuspidal_cubic} we can fix coordinates on $\P^2$ so that $\qq=\{y^3=x^2 z\}$, $p_{1}=[1:0:0]$, $\ll_{1}=\{z=0\}$; $p_{2}=[1:1:1]$, $\ll_{2}=\{x=z\}$; and $\qq$ has a cusp at $r=[0:0:1]$, with tangent line $\{x=0\}$. Now the pencil of cubics meeting $\qq$ at $p_1,p_2,r$ with multiplicity at least $3$ is given by $\{\lambda (y^3-x^2z)=\mu(y^3-xz^2)\}$, $[\lambda:\mu]\in \P^{1}$, and every irreducible member of this pencil has a singular point at $[\mu(\lambda-\mu)^{1/3}:(\lambda\mu)^{2/3}:\lambda(\mu-\lambda)^{1/3}]$, which is a cusp by Lemma \ref{lem:cubic}. 
\smallskip

\ref{item:4A2_moduli} The above construction shows that $\P^1\setminus \{[0:1],[1:0],[1:1],[1:-1]\}$ is the base of the required almost universal family. We postpone a detailed proof for the moment, as it will be more clear from an equivalent construction of $\bar{X}$ given in  \cite[Proposition 5.1(5)]{KN_Pathologies}, which we recall in Remark \ref{rem:4A2-alternative} below.
\smallskip

\ref{item:4A2_Aut} Clearly, we have $\Aut(\bar{X})=\Aut(X,D)$. Recall from Example \ref{ex:4A2_construction} that the log surface $(X,D)$ is constructed via a  morphism $\phi\colon (X,D)\to (\P^2,\pp)$, where $\pp=\qq+\ll_1+\ll_2$, $\qq$ is a rational cubic, and for $i\in \{1,2\}$, $\ll_{i}$ is a line such that $(\ll_{i}\cdot \qq)_{p_i}=3$ for some $p_{i}\in \qq\reg$. The singular point $r\in \qq$ is a node if $\cha\kk\neq 3$ and a cusp if $\cha\kk=3$, cf.\  Lemma \ref{lem:cubic}. Given the configuration $\pp\subseteq \P^2$, the morphism $\phi$ constructed in Example \ref{ex:4A2_construction} is uniquely determined, up to an action of $\Aut(X,D)$, by the choice of a line $\ll$ tangent to $\qq$ at $r$.

To construct an automorphism $\sigma\in \Aut(X,D)$, we  define a morphism $\tilde{\phi}\colon (X,D)\to (\P^2,\pp)$ as the contraction of a divisor which is supposed to be the image $\sigma_{*}(\Exc\pi)$. Next, we put $\sigma= \tilde{\phi}^{-1}\circ \phi\colon X\map X$. The above uniqueness property of $\phi$ shows that $\sigma$ extends to an automorphism of $(X,D)$ if and only if
\begin{equation}\label{eq:condition_L}
	\parbox{.9\linewidth}{the second blowup at $r$ in the decomposition of $\tilde{\phi}$ is centered on the proper transform of $\ll$.}
\end{equation}
Note that if $\cha\kk=3$ then condition \eqref{eq:condition_L} is automatically satisfied because $r\in \qq$ is a cusp. 
\smallskip

We keep notation $L_i,G_{i}$ etc.\ from Example \ref{ex:4A2_construction}, see Figure \ref{fig:4A2}. Write $\{p\}=\ll_{1}\cap\ll_{2}$ and let $\ll_{12}$ be the line joining $p_1$ and $p_2$. For $i\in \{1,2\}$, let $\ll_{i}'$ be the line joining $p_i$ with $r$. Put $L_{12}=\phi^{-1}_{*}\ll_{12}$, $L_{i}'=\phi^{-1}_{*}\ll_{i}'$.

In case $\cha\kk\neq 3$, we introduce further notation. Fix a  primitive third root of unity $\zeta$. Let $\bar{\nu}\colon \P^1\to \P^2$ be a parametrization of $\qq$ such that $p_{1}=\nu[\zeta:-1]$, $r=\nu[0:1]=\nu[0:1]$ and $\ll$ is tangent to the branch of $\qq$ whose preimage contains $[0:1]$.  Then $p_2=\nu[\zeta^2:-1]$, and $\qq\cap \ll_{12}=\{p_1,p_2,\nu[1:-1]\}$, see Lemma \ref{lem:cubic}\ref{item:nodal_cubic}. The induced parametrization $\nu\colon \P^1\to X$ of $Q$ satisfies
\begin{equation}\label{eq:nu}
	Q\cap A_0= \nu[0:1],\ Q\cap R = \nu[1:0],\ Q\cap A_{1}=\nu[\zeta:-1],\ Q\cap A_{2}=\nu[\zeta^2:-1],\ Q\cap L_{12}=\nu[1:-1].
\end{equation}
Moreover we fix a parametrization $\bar{\nu}_1\colon \P^1\to \P^2$ of $\ll_{1}$ such that $p=\bar{\nu}_1[1:0]$, $\ll_1\cap \ll=\bar{\nu}_1[0:1]$ and $p_1=\bar{\nu}_1[\zeta^{2}:1]$. An explicit computation in coordinates of Lemma \ref{lem:cubic}\ref{item:nodal_cubic}, with $\ll=\{x=0\}$, shows that $\bar{\nu}_1[\zeta:1]=\ll_{1}\cap \ll_{2}'$. Indeed, we have $p=[1:1:3]$, $\ll_{1}\cap \ll=[0:1:-3\zeta^{2}]$, $p_{1}=[\zeta:-1:0]$, so $\bar{\nu}_1[s:t]=[s:s+t:3s-3\zeta^{2} t]$; and $\ll_{2}'=\{\zeta x+y=0\}$, so $\ll_{1}\cap \ll_{2}'=[1:-\zeta:3-3\zeta]=\bar{\nu}_1[\zeta:1]$, as claimed. By symmetry, we have a parametrization $\bar{\nu}_2$ of $\ll_2$ such that $p=\bar{\nu}_1[1:0]$, $\ll_2\cap \ll=\bar{\nu}_2[0:1]$, $p_2=\bar{\nu}_2[\zeta:1]$ and $\ll_{2}\cap \ll_{1}'=\bar{\nu}_2[\zeta^2:1]$. Thus for $i\in \{1,2\}$, the induced parametrization $\nu_i\colon \P^1\to X$ of $L_i$ satisfies
\begin{equation}\label{eq:nu_i}
	L_{i}\cap L=\nu_i[0:1],\ L_1\cap L_2=\nu_{i}[1:0],\ L_{i}\cap A_i=\nu_{i}[\zeta^{2i}:1],\ L_{i}\cap L_{3-i}'=\nu_{i}[\zeta^{i}:1].
\end{equation}
Having introduced this notation, we can proceed with the construction of each automorphism from \ref{item:4A2_Aut}.
\smallskip

\ref{item:Aut4A2_Z2} Let $\tilde{\phi}\colon X\to \P^2$ be the contraction of $(L_{1}'+G_{1})+(L_{2}'+G_{2})+(A_{0}+Q)$, such that $\tilde{\phi}(L_{1}')=p_{1}$, $\tilde{\phi}(L_{2}')=p_{2}$, $\tilde{\phi}(L_1\cap L_2)=p$, $\tilde{\phi}(A_0)=r$. Then for $i\in \{1,2\}$ we have  $\ll_{i}=\tilde{\phi}(L_{3-i})$, $\ll_{i}'=\tilde{\phi}(A_{i})$, and $\tilde{\phi}(R)$ is a rational cubic with a singular point at $r$, tangent with multiplicity $3$ to $\ll_{i}$ at $p_i$, so $\tilde{\phi}(R)=\qq$. In particular, $\tilde{\phi}_{*}D=\pp$. The second blowup over $r$ is centered on the image of $L$, so to verify condition \eqref{eq:condition_L}, we need to check that $\tilde{\phi}(L)=\ll$ in case $\cha\kk\neq 3$. Consider a parametrization of $\ll_1$ given by $\tilde{\nu}_1\de\tilde{\phi}\circ \nu_2\colon \P^1\to \P^2$. Formulas  \eqref{eq:nu_i} give $p=\tilde{\phi}(L_1\cap L_2)=\tilde{\nu}_1[1:0]$, $p_1=\tilde{\phi}(L_{2}\cap L_{1}')=\tilde{\nu}_1[\zeta^2:1]$ and $\ll_1\cap \ll_{2}'=\tilde{\phi}(L_2\cap A_2)=\tilde{\nu}_1[\zeta:1]$. Hence $\tilde{\nu}_1=\bar{\nu}_{1}$. It follows from \eqref{eq:nu_i} that $\ll_{1}\cap \ll=\tilde{\nu}_1[0:1]=\tilde{\phi}(L_2\cap L)$, so $\ll=\tilde{\phi}(L)$, as needed.

\ref {item:Aut4A2_(123)} Let $\tilde{\phi}\colon X\to \P^2$ be the contraction of $(A_2+L_2+L_1)+(L_{12}+G_1)+(A_0+R)$, such that $\tilde{\phi}(A_2)=p_1$, $\tilde{\phi}(L_{12})=p_2$, $\tilde{\phi}(G_{2}\cp{1}\cap G_{2}\cp{2})=p$, $\tilde{\phi}(A_0)=r$. Then $\ll_{1}=\tilde{\phi}(G_{2}\cp{2})$, $\ll_{2}=\tilde{\phi}(G_{2}\cp{1})$, and like before we see that $\tilde{\phi}(Q)=\qq$, so $\tilde{\phi}_{*}D=\pp$.  It remains to check condition \eqref{eq:condition_L} in case $\cha\kk\neq 3$. Consider a parametrization of $\qq$ given by $\tilde{\nu}=\tilde{\phi}\circ \nu\circ \delta$, where $\delta[s:t]=[\zeta s:t]$, so $\delta\in \Aut(\P^1)$ and  $\delta^{-1}[s:t]=[\zeta^2 s:t]$. Using formulas \eqref{eq:nu}, we compute that  $p_{1}=\tilde{\phi}(Q\cap A_{2})=\tilde{\phi}\circ \nu[\zeta^2:-1]=\tilde{\nu}[\zeta:-1]$, $p_{2}=\tilde{\phi}(Q\cap L_{12})=\tilde{\phi}\circ \nu[1:-1]=\tilde{\nu}[\zeta^2:-1]$; and $r=\tilde{\nu}[0:1]=\tilde{\nu}[1:0]$, so $\tilde{\nu}=\bar{\nu}$. The second blowup over $r$ in the decomposition of $\tilde{\phi}$ is centered at the image of $Q\cap A_{0}=\nu[0:1]$; as needed.

\ref{item:Aut4A2_(12)(34)} Let $\tilde{\phi}\colon X\to \P^2$ be the contraction of $(L_{2}'+G_{2})+(A_{1}+G_{1})+(L+L_2)$ such that $\tilde{\phi}(L_{2}')=p_1$, $\tilde{\phi}(A_1)=p_2$, $\tilde{\phi}(Q\cap R)=p$, $\tilde{\phi}(L)=r$. Then $\tilde{\phi}(R)=\ll_1$, $\tilde{\phi}(Q)=\ll_2$, $\tilde{\phi}(A_{2})=\ll_{1}'$; $\tilde{\phi}(L_1)=\qq$ and $\tilde{\phi}(A_0)$ is a line tangent to $\qq$ at $r$. The second blowup over $r$ is centered on the image of $A_0$, so to verify condition \eqref{eq:condition_L}, we need to check that $\tilde{\phi}(A_0)=\ll$ if $\cha\kk\neq 3$. Let $\tilde{\nu}_2\de \tilde{\phi}\circ \nu\circ \delta$ be a parametrization of $\ll_2$, where $\delta[s:t]=[s:-t]$. Using formulas \eqref{eq:nu}, we compute that  $p_{2}=\tilde{\phi}(Q\cap A_{1})=\tilde{\nu}_{2}[\zeta:1]$, $\ll_{2}\cap \ll_{1}'=\tilde{\phi}(Q\cap A_{2})=\tilde{\nu}_2[\zeta^2:1]$, $p=\tilde{\phi}(Q\cap R)=\tilde{\nu}_2[1:0]$, so $\tilde{\nu}_2=\bar{\nu}_2$. Now $\tilde{\phi}(Q\cap A_0)=\tilde{\nu}_2[0:1]=\ll_{2}\cap \ll$ by formula \eqref{eq:nu_i}, so $\tilde{\phi}(A_0)=\ll$, as claimed.

\ref{item:Aut4A2_(12)} Assume $\cha\kk=3$, and let $\tau\in \Aut(\P^2,\pp)$ be a automorphism given by $(p,p_1,p_2,r)\mapsto (p,p_2,p_1,r)$. Since $\cha\kk=3$, this automorphism lifts to the required automorphism of $(X,D)$.
\smallskip

It remains to show that the group $\Aut(X,D)$ is generated by the automorphisms in \ref{item:4A2_Aut}. First, we note that an element of $\Aut(X,D)$ is uniquely determined by its action on the set of components of $D$. Indeed, suppose that $\sigma\in \Aut(X,D)$ fixes all components of $D$. Since $\rho(\bar{X})=1$, the group $\NS_{\Q}(X)$ is generated by $\pi^{*}K_{\bar{X}}=K_{X}$ and the classes of components of $D$, so $\sigma$ acts trivially on $\NS_{\Q}(X)$. Because $\Exc\phi$ is negative definite, it follows that $\sigma$ fixes each component of $\Exc \phi$, and therefore descends to an automorphism of $\P^2$ fixing $\phi_{*}D=\pp$ componentwise. Thus $\sigma=\id$, as claimed.

Consider a group homomorphism $\alpha\colon \Aut(X,D)\to S_{4}$ given by the action of $\Aut(X,D)$ on the set of connected components of $D$. Clearly, the involution $\iota$ from \ref{item:Aut4A2_Z2} lies in $\ker \alpha$. We claim that $\ker\alpha=\langle \iota\rangle$. 

Let $\sigma\in \ker \alpha$. Since $\sigma(Q+R)=Q+R$ and $\sigma(L_1+L_2)=L_1+L_2$, we have $\sigma(L)=L$ and $\sigma(A_0)=A_0$, so $\sigma$ preserves the $\P^1$-fibration of $X$ induced by $|A_{0}+L|$, see Figure \ref{fig:4A2}. It follows that $\sigma(A_{i}+L_{i}')=A_{i}+L_{i}'$ for $i\in \{1,2\}$. Composing $\sigma$ with $\iota$ if needed, we can assume that $\sigma(A_1)=A_1$, so $\sigma(L_1')=L_1'$. Then $\sigma$ fixes components of $D$ meeting $A_1$, in particular $\sigma(Q)=Q$. Since $L_{2}'$ does not meet $Q$, we have $\sigma(A_{2})=A_{2}$, so $\sigma(L_{2}')=L_{2}'$. It follows that $\sigma$ fixes each component of $D$, so $\sigma=\id$, which proves the claim.

Let $G\subseteq \Aut(X,D)$ be the subgroup generated by the remaining automorphisms listed in \ref{item:4A2_Aut}. The restriction $\alpha|_{G}\colon G\to S_{4}$ is injective, its image equals $A_{4}$ if $\cha\kk\neq 3$ and $S_{4}$ if $\cha\kk= 3$. It remains to show that $\alpha$ is not surjective if $\cha\kk\neq 3$. Suppose the contrary. Then there is $\sigma\in \Aut(X,D)$ such that $\sigma(G_{1})=G_{2}$, $\sigma(L_1+L_2)=L_1+L_2$ and $\sigma(Q+R)=Q+R$. Composing $\sigma$ with $\iota$, we can assume that $\sigma(G_{1}\cp{i})=G_{2}\cp{i}$ for $i\in \{1,2\}$. If $\sigma(Q)=R$ then $\sigma(L_{12})$ meets $G_{1}\cp{2}$, $G_{2}\cp{2}$ and $R$, so its image $\cc\de \phi(\sigma(L_{12}))$ satisfies 
\begin{equation*}
	3\deg \cc=\cc\cdot \qq=(3\cdot (\sigma(L_{12})\cdot A_{1})+1)+(3\cdot (\sigma(L_{12})\cdot A_{2})+1)+(3\cdot \sigma(L_{12})\cdot A_0+2)\equiv 1\pmod{3},
\end{equation*}
which is false. Hence $\sigma(Q)=Q$. If $\sigma(L_1)=L_2$ then $\sigma(A_1)$ meets $G_{2}\cp{1}$, $L_2$ and $Q$, so  $\aa\de \phi(\sigma(A_{1}))$ satisfies 
\begin{equation*}
	0=(\ll_{1}-\ll_{2})\cdot \aa=3(\sigma(A_1)\cdot A_1)+2-3(\sigma(A_1)\cdot A_2)\equiv 2\pmod{3},	
\end{equation*}
which again is false. Thus $\sigma$ is as in \ref{item:Aut4A2_(12)}, so it induces an automorphism $\bar{\sigma}$ of $(\P^2,\pp+\ll)$ interchanging $\ll_{1}$ with $\ll_{2}$. Since $\cha\kk\neq 3$, $\bar{\sigma}(\ll)$ is the other line tangent to the node of $\qq$; a contradiction. 
\end{proof}

\begin{remark}[Alternative construction in case $\cha\kk=3$, see {\cite[Proposition 5.1(5)]{KN_Pathologies}}]\label{rem:4A2-alternative}
Assume $\cha\kk=3$. Then a del Pezzo surface $\bar{X}$ of rank one and type $4\rA_2$ can be constructed as follows. Fix an $\F_{3}$-rational point $p\in \P^2$, let $\ll,\ll_1,\dots, \ll_{8}$ be all $\F_{3}$-rational lines not passing through $p$, and let $p_1,\dots, p_{8}$ be all $\F_{3}$-rational points not lying on $\ll\cup \{p\}$. Let $\psi\colon X\to \P^{2}$ be the blowup at $p_1,\dots,p_{8}$, and let $X\to \bar{X}$ be the contraction of $\psi^{-1}_{*}\ll_i$ for $i\in \{1,\dots,8\}$. Clearly, $\bar{X}$ is a surface of type $4\rA_2$, hence by Proposition \ref{prop:canonical_ht>2} it is isomorphic to the one constructed in Example \ref{ex:4A2_construction}. More explicitly, choosing coordinates $[x:y:z]$ on $\P^2$ so that $p=[0:0:1]$ and $\ll=\{z=0\}$, the morphism $\phi\colon X\to \P^2$ from Example \ref{ex:4A2_construction} is given by the contraction of $(-1)$-curves $\psi^{-1}_{*}\{x=0\}$, $\psi^{-1}[1:0:1]$, $\psi^{-1}[1:1:1]$ and $(-2)$-curves $\psi^{-1}_{*}\{x+z=0\}$, $\psi^{-1}_{*}\{x=y+z\}$, $\psi^{-1}_{*}\{y=x+z\}$, $\psi^{-1}_{*}\{y=z\}$, $\psi^{-1}_{*}\{y+z=0\}$. Indeed, $\phi(\psi^{-1}_{*}\{x=z\})$ is a cuspidal cubic; $\phi(\psi^{-1}_{*}\{x=z\})$ is the line tangent to its cusp, and $\phi(\psi^{-1}_{*}\{x+y=z\})$, $\phi(\psi^{-1}_{*}\{x+y+z=0\})$ are its inflectional tangent lines.

Now it is clear that $\Aut(\bar{X})\cong \operatorname{GL}_{2}(\F_{3})$ acts transitively on $\Sing \bar{X}$, as claimed in Lemma \ref{lem:Aut4A2}\ref{item:4A2_Aut}. Moreover, every irreducible member $\bar{T}$ of $|-K_{\bar{X}}|$ is a proper transform of a cubic passing through $p_1,\dots,p_{8}$; and a direct computation shows that every such cubic is   cuspidal, as claimed in Lemma \ref{lem:Aut4A2}\ref{item:4A2_K}. Indeed, choosing coordinates $[x:y:z]$ as above, we see that every such cubic is of the form $\qq_{[a:b]}=\{ax(x^2-z^2)=by(y^2-z^2)\}$ for $[a:b]\in \tilde{\P}^{1}\de \P^1\setminus \P^1_{\F_{3}}$, and $\qq_{[a:b]}$ has a cusp at $r_{[a:b]}\de [b^{1/3}:a^{1/3}:0]$.

We can now complete the proof of Lemma \ref{lem:Aut4A2}\ref{item:4A2_moduli}, by constructing an almost universal family $f$ representing $\Pcusp(4\rA_2)$. Fix coordinates $[x:y:z]$ on $\P^2$ as above; and let $[a:b]$ be coordinates on $\P^1$. In the product $\P^2\times \tilde{\P}^1$, consider curves $\cP_{i}=\{p_i\}\times \tilde{\P}^1$, $\Delta=\{xa=yb,z=0\}$ and divisors $\cL_{i}=\ll_i\times \tilde{\P}^1$, $\cQ=\{a^3x(x^2-z^2)=b^3y(y^2-z^2)\}$. Let $\Psi\colon \cX\to \P^2\times \tilde{\P}^1$ be a composition of blowups at $\cP_{1},\dots, \cP_{8}$ and three blowups over $\Delta$, each time on the proper transform of $\cQ$. Let $\cD=\Psi^{-1}_{*}(\cQ+\sum_{i}\cL_i)$. Then $f\colon (\cX,\cD)\to \tilde{\P}^1$ represents $\Pcusp(4\rA_2)$. The fibers of $f$ over $z,w\in \tilde{\P}^1$ are isomorphic if and  only if there is an element of $\operatorname{GL}_{2}(\F_3)=\Aut(\P^2,p,\{p_1,\dots,p_{8}\})$ mapping $r_{z}$ to $r_{w}$. This happens if and only if $z,w$ lie in the same orbit of the induced action of $\operatorname{GL}_{2}(\F_3)$ on $\tilde{\P}^1$.

Thus $f$ is almost faithful, with symmetry group $\PGL_{2}(\F_3)$. We claim that the formal germ of $f$ at any $z\in \tilde{\P}^{1}$ is a versal deformation of $(X_z,D_z)$. To see this, consider any infinitesimal deformation $\check{f}\colon (\check{\cX},\check{\cD})\to T$ of $(X_z,D_z)$. Performing a sequence of $K_{\check{\cX}/T}$-negative contractions we get a blowdown $\check{\cX}\to \P^{2}_{T}$, mapping $\check{\cD}$ to a configuration of lines and a cubic as above. Choosing coordinates on $\P^2_{T}=\P^2\times T$ we can assume that the image of $\check{\cD}$ is $\sum_{i}(\ll_{i}\times T) + \cQ_T$ for some family of cuspidal cubics $\cQ_{T}$. The computation above shows that every such cubic has a cusp on $\tilde{\ll}\times T$, where  $\tilde{\ll}=\ll\setminus \sum_{i}\ll_i\cong \tilde{\P}^1$; in fact $\Sing \cQ_{T}=(\alpha_0(t),t)\in \tilde{\ll}\times T$ for some morphism $\alpha_0\colon T\to \tilde{\P}^1$. Putting $\alpha=(\id_{\P^2},\alpha_0)\colon \P^2\times T\to \P^2\times \tilde{\P}^1$ we get $\Delta_{T}=\alpha^{*}\Delta$ and thus $\cQ_{T}=\alpha^{*}\cQ$. By the universal property of blowing up  we conclude that $\check{f}$ is a pullback of $f$ along $\alpha_0$, as needed.

It remains to prove that the germ of $f_z$ at $z$ is a semiuniversal deformation of $(X_z,D_z)$. Suppose it is not.  Since it is versal, it follows from \cite[2.2.7(iii)]{Sernesi_deformations} that the semiuniversal deformation of $(X_z,D_z)$ is trivial; hence $f_z$ induces a trivial formal deformation. By \cite[Theorem 1.7]{Artin_algebraization_I} $f_z$ becomes a trivial deformation after an \'etale base change. This is impossible since the fibers of $f$ are pairwise non-isomorphic; a contradiction.
\end{remark}

\section{Del Pezzo surfaces of rank one and types $8\rA_1$, $4\rA_1+\rD_4$ and $7\rA_1$ ($\cha\kk=2$)}\label{sec:8A1}

Throughout this section we assume $\cha\kk=2$. We now construct surfaces of type $8\rA_1$ from Proposition \ref{prop:canonical_ht>2}\ref{item:8A1} and study their properties. In particular, we prove in Lemma \ref{lem:8A1}\ref{item:8A1_ht},\ref{item:8A1_Aut} that their height equals $4$ and their automorphism groups act transitively on the singular loci; as it was the case for type $4\rA_2$ studied in Section \ref{sec:4A2} above. This automorphism group is computed in \cite[Corollary 5.22]{Kawakami_Nagaoka_canonical_dP-in-char>0}. Nonetheless, we provide a self-contained argument based on the geometry of the witnessing $\P^1$-fibration found in the proof of Proposition \ref{prop:canonical_ht>2}, see Figure \ref{fig:8A1}, which makes it slightly more explicit.

It will be convenient for our future applications to gather here a construction and basic properties of other canonical del Pezzo surfaces of rank one which occur only if $\cha\kk=2$, namely those of types $7\rA_1$ and $4\rA_1+\rD_4$. They are of height $2$, and as such appeared in \cite[Examples 5.5 and 5.6(c)]{PaPe_ht_2}, see Figures \ref{fig:7A1} and \ref{fig:4A1+D4}. Here we present an alternative construction of these surfaces, used e.g.\ in \cite{Kawakami_Nagaoka_canonical_dP-in-char>0}.

\begin{example}[Del Pezzo surfaces of types $7\rA_1$, $4\rA_1+D_4$ and $8\rA_1$]
\label{ex:cha_kk=2}
Let $p_{1},\dots, p_{7}\in \P^2$ be all $\F_{2}$-rational points, and let $\ll_{1},\dots, \ll_{7}\subseteq \P^2$ be all $\F_{2}$-rational lines: each $\ll_{j}$ contains exactly three $p_{i}$'s, and each $p_{i}$ lies on exactly three $\ll_{j}$'s, see Figure \ref{fig:Fano} (note that such a line arrangement cannot be realized unless $\cha\kk=2$). 

Put $\pp=\sum_{j=1}^{7}\ll_{j}$, let $\phi\colon Y\to \P^2$ be a blowup at $p_{1},\dots, p_{7}$, and let $A_i=\phi^{-1}(p_i)$, $D_Y=(\phi^{*}\pp)\redd$. 

\begin{figure}[htbp]
	\begin{tikzpicture}[scale=1.5]
		\draw[add = 0.1 and 0.1] (1,0) to (-1,0);
		\draw[add = 0.1 and 0.1] (1,0) to (0,1.732);
		\draw[add = 0.1 and 0.1] (-1,0) to (0,1.732);
		\draw[add = 0.1 and 0.1] (0,0) to (0,1.732);
		\draw[densely dashed, add = 0.15 and 0.1] (-0.75,0) to (0,1.732);
		\node at (-0.7,-0.15) {\small{$\ll$}};
		\draw[add = 0.2 and 0.8] (1,0) to (0,0.577);
		\draw[add = 0.2 and 0.8] (-1,0) to (0,0.577);
		\filldraw (1,0) circle (0.04);
		\node at (1.1,0.2) {\small{$p_7$}};
		\filldraw (0,0) circle (0.04);
		\node at (-0.2,0.1) {\small{$p_6$}};
		\filldraw (-1,0) circle (0.04);
		\node at (-1.1,0.1) {\small{$p_5$}};
		\filldraw (0,1.732) circle (0.04);
		\node at (-0.2,1.8) {\small{$p_1$}};
		\filldraw (0,0.577) circle (0.04);
		\node at (-0.25,0.577) {\small{$p_4$}};
		\filldraw (0.5,0.866) circle (0.04);
		\node at (0.32,0.95) {\small{$p_3$}};
		\filldraw (-0.5,0.866) circle (0.04);
		\node at (-0.75,0.85) {\small{$p_2$}};
		\draw[add = 0.2 and -0.55] (0,0) to (0.5,0.866);
		\draw[add = -0.47 and 0] (0,0) to (0.6,1.04);
		\draw[add = 0.5 and -0.4] (-0.5,0.866) to (0.25,1.299);
		\draw[add = -0.75 and -0.07] (-0.5,0.866) to (0.25,1.299);
		\draw (0.6,1.04) to[out=60,in=30] (0.4,1.386) -- (0.3,1.328);
		\draw[thick] (-2,-0.2) to[out=0,in=-130] (-1,0) to[out=50,in=-90] (-0.4,0.65) to[out=90,in=-60] (-0.5,0.866) to[out=120,in=180] (0,1.732) to [out=0,in=180] (1.5,1.2) to[out=185,in=0] (0.5,0.866) to[out=180,in=60] (0,0.577) to[out=-120,in=135] (0,0) to[out=-45,in=-135] (1,0) to[out=45,in=180] (1.5,0.2);
		\node at (1.1,1.4) {\small{$\qq$}};
	\end{tikzpicture}
\caption{Example \ref{ex:cha_kk=2}: the Fano plane, a non $\F_{2}$-rational line $\ll$, and a cubic $\qq\in \cQ$.}
\label{fig:Fano}
\end{figure}
 
\begin{parlist}
	\litem{Type $7\rA_1$}\label{item:7A1-construction} 
	Let $Y\to \bar{Y}$ be the contraction of $D_Y$. Then $\bar{Y}$ is a del Pezzo surface of rank one and type $7\rA_{1}$, see Figure \ref{fig:7A1}. It is unique up to an isomorphism since $(\P^2,\pp)$ is; cf.\ \cite[Proposition 5.10]{Kawakami_Nagaoka_canonical_dP-in-char>0}. 
	
	This surface is isomorphic to the one constructed in \cite[Example 5.5]{PaPe_ht_2} with $\nu=3$. Indeed, let $\sigma\colon \P^2\map \P^2$ be a standard quadratic transformation centered at $p_1,p_2,p_3$. Then $\sigma^{-1}_{*}\pp$ is precisely the configuration from loc.\ cit., and $\sigma$ lifts to the required isomorphism between minimal log resolutions.
	
	\litem{Type $4\rA_1+\rD_4$}\label{item:4A1+D4-construction} Put $A_1^{\circ}\de  A_{1}\setminus D_Y$, $D_{Y}'\de D_Y+A_1$. Let $\tau\colon X\to Y$ be a blowup at a point of $A_{1}^{\circ}$, and let $X\to\bar{X}$ be the contraction of $\tau^{-1}_{*}D_Y'$.  Then $\bar{X}$ is of type $4\rA_{1}+\rD_{4}$, see Figure \ref{fig:4A1+D4}.  
	The set of isomorphism classes of such surfaces has moduli dimension $1$, see \cite[Proposition 5.17]{Kawakami_Nagaoka_canonical_dP-in-char>0} or \cite[Example 5.6(c)]{PaPe_ht_2}, with representing family over $A_1^{\circ}$ parametrizing non--$\F_{2}$-rational lines through $p_1$, see Figure \ref{fig:Fano}. The symmetry group of this family is 
	$\Aut(\P^2,\pp,p_1)\cong \Aut(A_{1}^{\circ})\cong S_3$. It has a non-trivial stabilizer corresponding to the non--$\F_2$- but $\F_{4}$-rational lines through $p_1$, see \cite[Corollary 5.20]{Kawakami_Nagaoka_canonical_dP-in-char>0}.
	
	\litem{Type $8\rA_1$}\label{item:8A1-construction} 	Let $\cQ$ be the net of cubics passing through $p_{1},\dots, p_{7}$, see Figure \ref{fig:Fano}. It is given by \begin{equation}\label{eq:Q}
		\{ayz(y+z)+bzx(z+x)+cxy(x+y)=0\},\quad [a:b:c]\in \P^{2}.
	\end{equation} 
	Its irreducible members correspond to $[a:b:c]\not\in \pp$. Any such member has a cusp at the point $[\sqrt{a}:\sqrt{b}:\sqrt{c}]$.
	
	Pick $p\in \P^2\setminus \pp$. Let $\qq\in \cQ$ be a cubic with cusp at $p$, let $\tau\colon X\to Y$ be a blowup at $\phi^{-1}(p)$ and let $X\to\bar{X}$ be the contraction of $(\phi\circ\tau)^{-1}_{*}(\pp+\qq)$. Then $\bar{X}$ is of type $8\rA_{1}$, see Figure \ref{fig:8A1}. The set of isomorphism classes of such surfaces is represented by an almost faithful family over $\P^2\setminus \pp$, with symmetry group 
	$\Aut(\P^2,\pp)=\PGL_{3}(\F_2)$, see 
	 \cite[Proposition 5.21]{Kawakami_Nagaoka_canonical_dP-in-char>0}. Its action has non-trivial stabilizers at $\F_{8}$-rational points of $\P^2\setminus \pp$, see \cite[Corollary 5.22]{Kawakami_Nagaoka_canonical_dP-in-char>0}. 
\end{parlist}
\end{example}

\begin{lemma}[Type $7\rA_1$, cf.\ {\cite[5.10, 5.11]{Kawakami_Nagaoka_canonical_dP-in-char>0}  or \cite[5.9 -- 5.11]{KN_Pathologies}}]\label{lem:7A1}
Let $\bar{X}$ be the del Pezzo surface of rank one and type $7\rA_1$. Let $\pi\colon (X,D)\to (\bar{X},0)$ be its minimal log resolution, and let $\phi\colon (X,D)\to (\P^2,\pp)$ be as in Example \ref{ex:cha_kk=2}\ref{item:7A1-construction}, i.e.\ $\pp$ is the sum of all $\F_2$-rational lines $\ll_1,\dots,\ll_7$, and $\phi$ is a blowup at all $\F_2$-rational points $p_1,\dots,p_7$. Then the following hold.
\begin{enumerate}
	\item \label{item:7A1_cuspidal} All irreducible members of $|-K_{\bar{X}}|$ are cuspidal. The set $\Pcusp(7\rA_1)$ has moduli dimension $2$. In fact, it is represented by an almost universal family over $\P^2\setminus \pp$ with symmetry group $\PGL_3(\F_2)$, see Table \ref{table:symmetries}.
	\item \label{item:7A1_K} We have $K_{X}=-\tfrac{3}{7}D-\tfrac{2}{7}U$, where $U\de\Exc\phi$
	\item \label{item:7A1_-1_curves} The only $(-1)$-curves on $X$ are the preimages of $p_1,\dots, p_7$.
	\item \label{item:7A1_ht} We have $\height{(\bar{X})}=2$. 
	\item \label{item:7A1_Aut} We have natural isomorphisms $\Aut(\bar{X})=\Aut(X,D)=\Aut(\P^2,\pp)=\PGL_{3}(\F_2)$. In particular, the group $\Aut(\bar{X})$ acts transitively on the set $\Sing \bar{X}$.
\end{enumerate}
\end{lemma}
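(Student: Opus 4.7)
The plan is to let the explicit description of $\phi$ and an explicit formula for $K_X$ carry most of the weight. First, for (b): $\phi$ blows up the seven $\F_2$-rational points, so $K_X = -3\phi^*\ell + U$ with $\ell$ the line class; since each $p_i$ lies on three $\ll_j$ and each $\ll_j$ contains three $p_i$, we have $\phi^*\pp = D + 3U$, hence $D = 7\phi^*\ell - 3U$, which gives $K_X = -\tfrac{3}{7}D - \tfrac{2}{7}U$ after solving. For (c), a $(-1)$-curve $L$ satisfies $3L\cdot D + 2L\cdot U = 7$. Since $D$ contains no $(-1)$-curves (canonicity) and the $A_i$ exhaust the case $L\subseteq U$, I am reduced to $L$ not a component of $D+U$, in which case both intersections are nonnegative and the only solution is $(L\cdot D, L\cdot U) = (1,2)$. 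Writing $\phi(L)$ as a degree-$d$ curve with multiplicity $n_i$ at $p_i$ and using $L\cdot U = \sum n_i$ and $L\cdot D = 7d - 3\sum n_i$ gives $d = 1$ and $\sum n_i = 2$; but any line through two of the $p_i$ is $\F_2$-rational and so contains a third, a contradiction.

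For (d), the pencil of conics through the four points $\{p_4,\dots,p_7\}$ complementary to $\ll_1$ will realize $\height \leq 2$: no three of these points are collinear (every other $\ll_k$ meets $\ll_1$ in one of $p_1, p_2, p_3$ and hence contains only two of $\{p_4,\dots,p_7\}$), so a general member is irreducible and its proper transform $F = 2\phi^*\ell - A_4 - \dots - A_7$ satisfies $F\cdot D_k = 2 - |\ll_k \cap \{p_4,\dots,p_7\}|$, which equals $2$ for $k=1$ and $0$ for $k \geq 2$. For the matching lower bound I would argue by contradiction: if $\height = 1$, Lemma \ref{lem:fibrations-Sigma-chi} forces $\#D\hor = 1$ (a $(-2)$-curve $H$ serving as a $1$-section) and $\sigma(F) = 1$ for every degenerate fiber. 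Each such fiber $F$ has a unique $(-1)$-curve $L_F$ with $F\redd - L_F \subseteq D\vert$; since the vertical connected components of $D$ are pairwise disjoint $(-2)$-curves, the balance equations $L_F\cdot F = D_k\cdot F = 0$ force $F = D_{k_1} + 2L_F + D_{k_2}$. Then $H\cdot F = 2(H\cdot L_F)$ would be even, contradicting $H\cdot F = 1$.

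For (e), the minimality of $\pi$ gives $\Aut(\bar X) = \Aut(X,D)$, and by (c) any element of $\Aut(X,D)$ permutes the seven $(-1)$-curves $A_i$, so it descends via their contraction to an element of $\Aut(\P^2,\pp)$, producing a homomorphism inverse to the obvious lifting; a plane automorphism preserving $\pp$ must permute the seven $\F_2$-rational points and is determined by this permutation, identifying $\Aut(\P^2,\pp)$ with $\PGL_3(\F_2)$. Transitivity on $\Sing \bar X$ is then clear. The cuspidal claim in (a) reduces to the planar statement: any irreducible $\bar T \in |-K_{\bar X}|$ contained in $\bar X\reg$ has proper transform $T$ disjoint from $D$ and numerically equivalent to $-K_X$, so $\phi(T)$ is an irreducible cubic passing through each $p_i$ with multiplicity one; by Example \ref{ex:cha_kk=2}\ref{item:8A1-construction} all such cubics are cuspidal in characteristic two.

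The moduli statement in (a) is where I expect the main technical work to lie, and I would model the argument on Remark \ref{rem:4A2-alternative}. The Frobenius map $[a:b:c] \mapsto [\sqrt a:\sqrt b:\sqrt c]$ identifies the irreducible locus in the parameter $\P^2$ of $\cQ$ with $B := \P^2 \setminus \pp$, indexing members by their cusp location. I would then form the universal cubic over $B$ inside $\P^2 \times B$, adjoin the constant family $\pp \times B$, and blow up the seven sections $\{p_i\}\times B$ to obtain a family $(\cX, \cD)\to B$; almost faithfulness with symmetry group $\PGL_3(\F_2) = \Aut(\P^2,\pp)$ is immediate from (e). The main obstacle will be the semiuniversality check: one must contract an arbitrary infinitesimal deformation of a fiber to a deformation of $(\P^2, \pp)$, verify rigidity there, and read off the cusp location to obtain the unique classifying map, then invoke Artin algebraization to upgrade the resulting formal factorization to an étale-local one.
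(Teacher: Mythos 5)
Your proposal is correct and, for parts (a)--(c) and (e), follows essentially the same route as the paper: the identity $K_X=-\tfrac37 D-\tfrac27 U$ obtained from $\phi^*\pp=D+3U$, the resulting Diophantine constraint $3\,L\cdot D+2\,L\cdot U=7$ forcing $(1,2)$ and hence a line through two $\F_2$-rational points, and the descent of $\Aut(X,D)$ to $\Aut(\P^2,\pp)=\PGL_3(\F_2)$ via the rigidity of the seven exceptional curves. The one place where you genuinely diverge is the lower bound in (d): the paper gets $\height(\bar X)\geq 2$ for free from the same numerical identity, since $F\cdot D=-\tfrac73 F\cdot K_X-\tfrac23 F\cdot U=\tfrac23(7-F\cdot U)$ can only be a positive integer when $F\cdot U\in\{1,4\}$, i.e.\ $F\cdot D\in\{4,2\}$. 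Your structural argument (a height-one fibration would force every degenerate fiber to be $[2,1,2]$ with both $(-2)$-tips vertical components of $D$, making $H\cdot F$ even) is also valid, just longer; it implicitly uses that degenerate fibers exist, which holds since $\rho(X)=8>2$.

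On the moduli statement in (a), your strategy matches the paper's (parametrize by cusp location via the Frobenius twist of the net \eqref{eq:Q}, check versality by blowing down a deformation and reading off the classifying map from $\Sing$ of the cubic, then use Artin algebraization to rule out triviality). However, the family you describe --- blowing up only the seven constant sections --- does not yet represent $\Pcusp(7\rA_1)$ in the sense of Section \ref{sec:moduli}: the moduli dimension is defined via \emph{minimal log resolutions}, and the definition of a representing family requires each component of $\cD$ to be smooth over the base, which the universal cuspidal cubic is not. You must additionally blow up three times over the cusp section (the diagonal $\Delta\subseteq\P^2\times(\P^2\setminus\pp)$ under your identification), each time on the proper transform of the universal cubic, exactly as the paper does. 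This is a routine fix and does not affect the dimension count, but without it the object you exhibit is not an almost universal family in the paper's sense.
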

\begin{proof}
\ref{item:7A1_cuspidal} Since $\bar{X}$ is canonical, we have $K_{X}=\pi^{*}K_{\bar{X}}$. Let $Q$ be an irreducible member of $|-K_{X}|$. By adjunction $Q\cdot A_{i}=1$ for all $i$, so $Q$ is isomorphic to its direct image $\phi_{*}Q$. The latter is an irreducible member of the net of cubics $\cQ$ given by formula \eqref{eq:Q}, so it is cuspidal, as needed. 

To construct an almost universal family representing $\Pcusp(7\rA_1)$, we argue as in Remark \ref{rem:4A2-alternative}. Put $\tilde{\P}^2=\P^2\setminus \pp$, where as above $\pp$ is the sum of all $\F_{2}$-rational lines. Let $\Delta\subseteq \P^2\times \tilde{\P}^2$ be the diagonal, let $\bar{\pp}=\pp\times \tilde{\P}^2$ and let $\bar{\cQ}\subseteq \P^2\times \tilde{\P}^2$ be a hypersurface $\{a^2yz(y+z)+b^2zx(z+x)+c^2xy(x+y)=0\}$, where $[x:y:z]$ and $[a:b:c]$ are coordinates of $\P^2$ and $\tilde{\P}^2$, respectively, cf.\ formula \eqref{eq:Q}. Blow up at $\P^2_{\F_2}\times \tilde{\P}^2$ and three times over $\Delta$, each time at the proper transform of $\bar{\cQ}$. Denote the resulting family by $f\colon \cX\to \tilde{\P}^2$, and let $\cD$ be the proper transform of $\bar{\pp}+\bar{\cQ}$. Then $f\colon (\cX,\cD)\to \tilde{\P}^2$ is a family representing $\Pcusp(7\rA_1)$. It is almost faithful, with symmetry group $\PGL_3(\F_2)$: indeed, the  diagonal action of $\PGL_{3}(\F_2)$ on $\P^2\times \tilde{\P}^2$ lifts to an action on $\cX$ such that two fibers of $f$ are isomorphic if and only if they lie in the same orbit. Moreover, the germ of $f$ at every $z\in \tilde{\P}^2$ is versal. Indeed, every  infinitesimal deformation $\check{f}\colon (\check{\cX},\check{\cD})\to T$ of $(X_z,D_z)$ blows down to a configuration of all seven $\F_2$-rational lines and a cuspidal cubic $\qq_{T}$ on $\P^2_{T}$. Such a configuration is a pullback of $\bar{\pp}+ \bar{\cQ}$ along the morphism $\alpha\colon T\to \P^2$ whose graph, viewed as a point of $\P^2_{T}$, is the singular locus of $\qq_{T}$. By the universal property of blowing up we get $\check{f}=\alpha^{*}f$, which proves that the germ of $f$ at $z$ is versal. Exactly as in Remark \ref{rem:4A2-alternative}, we conclude that it is semiuniversal. Indeed, if it is not then by \cite[2.2.7(iii)]{Sernesi_deformations} the morphism from the germ of $\tilde{\P}^2$ at $z$ to the base of a semiuniversal deformation of $(X_z,D_z)$ contracts some curve, hence the restriction of $f$ to this curve, call it $f_0$, is a trivial formal deformation. By \cite[Theorem 1.7]{Artin_algebraization_I} $f_0$  becomes trivial after an \'etale base change, which is impossible, since it has pairwise non-isomorphic fibers.

\ref{item:7A1_K} We have $K_{X}=\phi^{*}K_{\P^2}+U=-\tfrac{3}{7}\phi^{*}\pp+U=-\tfrac{3}{7}D-\tfrac{2}{7}U$, as claimed.

\ref{item:7A1_-1_curves} Let $A\subseteq X$ be a $(-1)$-curve, and suppose $A\neq \phi^{-1}(p_i)$, $i\in \{1,\dots, 7\}$. We have $\pi(A)^2>0>A^2$, so $A\cdot D\geq 1$. Part \ref{item:7A1_K} and adjunction formula give $1=-K_{X}\cdot A=\frac{3}{7}D\cdot A+\frac{2}{7}U\cdot A$, so $U\cdot A=\frac{7}{2}-\frac{3}{2}D\cdot A\leq 2$. If $A$ meets only one component of $U$ then $\phi(A)^2=-1+(U\cdot A)^2\in \{0,3\}$, which is impossible. Hence $A$ meets exactly two components of $U$, so $\phi(A)$ is a line joining two base points of $\phi$, a contradiction since $A\not\subseteq D$. 

\ref{item:7A1_ht} Let $F$ be a fiber of a $\P^{1}$-fibration of $X$. Part \ref{item:7A1_K} and adjunction give $F\cdot D=-\frac{7}{3}F\cdot K_{X}-\frac{2}{3}F\cdot U=\frac{2}{3}(7-F\cdot U)\in \{4,2\}$, since $F\cdot D\geq 1$. The pencil of conics passing through $p_1,\dots, p_4$ induces a $\P^1$-fibration of height $2$ on $(X,D)$, see Figure \ref{fig:7A1}.

\ref{item:7A1_Aut} Clearly, $\Aut(\bar{X})=\Aut(X,D)$. Part \ref{item:7A1_-1_curves} implies that every automorphism of $(X,D)$ fixes $\Exc\phi$, so $\Aut(X,D)=\Aut(\P^{2},\pp)=\PGL_{3}(\F_{2})$.
\end{proof}

Similar description for type $4\rA_1+\rD_4$ is not strictly needed now, but will be convenient as a future reference.

\begin{lemma}[Type $4\rA_1+\rD_4$, cf.\ {\cite[5.19, 5.20]{Kawakami_Nagaoka_canonical_dP-in-char>0} or \cite[5.20, 5.21]{KN_Pathologies}}]\label{lem:4A1+D4}
Let $\bar{X}$ be a del Pezzo surface of rank one and type $4\rA_1+\rD_4$. Let $\hat{\phi}\de \phi\circ \tau\colon (X,D)\to (\P^2,\pp)$ be as in Example \ref{ex:cha_kk=2}\ref{item:4A1+D4-construction}, that is, $\pp$ is the sum of all $\F_{2}$-rational lines $\ll_1,\dots,\ll_7$; $\phi$ is a blowup at all $\F_{2}$-rational points $p_1,\dots, p_7$; and $\tau$ is a blowup at a point $p\in \phi^{-1}(p_1)\setminus D_Y$. Say that $\ll_1,\dots,\ll_4$ do not pass through $p_1$. 
Then the following hold. 
\begin{enumerate}
	\item \label{item:4A1+D4_cuspidal} All irreducible members of $|-K_{\bar{X}}|$ are cuspidal. The set $\Pcusp(4\rA_1+\rD_4)$ has moduli dimension $2$. In fact, it is represented by an almost universal family over $\P^2\setminus \pp$, whose symmetry group is the stabilizer of $p_1\in \P^2_{\F_2}$ in $\PGL_{3}(\F_2)$, which is isomorphic to $S_4$, see Table \ref{table:symmetries}.
	\item \label{item:4A1+D4_-1_curves} The $(-1)$-curves on $X$ are: $\Exc\tau$, $\hat{\phi}^{-1}(p_i)$, $i\in \{2,\dots, 7\}$, $\hat{\phi}^{-1}_{*}\ll$ and $\hat{\phi}^{-1}\cc_{i}$, $i\in \{1,\dots,4\}$;  where $\ll$ is a line through $p_1$ such that $p\in \phi^{-1}_{*}\ll$, and $\cc_i$ is the conic passing through all four $\F_{2}$-rational points not lying on $\ll_i$ and tangent to $\ll$ at $p_1$.
	\item \label{item:4A1+D4_ht} We have $\height{(\bar{X})}=2$. 
	\item \label{item:4A1+D4_Aut} The group $\Aut(\bar{X})=\Aut(X,D)$ is isomorphic to $(\Z/2)^{2}\rtimes G$, where $G=\Z/3$ if the point $p\in \phi^{-1}(p_1)\setminus D_Y$ is $\F_4$-rational and $G=\{1\}$ otherwise. Explicitly, $\Aut(X,D)$ is generated by the following automorphisms:
	\begin{enumerate}
		\item\label{item:4A1+D4_(12)(34)} $(D_1,D_2,D_3,D_4,T_1,T_2,T_3)\mapsto (D_2,D_1,D_4,D_3,T_1,T_2,T_3)$
		\item\label{item:4A1+D4_(13)(24)} $(D_1,D_2,D_3,D_4,T_1,T_2,T_3)\mapsto (D_3,D_4,D_1,D_2,T_1,T_2,T_3)$
		\item\label{item:4A1+D4_Z3} $(D_1,D_2,D_3,D_4,T_1,T_2,T_3)\mapsto (D_2,D_3,D_1,D_4,T_2,T_3,T_1)$ if $p$ is $\F_4$-rational
	\end{enumerate}
	where $D_i=\hat{\phi}^{-1}_{*}\ll_{i}$, $i\in \{1,\dots, 4\}$; and for $i\in \{1,2,3\}$, $T_{i}$ is the twig of the $(-2)$-fork in $D$ such that the lines $\hat{\phi}(T_i), \ll_{i}$ and $\ll_{4}$ meet at one point. 
	
	In particular, the subgroup $(\Z/2)^{2}$ acts transitively on the set of $\rA_1$-type singularities of $\bar{X}$. 
\end{enumerate}
\end{lemma}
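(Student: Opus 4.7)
The strategy is to mirror the argument of Lemma~\ref{lem:7A1} for type $7\rA_1$, exploiting the fact that the construction differs only by one additional blowup $\tau$ over $p_1$. First, I would write down a discrepancy formula. Using $K_X=\tau^*K_Y+\Exc\tau$ together with the formula $K_Y=-\tfrac{3}{7}D_Y-\tfrac{2}{7}U_Y$ from Lemma~\ref{lem:7A1}\ref{item:7A1_K} (where $U_Y=\Exc\phi$), one derives an expression $K_X=-\tfrac{3}{7}\hat\phi^{-1}_*\pp-\tfrac{2}{7}U -c\cdot\Exc\tau$ for an explicit constant $c$, and then rewrites this in terms of the components of $D+\Exc\tau$. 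This formula will be the engine driving parts \ref{item:4A1+D4_cuspidal}--\ref{item:4A1+D4_ht}.

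For part~\ref{item:4A1+D4_cuspidal}, an irreducible $T\in|-K_X|$ satisfies $T\cdot C=0$ for every $(-2)$-curve $C$ in $D$ and $T\cdot A=1$ for every $(-1)$-curve (by adjunction and the formula above), so $T$ meets $\Exc\tau$ transversally in one point. Pushing forward to $\P^2$ gives a cubic $\qq=\hat\phi_*T$ through $p_1,\dots,p_7$ whose proper transform on $Y$ meets $A_1$ at the distinguished point $p$, i.e.\ whose tangent direction at $p_1$ is the one encoded by $p$. Every irreducible member of the net $\cQ$ in \eqref{eq:Q} is cuspidal; moreover, the cusp location defines a bijection between irreducible members of $\cQ$ and points of $\P^2\setminus \pp$, via the Frobenius identification $[a:b:c]\leftrightarrow[\sqrt a:\sqrt b:\sqrt c]$. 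Hence $\bar T$ is cuspidal, and the pair $(\bar X,\bar T)$ is determined up to isomorphism by the cusp $c\in\P^2\setminus\pp$ (which determines $\qq$ and thereby its tangent direction $p\in A_1$, and so $\bar X$). To represent $\Pcusp(4\rA_1+\rD_4)$ by an almost universal family over $\P^2\setminus\pp$, I would repeat the argument from the proof of Lemma~\ref{lem:7A1}\ref{item:7A1_cuspidal}: construct $\bar\cQ\subset\P^2\times(\P^2\setminus\pp)$ defined by $\{a^2yz(y+z)+b^2zx(z+x)+c^2xy(x+y)=0\}$, blow up along $\P^2_{\F_2}\times(\P^2\setminus\pp)$ and suitably along the cusp locus, then verify versality via the universal property of blowing up and upgrade to semiuniversality exactly as there. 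The symmetry group is the stabilizer in $\PGL_3(\F_2)$ of the data $(\pp,p_1)$, which has order $168/7=24$ and is isomorphic to $S_4$.

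For part~\ref{item:4A1+D4_-1_curves}, let $A\subseteq X$ be a $(-1)$-curve other than the obvious ones. Adjunction and the formula for $K_X$ yield a linear equation in $A\cdot D$, $A\cdot U$, $A\cdot\Exc\tau$ with small positive solutions; the pushforward $\hat\phi(A)$ is a line or conic on $\P^2$ of degree controlled by $A\cdot U+A\cdot \Exc\tau$, and the intersection numbers with $\pp$ and with the distinguished tangent direction at $p_1$ dictate its shape. A case-by-case analysis (following the proof of \ref{lem:7A1}\ref{item:7A1_-1_curves}) leaves exactly the $(-1)$-curves listed: $\Exc\tau$, the six $\hat\phi^{-1}(p_i)$ for $i\geq 2$, the proper transform of the line $\ll$ through $p_1$ with tangent direction $p$, and for each $i\in\{1,\dots,4\}$ the proper transform of the conic $\cc_i$ through the four $\F_2$-points off $\ll_i$ and tangent to $\ll$ at $p_1$. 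For part~\ref{item:4A1+D4_ht}, applying the same linear-algebra trick to a fiber $F$ gives $F\cdot D\in\{2,4\}$ modulo small correction terms, and one exhibits the bound by the $\P^1$-fibration induced by the pencil of conics through four well-chosen $\F_2$-points (as in Figure~\ref{fig:4A1+D4}), yielding $\height(\bar X)=2$.

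For part~\ref{item:4A1+D4_Aut}, by part~\ref{item:4A1+D4_-1_curves} every automorphism of $(X,D)$ preserves the set of $(-1)$-curves; contracting the canonically distinguished ones leads to a descent $\Aut(X,D)\to\Aut(\P^2,\pp,p_1,p)$. The image is the stabilizer in $\Aut(\P^2,\pp,p_1)\cong S_4$ of the point $p\in A_1^\circ$. The subgroup $S_3\subset S_4$ permuting $\{\ll_5,\ll_6,\ll_7\}$ acts on $A_1^\circ$ via its natural identification with $\P^1\setminus\P^1_{\F_2}$, so its stabilizer of $p$ is $\Z/3$ when $p$ is $\F_4$-rational (the $S_3$-action has a unique fixed orbit of such points) and trivial otherwise. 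The kernel $(\Z/2)^2\subset S_4$ preserves $p$ since it acts trivially on $A_1$ (it permutes $\ll_1,\dots,\ll_4$ fixing $p_1$ componentwise only on the tangent-direction data). The generating automorphisms \ref{item:4A1+D4_(12)(34)}--\ref{item:4A1+D4_Z3} are then constructed, exactly as in the proof of Lemma~\ref{lem:Aut4A2}\ref{item:4A2_Aut}, by specifying their action on the configuration of $(-1)$- and $(-2)$-curves and invoking uniqueness of the blowdown. The main subtlety I expect is the last step: one must check that each candidate descent actually lifts to $(X,D)$ (an analogue of condition~\eqref{eq:condition_L} for the extra blowup $\tau$), which amounts to verifying that the prescribed image of $\Exc\tau$ meets the prescribed image of $A_1$ in the correct tangent direction. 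Once this is done, transitivity of $(\Z/2)^2$ on the four $\rA_1$-singularities is immediate from the prescribed action on $D_1,\dots,D_4$.
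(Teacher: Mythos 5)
Your plan follows essentially the same route as the paper: cuspidality of $\bar T$ is reduced to the $7\rA_1$ case via $T\cdot\Exc\tau=1$; the $(-1)$-curves are found by pushing down to $Y$ and running adjunction against the formula $K_Y=-\tfrac{3}{7}D_Y-\tfrac{2}{7}U_Y$; and the automorphism group is computed by descending to $\Aut(\P^2,\pp+\ll)$ and splitting off the Klein four-group acting trivially on the pencil of lines through $p_1$. Two details differ. For \ref{item:4A1+D4_cuspidal}, the paper does not rebuild the family from scratch but lifts the almost universal family for $\Pcusp(7\rA_1)$ by one further blowup along the section $\cA_1\cap\cQ$ (citing a lemma from the companion paper); your description omits exactly this blowup --- without it you get the $7\rA_1$ family back, not the $4\rA_1+\rD_4$ one --- though you clearly have the right mechanism in mind elsewhere.

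The one step that would fail as written is the lower bound in \ref{item:4A1+D4_ht}. The ``linear-algebra trick'' does not transfer: computing $K_X=\tau^*K_Y+\Exc\tau$ gives a \emph{positive} coefficient on $\Exc\tau$ (equivalently, in the clean form $-K_X=T+B+\Exc\tau$ used in the paper, adjunction only controls $F\cdot(\text{fork})+F\cdot\Exc\tau$ and says nothing about $F\cdot D_i$ for the four $(-2)$-curves $D_1,\dots,D_4$), so $F\cdot D=1$ is not excluded by intersection numbers alone. The paper instead splits into two cases: fibrations pulled back from $Y$ (where $\height(\bar Y)=2$ applies) and fibrations with $\Exc\tau$ horizontal, where one inspects a degenerate fiber through the $(-1)$-curve list of \ref{item:4A1+D4_-1_curves} and checks that each candidate forces $F\cdot D\geq 2$. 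You would need to substitute this fiber analysis for the adjunction count.
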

\begin{proof}
Put $A=\Exc\tau$. Let $\pi_{Y}\colon Y\to \bar{Y}$ be the contraction of $D_Y$, so $\bar{Y}$ is the surface from Example \ref{ex:cha_kk=2}\ref{item:7A1-construction}.

\ref{item:4A1+D4_cuspidal} Let $Q\subseteq X$ be an irreducible member of $|-K_{X}|$. We have $Q\cdot A=1$ by adjunction, hence $Q\cong \tau(Q)\in |-K_{Y}|$ is cuspidal by Lemma \ref{lem:7A1}\ref{item:7A1_cuspidal}. As before, the assertion about $|-K_{\bar{X}}|$ follows since $\bar{X}$ is canonical. 

The second statement follows from Lemma \ref{lem:7A1}\ref{item:7A1_cuspidal} and \cite[Lemma 2.19(1)]{PaPe_ht_2}. Indeed, let $f_{Y}\colon (\cY,\cD_Y)\to \P^2\setminus \pp$ be an almost universal family representing the set of minimal log resolutions of log surfaces in $\Pcusp(7\rA_1)$, constructed in Lemma \ref{lem:7A1}\ref{item:7A1_cuspidal}. Let $\cQ_{Y}$ be the component of $\cD_{Y}$ corresponding to a member of $|-K_{Y}|$, and let $\cA$ be a divisor on $\cY$ whose restriction to each fiber is a $(-1)$-curve meeting three components of the boundary. Now \cite[Lemma 2.19(1)]{PaPe_ht_2} implies that  a composition of $f_Y$ with a blowup at $\cA\cap \cQ$ is an almost universal family representing the set of minimal log resolutions of log surfaces in $\Pcusp(4\rA_1+\rD_4)$, as needed. The base of this family is the same as in \ref{lem:7A1}\ref{item:7A1_cuspidal}, namely $\P^2\setminus \pp$, and the symmetry group is the stabilizer of the image of $\cA$, i.e.\ $\Aut(\P^2,\pp,p_0)$. We note that this group is isomorphic to $S_4$ permuting $\{\ll_1,\dots,\ll_4\}$.

\ref{item:4A1+D4_-1_curves} Let $C$ be a $(-1)$-curve on $X$. Put $C_Y=\tau(C)$. By Lemma \ref{lem:7A1}\ref{item:7A1_-1_curves}, the $(-1)$-curves on $Y$ are $A_{i}\de \phi^{-1}(p_i)$, $i\in \{1,\dots, 7\}$. If $C\cdot A=0$ then $C_Y=[1]$, so $C_Y=A_i$ for some $i\in \{2,\dots, 7\}$. Thus we can assume $C\cdot A\geq 1$. 

Denote by $T$ the fork in $D$, and let $B$ be its branching component, so $\tau(B)=A_1$. The class $K_{X}\in \NS(X)$ is uniquely determined by equalities $K_{X}^2=10-\rho(X)=1$ and $K_{X}\cdot L=0$ for every component $L$ of $D$, so $K_{X}= -T-B-A$. By adjunction $1=-C\cdot K_{X}=C\cdot (T+B)+C\cdot A$, so  $C\cdot A=1$. In particular, $C_Y=[0]$. 

Since $\pi_{Y}(C_Y)^2>0$, we have $C_Y\cdot D_Y>0$. By Lemma \ref{lem:7A1}\ref{item:7A1_K}, we have $K_{Y}=-\frac{3}{7}D_{Y}-\frac{2}{7}U_{Y}$, where $U_{Y}=\sum_{i=1}^{7} A_i$. By adjunction $2=-K_{Y}\cdot C_Y=\frac{3}{7}D_Y\cdot C_Y+\frac{2}{7}U_{Y}\cdot C_Y$, so $U_{Y}\cdot C_Y\in \{1,4\}$. If $U_Y\cdot C_Y=1$ then $\phi(C)=\ll$. Assume that $U_{Y}\cdot C_Y=4$. Suppose $C_Y\cdot A_i>1$ for some $i\in \{2,\dots, 7\}$. We have $C_Y\cdot A_i\leq C_Y\cdot (U_Y-A_1)=3$, so such $i$ is unique, and $\phi(C)^2=C_Y^2+C_Y\cdot(U_Y-A_i)+(C_Y\cdot A_i)^2\in \{6,10\}$, which is impossible. Hence $C_Y$ meets four components of $U_Y$, once each, so $\hat{\phi}(C)=\cc_i$ for some $i$, as needed. 

\ref{item:4A1+D4_ht} Pulling back a $\P^1$-fibration from $Y$, we see that $\height\bar{X} \leq \height\bar{Y}= 2$ by Lemma \ref{lem:7A1}\ref{item:7A1_ht}. To see that $\height\bar{X}=2$, consider a $\P^1$-fibration which is not a pullback from $Y$, so $A$ is horizontal. Clearly, some degenerate fiber $F$ meets $A$ in a component $C\not \subseteq D$. By Lemma \ref{lem:delPezzo_fibrations}\ref{item:-1_curves} $C$ is a $(-1)$-curve. If $\hat{\phi}(C)=\cc_i$ then $C$ meets $D_i$ twice, so $D_i$ is horizontal and $F\cdot D\geq 2$, as needed. If $\hat{\phi}(C)=p_i$ or $\ll$ then $C$ meets three or four components of $D$, so at least one of them is horizontal; and if exactly one is horizontal then $C$ has multiplicity $2$ in $F$. In any case, we get $F\cdot D\geq 2$, as needed.

\ref{item:4A1+D4_Aut} Part \ref{item:4A1+D4_-1_curves} implies that $A$ is a unique $(-1)$-curve on $X$ meeting $D$ once, so any automorphism of $(X,D)$ fixes $A$. Thus we have a natural isomorphism $\Aut(X,D)\ni \sigma\mapsto \phi \circ \sigma \circ \phi^{-1}\in \Aut(\P^2,\pp+\ll)$.

Consider the group homomorphism $\alpha\colon \Aut(X,D)=\Aut(\P^2,\pp+\ll)\to \Aut(\P^1)$ given by the action of $\Aut(\P^2,\pp+\ll)$ on the pencil of lines passing through $p_1$. Its kernel is the subgroup of $\PGL_{3}(\F_2)$ consisting of elements which fix all three $\F_{2}$-rational lines passing through $p_1$; so $\ker\alpha$ is generated by involutions \ref{item:4A1+D4_(12)(34)} and \ref{item:4A1+D4_(13)(24)}. The image of $\alpha$ is the subgroup of $\PGL_{2}(\F_2)$ consisting of matrices whose one eigenvector is $p\in \phi^{-1}(p)\setminus D_{Y}=\P^{1}\setminus \P^{1}_{\F_2}$. Thus if $p$ is not $\F_{4}$-rational then $\im\alpha$ is trivial; otherwise $\im \alpha\cong \Z/3$ is generated by the image of the automorphism \ref{item:4A1+D4_Z3}. 
\end{proof}

We conclude with the main result of this section.

\begin{lemma}[Type $8\rA_1$, cf.\ {\cite[5.21, 5.22]{Kawakami_Nagaoka_canonical_dP-in-char>0} or \cite[5.22, 5.23]{KN_Pathologies}}]\label{lem:8A1}
Let $\bar{X}$ be a del Pezzo surface of rank one and type $8\rA_1$. Let $\phi\colon (X,D)\to (\P^2,\pp+\qq)$ be as in Example \ref{ex:cha_kk=2}\ref{item:8A1-construction}, i.e.\ $\pp$ is the sum of all $\F_2$-rational lines $\ll_1,\dots,\ll_7$; $\qq$ is a cubic passing through $p_1,\dots,p_7$, with a cusp at $p\not\in \pp$, and $\phi$ is a blowup at $p$ and at all $\F_2$-rational points $p_1,\dots,p_7$ of $\P^2$. Put $D_i=\phi^{-1}_{*}\ll_i$, $D_0=\phi^{-1}_{*}\qq$. Then the following hold. 
\begin{enumerate}
	\item \label{item:8A1_cuspidal} All irreducible members of $|-K_{\bar{X}}|$ are cuspidal. The set $\Pcusp(8\rA_1)$ has moduli dimension $3$; in fact, it is represented by an almost universal family over the threefold \eqref{eq:3fold}, with symmetry group $\PGL_2(\F_3)$.
	\item \label{item:8A1_-1_curves} The only $(-1)$-curves on $X$ are: $\phi^{-1}(p)$ and $\phi^{-1}(p_i)$, $\phi^{-1}_{*}\ll_{i}'$, $\phi^{-1}_{*}\cc_{i}$ for $i\in \{1,\dots, 7\}$, where $\ll_{i}'$ is the line joining $p$ with $p_i$, and $\cc_{i}$ is the conic passing through $p$ and all $p_j\not\in \ll_{i}$.
	\item \label{item:8A1_ht} We have $\height(\bar{X})=4$. 
	\item \label{item:8A1_Aut} The group $\Aut(\bar{X})=\Aut(X,D)$ is isomorphic to $(\Z/2)^{3}\rtimes G$, where $G=\Z/7$ if the point $p$ is $\F_8$-rational, and $G=\{1\}$ otherwise. Explicitly, $\Aut(X,D)$ is generated by the following automorphisms.
	\begin{enumerate}
		\item\label{item:8A1_j} For $j\in \{1,\dots,7\}$: $D_{0}\mapsto D_{j}$, $D_{i}\mapsto D_{k}$ whenever $\ll_i,\ll_j,\ll_k$ meet at one point
		\item\label{item:8A1_Z7} If $p$ is $\F_{8}$-rational: $D_0\mapsto D_0$, $D_{i}\mapsto D_{i+1}$, where $D_{8}=D_{1}$.
	\end{enumerate}
	In particular, the subgroup $(\Z/2)^{3}$ acts transitively on $\Sing\bar{X}$.
\end{enumerate}
\end{lemma}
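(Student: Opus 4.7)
My plan is to mirror the strategy used for Lemmas \ref{lem:7A1} and \ref{lem:4A1+D4}, working directly on the birational model $\psi\de\phi\circ\tau\colon X\to\P^2$ which blows up the seven $\F_2$-rational points $p_1,\dots,p_7$ and the cusp $p\in\P^2\setminus\pp$. Let $H=\psi^\ast(\text{line})$ and let $E_0,E_1,\dots,E_7$ be the exceptional divisors over $p,p_1,\dots,p_7$; then $-K_X=3H-\sum_{i=0}^{7}E_i$ and $D_i=H-E_{i_1}-E_{i_2}-E_{i_3}$ for $i\in\{1,\dots,7\}$, while $D_0=3H-\sum_{j=1}^{7}E_j-2E_0$. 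Any curve class $C=dH-\sum m_i E_i$ with $C^2=-1$ and $-K_X\cdot C=1$ satisfies $\sum m_i=3d-1$ and $\sum m_i^2=d^2+1$; this is the single book-keeping tool used throughout.

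For \ref{item:8A1_cuspidal}, adjunction gives $T\cdot E_i=1$ for every irreducible $T\in|-K_X|$, so $\psi_\ast T$ is a cubic through $p_1,\dots,p_7$ with multiplicity $2$ at $p$; by formula \eqref{eq:Q} every such cubic is cuspidal with cusp at some $p''\in\P^2\setminus\pp$. The additional condition $T\cdot D_0=0$, required so that $\bar{T}\subseteq\bar{X}\reg$, forces $\psi_\ast T$ to pass through $p$ with multiplicity exactly $1$. Hence $\Pcusp(8\rA_1)$ is parametrized by the open subset $B$ of the threefold
\begin{equation}
\{(p,p')\in(\P^2\setminus\pp)\times(\P^2\setminus\pp):p\in\qq_{p'},\ p\neq p'\},\tag{$\ast$}\label{eq:3fold}
\end{equation}
where $\qq_{p'}\in\cQ$ denotes the cuspidal cubic with cusp at $p'$. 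An almost faithful family $f\colon(\cX,\cD)\to B$ is built by blowing up the universal base points (a copy of $\{p_1,\dots,p_7\}\times B$, the graph of the first projection, and the incidence locus) inside $\P^2\times B$ and taking proper transforms of $\bar{\pp}$ and of the universal cubic $\bar{\cQ}$; the diagonal $\PGL_3(\F_2)$-action witnesses $B/\PGL_3(\F_2)\to\Pcusp(8\rA_1)$ as a bijection on isomorphism classes. Versality at every $z\in B$ follows, exactly as in Remark \ref{rem:4A2-alternative}, by running $K_{\cX/T}$-negative contractions on an arbitrary infinitesimal deformation and invoking the universal property of blowing up; semiuniversality is then forced by \cite[2.2.7(iii)]{Sernesi_deformations} and \cite[Theorem 1.7]{Artin_algebraization_I}.

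For \ref{item:8A1_-1_curves}, I case-split on $d$. Case $d=0$ gives $C=E_i$ for some $i\in\{0,\dots,7\}$. Case $d=1$ forces exactly two $m_i$ equal to $1$; if these indices lie in $\{1,\dots,7\}$ the corresponding $\F_2$-line passes through a third $\F_2$-point, and the resulting class is that of a $(-2)$-curve $D_k$, so the only new $(-1)$-curves arise from $(m_0,m_i)=(1,1)$, yielding the seven proper transforms of the lines $\ll_i'$. Case $d=2$ requires five $m_i$ equal to $1$; if $m_0=0$ the corresponding conic is forced to be reducible because the three omitted $\F_2$-points lie on an $\F_2$-line (since $\P^2(\F_2)$ is the Fano plane), so we must have $m_0=1$ and the four further $m_i=1$ correspond to the complement of an $\F_2$-line, giving the seven conics $\cc_i$. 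For $d\geq 3$, the constraint $\sum m_i^2\geq (\sum m_i)^2/8=(3d-1)^2/8$ combined with $\sum m_i^2=d^2+1$ and $0\leq m_i\leq 2$ (from $\psi_\ast C$ being irreducible of degree $d$ through eight points of $\P^2$ in the special Fano-plane configuration) leaves no admissible multiplicity vectors, as one verifies directly. For \ref{item:8A1_ht} the upper bound $\height\leq 4$ is furnished by the $\P^1$-fibration constructed in the last paragraph of the proof of Proposition \ref{prop:canonical_ht>2}; for the lower bound, a fibration of height $\leq 3$ would, by Lemma \ref{lem:delPezzo_fibrations}\ref{item:-1_curves} together with Lemma \ref{lem:degenerate_fibers}\ref{item:F-2}, have every degenerate fiber supported on a chain $[1,1]$, $[2,1,2]$, $[1,2,1]$ or $[1,2,2,1]$; counting via Lemma \ref{lem:delPezzo_fibrations}\ref{item:Sigma} and using that the eight components of $D$ are pairwise disjoint $(-2)$-curves shows that no such collection of fibers and sections can simultaneously account for all of $D$.

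For \ref{item:8A1_Aut}, $\Aut(\bar{X})=\Aut(X,D)$ is clear, and an automorphism acting trivially on the set of components of $D$ acts trivially on $\operatorname{NS}(X)$ (since $\bar{X}$ has rank one), hence descends to a linear automorphism of $(\P^2,\pp,\qq)$ which must be the identity. To produce the $(\Z/2)^3$ subgroup I construct, for every $j\in\{1,\dots,7\}$, an involution $\iota_j\in\Aut(X,D)$ swapping $D_0\leftrightarrow D_j$: this is done by specifying a second birational model $\tilde\psi_j\colon(X,D)\to(\P^2,\pp)$ of the form guaranteed by Example \ref{ex:cha_kk=2}\ref{item:8A1-construction}, whose exceptional divisor blows down the seven $(-1)$-curves of \ref{item:8A1_-1_curves} dictated by the combinatorial requirement that $D_j$ plays the role of the ``cubic''; the uniqueness of the ambient model up to $\PGL_3(\F_2)$ (proved as in Example \ref{ex:cha_kk=2}\ref{item:8A1-construction}) then gives $\iota_j\de\tilde\psi_j^{-1}\circ\psi\in\Aut(X,D)$, and composition shows that $\iota_1,\dots,\iota_7$ together with $\id$ form an elementary abelian group $(\Z/2)^3\cong(\F_2^3,+)$ acting simply transitively on $\Sing\bar{X}$. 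Finally, the quotient $\Aut(X,D)/(\Z/2)^3$ is identified with the stabilizer of the distinguished component $D_0$ (hence of the point $p\in\P^2\setminus\pp$) inside $\PGL_3(\F_2)$, which is trivial when $p\not\in\P^2(\F_8)$ and cyclic of order $7$ when $p\in\P^2(\F_8)\setminus\P^2(\F_2)$ (the $\Z/7$-action being induced by Frobenius on $\F_8$); the $\Z/7$-generator is realized explicitly by the automorphism of \ref{item:8A1_Z7}. The hardest step will be verifying that the Cremona-type maps $\tilde\psi_j^{-1}\circ\psi$ really do extend biregularly across $X$, which amounts to checking position-of-the-cusp conditions analogous to \eqref{eq:condition_L} in the proof of Lemma \ref{lem:Aut4A2}\ref{item:4A2_Aut}.
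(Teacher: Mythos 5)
Your overall architecture tracks the paper's for parts \ref{item:8A1_cuspidal} and \ref{item:8A1_Aut}, your threefold $(\ast)$ agrees with \eqref{eq:3fold}, and your counting argument for the lower bound in \ref{item:8A1_ht} is a valid alternative to the paper's (which instead deduces it from the list of $(-1)$-curves). The genuine gap is in part \ref{item:8A1_-1_curves}, case $d\geq 3$: the claim that $\sum m_i=3d-1$, $\sum m_i^2=d^2+1$ and $0\leq m_i\leq 2$ leave no admissible multiplicity vectors is false. For instance $(d;m_0,\dots,m_7)=(3;2,1,1,1,1,1,1,0)$, $(4;2,2,2,1,1,1,1,1)$ and $(5;2,2,2,2,2,2,1,1)$ all satisfy these constraints; they are among the $240$ numerical $(-1)$-classes on a blowup of $\P^2$ at eight points, and no purely numerical book-keeping can eliminate them. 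What excludes them is geometry: an irreducible $(-1)$-curve $L$ distinct from the components of $D$ must pair nonnegatively with each $D_i$, and, more efficiently, the paper uses that $D_0+\phi^{-1}(p)\in|-K_X|$, so $L\cdot D_0+L\cdot \phi^{-1}(p)=1$ with both terms nonnegative; this forces $L\cdot\phi^{-1}(p)\leq 1$ and reduces the classification to the known $(-1)$- and $0$-curves on the $7\rA_1$ resolution via Lemma \ref{lem:7A1}. Some argument of this kind is indispensable; as written, your $d\geq 3$ step would not survive verification. (Two harmless slips elsewhere: an irreducible anticanonical member meets $E_0$ once, so its image has multiplicity $1$, not $2$, at $p$; and choosing five of the seven $\F_2$-points omits two, not three --- the correct reason the conic is reducible when $m_0=0$ is that any five points of the Fano plane contain a full line.)

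In part \ref{item:8A1_Aut} you correctly identify the structure $(\Z/2)^{3}\rtimes G$ and both generating families, but you explicitly defer the one step carrying the real content: showing that the birational maps $\tilde{\psi}_j^{-1}\circ\psi$ are biregular, i.e.\ that the cusp of the new cubic lands at the required point (the analogue of condition \eqref{eq:condition_L} in Lemma \ref{lem:Aut4A2}). The paper settles this by an explicit Cremona computation in coordinates; without it the existence of the involutions, and hence of the transitive $(\Z/2)^{3}$, remains unproved. The remaining ingredients of \ref{item:8A1_Aut} --- that an automorphism is determined by its action on the components of $D$, and that the quotient is the stabilizer of $p$ in $\PGL_3(\F_2)$, trivial unless $p$ is $\F_8$-rational --- are in line with the paper.
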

\begin{proof}
\ref{item:8A1_cuspidal} Like in Lemma \ref{lem:7A1}\ref{item:7A1_cuspidal}, the proper transform of the pencil $|-K_{\bar{X}}|$ on $\P^2$ is contained in the net of cubics $\cQ$ given by formula \eqref{eq:Q}, whose all irreducible members are cuspidal.

For the second assertion, we upgrade the family constructed in the proof of Lemma \ref{lem:7A1}\ref{item:7A1_cuspidal} to keep track of the additional cubic. Let $\tilde{\P}^2=\P^2\setminus \pp$. Consider the product $\P^2\times \tilde{\P}^2\times \tilde{\P}^2$, with coordinates $[x:y:z]$, $[a:b:c]$,  $[\alpha:\beta:\gamma]$, and its subvarieties $\bar{\pp}'\de \pp\times \tilde{\P}^2\times \tilde{\P^2}$, $\Delta_{1}'\de \{(u,u,v): u,v\in \tilde{\P}^2\}$, $\bar{\cQ}_1'\de \{a^2yz(y+z)+b^2zx(z+x)+c^2xy(x+y)=0\}$, $\Delta_2'\de \sigma(\Delta_1')$, $\bar{\cQ}_2'\de \sigma(\bar{\cQ}_1')$, where $\sigma(u,v,w)=(u,w,v)$. Let $\Delta\subseteq \tilde{\P}^2\times \tilde{\P^2}$ be the diagonal, and let $\bar{\pp},\Delta_i$, $\bar{\cQ}_i$ be the restrictions of $\bar{\pp}'$, $\Delta_{i}'$, $\bar{\cQ}_i'$ to $\P^1\times B$, where 
\begin{equation}\label{eq:3fold}
B\de \{\alpha^2bc(b+c)+\beta^2ca(c+a)+\gamma^2ab(a+b)=0\}\subseteq (\tilde{\P}^2\times \tilde{\P}^2)\setminus \Delta.
\end{equation} 
Each fiber of the projection $(\P^2\times B,\bar{\pp}+\bar{\cQ}_1+\bar{\cQ}_2)\to B$ is $(\P^2,\pp+\qq_1+\qq_2)$, where $\qq_1,\qq_2$ are members of the pencil \eqref{eq:Q}, such that the cusp of $\qq_1$ is a smooth point of $\qq_2\setminus \pp$. Moreover, we have $\Sing \bar{\cQ}_i=\Delta_i$. Blowing up once at $\P^2_{\F_2}\times B$, once at $\Delta_1$, and three times over $\Delta_2$, each time on the proper transform of $\bar{\cQ}_2$, we get a family representing $\Pcusp(8\rA_1)$. Exactly as in the proof of Lemma \ref{lem:7A1}\ref{item:7A1_cuspidal} we conclude that this family is almost universal and its symmetry group is $\PGL_{3}(\F_2)$.
\smallskip

\ref{item:8A1_-1_curves} 
Put $C_0=\phi^{-1}(p)$. Let $L\subseteq X$ be a $(-1)$-curve, $L\neq C_0$. As in Example \ref{ex:cha_kk=2}\ref{item:8A1-construction}, let $\tau\colon (X,D-D_0)\to (Y,D_Y)$ be the contraction of $C_0$, so $\pi_{Y}\colon (Y,D_{Y})\to (\bar{Y},0)$ is a minimal log resolution of a del Pezzo surface of type $7\rA_1$. We have $\pi_{Y}\circ \tau(D_0)\in |-K_{\bar{Y}}|$, so since $\bar{Y}$ is canonical, $\tau(D_0)\in |-K_{Y}|$. Since $\tau(C_0)$ is a point of multiplicity $2$ on $\tau(D_0)$, we get $D_0+C_0\in |-K_{X}|$. By adjunction $1=-L\cdot K_{X}=L\cdot D_0+L\cdot C_0$, so $L\cdot C_{0}\in \{0,1\}$. If $L\cdot C_0=0$ then $\tau(L)$ is a $(-1)$-curve on $Y$, so $\phi(L)=p_i$ by Lemma \ref{lem:7A1}\ref{item:7A1_-1_curves}. Assume $L\cdot C_0=1$. Then $L_{Y}\de \tau(L)$ is a $0$-curve on $Y$. Since $\pi_{Y}(L_{Y})^2>0$, we have $L_{Y}\cdot D_{Y}\geq 1$. Put $U_Y=\tau_{*}(\Exc\phi)$. Then Lemma \ref{lem:7A1}\ref{item:7A1_K} and adjunction give $2=-L_{Y}\cdot K_{Y}=\frac{3}{7}L_{Y}\cdot D_{Y}+\frac{2}{7}L_{Y}\cdot U_{Y}$, so $L_{Y}\cdot U_{Y}\in \{1,4\}$. It follows that $\hat{\phi}(L)$ is a line or a conic, so $L=V_{i}$ or $C_{i}$ for some $i\in \{1,\dots, 7\}$, as claimed. 
\smallskip

\ref{item:8A1_ht} To see that $\height\bar{X}\geq 4$, consider a fiber $F$ of a $\P^1$-fibration of $Y$. By Lemma \ref{lem:7A1}\ref{item:7A1_ht}, we have $F\cdot D_{Y}=2$, so by adjunction $2=-F\cdot K_{Y}=F\cdot \tau (D_0)$, and therefore $\tau^{*}F\cdot D=F\cdot D_Y+F\cdot D_0=4$. Thus $|\tau^{*}F|$ induces a $\P^1$-fibration of height $4$, as needed. For the converse inequality, let $F$ be a degenerate fiber of some $\P^1$-fibration of $X$, and let $L$ be a $(-1)$-curve of multiplicity $\mu\geq 1$ in $F$. If $\phi(L)=p_i$ or $\ll_{i}'$ then $L$ meets four $(-2)$-curves in $D$, so combining Lemma \ref{lem:delPezzo_fibrations}\ref{item:-1_curves} with Lemma \ref{lem:degenerate_fibers}\ref{item:F-2} we see that $4\leq \mu L\cdot D\hor\leq F\cdot D\hor$, as needed. If $\phi(L)=\cc_i$ then $L\cdot D_i=2$, so the curve $D_i$ is horizontal, and since $L\cdot (D-D_i)=0$, the fiber $F$ contains another $(-1)$-curve $L'$, which again satisfies $L'\cdot D\hor\geq 2$. Thus $F\cdot D\hor\geq 4$, as claimed.
\smallskip

\ref{item:8A1_Aut} Let $C_0\de\hat{\phi}^{-1}(p)$, $A_{i}\de\hat{\phi}^{-1}(p_i)$, $V_{i}\de\hat{\phi}^{-1}_{*}\ll_{i}'$ and  $C_{i}\de\hat{\phi}^{-1}_{*}\cc_{i}$, $i\in \{1,\dots, 7\}$ be all $(-1)$-curves on $X$. An element $\sigma\in \Aut(X,D)$ is uniquely determined by the images $\sigma(C_0)$ and $\sigma(A_1),\dots,\sigma(A_7)$. Indeed, if $\sigma$ fixes $C_0,A_1,\dots,A_7$ then $\sigma$ descends to automorphism of $\P^2$ fixing $\pp+\qq$ componentwise, so $\sigma=\id$.

It follows from \ref{item:8A1_-1_curves} that if a $(-1)$-curve $L\subseteq X$ satisfies $L\cdot D=4$ then $L=A_i$ or $V_i$ for some $i\in \{1,\dots,7\}$. Hence every $\sigma\in \Aut(X,D)$ fixes $\sum_{i=1}^{7}A_i+V_i$, which is a sum of disjoint chains $A_i+V_i=[1,1]$. Let $\alpha\colon \Aut(X,D)\to S_7$ be the group homomorphism given by the action of $\Aut(X,D)$ on the set of those chains.

Part \ref{item:8A1_-1_curves} implies also that if a $(-1)$-curve $L$ satisfies $L\cdot D=2$ then $L=C_i$ for some $i\in \{0,\dots, 7\}$, in which case $L\cdot D_i=2$ and $L\cdot (D-D_i)=0$. It follows that every $\sigma\in\Aut(X,D)$ satisfies $\sigma(C_0)=C_j$ for some $j\in \{0,\dots, 7\}$, and $\sigma(C_i)=C_k$ if and only if $\sigma(D_i)=D_k$. 

Let $\sigma_{j}$ be an element of $\ker\alpha$ such that $\sigma_{j}(C_0)=C_j$. We have $\sigma_j(A_i)\cdot C_j=\sigma_j(A_i)\cdot \sigma_j(C_0)=A_i\cdot C_0=0$ for all $i\in \{1,\dots, 7\}$. Thus $\sigma_j(A_i)=V_i$ if $A_i$ meets $C_j$ and $\sigma_j(A_i)=A_i$ otherwise. We conclude that 
\begin{equation}\label{eq:sigma_j}
	\sigma_j(A_i)=A_i\mbox{ if }p_i\in \ll_j, \quad \sigma_j(A_i)=V_i \mbox{ if }p_i\not\in \ll_j, \quad \mbox{and } \sigma_j(C_0)=C_j.
\end{equation}
In particular, $\sigma_{j}$ is uniquely determined by $j\in \{0,\dots,7\}$. We have $\sigma_{0}=\id$, and $\sigma_{j}(C_j)=C_0$, so $\sigma_{j}^2=\id$. Fix $k\in \{1,\dots,7\}\setminus \{j\}$. The curve $\sigma_j(C_k)$ meets $\sigma_{j}(A_i+V_i)$ once, and it meets $\sigma_j(A_i)$ if $p_{i}\not\in \ll_{k}$ and $\sigma_{j}(V_i)$ otherwise. Condition \eqref{eq:sigma_j} shows that $\sigma_j(C_k)$ meets $A_{i}$ if and only if $p_{i}\in \ll_{j}\setminus \ll_{k}$ or $p_{i}\in \ll_{k}\setminus \ll_{j}$; so the conic $\phi(\sigma_{j}(C_k))$ contains all $\F_2$ rational points away from the third $\F_2$-rational line passing through $\ll_j\cap \ll_k$, say $\ll_r$. It follows that $\phi(\sigma_{j}(D_k))=\ll_r$, so $\sigma_j(D_{k})=D_r$. Thus $\sigma_j$ is as in \ref{item:8A1_j}. 

Assume for now that all automorphisms $\sigma_0,\dots,\sigma_7$ satisfying \eqref{eq:sigma_j} exist. Then $\ker\alpha=\{\sigma_{0},\dots,\sigma_{7}\}$ is generated by $\sigma_{i},\sigma_{j},\sigma_{k}$ for any triple $\{i,j,k\}$ such that $\ll_i\cap\ll_j\cap\ll_k=\emptyset$, so $\ker\alpha$ is isomorphic to $(\Z/2)^{3}$ and acts transitively on the set of components of $D$. Let $G\subseteq \Aut(X,D)$ be the subgroup consisting of elements fixing $C_0$. Then for every $\sigma\in \Aut(X,D)$ there is $\sigma_j\in \ker \alpha$ such that $\sigma_j\circ\sigma\in G$, so the restriction $\alpha|_{G}\colon G\to \im \alpha$ is surjective. Every element of $G$ descends to an automorphism of $(\P^2,\pp+\qq)$, i.e.\ an element of $\PGL_{3}(\F_2)$ with eigenvector $p$. If $p$ is not $\F_{8}$-rational then $G=\{\id\}$; otherwise $G\cong \Z/7$ permutes $p_1,\dots,p_7$ cyclically, as needed.

The existence of an automorphism $\sigma_{j}\in \Aut(X,D)$ satisfying the condition \eqref{eq:sigma_j} can be inferred from \cite[Theorem 2.1]{Hosoh_automorphisms}, see \cite[p.\ 24]{Kawakami_Nagaoka_canonical_dP-in-char>0}. We provide an explicit construction. 
Say that $j=1$ and $p_5,p_6,p_7\in \ll_1$. Let $\tilde{\phi}\colon X\to \P^2$ be the contraction of $C_1+\sum_{i=1}^{4}V_{i}+\sum_{i=5}^{7}A_{i}$, such that $\tilde{\phi}(V_i)=p_i$, $i\in \{1,\dots,4\}$. Then $\tilde{\phi}_{*}D=\pp+\qq'$, where $\qq'\in \cQ$ is a cubic passing through $p_1,\dots,p_7$, with a cusp at $p'\de \tilde{\phi}(C_1)$. We put $\sigma_j\de \tilde{\phi}^{-1}\circ \phi\colon X\map X$. It remains to prove that $\sigma_{j}$ is regular. This happens if and only if $\qq'=\qq$, i.e.\ $p'=p$.

We prove the last equality by a direct computation. Write $\tilde{\phi}\circ \phi^{-1}\colon \P^2\map \P^2$ as a composition $\tau_{2}\circ \tau_{1}$, where $\tau_{1},\tau_2\colon \P^2\map \P^2$ are quadratic Cremona maps; $\tau_1$ is centered at $p,p_1,p_2$; and $\tau_{2}$ is centered at $q_1\de \tau_1(p_3)$, $q_2\de \tau_1(p_4)$ and $q\de \tau_1(\ll_{12})$, where $\ll_{12}$ is the line joining $p_1$ with $p_2$. Moreover, the coordinates on the target $\P^2$ are chosen in such a way that $\tau_2(p_{i}')=p_i$, where $p_{i}'=\tau_{1}(\ll_{i}')$, $i\in \{1,2\}$; and $\tau_{2}(\ll_{i}'')=p_{i+2}$, $i\in \{1,2\}$, where $\ll_{i}''$ is the line joining $q_i$ with $q$. To find the point $p'$, we note that  $\tau_{1}(\cc_{1})$ is a line joining $q_1$ with $q_2$, so the centers of $\tau_{2}^{-1}$ are: $p_1$, $p_2$, and the point $p'$ we are looking for.

Fix coordinates on $\P^2$ so that $p_1=[1:0:0]$, $p_2=[0:0:1]$, $p_3=[1:1:0]$, $p_4=[0:1:1]$. Then $p=[a:1:b]$ for some $a\neq b$, $a,b\not\in \F_2$. Let $\tau\colon \P^2\map \P^2$ be the standard Cremona transformation $\tau[x:y:z]=[yz:zx:xy]$. Fix $\delta_{1}\in \Aut(\P^2)$ such that $\delta_{1}(p_1)=[1:0:0]$, $\delta_{1}(p_2)=[0:0:1]$ and $\delta_{1}(p)=[0:1:0]$, say 
\begin{equation*}
	\delta_{1}[x:y:z]=[x+ay:y:z+by],
\end{equation*}
and take $\tau_{1}=\tau\circ \delta_{1}$. We have $\delta_{1}(p_3)=[1+a:1:b]$, $\delta_{1}(p_4)=[a:1:1+b]$, so $q_1=\tau_1(p_3)=[b:b(1+a):1+a]$, $q_2=\tau_{1}(p_4)=[1+b:a(1+b):a]$. By definition,  $q=\tau_1(\ll_{12})=[0:1:0]$, $p_{1}'=\tau_1(\ll_{1}')=[0:0:1]$, $p_{2}'=\tau_{1}(\ll_{2}')=[1:0:0]$. Fix $\delta_{2}\in \Aut(\P^2)$ such that $\delta_{2}(q_1)=[1:0:0]$, $\delta_{2}(q_2)=[0:0:1]$, $\delta_{2}(q)=[0:1:0]$, say 
\begin{equation*}
	\delta_{2}[x:y:z]=[ax+(1+b)z:a(1+a)x+(1+a+b)y+b(1+b)z:(1+a)x+bz].
\end{equation*}
and put $\tau_{2}'=\tau\circ\delta_{2}$. Then $\tau_{2}'(\ll_{1}'')=[0:0:1]$, $\tau_{2}'(\ll_{2}'')=[1:0:0]$. Since  $\delta_{2}(p_{1}')=[1+b:b(1+b):b]$, $\delta_{2}(p_{2}')=[a:a(1+a):1+a]$, we have $\tau_{2}'(p_{1}')=[b:1:1+b]$, $\tau_{2}'(p_{2}')=[1+a:1:a]$. Now $\tau_{2}=\delta_{3}\circ \tau_{2}'$, where $\delta_{3}\in \Aut(\P^2)$ is given by $\delta_{3}[b:1:1+b]=[1:0:0]$, $\delta_{3}[1+a:1:a]=[0:0:1]$, $\delta_{3}[0:0:1]=[1:1:0]$, $\delta_{3}[1:0:0]=[0:1:1]$. We compute that  
\begin{equation*}
	\delta_{3}[x:y:z]=[ay+z:x+y+z:x+by]	
\end{equation*}
The centers of $\tau_{2}^{-1}$ are $p_1$, $p_2$ and $p'$, so $p'=\delta_{3}[0:1:0]=[a:1:b]=p$, as claimed.
\end{proof}

\section{Proof of Proposition \ref{prop:canonical}: rational anti-canonical curves on canonical del Pezzo surfaces}\label{sec:relatives}

In this section we prove Proposition \ref{prop:canonical}, classifying log surfaces $(\bar{Y},\bar{T})$, where $\bar{Y}$ is a canonical del Pezzo surface of rank one, and $\bar{T}\subseteq \bar{Y}\reg$ is a rational curve of arithmetic genus one. This result, together with the height computation in Proposition \ref{prop:height} below, completes the proof of Lemma \ref{lem:relative} and thus of Proposition \ref{prop:MT_smooth}. 
\begin{proof}[Proof of Proposition \ref{prop:canonical}]
Let $Y\to \bar{Y}$ be the minimal resolution of $\bar{Y}$, let $D_{Y}$ be its exceptional divisor, and let $T$ be the proper transform of $\bar{T}$ on $Y$. Since $\rho(\bar{Y})=1$, the equality $p_{a}(\bar{T})=1$ implies that $\bar{T}\in |-K_{\bar{Y}}|$, so $T\in |-K_{Y}|$ as $\bar{Y}$ is canonical. In particular, $\rho(Y)=10-T^2=10-\bar{T}^2$ by Noether's formula, which proves \ref{prop:canonical}\ref{item:canonical-Noether}. We now prove the remaining parts of Proposition \ref{prop:canonical} by analyzing each type separately. In each case, $D_Y$ consists of $(-2)$-curves, drawn as solid lines in Figures \ref{fig:can_ht=4}--\ref{fig:can_ht=1}. Some $(-1)$-curves on $Y$ are shown as dashed curves. By adjunction, $T$ meets each such $(-1)$-curve once; and of course is disjoint from $D_Y$. 

\begin{casesp*}
	\litem{$\height(\bar{Y})\geq 3$, see Figure \ref{fig:can_ht=4}} By Proposition \ref{prop:canonical_ht>2}, $\bar{Y}$ is the surface of type $4\rA_2$ from Example \ref{ex:4A2_construction}, or one of the surfaces of type $8\rA_1$ from Example  \ref{ex:cha_kk=2}\ref{item:8A1-construction}. By Lemmas \ref{lem:Aut4A2}\ref{item:4A2_K} and \ref{lem:8A1}\ref{item:8A1_cuspidal} we have $\cha\kk=3$ or $2$, respectively, all irreducible members of $|-K_{\bar{Y}}|$ are cuspidal, as claimed in \ref{prop:canonical}\ref{item:canonical-types}, and the corresponding sets $\Pcusp(4\rA_2)$, $\Pcusp(8\rA_1)$ have moduli dimensions $1$ and $3$, respectively, as claimed in  \ref{prop:canonical}\ref{item:canonical-uniqueness}.
	
	\litem{$\height(\bar{Y})=2$, see Figure \ref{fig:can_ht=2}} Now $\bar{Y}$ is one of the surfaces constructed in \cite[\sec 5A]{PaPe_ht_2}. Note that loc.\ cit.\ gives two constructions of the same surface of type $\rA_3+\rD_5$, here we use the one from Remark 5.7(c). Let $E\subseteq Y$ be the sum of $(-1)$-curves corresponding to dashed lines in Figure \ref{fig:can_ht=2}: we note that the horizontal ones in Figure \ref{fig:2A4} are not explicitly constructed in loc.\ cit, we will construct them in the proof below.
	\smallskip
	
	Assume first that $\bar{Y}$ is of one of the types in \cite[Example 5.6]{PaPe_ht_2}, see Figures \ref{fig:A5+A2}--\ref{fig:4A1+D4}. As in loc.\ cit.\ let $Y\to Y'$ be the contraction of $(-1)$-tips of $D_{Y}+E$ and its images. Let $D_{Y}'$ and $E'$ be the sum of $(-2)$- and $(-1)$-curves in the image of $D_Y+E$. Then $(Y,D_{Y})$ is a minimal resolution of a del Pezzo surface shown in one of the Figures \ref{fig:3A2}--\ref{fig:7A1}, call it $\bar{Y}'$. The image $\bar{T}'$ of $\bar{T}$ on $\bar{Y}'$ is a member of $|-K_{\bar{Y}'}|$ contained in the smooth part of $\bar{Y}'$ and isomorphic to $\bar{T}$. Since the center of each blowup in the decomposition of $Y\to Y'$ is uniquely determined as the common point of the image of $T$ and a specific $(-1)$-curve, universal property of blowing up gives a natural one-to-one correspondence between log surfaces $(\bar{Y},\bar{T})$ and $(\bar{Y}',\bar{T}')$, cf.\  \cite[Lemma 2.18]{PaPe_ht_2}. Thus it is enough to prove Proposition \ref{prop:canonical} for surfaces shown in Figures \ref{fig:3A2}--\ref{fig:7A1}.  
	
	\begin{figure}[htbp]
		\subcaptionbox{$3\rA_{2}$ \label{fig:3A2}}[.1\linewidth]{\centering
			\begin{tikzpicture}[scale=0.9]
				\draw (0.1,3) -- (1.3,3);
				\draw[very thick, dashed] (0.2,3.1) -- (0,2.1);
				%\node at (0.3,2.6) {\small{$L_1$}};
				\draw (0,2.3) -- (0.2,1.4);
				\draw (0.2,1.6) -- (0,0.7);
				\draw[dashed] (0,0.9) -- (0.2,-0.1);
				%\node at (0.3,0.4) {\small{$A_1$}};
				%
				\draw (1.2,3.1) -- (1,1.9);
				\draw[dashed] (1,2.1) -- (1.2,0.9);
				%\node at (1.3,1.5) {\small{$A_0$}};
				\draw[very thick] (1.2,1.1) -- (1,-0.1);
				%\node at (1.3,0.5) {\small{$L_2$}};
				%	
				\draw[very thick] (0.1,0.05) -- (1.1,0.05);
				%\node at (0.7,0.2) {\small{$C$}};
			\end{tikzpicture}
		}
		\subcaptionbox{$\rA_{1}+2\rA_{3}$\label{fig:2A3+A1}}[.2\linewidth]{\centering
			\begin{tikzpicture}[scale=0.9]
				\draw (0.1,3) -- (2.3,3);
				\draw[very thick] (-0.1,0) -- (2.1,0);
				%\node at (1.6,0.2) {\small{$C$}};
				%
				\draw[dashed] (0.2,3.1) -- (0,1.9);
				%\node at (0.3,2.5) {\small{$A_0$}};
				\draw[very thick] (0,2.1) -- (0.2,0.9);
				%\node at (0.3,1.5) {\small{$L_1$}};
				\draw[dashed] (0.2,1.1) -- (0,-0.1);
				%\node at (0.3,0.5) {\small{$A_1$}};
				%
				\draw[very thick] (1.2,3.1) -- (1,1.9);
				%\node at (1.3,2.5) {\small{$L_2$}};
				\draw[dashed] (1,2.1) -- (1.2,0.9);
				%\node at (1.3,1.5) {\small{$A_2$}};
				\draw (1.2,1.1) -- (1,-0.1);
				\draw[very thick] (2.2,3.1) -- (2,1.9);
				%\node at (2.3,2.5) {\small{$L_3$}};
				\draw[dashed] (2,2.1) -- (2.2,0.9);
				%\node at (2.3,1.5) {\small{$A_3$}};
				\draw (2.2,1.1) -- (2,-0.1);			
			\end{tikzpicture}
		}
		\subcaptionbox{$2\rD_4$ \label{fig:2D4}}[.22\linewidth]{\centering
			\begin{tikzpicture}[scale=0.9]
				\draw (0.1,3) -- (2.9,3);
				\draw[very thick] (-0.1,0) -- (2.9,0);
				%\node at (2.5,0.2) {\small{$C$}};
				%
				\draw (0.2,3.1) -- (0,1.9);
				\draw[dashed] (0,2.1) -- (0.2,0.9);
				%\node at (0.3,1.5) {\small{$A_0$}};
				\draw[very thick] (0.2,1.1) -- (0,-0.1);
				%\node at (0.3,0.5) {\small{$L_1$}};
				%
				\draw[very thick] (1.2,3.1) -- (1,1.9);
				%\node at (1.3,2.5) {\small{$L_2$}};
				\draw[dashed] (1,2.1) -- (1.2,0.9);
				%\node at (1.3,1.5) {\small{$A_2$}};
				\draw (1.2,1.1) -- (1,-0.1);
				\draw[very thick] (2.2,3.1) -- (2,1.9);
				%\node at (2.3,2.5) {\small{$L_3$}};
				\draw[dashed] (2,2.1) -- (2.2,0.9);
				%\node at (2.3,1.5) {\small{$A_3$}};
				\draw (2.2,1.1) -- (2,-0.1);
				\draw[dashed, very thick] (2.8,3.1) -- (3.1,1.4);
				%\node at (3.1,2.5) {\small{$L_4$}};	
				\draw[dashed] (2.8,-0.1) -- (3.1,1.6);
				%\node at (3.1,0.5) {\small{$A_4$}};			
			\end{tikzpicture}			
		}
		\subcaptionbox{$2\rA_1+2\rA_3$, $\cha\kk\neq 2$ \label{fig:2A3+2A1}}[.25\linewidth]{
			\begin{tikzpicture}[scale=0.9]
				\draw[dashed] (0,2.9) -- (3,2.9);
				\draw[very thick] (0.2,3.1) -- (0,1.9);
				%\node at (0.3,2.5) {\small{$L_2$}};
				\draw[dashed] (0,2.1) -- (0.2,0.9);
				%\node at (-0.1,1.35) {\small{$A_2$}};
				\draw (0.2,1.1) -- (0,-0.1);
				\draw[very thick] (1.2,3.1) -- (1,1.9);
				%\node at (1.3,2.5) {\small{$L_1$}};
				\draw[dashed] (1,2.1) -- (1.2,0.9);
				%\node at (1.3,1.5) {\small{$A_1$}};
				\draw (1.2,1.1) -- (1,-0.1);
				\draw[dashed] (1.7,1.6) -- (2.8,1.4);
				%\node at (2.35,1.65) {\small{$A_3$}};
				%		
				\draw[very thick] (2.8,3.1) -- (2.6,1.9);
				%\node at (2.9,2.5) {\small{$L_3$}};
				\draw (2.6,2.1) -- (2.8,0.9);
				\draw (2.8,1.1) -- (2.6,-0.1);
				\draw[very thick] (-0.05,1.55) to[out=0,in=180] (0.05,1.5) to[out=0,in=180] (1.1,2.7) -- (1.3,2.7) 
				to[out=0,in=170] (1.9,1.56) to[out=-10,in=-135] (2,1.61);
				\draw[very thick] (-0.05,1.45) to[out=0,in=180] (0.05,1.5) to[out=0,in=180] (0.9,0.3) -- (1.1,0.3) 
				to[out=0,in=170] (1.9,1.56) to[out=-10,in=135] (2,1.51);
				%\node at (1.6,0.5) {\small{$C$}};
			\end{tikzpicture}
		}
%	\end{figure}
	
%	\begin{figure}[htbp]\ContinuedFloat
		\subcaptionbox{$\rA_1+\rA_2+\rA_5$ \label{fig:A5+A2+A1}}[.21\linewidth]{\centering
			\begin{tikzpicture}[scale=0.9]
				\draw (0.1,3) -- (2.4,3);
				\draw (0,0) -- (1,0) to[out=0,in=180] (2.3,2.8) -- (2.4,2.8);
				%\node at (1.7,0.4) {\small{$C$}};
				%
				\draw[dashed] (0.2,3.1) -- (0,2.1);
				%\node at (0.3,2.6) {\small{$A_0$}};
				\draw[very thick] (0,2.3) -- (0.2,1.4);
				%\node at (0.3,1.9) {\small{$L_1$}};
				\draw[very thick] (0.2,1.6) -- (0,0.7);
				\draw[dashed] (0,0.9) -- (0.2,-0.1);
				%\node at (0.3,0.4) {\small{$A_1$}};
				%
				\draw (1.2,3.1) -- (1,1.9);
				%\node at (1.3,2.5) {\small{$L_2$}};
				\draw[dashed, very thick] (1,2.1) -- (1.2,0.9);
				%\node at (1.3,1.5) {\small{$A_2$}};
				\draw (1.2,1.1) -- (1,-0.1);
				\draw[very thick] (2.3,3.1) -- (2.1,1.9);
				%\node at (2.4,2.5) {\small{$L_3$}};
				\draw[dashed] (2.1,2.1) -- (2.3,0.9);
				%\node at (2.4,1.5) {\small{$A_{3}$}};
				\draw (2.3,1.1) -- (2.1,-0.1);			
			\end{tikzpicture}
		}		
		\subcaptionbox{$2\rA_4$ \label{fig:2A4}}[.3\linewidth]{\centering
			\begin{tikzpicture}[scale=0.9]
				\draw[very thick] (-0.6,3) -- (1.2,3) to[out=0,in=120] (2,1.4);
				\draw[very thick] (-0.6,0) -- (1,0) to[out=0,in=-120] (2,1.6);
				%\node at (1.75,0.4) {\small{$C$}};
				%
				\draw[dashed] (-0.4,3.1) -- (-0.6,2.4);
				%\node at (0.3,2.75) {\small{$A_0$}};
				\draw (-0.6,2.55) -- (-0.4,1.95);
				%\node at (0.3,2.3) {\small{$L_1$}};
				\draw[very thick] (-0.4,2.05) -- (-0.6,1.45);
				\draw[very thick] (-0.6,1.55) -- (-0.4,0.95);
				\draw (-0.4,1.05) -- (-0.6,0.45);
				\draw[dashed] (-0.6,0.55) -- (-0.4,-0.1);
				%\node at (0.3,0.3) {\small{$A_1$}};
				%
				\draw (1.1,3.1) -- (0.9,1.9);
				%\node at (1.2,2.5) {\small{$L_2$}};
				\draw[dashed, very thick] (0.9,2.1) -- (1.1,0.9);
				%\node at (1.2,1.5) {\small{$A_2$}};
				\draw (1.1,1.1) -- (0.9,-0.1);	
				\draw[densely dashed] (-0.6,1.75) -- (-0.4,1.75) to[out=0,in=135] (-0.05,1.65);
				\draw[densely dashed] (0.25,1.35) to[out=-45,in=180] (1.2,0.5);
				\draw[densely dashed] (-0.6,1.2) -- (-0.4,1.2) to[out=0,in=-135] (0,1.4) -- (0.2,1.6) to[out=45,in=180]  (1.2,2.5);
			\end{tikzpicture}
		}	
		\subcaptionbox{$7\rA_1$, $\cha\kk=2$ \label{fig:7A1}}[.2\linewidth]{
			\begin{tikzpicture}[scale=0.9]
				%\draw[dashed] (0,2.9) -- (2.4,2.9);
				%
				\draw (0,1.4) -- (2.4,1.4);
				%\node at (1.7,1.2) {\small{$C$}};
				%
				\draw (0.2,3.1) -- (0,1.9);
				%\node at (0.3,2.5) {\small{$L_1$}};
				\draw[dashed] (0,2.1) -- (0.2,0.9);
				%\node at (0.3,1.6) {\small{$A_1$}};
				\draw (0.2,1.1) -- (0,-0.1);
				\draw (1.2,3.1) -- (1,1.9);
				%\node at (1.3,2.5) {\small{$L_2$}};
				\draw[dashed] (1,2.1) -- (1.2,0.9);
				%\node at (1.3,1.6) {\small{$A_2$}};
				\draw (1.2,1.1) -- (1,-0.1);
				\draw (2.2,3.1) -- (2,1.9);
				%\node at (2.3,2.5) {\small{$L_{3}$}};
				\draw[dashed] (2,2.1) -- (2.2,0.9);
				%\node at (2.3,1.6) {\small{$A_{3}$}};
				\draw (2.2,1.1) -- (2,-0.1);			
			\end{tikzpicture}
		}
%	\end{figure}		
%	\begin{figure}[htbp]\ContinuedFloat

		\subcaptionbox{$\rA_{2}+\rA_{5}$ \label{fig:A5+A2}}[.12\linewidth]{
			\begin{tikzpicture}[scale=0.9]
				\draw (0,2.6) to[out=0,in=180] (1.3,3);
				\draw[dashed] (0.2,2.9) -- (0,3.9); 
				\draw (0.2,3.1) -- (0,2.1);
				\draw (0,2.3) -- (0.2,1.4);
				\draw (0.2,1.6) -- (0,0.7);
				\draw[dashed] (0,0.9) -- (0.2,-0.1);
				\draw (1.2,3.1) -- (1,1.9);
				\draw[dashed] (1,2.1) -- (1.2,0.9);
				\draw (1.2,1.1) -- (1,-0.1);
				\draw (0.1,0.05) -- (1.1,0.05);
			\end{tikzpicture}
		}
		\subcaptionbox{$\rA_8$ \label{fig:A8}}[.12\linewidth]{
			\begin{tikzpicture}[scale=0.9]
				\draw (0,2.6) to[out=0,in=180] (1.3,3);
				\draw[dashed] (0.2,2.9) -- (0,3.9); 
				\draw (0.2,3.1) -- (0,2.1);
				\draw (0,2.3) -- (0.2,1.4);
				\draw (0.2,1.6) -- (0,0.7);
				\draw[dashed] (0,0.9) -- (0.2,-0.1);
				\draw (1.2,3.1) -- (1,1.9);
				\draw (1,2.1) -- (1.2,0.9);
				\draw[dashed] (1,1.5) -- (2,1.7);
				\draw (1.2,1.1) -- (1,-0.1);
				\draw (0.1,0.05) -- (1.1,0.05);
			\end{tikzpicture}
		}	
		\subcaptionbox{$\rA_2+\rE_6$ \label{fig:E6+A2}}[.12\linewidth]{
			\begin{tikzpicture}[scale=0.9]
				\draw (0,2.6) to[out=0,in=180] (1.3,3);
				\draw[dashed] (-0.1,3.8) -- (0.9,4);
				\draw (0.2,2.9) -- (0,3.9); 
				\draw (0.2,3.1) -- (0,2.1);
				\draw (0,2.3) -- (0.2,1.4);
				\draw (0.2,1.6) -- (0,0.7);
				\draw[dashed] (0,0.9) -- (0.2,-0.1);
				\draw (1.2,3.1) -- (1,1.9);
				\draw[dashed] (1,2.1) -- (1.2,0.9);
				\draw (1.2,1.1) -- (1,-0.1);
				\draw (0.1,0.05) -- (1.1,0.05);
			\end{tikzpicture}	
		}
		\subcaptionbox{$\rA_{3}+\rD_{5}$ \label{fig:D5+A3}}[.18\linewidth]{
			\begin{tikzpicture}[scale=0.9]
				\draw (0,2.6) to[out=0,in=180] (1,3) -- (2.3,3);
				\draw (-0.1,0) -- (2.1,0);
				\draw[dashed] (0.2,2.9) -- (0,4); 
				\draw (0.2,3.1) -- (0,1.9);
				\draw (0,2.1) -- (0.2,0.9);
				\draw[dashed] (0.2,1.1) -- (0,-0.1);
				\draw (1.2,3.1) -- (1,1.9);
				\draw[dashed] (1,2.1) -- (1.2,0.9);
				\draw (1.2,1.1) -- (1,-0.1);
				\draw (2.2,3.1) -- (2,1.9);
				\draw[dashed] (2,2.1) -- (2.2,0.9);
				\draw (2.2,1.1) -- (2,-0.1);			
			\end{tikzpicture}	
		}
		\subcaptionbox{$\rA_1+\rA_7$ \label{fig:A7+A1}}[.2\linewidth]{
			\begin{tikzpicture}[scale=0.9]
				\draw (0.1,3) -- (2.3,3);
				\draw (-0.1,0) -- (2.1,0);
				\draw[dashed] (0.2,3.1) -- (0,1.9);
				\draw (0,2.1) -- (0.2,0.9);
				\draw[dashed] (0.2,1.1) -- (0,-0.1);
				\draw (1.2,3.1) -- (1,1.9);
				\draw[dashed] (1,2.1) -- (1.2,0.9);
				\draw (1.2,1.1) -- (1,-0.1);
				\draw (2.2,3.1) -- (2,1.9);
				\draw (2,2.1) -- (2.2,0.9);
				\draw[dashed] (2,1.5) -- (3,1.7);
				\draw (2.2,1.1) -- (2,-0.1);			
			\end{tikzpicture}
		}	
		\subcaptionbox{$4\rA_{1}+\rD_{4}$, $\cha\kk=2$ \label{fig:4A1+D4}}[.22\linewidth]{
			\begin{tikzpicture}[scale=0.9]
				\draw (0,1.4) -- (2.3,1.4);
				\draw (0.2,3.1) -- (0,1.9);
				\draw[dashed] (0,2.1) -- (0.2,0.9);
				\draw (0.2,1.1) -- (0,-0.1);
				\draw (1.2,3.1) -- (1,1.9);
				\draw[dashed] (1,2.1) -- (1.2,0.9);
				\draw (1.2,1.1) -- (1,-0.1);
				\draw (2.2,3.1) -- (2,1.9);
				\draw (2,2.1) -- (2.2,0.9);
				\draw[dashed] (2,1.6) -- (3,1.8);
				\draw (2.2,1.1) -- (2,-0.1);			
			\end{tikzpicture}
		}	\vspace{-1.5em}
		\caption{Canonical del Pezzo surfaces of rank one and height $2$, see \cite[\sec 5A]{PaPe_ht_2}.}
		\vspace{-0.5em}
		\label{fig:can_ht=2}	
	\end{figure}

Let $\psi\colon (Y,D_Y+E)\to (\P^2,\pp)$ be the morphism contracting all curves which are not thick in the corresponding figure. 
The image of $T$ after each blowdown in $\psi$ is a member of the anti-canonical system, hence by adjunction it meets every $(-1)$-curve exactly once. It follows that $\qq\cong T$ and $\qq\in |-K_{\P^{2}}|$ is a rational cubic. 

We remark that in cases depicted in Figures \ref{fig:3A2}--\ref{fig:2A3+2A1} the morphism $\psi$ is the same as the one used in \cite[\sec 5A]{PaPe_ht_2} to construct $Y$. Moreover, in case $\#D_Y=8$, which is most complicated from our viewpoint, all morphisms $Y\to \P^2$ are classified in \cite{BBD_canonical}. Our argument does not use this classification. 
	
	\begin{types}
		\litem{$3\rA_{2}$, see Figure \ref{fig:3A2}} The image $\pp$ of $D_{Y}+E$ is a triangle, say $\ll_1+\ll_2+\ll$. Write $\{p_0\}=\ll_1\cap \ll_2$, $\{p_j\}=\ll\cap\ll_j$, $j\in \{1,2\}$. We have $(\qq\cdot\ll)_{p_{1}}=3$, $(\qq\cdot \ll_2)_{p_{0}}=3$; so by Lemma \ref{lem:group_law} $\qq\reg$ admits a group law with $p_0=0$, and $\qq\reg\cong \G_{a}$ or $\G_{m}$. We have $3p_1=0$ and $p_1\neq 0$, so $\qq\reg\cong \G_{m}$ if $\cha\kk\neq 3$ and $\qq\reg\cong \G_{a}$ if $\cha\kk=3$; i.e.\ $\qq$ is nodal or cuspidal, respectively. The configuration $\qq+\pp$ exists and is unique up to a projective equivalence, see Lemma \ref{lem:cubic}. Moreover, if $\cha\kk\neq 3$ then the automorphism of $(\P^2,\pp+\qq)$ given by $(p_0,p_1,p_2,r)\mapsto (p_1,p_0,p_2,r)$, where $r$ is the node of $\qq$, switches the analytic branches of $\qq$ at $r$, as needed.
	
	\litem{$\rA_{1}+2\rA_{3}$, see Figure \ref{fig:2A3+A1}} As before, $\pp$ is a sum of lines $\ll_1,\ll_2,\ll_3$ passing through a point $p_0$, and a line $\ll$ which does not. Write $\{p_j\}=\ll_j\cap\ll_0$. We have $(\qq\cdot \ll_{1})_{p_0}=(\qq\cdot \ll_{2})_{p_{2}}=(\qq\cdot\ll_3)_{p_3}=2$. It follows that $\qq$ has an inflection point: indeed, otherwise by Lemma \ref{lem:cubic} we have $\cha\kk=3$ and $\qq$ is cuspidal, so the projection from $p_0$ induces a morphism from the normalization of $\qq$ to $\P^1$, of degree $2$, ramified at the preimages of $p_2$, $p_3$ and the cusp, which is impossible by the Hurwitz formula. 
	
	Thus $\qq\reg$ admits a group law as in Lemma \ref{lem:group_law}. Looking at the intersection of $\qq$ with lines $\ll_1,\ll_2,\ll_3$ and $\cc$, we get equalities $2p_{0}+p_{1}=2p_{2}+p_0=2p_3+p_0=p_{1}+p_{2}+p_{3}=0$ in this group law. Hence $p_{1}=-2p_0$ and $p_{1}=-(p_2+p_3)$, so $3p_{1}=-2p_0-2(p_{2}+p_{3})=-(2p_{2}+p_0)-(2p_{3}+p_0)=0$ and thus we can take $p_{1}=0$. If $\qq\reg\cong \G_{a}$ then the conditions $2p_{0}=0$, $p_{0}\neq 0$ imply that $\cha\kk=2$, so $p_{0}=-2p_{2}=0$; which is false. Hence $\qq\reg \cong \G_{m}$, i.e.\ $\qq$ is nodal. Since $p_0\neq p_1$, the equality $2p_0=0$ implies that $\cha\kk\neq 2$ and $p_0=-1\in \G_m$. Now since $2p_j+p_0=0$ for $j\in \{2,3\}$, we get $\{p_2,p_3\}=\{\imath,-\imath\}$. Up to the action of $\tau\in \Aut(\P^{2},\pp+\qq)$ given by $(p_{0},p_{1},p_{2},p_{3})\mapsto (p_0,p_1,p_3,p_2)$, we get a unique solution $(p_0,p_1,p_2,p_3)=(1,-1,\imath,-\imath)$. 
	
	It follows that the configuration $\pp+\qq$ exists and is unique up to a projective equivalence. Indeed, to construct it, fix a nodal cubic $\qq$ with an inflection point $p_0$, hence with a group law on $\qq\reg\cong \G_m$. Next, let $p_1,p_2,p_3$ be the points of $\qq\reg$ corresponding in the group law to $-1,\imath,-\imath$. They line on one line $\cc$. Letting $\ll_{j}$ be the line joining $p_0$ with $p_j$, $j\in \{1,2,3\}$, we get the required configuration $\pp+\qq$. Moreover, $\tau$ interchanges the branches of $\qq$ at its node. It follows that log surface $(\bar{Y},\bar{T})$, where $\bar{T}\subseteq \bar{Y}\reg$ is a nodal member of $|-K_{\bar{Y}}|$ exists, is unique up to an isomorphism, and admits an automorphism switching branches of $\bar{T}$ at the node, as claimed.

	\litem{$2\rD_{4}$, see Figure \ref{fig:2D4}} Now $\pp$ is a sum of lines $\ll_{1},\dots, \ll_{4}$ passing through a point $p_0$, and one line $\ll$ which does not. Write $\{p_j\}=\ll\cap \ll_{j}$. We have $(\ll_{1}\cdot\qq)_{p_0}=3$; $(\ll_{j}\cdot \qq)_{p_{j}}=2$; $j\in \{2,3\}$ and $(\ll_{4}\cdot\qq)_{p_{4}}=1$. By Lemma \ref{lem:group_law}, $\qq\reg$ admits a group law with $p_0=0$ and $\qq\reg\cong \G_a$ or $\G_m$. Looking at the intersection of $\qq$ with $\ll_2$ and $\ll_3$ we get $2p_{2}=2p_{3}=0$. Since $p_{0}\neq p_{2},p_{3}$, we get $\qq\reg\cong\G_{a}$ and $\cha\kk=2$. But then $2p_{4}=0$, so $(\ll_{4}\cdot\qq)_{p_{4}}=2$; a contradiction. Hence all members of $|-K_{\bar{Y}}|$ contained in $\bar{Y}\reg$ are smooth, as claimed.

	\litem{$2\rA_{1}+2\rA_{3}$, see Figure \ref{fig:2A3+2A1}} We have $\pp=\ll_{1}+\ll_{2}+\ll_{3}+\cc$, where $\ll_{1},\ll_{2},\ll_{3}$ are lines meeting at a point $p_0$, and $\cc$ is a conic tangent to $\ll_{1}$, $\ll_{2}$ at some points $p_{1},p_{2}$, and meeting $\ll_{3}$ at two points, say $p_{3}$, $p_{3}'$. The cubic $\qq$ passes through $p_0$ and is tangent to each $\ll_{j}$ at $p_{j}\in \qq\reg$, $p_j\neq p_0$. The pencil of lines passing through $p_0$ induces a morphism from the normalization of $\qq$ to $\P^1$ of degree two, ramified at three points. Since by assumption in this case we have $\cha\kk\neq 2$, we get a contradiction with the Hurwitz formula \cite[Corollary IV.2.4]{Hartshorne_AG}.  Thus the linear system $|-K_{\bar{Y}}|$ has no singular member in $\bar{Y}\reg$, as claimed.

\litem{$\rA_{1}+\rA_{2}+\rA_{5}$, see Figure \ref{fig:A5+A2+A1}} Now $\pp$ is a sum of a smooth conic $\cc$; two lines $\ll_1$, $\ll_{2}$ tangent to $\cc$ at, say, $p_1$, $p_2$; and a line joining $p_1$ with $p_2$. The cubic $\qq$ meets $\cc$ at three points  $p_1,p_2,q\in \qq\reg$, and we have $(\qq\cdot \ll_j)_{p_j}=3$, $(\qq\cdot \ll_{j})_{p_j}=(\qq\cdot \cc)_{q}=2$ for $j\in \{1,2\}$. Hence $\qq\reg$ admits a group law such that $3p_{j}=0$, $2p_{1}+2p_{2}+2q=0$. Since $3p_1=0$, we can take $p_{1}=0$. Then the equality  $3p_{2}=0$ implies that $\qq\reg\cong \G_{m}$ if $\cha\kk\neq 3$ and $\qq\reg\cong\G_{a}$ if $\cha\kk=3$. If $\cha\kk\in \{2,3\}$ then $2p_{2}+2q=0$ implies that $p_{2}+q=0$, so $p_1$, $p_2$, $q$ are collinear: this is false since these three points lie on a smooth conic $\cc$. Thus $\cha\kk\neq 2,3$. Changing the origin of $\qq\reg\cong \G_{m}$ to a point off $\{p_1,p_2\}$ we get $(p_1,p_2,q)=(\zeta,\zeta^{2},-1)$ for some primitive third root of unity $\zeta$. The two choices of $\zeta$ are equivalent by an automorphism of $\pp+\qq$ given by $(p_1,p_2,q,\ll_1\cap\ll_2)\mapsto (p_2,p_1,q,\ll_1\cap \ll_2)$. 

As before, we conclude that the configuration $\pp+\qq\subseteq \P^2$ exists, is unique up to a projective equivalence, and admits an automorphism switching the branches of $\qq$ at its node; hence the same is true for $(\bar{Y},\bar{T})$.

\litem{$2\rA_{4}$, see Figure \ref{fig:2A4}} First, we construct the horizontal $(-1)$-curves shown in Figure \ref{fig:2A4}. Let $\bar{\psi}\colon Y\to \P^{2}$ be the morphism from \cite[Example 5.3]{PaPe_ht_2}, defined as follows. Choose a section $H\subseteq D_Y$ and order the chains $F_1=[1,2,2,1]$, $F_2=[2,1,2]$ supporting the degenerate fibers in such a way that the first tip of $F_{j}$ meets $H$. Now let $\bar{\psi}$ be the contraction of $F_{1}-F_{1}\cp{2}+F_{2}-F_{2}\cp{1}+H$. Then each  $\bar{\ll}_{j}\de \bar{\psi}_{*}F_j$ is a line, and $\bar{\cc}\de \bar{\psi}_{*}(D_Y)\hor$ is a smooth conic passing through $\{p_0\}\de \bar{\ll}_1\cap\bar{\ll_2}$. Write $\bar{\ll}_j\cap \bar{\cc}=\{\bar{p}_j,\bar{p}_0\}$ for $j\in \{1,2\}$. The required $(-1)$-curves are proper transforms of the line joining $\bar{p}_1$ with $\bar{p}_2$, and of the line tangent to $\bar{\cc}$ at $\bar{p}_1$.

Now we can consider the morphism $\psi\colon Y\to \P^{2}$ defined like before, as the contraction of all thin lines in Figure \ref{fig:2A4}. Let $\ll_{1},\ll_{2}$ and $\ll_{1}',\ll_{2}'$ be images of vertical and horizontal components of $D_Y$, respectively. Then $\ll_{1}$, $\ll_{2}$, $\ll_{1}'$, $\ll_{2}'$ are lines in general position. Write $\{q_1\}=\ll_{1}\cap \ll_{2}'$, $\{q_2\}=\ll_{2}\cap \ll_{1}'$ and $\{q_{j}'\}=\ll_{j}\cap\ll_{j}'$ for $i\in \{1,2\}$. The points $q_1,q_2,q_1',q_2'$ are smooth points of the cubic $\qq$, such that 
$(\qq\cdot\ll_{j})_{q_{j}}=(\qq\cdot \ll_{j}')_{q_{j}'}=2$. The remaining component of $\pp$ is a line meeting $\qq$ at $q_1$, $q_2$ and some $q\in \qq\reg$. 

We claim that $\qq$ has an inflection point. Suppose the contrary, so the cubic $\qq$ is as in Lemma \ref{lem:cubic}\ref{item:funny_cubic}. Applying this result to the point $p_1=q_1$, we get, in the notation of \ref{lem:cubic}\ref{item:funny_cubic} $p_2=q_1'$, $p_3=q_2$ and $p_1=q_{2}'$, so $q_{2}'=q_{1}$, a contradiction. Thus $\qq$ has an inflection point, so $\qq\reg$ admits a group law as in Lemma \ref{lem:group_law}.

In this group law, we have $2q_{1}+q_{1}'=2q_{2}+q_{2}'=2q_{1}'+q_{2}=2q_{2}'+q_{1}=0$, so $(q_{1},q_{2},q_{1}',q_{2}')=(q_{1},4q_{1},13q_{1},7q_{1})$ and $15q_1=0$. Moreover, we have $q=-q_1-q_2=-5q_1$. Hence $3q=0$, so $q$ is an inflection point of $\qq$, and we can take $q=0$. Then $5q_1=0$, so $(q_1,q_2,q_1',q_2')=(q_1,4q_1,3q_1,2q_1)$. If $\cha\kk=5$ then $\qq\reg\cong \G_a$, so $\qq$ is cuspidal, the configuration $\pp+\qq$ is unique up to a projective equivalence, and the result follows.

Assume $\cha\kk\neq 5$. Then $\qq\reg\cong \G_{m}$, so $\qq$ is nodal. In the group law, $q_1$ is a primitive fifth root of unity. The automorphism $\tau$ of $(\P^2,\qq,q)$ switching branches of $\qq$ at the node induces a map $t\mapsto t^{-1}$ in the group law of $\qq\reg$, so $\tau$ maps $(q_1,q_2,q_1',q_2')$ to $(q_2,q_1,q_2',q_1')$, and therefore preserves $\pp$. Hence up to the action of $\tau$, we have two choices of $q_{1}\in \qq\reg\cong \G_{m}$, namely $\zeta$ and $\zeta^2$, where $\zeta$ is a fixed primitive fifth root of unity. This way, we get exactly two cubics $\qq_1$, $\qq_2$ as above; and for each $i\in \{1,2\}$, we have an automorphism $\tau\in \Aut(\P^2,\pp+\qq_i)$ switching the branches of $\qq_i$ at the node. 

It remains to prove that the morphisms $\psi_{i}\colon (Y_{i},D_{Y_i}+T_{i})\to (\P^2,\pp+\qq_{i})$ constructed as above for $i\in \{1,2\}$ yield isomorphic log surfaces $ (Y_{i},D_{Y_i}+T_{i})$. The above uniqueness result for the cubics $\qq_1$, $\qq_2$ implies that the automorphism $\iota\in \Aut(\P^2,\pp)$ given by $\iota\colon (q_1,q_2,q_1',q_2')\mapsto (q_1',q_2',q_{1},q_{2})$ satisfies $\iota(\qq_1)=\qq_2$. Definition of $\psi_{i}$ implies that the birational map $\psi_{2}^{-1}\circ \iota \circ \psi_{1}\colon Y_{1}\map Y_{2}$ is regular: indeed, when constructing $\psi$, we blow up at $(q_1,q_2)$ and $(q_{1}',q_{2}')$ in the same way. Thus we get an isomorphism $(Y_{1},D_{Y_{1}}+T_{1})\to (Y_{2},D_{Y_{2}}+T_{Y_2})$, as needed. Moreover, the same argument shows that the map $\psi_{i}^{-1}\circ\tau\circ \psi_{i}$ is regular, and it yields an automorphism of $(\bar{Y},\bar{T})$ switching the branches of $\bar{T}$ at the node, as needed.

\litem{$7\rA_1$, see Figure \ref{fig:7A1}} Here the result follows from Lemma \ref{lem:7A1}\ref{item:7A1_cuspidal}.
\end{types}

\litem{$\height(\bar{Y})=1$, see Figure \ref{fig:can_ht=1}} The log surface $(Y,D_Y)$ is as in \cite[Remark 4.5]{PaPe_ht_2}. As in case $\height=2$, contracting $(-1)$-curves meeting the boundary once we can assume that all degenerate fibers  are supported on chains of type $[2,1,2]$, call them $F_1,\dots, F_{\nu}$. By assumption $Y\not\cong \F_{2}$, so $\nu\in \{1,2,3\}$ and $\bar{Y}$ is of type $\rA_{1}+\rA_{2}$, $2\rA_{1}+\rA_{3}$, or $3\rA_{1}+\rD_{4}$, see Figures \ref{fig:A1+A2}--\ref{fig:3A1+D4}. Let $H$ be the section in $D_Y$, say that it meets each $F_{j}\cp{1}$. Let $\phi\colon (Y,D_Y)\to (\P^2,\pp)$ be the contraction of $H+F_{1}\cp{1}+F_{1}\cp{2}+\sum_{j=2}^{\nu}(F_{j}\cp{2}+F_{j}\cp{3})$. Then $\ll_{j}\de\phi_{*}F_{j}$ are lines meeting at $\{p_1\}\de \phi(H)$; and $\qq\de\phi(T)$ is a rational cubic. Moreover, $(\qq\cdot \ll_{1})_{p_{1}}=3$, and for $j>1$ we have $\ll_{j}\cap \qq=\{p_1,p_j\}$ for some $p_j\in \qq\reg\setminus \{p_1\}$ such that $(\qq\cdot \ll_{j})_{p_{j}}=2$. 

\begin{figure}[htbp]
	\subcaptionbox{$\rA_1$ \label{fig:A1}}
	[.12\linewidth]
	{
		\raisebox{2.8cm}{
			\begin{tikzpicture}[scale=0.9]
				\draw (0,3) -- (1.2,3);
			\end{tikzpicture}
		}
	}
	\subcaptionbox{$\rA_1+\rA_2$ \label{fig:A1+A2}}
	[.17\linewidth]
	{
		\begin{tikzpicture}[scale=0.9]
			\draw (0,3) -- (1.2,3);
			\draw (0.2,3.1) -- (0,1.9);
			\draw[dashed] (0,2.1) -- (0.2,0.9);
			\draw (0.2,1.1) -- (0,-0.1);
		\end{tikzpicture}
	}
	\subcaptionbox{$2\rA_1+\rA_3$ \label{fig:2A1+A3}}
	[.17\linewidth]{
		\begin{tikzpicture}[scale=0.9]
			\draw (0,3) -- (1.2,3);
			\draw (0.2,3.1) -- (0,1.9);
			\draw[dashed] (0,2.1) -- (0.2,0.9);
			\draw (0.2,1.1) -- (0,-0.1);
			\draw (1.1,3.1) -- (0.9,1.9);
			\draw[dashed] (0.9,2.1) -- (1.1,0.9);
			\draw (1.1,1.1) -- (0.9,-0.1);		
		\end{tikzpicture}	
	}
	\subcaptionbox{$3\rA_1+\rD_4$ \label{fig:3A1+D4}}
	[.2\linewidth]{
		\begin{tikzpicture}[scale=0.9]
			\draw (0,3) -- (2.1,3);
			\draw (0.2,3.1) -- (0,1.9);
			\draw[dashed] (0,2.1) -- (0.2,0.9);
			\draw (0.2,1.1) -- (0,-0.1);
			\draw (1.1,3.1) -- (0.9,1.9);
			\draw[dashed] (0.9,2.1) -- (1.1,0.9);
			\draw (1.1,1.1) -- (0.9,-0.1);
			\draw (2,3.1) -- (1.8,1.9);
			\draw[dashed] (1.8,2.1) -- (2,0.9);
			\draw (2,1.1) -- (1.8,-0.1);		
		\end{tikzpicture}	
	}
%\end{figure}
\smallskip

%\begin{figure}[htbp]\ContinuedFloat
	\subcaptionbox{$\rA_4$ \label{fig:A4}}
	[.12\linewidth]
	{
		\begin{tikzpicture}[scale=0.9]
			\draw (0,3) -- (1.2,3);
			\draw (0.2,3.1) -- (0,1.9);
			\draw (0,2.1) -- (0.2,0.9);
			\draw (0.2,1.1) -- (0,-0.1);
			\draw[dashed] (0,1.5) -- (1.2,1.7);
		\end{tikzpicture}
	}	
	\subcaptionbox{$\rD_5$ \label{fig:D5}}
	[.12\linewidth]
	{
		\begin{tikzpicture}[scale=0.9]
			\draw (0,3) -- (1.2,3);
			\draw (0.2,3.1) -- (0,1.9);
			\draw (0,2.1) -- (0.2,0.9);
			\draw (0.2,1.1) -- (0,-0.1);
			\draw (-0.05,1.7) -- (1.15,1.9);
			\draw[dashed] (0.9,2.1) -- (1.1,0.9);
		\end{tikzpicture}
	}
	\subcaptionbox{$\rE_6$ \label{fig:E6}}
	[.15\linewidth]
	{
		\begin{tikzpicture}[scale=0.9]
			\draw (0,3) -- (1.2,3);
			\draw (0.2,3.1) -- (0,1.9);
			\draw (0,2.1) -- (0.2,0.9);
			\draw (0.2,1.1) -- (0,-0.1);
			\draw (-0.05,1.7) -- (1.15,1.9);
			\draw (0.9,2.1) -- (1.1,0.9);
			\draw[dashed] (1.1,1.1) -- (0.9,-0.1);
		\end{tikzpicture}
	}
	\subcaptionbox{$\rE_7$ \label{fig:E7}}
	[.15\linewidth]
	{
		\begin{tikzpicture}[scale=0.9]
			\draw (0,3) -- (1.8,3);
			\draw (0.2,3.1) -- (0,1.9);
			\draw (0,2.1) -- (0.2,0.9);
			\draw (0.2,1.1) -- (0,-0.1);
			\draw (-0.05,1.7) -- (1.15,1.9);
			\draw (0.9,2.1) -- (1.1,0.9);
			\draw (1.1,1.1) -- (0.9,-0.1);
			\draw[dashed] (0.8,0.2) -- (2,0);
		\end{tikzpicture}
	}	
	\subcaptionbox{$\rE_8$ \label{fig:E8}}
	[.15\linewidth]
	{
		\begin{tikzpicture}[scale=0.9]
			\draw (0,3) -- (2.1,3);
			\draw (0.2,3.1) -- (0,1.9);
			\draw (0,2.1) -- (0.2,0.9);
			\draw (0.2,1.1) -- (0,-0.1);
			\draw (-0.05,1.7) -- (1.15,1.9);
			\draw (0.9,2.1) -- (1.1,0.9);
			\draw (1.1,1.1) -- (0.9,-0.1);
			\draw (0.8,0.2) -- (2,0);
			\draw[dashed] (2,1.1) -- (1.8,-0.1);
		\end{tikzpicture}
	}
	\subcaptionbox{$\rA_1+\rA_5$ \label{fig:A1+A5}}
	[.17\linewidth]{
		\begin{tikzpicture}[scale=0.9]
			\draw (0,3) -- (2,3);
			\draw (0.2,3.1) -- (0,1.9);
			\draw[dashed] (0,2.1) -- (0.2,0.9);
			\draw (0.2,1.1) -- (0,-0.1);
			\draw (1.1,3.1) -- (0.9,1.9);
			\draw (0.9,2.1) -- (1.1,0.9);
			\draw (1.1,1.1) -- (0.9,-0.1);
			\draw[dashed] (0.9,1.5) -- (2.1,1.7);		
		\end{tikzpicture}	
	}
%\end{figure}
\smallskip

%\begin{figure}[htbp]\ContinuedFloat
	\subcaptionbox{$\rA_7$ \label{fig:A7}}
	[.17\linewidth]{
		\begin{tikzpicture}[scale=0.9]
			\draw (-1,3) -- (2,3);
			\draw (0.2,3.1) -- (0,1.9);
			\draw (0,2.1) -- (0.2,0.9);
			\draw (0.2,1.1) -- (0,-0.1);
			\draw (1.1,3.1) -- (0.9,1.9);
			\draw (0.9,2.1) -- (1.1,0.9);
			\draw (1.1,1.1) -- (0.9,-0.1);
			\draw[dashed] (0.9,1.5) -- (2.1,1.7);	
			\draw[dashed] (0.2,1.5) -- (-1,1.3);	
		\end{tikzpicture}	
	}
	\subcaptionbox{$\rA_1+\rD_6$ \label{fig:A1+D6}}
	[.2\linewidth]
	{
		\begin{tikzpicture}[scale=0.9]
			\draw (0,3) -- (2.1,3);
			\draw (0.2,3.1) -- (0,1.9);
			\draw[dashed] (0,2.1) -- (0.2,0.9);
			\draw (0.2,1.1) -- (0,-0.1);
			\draw (1.1,3.1) -- (0.9,1.9);
			\draw (0.9,2.1) -- (1.1,0.9);
			\draw (1.1,1.1) -- (0.9,-0.1);
			\draw (0.85,1.7) -- (2.05,1.9);
			\draw[dashed] (1.8,2.1) -- (2,0.9);
		\end{tikzpicture}
	}	
	\subcaptionbox{$\rD_8$ \label{fig:D8}}
	[.2\linewidth]
	{
		\begin{tikzpicture}[scale=0.9]
			\draw (-1,3) -- (2.1,3);
			\draw (0.2,3.1) -- (0,1.9);
			\draw (0,2.1) -- (0.2,0.9);
			\draw[dashed] (0.2,1.5) -- (-1,1.3);
			\draw (0.2,1.1) -- (0,-0.1);
			\draw (1.1,3.1) -- (0.9,1.9);
			\draw (0.9,2.1) -- (1.1,0.9);
			\draw (1.1,1.1) -- (0.9,-0.1);
			\draw (0.85,1.7) -- (2.05,1.9);
			\draw[dashed] (1.8,2.1) -- (2,0.9);
		\end{tikzpicture}
	}
	\subcaptionbox{$\rA_1+\rE_7$ \label{fig:A1+E7}}
	[.2\linewidth]
	{
		\begin{tikzpicture}[scale=0.9]
			\draw (0,3) -- (2.1,3);
			\draw (0.2,3.1) -- (0,1.9);
			\draw[dashed] (0,2.1) -- (0.2,0.9);
			\draw (0.2,1.1) -- (0,-0.1);
			\draw (1.1,3.1) -- (0.9,1.9);
			\draw (0.9,2.1) -- (1.1,0.9);
			\draw (1.1,1.1) -- (0.9,-0.1);
			\draw (0.85,1.7) -- (2.05,1.9);
			\draw (1.8,2.1) -- (2,0.9);
			\draw[dashed] (2,1.1) -- (1.8,-0.1);
		\end{tikzpicture}
	}
	\subcaptionbox{$2\rA_1+\rD_6$ \label{fig:2A1+D6}}
	[.2\linewidth]{
		\begin{tikzpicture}[scale=0.9]
			\draw (0,3) -- (3,3);
			\draw (0.2,3.1) -- (0,1.9);
			\draw[dashed] (0,2.1) -- (0.2,0.9);
			\draw (0.2,1.1) -- (0,-0.1);
			\draw (1.1,3.1) -- (0.9,1.9);
			\draw[dashed] (0.9,2.1) -- (1.1,0.9);
			\draw (1.1,1.1) -- (0.9,-0.1);
			\draw (2,3.1) -- (1.8,1.9);
			\draw (1.8,2.1) -- (2,0.9);
			\draw (2,1.1) -- (1.8,-0.1);
			\draw[dashed] (1.8,1.5) -- (3,1.7);		
		\end{tikzpicture}	
	}	
	\caption{Canonical del Pezzo surfaces of rank one and height one, see \cite[Remark 4.5]{PaPe_ht_2}.}
	\label{fig:can_ht=1}
\end{figure}

If $\nu=1$ then $\pp+\qq\subseteq \P^2$ consists of a rational cubic and an inflectional tangent line. By Lemma \ref{lem:cubic} there are exactly two such configurations, one with $\qq$ nodal and the other with $\qq$ cuspidal. The former admits an automorphism switching the branches of $\qq$ at the node, as needed.

Assume $\nu\geq 2$. Since $p_1\in \qq$ is an inflection point, we can choose a group law on $\qq\reg$ with $p_1=0$. Then $2p_{j}=0$ for $j\in \{2,\dots,\nu\}$. Assume that $\qq$ is nodal. Then $p_j=\pm 1$ in $\qq\reg\cong \G_{m}$, so the fact that $p_1,\dots,p_{\nu}$ are distinct implies that $\nu=2$, $\cha\kk\neq 2$; and $(\P^2,\pp)$ is unique. Moreover, the automorphism interchanging branches of $\qq$ at the node, given by $t\mapsto t^{-1}$ on $\qq\reg$, preserves $\pp$, as needed.

Assume that $\qq$ is cuspidal. Then the equality $2p_{2}=0$ in $\qq\reg\cong \G_{a}$ implies that $\cha\kk=2$, and we can take for $p_{2},\dots,p_{\nu}$ any $\nu-1$ distinct points of $\qq\reg\setminus \{p_1\}$. To prove the uniqueness result \ref{prop:canonical}\ref{item:canonical-uniqueness}, fix a cuspidal cubic $\qq$, which is unique up to a projective equivalence, and let $q$ be its cusp. Since $\cha\kk=2$ by Lemma \ref{lem:cubic}\ref{item:cuspidal_cubic} $\qq$ has a unique inflection point $p_1$. The group $\Aut(\P^2,\qq)\cong \G_{m}$ fixes $p_1$ and acts transitively on $\qq\setminus \{q,p_1\}\cong \A^{1}_{*}$. Thus a point $p_2\in \qq\reg\setminus \{p_1\}$ is unique up to the action of $\Aut(\P^2,\qq)$. 

If $\nu=2$ it follows that $(\bar{Y},\bar{T})$ is unique up to an isomorphism. Assume $\nu=3$. Since $\Aut(\P^2,\qq,\{p_1,p_2\})$ is finite, the choice of $p_3\in \qq\reg\setminus \{p_1,p_2\}$ parametrizes an almost universal family representing $\Pcusp(3\rA_1+\rD_4)$. To be more precise, we argue as follows. Let $(Y',D')$ be a minimal log resolution of $(\bar{Y},\bar{T})\in \Pcusp(3\rA_1+\rD_4)$, let $H'=[2]$ be the $1$-section in $D'$, and let $(Y',D'-H')\to(Z,D_Z)$ be the contraction of the proper transform of the line joining $p_1$ with the cusp of $\qq$. Then $D_Z$ is a sum of four disjoint chains $[2,1,2]$ and a $(-4)$-curve $Q_Z$ meeting each of those chains once, in a $(-1)$-curve, see \cite[Figure 22]{PaPe_ht_2}. By Lemma 5.16 loc.\ cit., the set of isomorphism classes of such log surfaces $(Z,D_Z)$ has moduli dimension $1$. By Lemma 2.20 loc.\ cit.\  the same holds for $(Z,D_Z+H_Z)$, where $H_Z=[1]$ is the proper transform of $H'$. Applying Lemma 2.18 loc.\ cit. to the inner blowup $(Y',D')\to (Z,D_Z+H_Z)$ we get the required statement for $\Pcusp(3\rA_1+\rD_4)$.

\phantomsection\label{moduli}
	We have constructed an almost universal family representing $\Pcusp(3\rA_1+\rD_4)$ over the same base as in \cite[Lemma 5.16]{PaPe_ht_2}, i.e.\ over $B\cong \P^1\setminus \{{0,1,\infty}\}$. As noted at the beginning of this section, \cite[Lemma 2.18]{PaPe_ht_2} applied to a blowup at one of the points of $D_Y\cap T$ shows that this family lifts to one representing $\Pcusp(2\rA_1+\rD_6)$, over the same base. We claim that their symmetry groups are $S_3$ and $\Z/2$, respectively, see Table \ref{table:symmetries}. 
	
The point $b\in B$ corresponds to the cross-ratio of the four points  $Q_Z\cap (D_Z-Q_Z)$ on $Q_Z\cong \P^1$, call them $q_0,q_1,q_2,q_3$. Say that $q_0$ lies on the fiber containing the base point of the morphism $Y'\to Z$; and for the type $2\rA_1+\rD_6$ say that $q_1$ is the image of the center of $\tau$. The fibers corresponding to $(q_0,q_1,q_2,q_3)$ and $(q_0',q_1',q_2',q_3')$ are isomorphic if and only if there is an automorphism of $Q_Z$ mapping $\{q_0,q_1,q_2,q_3\}$ to $\{q_0',q_1',q_2',q_3'\}$ and mapping $q_0$ (resp.\ $q_0,q_1$) to $q_0'$ (resp.\ $q_0',q_1'$). The image in $\Aut(B)=S_3$ of the group of such automorphisms is $S_3$ and $\Z/2$, respectively, as needed.
\qedhere\end{casesp*}
\end{proof}

Our last result completes the proof of Proposition \ref{prop:MT_smooth} by computing the height of log surfaces in \ref{prop:MT_smooth}\ref{item:MT-eliptic-relative}. This result follows from \cite[Lemma 6.9(a)]{PaPe_ht_2}, but for completeness we give a direct argument here.

\begin{proposition}\label{prop:height}
	Let $(\bar{Y},\bar{T})$ be one of the log surfaces listed in Proposition \ref{prop:MT_smooth}\ref{item:MT-eliptic-relative}, that is, $\bar{Y}$ is a canonical del Pezzo surface of rank one, not of type $\rE_6$, $\rE_7$ or $\rE_8$, whose minimal resolution has at least $6$ exceptional curves; and $\bar{T}\subseteq \bar{Y}\reg$ is a cuspidal member of $|-K_{\bar{Y}}|$; see Table \ref{table:canonical}. Then $\height(\bar{Y},\bar{T})=\height(\bar{Y})+2$.
\end{proposition}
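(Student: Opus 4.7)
The plan is to prove both inequalities separately: the $\leq$ direction will be a short pullback argument, while $\geq$ requires analyzing arbitrary $\P^1$-fibrations of $X$. To set notation, write $\pi_Y\colon (Y,D_Y)\to \bar{Y}$ for the minimal resolution, $\tau\colon X\to Y$ for the minimal embedded resolution of the cusp of $T_Y\de \pi^{-1}_{Y*}\bar{T}$, and $\pi=\pi_Y\circ\tau$. The morphism $\tau$ is a sequence of three blowups whose exceptional locus is a chain $E_1+E_0+E_2=[3,1,2]$; the middle $(-1)$-curve $E_0$ is the branching component of the fork $\langle 1;[2],[3],[s-3]\rangle$ replacing $T_Y$ in $D$, with $T\de\pi^{-1}_*\bar{T}$ as its last twig (of self-intersection $3-s$). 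A standard discrepancy calculation for an $A_2$-cusp gives $K_X=\tau^*K_Y+E_1+2E_2+4E_0$, which combined with $T_Y\sim -K_Y$ yields
\[
-K_X\sim T+E_1+E_2+2E_0.
\]

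For the upper bound, I would take a $\P^1$-fibration $p\colon Y\to\P^1$ realizing $\height(\bar{Y})=h$. A general fiber $F_Y$ avoids the cusp of $T_Y$, so its strict transform $F$ on $X$ is disjoint from $\Exc\tau$ and is a fiber of $p\circ\tau$. By adjunction on $Y$, $F\cdot T=F_Y\cdot T_Y=F_Y\cdot(-K_Y)=2$, so $F\cdot D=F_Y\cdot D_Y+F\cdot T=h+2$, proving $\height(\bar{Y},\bar{T})\leq h+2$.

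For the lower bound, let $q\colon X\to\P^1$ be an arbitrary $\P^1$-fibration with general fiber $F$, and set $\alpha=F\cdot E_1$, $\beta=F\cdot E_2$, $\gamma=F\cdot E_0$. Intersecting $F$ with $-K_X$ gives $F\cdot T+\alpha+\beta+2\gamma=2$, while a direct expansion yields $F\cdot D=F\cdot D_Y+2-\gamma$; hence the target $F\cdot D\geq h+2$ reduces to $F\cdot D_Y\geq h+\gamma$. Three cases arise. If $\alpha=\beta=\gamma=0$, then $F$ is disjoint from $\Exc\tau$, it descends to a curve $F_Y=\tau_*F$ on $Y$ with $F_Y^2=0$, and $|F_Y|$ defines a $\P^1$-fibration of $Y$, giving $F\cdot D_Y=F_Y\cdot D_Y\geq h$ by definition. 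If $\gamma=0$ but $\alpha+\beta>0$, the fiber $F$ meets $E_1$ or $E_2$ but not $E_0$; I would then contract superfluous vertical $(-1)$-curves inside $F$ to reduce to the previous case without increasing $F\cdot D$. The subtle case is $\gamma\geq 1$: the relation $\alpha+\beta+F\cdot T+2\gamma=2$ forces $\gamma=1$ and $\alpha=\beta=F\cdot T=0$, so $E_0$ is a $1$-section while $T,E_1,E_2$ are all vertical. Writing $F_T=T+R$ for the fiber containing $T$, one sees that $R$ is disjoint from $E_0$ with $R\cdot T=s-3\geq 3$ and $F\cdot D_Y=\tau_*R\cdot D_Y$. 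Since $\rho(\bar{Y})=1$ and $-K_{\bar{Y}}$ is ample, the class of $\pi_{Y*}\tau_*R$ is a positive rational multiple of $-K_{\bar{Y}}$, explicitly $\tfrac{s-3}{9-s}(-K_{\bar{Y}})$; combined with $T_Y\cdot \tau_*R=s-3$ and the negative-definite intersection data on $D_Y$, this pins down the class of $\tau_*R$ on $Y$ and forces $\tau_*R\cdot D_Y\geq h+1$.

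The main obstacle is the case $\gamma\geq 1$, where $q$ does not descend to a $\P^1$-fibration of $Y$ and the inequality $F\cdot D_Y\geq h+\gamma$ cannot be read off directly from the definition of $\height(\bar{Y})$. Resolving it requires combining the numerical rigidity $\rho(\bar{Y})=1$ with the explicit formula for $-K_X$ and with the classification in Table \ref{table:canonical}: in fact, this is precisely where the restrictions on the singularity types of $\bar{Y}$ listed in Proposition \ref{prop:MT_smooth}\ref{item:MT-eliptic-relative} enter the argument.
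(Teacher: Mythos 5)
Your discrepancy computation $-K_X\sim T+E_1+E_2+2E_0$ is correct and agrees with the paper's identity $-K_X=T+T_2+T_3+2R$ (with $R=E_0$, $T_2=E_2$, $T_3=E_1$), the upper bound via pulling back a witnessing fibration of $Y$ is exactly the paper's argument, and your reduction of the lower bound to $F\cdot D_Y\geq h+\gamma$ with the trichotomy on $(\alpha,\beta,\gamma)$ is a clean reformulation of the paper's dichotomy on whether $R$ is horizontal. Case 1 is fine. But cases 2 and 3, which contain essentially all of the content, both have genuine gaps. In case 2, contracting vertical $(-1)$-curves inside a fiber cannot turn the horizontal curve $E_1$ or $E_2$ into a vertical one, so this operation does not put you back in case 1: the fibration still fails to descend to $Y$, and this configuration (the paper's case ``$R$ vertical, $(D_0)\hor$ a $2$-section or two $1$-sections'') is nonempty and consumes most of the paper's proof via a detailed analysis of the degenerate fibers containing $E_1$.

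In case 3 the proposed argument is circular: the class of $\tau_*R$ in $\NS(Y)_{\Q}$ is $\pi_Y^{*}(\pi_{Y*}\tau_*R)$ minus a $\Q$-divisor supported on $D_Y$ whose coefficients are determined precisely by the intersection numbers $\tau_*R\cdot D_{Y}$ that you are trying to bound from below, so knowing $\pi_{Y*}\tau_*R\equiv\tfrac{s-3}{9-s}(-K_{\bar Y})$ and $\tau_*R\cdot T_Y=s-3$ pins down nothing. A concrete warning sign that no such purely numerical argument can close this case: for $\bar Y$ of type $\rE_6,\rE_7,\rE_8$ there is a $\P^1$-fibration in which $E_0$ is a $1$-section and $F\cdot D=2<\height(\bar Y)+2$ (this is how the Platonic fiber space arises in Lemma \ref{lem:relative}), and your case-3 numerology applies verbatim to those types. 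The paper instead closes both cases with a key Claim absent from your proposal: any $(-1)$-curve $L\neq E_0$ meeting $E_1$ has $L\cdot E_1=1$ and $L\cdot(T+E_2+E_0)=0$ by adjunction, hence its image on $Y$ is a $0$-curve and $L\cdot D_Y\geq\height(\bar Y)$; moreover, if $L\cdot D_Y=1$ then $L$ does not meet a tip of $D_Y$ --- and it is exactly this refinement that excludes types $\rE_6,\rE_7,\rE_8$. That Claim, combined with the structure of degenerate fibers (Lemmas \ref{lem:degenerate_fibers} and \ref{lem:delPezzo_fibrations}) and, in a final subcase, the classification of Proposition \ref{prop:canonical_ht>2}, is what actually forces $F\cdot D\geq\height(\bar Y)+2$.
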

\begin{proof}
	Let $(X,D)\to (\bar{Y},\bar{T})$ be the minimal log resolution. It factors through the minimal log resolution $(Y,D_{Y}')\to (\bar{Y},0)$. Let $T_{Y}$ be the proper transform of $\bar{T}$ on $Y$, and let $T_{Y}^2$. Since $T_{Y}\in |-K_{Y}|$, by  Noether's formula $T_{Y}^{2}=9-\#D_{Y}'$, and by adjunction $T_{Y}$ is a $2$-section for any $\P^1$-fibration of $Y$. Pulling back such fibrations to $X$, we get $\height(X,D)\leq \height(\bar{Y})+2$. It remains to prove the opposite inequality.
	
	Let $T$, $D_{Y}$ be the proper transforms of $T_Y$ and $D_{Y}'$ on $X$, and let $D_0$ be the connected component of $D$ containing $T$, so $D_0$ is a fork with branching $(-1)$-curve $R$ and twigs $T_2=[2]$, $T_3=[3]$, $T=[k]$, where $k=6-T_{Y}^2=\#D_Y-3\geq 3$ by assumption; and $D_Y=D-D_0$. Then the morphism $X\to Y$ contracts $R+T_2+T_3$, and is an isomorphism near $D_Y$. Since $T_{Y}\in |-K_{Y}|$, we have $-K_{X}=T+T_2+T_3+2R$.

	\begin{claim*} Let $L\neq R$ be a $(-1)$-curve meeting $T_3$. Then $L\cdot T_3=L\cdot D_0=1$ and $L\cdot D_{Y}\geq \height(\bar{Y})$. Moreover, if $L\cdot D_{Y}=1$ then the component of $D_{Y}$ meeting $L$ is not a tip of $D_Y$.
	\end{claim*}
	\begin{proof}
		By adjunction, we have $L\cdot (T+T_2+T_3+2R)=1$, so $L\cdot T_3=L\cdot D_0=1$. The image of $L$ on $Y$ is a $0$-curve, so it meets $D_{Y}'$ at least $\height(\bar{Y})$ times, as needed.
		
		Suppose that $L\cdot D_{Y}=1$, and the component $H$ of $D_Y$ meeting $L$ is a tip of $D_Y$. The image of $H$ on $Y$, say $H'$, is a $1$-section, and $(D_{Y}')\hor=H'$. By Lemma \ref{lem:delPezzo_fibrations} every degenerate fiber contains exactly one $(-1)$-curve, so it meets $H'$ in a component of $D_{Y}'$, see Figure \ref{fig:can_ht=1}. Since $H'$ is a tip of $D_Y'$, it follows that there is only one degenerate fiber, and Lemma \ref{lem:degenerate_fibers}\ref{item:F-2} shows that it is supported on a fork $\langle 2;[2],[2],[1,(2)_{k-4}]\rangle$, where $k=\#D_{Y}\geq 6$. Thus $\bar{Y}$ is of type $\rE_{k}$, contrary to our assumptions.
	\end{proof}
	
	Fix a $\P^{1}$-fibration of $X$ with a fiber $F$. By adjunction we have $2=F\cdot (T+T_2+T_3+2R)$, so either $R$ is a $1$-section and $T$, $T_{2}$, $T_{3}$ are vertical; or $R$ is vertical, and $(D_{0})\hor$ consists of two $1$-sections or one $2$-section.
	
	Assume that $T_{3}$ lies in some fiber, say $F_{3}$. Then $T_{3}$ meets a vertical $(-1)$-curve $L\neq R$: indeed, otherwise  $T_{3}$ is a tip of $(F_3)\redd$ by Lemma \ref{lem:delPezzo_fibrations}, so the $(-1)$-curve $R$ meeting it has multiplicity at least $3$ in $F_{3}$, which is impossible. Assume that $L$ meets a vertical component $V$ of $D_{Y}$. If $R$ is vertical then $F_{3}=R+T_{3}+2L+V$, so $F_{3}\cdot D=2+2(L\cdot D_{Y}-1)+\beta_{D_Y}(V)\geq \height(\bar{Y}) +L\cdot D_{Y}+\beta_{D_{Y}}(V)\geq \height(\bar{Y})+2$ by the Claim above. If $R$ is horizontal then $(F_{3})\redd=[3,1,2,2]$ or $[1,3,1,2]$. In the first case, $F_{3}\cdot D\geq 1+3(L\cdot D_{Y}-1)+2(\beta_{D_Y}(V)-1)=3L\cdot D_{Y}+2\beta_{D_Y}(V)-4\geq \height(\bar{Y})+2$ by the Claim. In the second case, denoting the other $(-1)$-curve in $F_{3}$ by $L'$, we have $F_{3}\cdot D\hor\geq 1+2(L\cdot D_{Y}-1)+\beta_{D_{Y}}(V)+L'\cdot D_{Y}\geq \height(\bar{Y})+2$ as before. Eventually, assume that every $(-1)$-curve $L\neq R$ meeting $T_{3}$ is disjoint from $(D_{Y})\vert$. Then either $R$ is horizontal and $F_{3}=\langle 3;[1],[1],[1]\rangle$, so $F_{3}\cdot D\hor \geq 1+3\height(\bar{Y})\geq \height(\bar{Y})+2$; or $R$ is vertical and $F_{3}\cdot D\hor \geq 2+\height(\bar{Y})$; as needed.
	
	Assume now that $T_{3}$ is horizontal, so $R$ lies in a fiber $F_{R}$, which contains all vertical components of $D_0$. Since $(D_{0})\vert=[1,k]$, $[1,2]$ or $[2,1,k]$ the fiber $F_{R}$ contains at most $k-2$ vertical $(-2)$-curves. If $F_{R}$ is a unique degenerate fiber then $\#D\hor\geq 1+\#(D_Y)\hor\geq 1+\#D_Y-(k-2)=6\geq \height(\bar{Y})+2$, as needed. Thus we can assume that there is a degenerate fiber $F\neq F_{R}$. Such a fiber meets $T_{3}$ in a vertical $(-1)$-curve $L\neq R$. 
	
	If $(D_{Y})\hor=0$ then, since $F_{R}$ has at least two $(-1)$-curves, Lemma \ref{lem:delPezzo_fibrations}\ref{item:Sigma} implies that $\#(D_0)\hor=2$, so $T_3$ is a $1$-section; and $L$ is the unique $(-1)$-curve in $F$, so it has multiplicity at least $2$ in $F$, which is impossible. Thus $(D_{Y})\hor\neq 0$. In particular $F\cdot D=2+F\cdot D_{Y}\geq 3$, so we can assume  $\height(\bar{Y})\geq 2$.
	
	If $T_{3}$ is a $2$-section then either $L$ has multiplicity $2$ in $F$; or $T_{3}$ meets two $(-1)$-curves in $F$. In any case, the Claim gives $F\cdot D\geq 2+2\cdot(\height(\bar{Y})-L\cdot D\vert)\geq \height(\bar{Y})+2$ since $L\cdot D\vert\leq 2$ and $\height(\bar{Y})\geq 2$ by assumption.
	
	Assume now that $T_{3}$ is a $1$-section. Then $L$ has multiplicity one in $F$, so $L\cdot D\vert\leq 1$. Like before, we compute $F\cdot D\geq 2+\height(\bar{Y})-L\cdot D\vert$. The lemma follows unless $L\cdot D\vert=1$, $F-L$ is disjoint from $(D_Y)\hor$, and $F\cdot D_{Y}=\height(\bar{Y})-1$. Thus we can assume that all degenerate fibers are: $F_{R},F_{1},\dots, F_{s}$, where $F_{i}=[1,(2)_{s_{i}},1]$ for some $s_{i}\geq 1$, and $F_{i}$ meets $(D_{Y})\hor+T_{3}$ only in the first component, with multiplicity $\height(\bar{Y})$. 
	
	Let $T'\in \{T,T_2\}$ be the vertical component of $D_0-R$. Write $T'=[k']$, so $k'\in \{k,2\}$. Since $F_{R}$ is supported on the sum of $T'$, some $(-2)$-curves in $D_{Y}$, and $(-1)$-curves, we have $F_{R}=R+T'+\sum_{j=1}^{v}V_{j}$, where $V_{j}$ is a chain $[1,(2)_{v_{j}}]$, meeting $T'$ in a $(-1)$-curve; for some $v_{j}\geq 0$ such that $k'-1=\sum_{j}(v_{j}+1)=v+\sum_{j}v_j$.
		
	Suppose that some component $H$ of $(D_{Y})\hor$ meets a tip of $F_{R}$. Contracting all vertical curves which are disjoint from $H$ we obtain a morphism onto a Hirzebruch surface such that the images of $H$ and $T_{3}$ have self-intersection $-2$ and $-1$, which is impossible. It follows that $D_Y$ does not contain any $1$-sections; and $v_{j}\geq 1$ for all $j$. The latter implies that $k'\geq 3$, so $k'=k$, i.e.\ $T$ is vertical. Moreover, $F\cdot (D_Y)\hor\geq 2$, so $\height(\bar{Y})=F\cdot ((D_Y)\hor+T_3)\geq 3$. By Proposition \ref{prop:canonical_ht>2}, $\bar{Y}$ is of type $4\rA_2$ or $8\rA_1$. It follows that $v_{j}\in \{1,2\}$ for all $j\in \{1,\dots, v\}$. Since $8=\#D_{Y}=3+k=4+v+\sum_{j}v_{j}$, we get $v=2$ and $v_{1}=v_{2}=1$. Thus $\bar{Y}$ is of type $8\rA_1$, so  $s_i=1$ for all $i\in \{1,\dots, s\}$. Since $\#(D_Y)\vert=\sum_{i}s_i+\sum_{j}v_j=s+2$ and $\#D_{Y}=8$, we get $\#(D_{Y})\hor=8-s-2=6-s$, so $\#D\hor=8-s$. By Lemma \ref{lem:delPezzo_fibrations}\ref{item:Sigma} we have $s+v=\#D\hor-1$, so $s+2=8-s-1$, and eventually $s=\frac{5}{2}$, a contradiction.
\end{proof}

\bibliographystyle{amsalpha}
\bibliography{bibl}

\end{document}